\newtheorem{theorem}{Theorem}[section]
\newtheorem{lemma}[theorem]{Lemma}
\newtheorem{proposition}[theorem]{Proposition} 
\newtheorem{corp}{Corollary}[theorem]
\newtheorem*{theorem*}{Theorem} 
\theoremstyle{remark}
\newtheorem{remark}[theorem]{Remark}
\theoremstyle{definition}\newtheorem{example}[theorem]{Example}
\numberwithin{equation}{section}
\newcommand{\h}{\mathfrak h}
\newcommand{\loge}{\overset{\log}{\sim}}
\newcommand{\gc}{\mathcal{GC}}
\newcommand{\bg}{\mathbf{\Gamma}}
\newcommand{\ronum}[1]{%
  \textup{\uppercase\expandafter{\romannumeral#1}}%
}
\newcommand{\lgrl}[1]{(\mathcal{L}^{#1} \nabla^{#1}_{\mathcal{L}})}
\newcommand{\grll}[1]{(\nabla^{#1}_{\mathcal{L}}\mathcal{L}^{#1})}
\newcommand{\grl}[1]{(\nabla^{#1}_{\mathcal{L}})}
\newcommand{\lm}[1]{(\mathcal{L}^{#1})}
\definecolor{mygray}{gray}{0.6}
\newcommand{\cdts}{\textcolor{mygray}{\cdots}}
\newcommand{\vdts}{\textcolor{mygray}{\vdots}}
\DeclareMathOperator{\rank}{rank}
\DeclareMathOperator{\id}{id}
\DeclareMathOperator{\diag}{diag}
\DeclareMathOperator{\ad}{ad}
\DeclareMathOperator{\GL}{GL}
\DeclareMathOperator{\gl}{gl}
\DeclareMathOperator{\SL}{SL}
\DeclareMathOperator{\sll}{sl}
\DeclareMathOperator{\tr}{tr}
\begin{document}
\SetBlockThreshold{1} 
\title{Multiple generalized cluster structures on $D(\GL_n)$}
\date{}
\author{Dmitriy Voloshyn}

\maketitle

\makeatletter
\def\blfootnote{\xdef\@thefnmark{}\@footnotetext}
\makeatother

\begin{abstract}We produce a large class of generalized cluster structures on the Drinfeld double of $\GL_n$ that are compatible with Poisson brackets given by Belavin-Drinfeld classification. The resulting construction is compatible with the previous results on cluster structures on $\GL_n$.
\end{abstract}

\blfootnote{Published in Forum of Mathematics, Sigma \textbf{11}(46) (2023), 1--78. \doi{10.1017/fms.2023.44} }
\blfootnote{\textit{2010 Mathematics Subject Classification.} 53D17, 13F60.}
\blfootnote{\textit{Key words and phrases.} Poisson-Lie group, cluster algebra, Belavin-Drinfeld triple.}

\tableofcontents

\section{Introduction}
The present article is a continuation in a series of papers by Misha Gekhtman, Misha Shapiro and Alek Vainshtein that aim at proving the following conjecture:
\blockquote{\emph{Any simple complex Poisson-Lie group endowed with a Poisson bracket from the Belavin-Drinfeld classification possesses a compatible generalized cluster structure.}
}
For conciseness, we refer to the above conjecture as the \emph{GSV conjecture} and to Belavin–Drinfeld triples as \emph{BD triples}. The conjecture was first formulated in~\cite{conj} assuming ordinary cluster structures of geometric type, and later it was realized in~\cite{double} that a more general notion of cluster algebras is needed. Cluster algebras were invented by Fomin and Zelevinsky in~\cite{fathers} as an algebraic framework for studying dual canonical bases and total positivity. The notion of generalized cluster algebra suitable for the GSV conjecture was first introduced in~\cite{double} as an adjustment of an earlier definition given in~\cite{earlier}.

\paragraph{Progress on GSV conjecture.}
As the recent progress shows, the GSV conjecture might be extended beyond simple groups and brackets compatible with the group structure. At present, we know that
\begin{itemize}
\item Any simple complex Poisson-Lie group endowed with the standard Poisson bracket possesses a compatible cluster structure; see~\cite{conj};
\item For any Belavin-Drinfeld data, the existence of a compatible cluster structure for $\SL_n$ for all $n < 5$ was shown in~\cite{conj}; for $\SL_5$, the conjecture was proved by Eisner in~\cite{slfive};
\item For a large class of the so-called aperiodic oriented\footnote{As of October 2023, M. Gekhtman, M. Shapiro and A. Vainshtein have pushed the conjecture further to aperiodic BD triples which are not necessarily oriented.} BD triples, the conjecture was proved for $\SL_n$ in~\cite{plethora}; 
\item For other Poisson-Lie groups, the conjecture was established for the Drinfeld double of $\SL_n$ in~\cite{double} for the standard Poisson structure, as well as for\footnote{As of October 2023, for any BD triple $\bg$ of type $A_{n-1}$, we have constructed a  generalized cluster structure on $\SL_n^{\dagger}$ compatible with $\{\cdot,\cdot\}_{\bg}^{\dagger}$.} $\SL_n^\dagger$, which is an image of the dual group $\SL_n^*$ in $\SL_n$. An alternative construction on the Drinfeld double of $\SL_n$ was also given in~\cite{periodic,doublerel}.
\end{itemize}
The above results naturally extend to $\GL_n$. The present paper combines the cluster structures from~\cite{plethora} for aperiodic oriented BD triples with the generalized cluster structure from~\cite{double} for the Drinfeld double endowed with the standard bracket. As a result, we derive generalized cluster structures on the Drinfeld doubles of $\GL_n$ and $\SL_n$ compatible with Poisson brackets from the aperiodic oriented class of Belavin-Drinfeld triples.

\paragraph{Belavin-Drinfeld triples.} Let $\Pi:=[1,n-1]$ be a set of simple roots of type $A_n$ identified with an interval $[1,n-1]$. Recall that a Belavin-Drinfeld triple is a triple $(\Gamma_1,\Gamma_2,\gamma)$ such that $\Gamma_1,\Gamma_2\subseteq \Pi$ and $\gamma:\Gamma_1 \rightarrow \Gamma_2$ a nilpotent isometry; we say that the triple is \emph{trivial} if $\Gamma_1=\Gamma_2=\emptyset$. As Belavin and Drinfeld showed in~\cite{bd,bd2}, such triples (together with some additional data) parametrize factorizable quasitriangular Poisson structures on connected simple complex Poisson-Lie groups (for details, see Section~\ref{s:plgrps}). As in~\cite{plethora}, however, we consider even more general Poisson brackets that depend on a pair $\bg:=(\bg^r,\bg^c)$ of Belavin-Drinfeld triples $\bg^r:=(\Gamma_1^r,\Gamma_2^r,\gamma_r)$ and $\bg^c:=(\Gamma_1^c,\Gamma_2^c,\gamma_c)$. A Belavin-Drinfeld triple $(\Gamma_1,\Gamma_2,\gamma)$ is called \emph{oriented} if for any $i,i+1 \in \Gamma_1$, $\gamma(i+1) = \gamma(i)+1$; a pair of Belavin-Drinfeld triples is called \emph{oriented} if both $\bg^r$ and $\bg^c$ are oriented. The pair is called \emph{aperiodic} if the map $\gamma_c^{-1}w_0\gamma_rw_0$ is nilpotent, where $w_0$ is the longest Weyl group element. Given a Cartan subalgebra $\mathfrak{h}$ of $\sll_n(\mathbb{C})$ and a Belavin-Drinfeld triple $(\Gamma_1,\Gamma_2,\gamma)$, set
\[
\mathfrak{h}_{\bg}:= \{h \in \mathfrak{h} \ | \ \alpha(h) = \beta(h), \ \gamma^j(\alpha)=\beta \ \text{for some} \ j\},
\]
and let $\mathcal{H}_{\bg}$ be the connected subgroup of $\SL_n(\mathbb{C})$ with Lie algebra $\mathfrak{h}_{\bg}$. The dimension of $\mathcal{H}_{\bg}$ is given by $k_{\bg}:=|\Pi\setminus\Gamma_1|$.

\paragraph{Main results and the outline of the paper.} In this paper, we consider generalized cluster structures in the rings of regular functions of $\GL_n\times\GL_n$ and $\SL_n \times \SL_n$ (for the precise definition, see Section~\ref{s:cluster_str}). Roughly, the difference between generalized cluster structures and ordinary cluster structures of geometric type (in the sense of Fomin and Zelevinsky) is that the former allows more than two monomials in exchange relations. In fact, there is only one generalized exchange relation in the initial seeds that we study in this paper (more generalized exchange relations appear in the case of nonaperiodic BD pairs; see~\cite{periodic}). Recall that an extended cluster $(x_1,\ldots,x_{N+M})$ is called \emph{log-canonical} (relative some Poisson bracket $\{\cdot,\cdot\}$) if $\{x_i,x_j\} = \omega_{ij} x_ix_j$ for some constants $\omega_{ij}$ and all $1 \leq i, j\leq N+M$; a generalized cluster structure is called \emph{compatible} with the Poisson bracket if all extended clusters are log-canonical. An extended cluster $(x_1,\ldots,x_{N+M})$ is called \emph{regular} if all $x_i$'s are represented as regular functions on the given variety ($\GL_n \times \GL_n$ or $\SL_n \times \SL_n$ in our case); the generalized cluster structure is called \emph{regular} if all extended clusters are regular. The main part of the paper is devoted to proving the following theorem:

\begin{theorem}\label{t:main}
Let $\bg = (\bg^r, \bg^c)$ be a pair of aperiodic oriented Belavin-Drinfeld triples. There exists a generalized cluster structure $\gc(\bg)$ on $D(\GL_n) = \GL_n \times \GL_n$ such that
\begin{enumerate}[(i)]
\item The number of stable variables is $k_{\bg^r}+k_{\bg^c} + (n+1)$, and the exchange matrix has full rank;
\item The generalized cluster structure $\gc(\bg)$ is regular, and the ring of regular functions $\mathcal{O}(D(\GL_n))$ is naturally isomorphic to the upper cluster algebra $\bar{\mathcal{A}}_{\mathbb{C}}(\gc(\bg))$;\label{tm:iso}
\item The global toric action of $(\mathbb{C}^*)^{k_{\bg^r}+k_{\bg^c}+2}$ on $\gc(\bg)$ is induced by the left action of $\mathcal{H}_{\bg^r}$, the right action of $\mathcal{H}_{\bg^c}$ and the action by scalar matrices on each component of $\GL_n \times \GL_n$;\label{tm:tor}
\item Any Poisson bracket defined by the pair $\bg$ on $D(\GL_n)$ is compatible with $\gc(\bg)$.\label{tm:compb}
\end{enumerate}
\end{theorem}
For the trivial $\bg^r$ and $\bg^c$, the theorem was proved in~\cite{double} (we refer to the corresponding generalized cluster structure $\gc(\bg^r,\bg^c)$ as the \emph{standard} one). When $\bg^r = \bg^c$, the group $D(\GL_n)$ together with its Poisson structure is the Drinfeld double of $\GL_n$. By default, we work over the field of complex numbers $\mathbb{C}$ (however, the results hold over $\mathbb{R}$ for the same class of Poisson brackets). The initial seed is described in Section~\ref{s:descr} (a rough description is available below). The proof of Theorem~\ref{t:main} is contained in Sections~\ref{s:regular}-\ref{s:compb}. In Section~\ref{s:regular}, we prove that all cluster variables in the seeds adjacent to the initial one are regular functions. In Section~\ref{s:complet}, we prove Part~\ref{tm:iso} by induction on the size $|\Gamma_1^r| + |\Gamma_1^c|$. The step of the induction employs the construction of certain birational quasi-isomorphisms introduced in~\cite{plethora} (i.e., quasi-isomorphisms in the sense of~\cite{fraser} that are also birational isomorphisms of the underlying varieties). Section~\ref{s:torpr} is devoted to Part~\ref{tm:tor}, and in Sections~\ref{s:logcan} and \ref{s:compb} we prove Part~\ref{tm:compb} via a direct computation. A similar result holds in the case of $D(\SL_n)$:
\begin{theorem}\label{t:mainsln}
Let $\bg = (\bg^r, \bg^c)$ be a pair of aperiodic oriented Belavin-Drinfeld triples. There exists a generalized cluster structure $\gc(\bg)$ on $D(\SL_n) = \SL_n \times \SL_n$ such that
\begin{enumerate}[(i)]
\item The number of stable variables is $k_{\bg^r}+k_{\bg^c} + (n-1)$, and the exchange matrix has full rank;
\item The generalized cluster structure $\gc(\bg)$ is regular, and the ring of regular functions $\mathcal{O}(D(\SL_n))$ is naturally isomorphic to the upper cluster algebra $\bar{\mathcal{A}}_{\mathbb{C}}(\gc(\bg))$;
\item The global toric action of $(\mathbb{C}^*)^{k_{\bg^r}+k_{\bg^c}}$ on $\gc(\bg)$ is induced by the left action of $\mathcal{H}_{\bg^r}$ and the right action of $\mathcal{H}_{\bg^c}$ on $D(\SL_n)$;
\item Any Poisson bracket defined by the pair $\bg$ on $D(\SL_n)$ is compatible with $\gc(\bg)$.
\end{enumerate}
\end{theorem}
In Section~\ref{s:adj_sln} we show how to derive Theorem~\ref{t:mainsln} from Theorem~\ref{t:main}, and in Section~\ref{s:exs} we provide a few examples of the generalized cluster structures studied in this paper.

\paragraph{A rough description of the initial extended seed.} The construction of the initial quiver consists of two parts: First, we construct the initial quiver for the case of the trivial $\bg$ (this was described in~\cite{double}); second, for each root in $\Gamma^r_1$ and $\Gamma^c_2$, we add three additional arrows (see Figure~\ref{f:nbd_extra}). The initial extended cluster consists of five types of regular functions: $c$-functions, $\varphi$-functions, $f$-functions, $g$-functions and $h$-functions. The $c$-, $\varphi$- and $f$-functions were constructed in~\cite{double} and are the same for any choice of $\bg$. More specifically, the $c$-functions comprise $n-1$ Casimirs\footnote{To be more precise, the $c$-functions are Casimirs on $D(\GL_n)$ if an only if $R_0(I) = (1/2)I$; see a discussion in Section~\ref{s:iniclust}.} of the given Poisson bracket which also serve as isolated frozen variables, and the $\varphi$- and $f$-functions are $(n-1)n/2$ and $(n-1)(n-2)/2$ cluster variables that satisfy the following invariance properties:
\[
f(X,Y) = f(N_+XN_-,N_+YN_-^\prime), \ \ \tilde{\varphi}(X,Y) = \tilde{\varphi}(AXN_-,AYN_-)
\]
where $(X,Y)$ are the standard coordinates on $D(\GL_n)$, $N_+$ is any unipotent upper triangular matrix, $N_-$ and $N_-^\prime$ are unipotent lower triangular matrices, $A$ is any invertible matrix and $\varphi(X,Y) = (\det X)^m\tilde{\varphi}(X,Y)$ for some number $m$ that depends on $\varphi$. Furthermore, for any BD data, the initial seed also contains the $g$-functions $\det X^{[i,n]}_{[i,n]}$ and the $h$-functions $\det Y^{[i,n]}_{[i,n]}$, $1 \leq i \leq n$ ($\det X$ and $\det Y$ are both frozen variables). All the other $g$- and $h$-functions are constructed via a combinatorial procedure based on the given root data $\bg$ (there are $n(n+1)/2$ $g$-functions and $n(n+1)/2$ $h$-functions). As in~\cite{plethora}, we construct a list of so-called $\mathcal{L}$-matrices, and then we set the $g$- and $h$-functions to be the trailing minors of the $\mathcal{L}$-matrices. The determinants of $\mathcal{L}$-matrices are declared to be frozen variables. If we let $\psi$ to be any $g$- or $h$-variable, then it satisfies the following invariance properties:
\[
\psi(N_+X,\tilde{\gamma}_r(N_+)Y) = \psi(X\tilde{\gamma}_c^*(N_-),YN_-) = \psi(X,Y)
\]
where $\tilde{\gamma}_r$ and $\tilde{\gamma}_c^*$ are group lifts of the Belavin-Drinfeld maps $\gamma_r$ and $\gamma_c^*$ associated with $\bg^r$ and $\bg^c$, respectively. The combinatorial construction relies on the nilpotency of the map $\gamma_c^{-1}w_0\gamma_rw_0$, where $w_0$ is the longest Weyl group element. When $\gamma_c^{-1} w_0 \gamma_r w_0$ is not nilpotent, the $\mathcal{L}$-matrices become infinite, so a different procedure has to be applied (one such example was studied in~\cite{periodic}). See Section~\ref{s:exs} for some examples of $\mathcal{L}$-matrices and initial quivers.

\paragraph{Future work.} As explained in Remark 3.7 in~\cite{plethora}, the authors of the conjecture have already identified generalized cluster structures in type $A$ for any Belavin-Drinfeld data. However, there is a lack of tools for producing proofs. One new tool was introduced in~\cite{plethora}, which consists in considering birational quasi-isomorphisms between different cluster structures and which significantly reduced labor in showing that the upper cluster algebra is naturally isomorphic to the algebra of regular functions. However, there's yet no better tool for proving the compatibility with a Poisson bracket except a tedious and direct computation. We hope that the constructed birational quasi-isomorphisms might be used for proving log-canonicity as well, but this idea is still under development. Furthermore, Schrader and Shapiro recently in~\cite{schrader2019cluster} have embedded the quantum group $U_q(\sll_n)$ into a quantum cluster $\mathcal{X}$-algebra introduced by Fock and Goncharov in~\cite{fock2006cluster}. As noted in~\cite{schrader2019cluster}, one should be able to embed $U_q(\sll_n)$ into a quantum cluster $\mathcal{A}$-algebra in the sense of Berenstein and Zelevinsky~\cite{berenstein2005quantum}, which is suggested by the existence of a generalized cluster structure on the dual group $\SL_n^*$ from~\cite{double}. We plan to address the question about $\mathcal{A}$-cluster realization of $U_q(\sll_n)$, as well as the question of existence of other generalized cluster structures on $\SL_n^*$ in our future work. 

\paragraph{Software.} During the course of working on this paper, we have developed a Matlab application that is able to produce the initial seed of any generalized cluster structure presented in the paper and which provides various tools for manipulating the quiver and the associated functions. It also presents some tools for working with Poisson brackets. The software is freely available under MIT license on the author's GitHub repository: \href{https://github.com/Grabovskii/GenClustGLn}{https://github.com/Grabovskii/GenClustGLn}

\paragraph{Acknowledgments.} The author would like to thank his advisor, Misha Gekhtman, for giving this problem to him and for numerous discussions and valuable suggestions. The author would also like to thank the anonymous referee, whose valuable comments have improved the exposition. The first version of the paper was written at University of Notre Dame and was partially supported by the NSF grant 2100785. The final version was written at Institute for Basic Science and supported by the grant IBS-R003-D1.

\section{Background}\label{s:back}
\subsection{Generalized cluster structures}\label{s:cluster_str}
In this section, we briefly recall the main definitions and propositions of the generalized cluster algebras theory from~\cite{double}, which constitute a generalization of cluster algebras of geometric type invented by Fomin and Zelevinsky in~\cite{fathers}. Throughout this section, let $\mathcal{F}$ be a field of rational functions in $N+M$ independent variables with coefficients in $\mathbb{Q}$. Fix an algebraically independent set $x_{N+1},\ldots, x_{N+M} \in \mathcal{F}$ over $\mathbb{Q}$ and call its elements \emph{stable} (or \emph{frozen})\emph{ variables}.

\paragraph{Seeds.} To define a seed, we first define the following data:
\begin{itemize}
 \item Let $\tilde{B} = (b_{ij})$ be an $N\times (N+M)$ integer matrix whose principal part $B$ is skew-symmetrizable (recall that the principal part of a matrix is its leading square submatrix). The matrices $B$ and $\tilde{B}$ are called the \emph{exchange matrix} and the \emph{extended exchange matrix}, respectively;
\item Let $x_1,\ldots, x_N$ be an algebraically independent subset of $\mathcal{F}$ over $\mathbb{Q}$ such that the elements $x_1,\ldots,x_N,\ldots,x_{N+M}$ freely generate the field $\mathcal{F}$. The elements $x_1,\ldots,x_N$ are called \emph{cluster variables}, and the tuples $\mathbf{x} := (x_1,\ldots,x_N)$ and $\tilde{\mathbf{x}} := (x_1,\ldots,x_{N+M})$ are called a \emph{cluster} and an \emph{extended cluster}, respectively;
\item For every $1 \leq i \leq N$, let $d_i$ be a factor of $\gcd(b_{ij} \ | \ 1 \leq j \leq N)$. The \emph{$i$th string $p_i$} is a tuple $p_i := (p_{ir})_{0 \leq r \leq d_i}$, where each $p_{ir}$ is a monomial in the stable variables with an integer coefficient and such that $p_{i0} = p_{id_i} = 1$. The $i$th string is called \emph{trivial} if $d_i = 1$. Set $\mathcal{P} := \{ p_i \ | \ 1 \leq i \leq N\}$.
\end{itemize}
Now, a \emph{seed} is $\Sigma := (\mathbf{x}, \tilde{B}, \mathcal{P})$ and an \emph{extended seed} is $\tilde{\Sigma} := (\tilde{\mathbf{x}}, \tilde{B}, \mathcal{P})$. In practice, one additionally names one of the seeds as the \emph{initial seed}.

\paragraph{Generalized cluster mutations.} Let $\Sigma = (\mathbf{x}, \tilde{B},\mathcal{P})$ be a seed constructed via the recipe from the previous paragraph. A \emph{generalized cluster mutation in direction $k$} produces a seed $\Sigma^\prime = (\mathbf{x}^\prime, \tilde{B}^\prime, \mathcal{P}^\prime)$ that is constructed as follows.
\begin{itemize}
\item Define \emph{cluster $\tau$-monomials} $u_{k;>}$ and $u_{k;<}$, $1 \leq k \leq N$, via
\[
u_{k;>} := \prod_{\substack{1 \leq i \leq N, \\ b_{ki} > 0}} x_{i}^{b_{ki}/d_k}, \ \ u_{k;<} := \prod_{\substack{1 \leq i \leq N, \\ b_{ki} < 0}} x_{i}^{-b_{ki}/d_k},
\]
and \emph{stable $\tau$-monomials} $v_{k;>}^{[r]}$ and $v_{k;<}^{[r]}$, $1 \leq k \leq N$, $0 \leq r \leq d_k$, as
\[
v_{k;>}^{[r]} := \prod_{\substack{N+1\leq i \leq N+M,\\ b_{ki} > 0}} x_i^{\lfloor rb_{ki}/d_k \rfloor}, \ \ v_{k;<}^{[r]} := \prod_{\substack{N+1 \leq i \leq N+M,\\ b_{ki} < 0}} x_i^{\lfloor-rb_{ki}/d_k\rfloor},
\]
where the product over an empty set by definition equals $1$ and $\lfloor m \rfloor$ denotes the floor of a number $m \in \mathbb{Z}$. Define $x_k^\prime$ via the \emph{generalized exchange relation}
\begin{equation}\label{eq:exchr}
x_k x_k^\prime := \sum_{r=0}^{d_k} p_{kr} u_{k;>}^{r} v_{k;>}^{[r]} u_{k;<}^{d_k-r} v_{k;<}^{[d_k-r]},
\end{equation}
and set $\mathbf{x}^\prime := (\mathbf{x} \setminus \{x_k\}) \cup \{x_k^\prime\}$.
\item The matrix entries $b_{ij}^\prime$ of $\tilde{B}^\prime$ are defined as
\[
b_{ij}^\prime := \begin{cases}
-b_{ij} \ \ &\text{if } i=k \text{ or }j=k;\\
b_{ij} + \dfrac{|b_{ik}|b_{kj} + b_{ik} |b_{kj}|}{2} \ \ &\text{otherwise.}
\end{cases}
\]
\item The strings $p_i^\prime \in \mathcal{P}^\prime$ are given by the \emph{exchange coefficient mutation}
\[
p_{ir}^\prime := \begin{cases} p_{i,d_i-r} \ \ &\text{if }i = k; \\ p_{ir} \ \ &\text{otherwise.}\end{cases}
\]
\end{itemize}

The seeds $\Sigma$ and $\Sigma^\prime$ are also called \emph{adjacent}. A few comments on the definition:
\begin{enumerate}[1)]
\item Call a frozen variable $x_i$ \emph{isolated} if $b_{ji} = 0$ for all $1 \leq j \leq N$. The definition of $b_{ij}^\prime$ implies that a mutation preserves the property of being isolated;
\item Since $\gcd\{b_{ij} \ | \ 1 \leq j \leq N\} = \gcd\{b_{ij}^\prime \ | \ 1 \leq j \leq N\}$, the numbers $d_1,\ldots, d_N$ retain their defining property after a mutation is performed;
\item If a string $p_k$ is trivial, then the generalized exchange relation in equation \eqref{eq:exchr} becomes the exchange relation from the ordinary cluster theory of geometric type:
\begin{equation}\label{eq:ordexchrel}
x_k x_k^\prime = \prod_{\substack{1 \leq  i \leq N+M\\ b_{ki>0}}} x_i^{b_{ki}} + \prod_{\substack{1 \leq  i \leq N+M\\ b_{ki<0}}} x_i^{-b_{ki}}.
\end{equation}
In fact, the generalized cluster structures studied in this paper have only one nontrivial string, hence all exchange relations except one are ordinary.
\item The generalized exchange relation can also be written in the following form. For any $i$, denote $v_{i;>} := v_{i;>}^{[d_i]}$, $v_{i;<} := v_{i;<}^{[d_i]}$; set
\[
q_{ir} := \frac{v_{i;>}^r v_{i; < }^{d_i-r}}{(v_{i;>}^{[r]} v_{i;<}^{[d_i-r]})^{d_i}}, \ \ \hat{p}_{ir} := \frac{p_{ir}^{d_i}}{q_{ir}}, \ \ 1 \leq i \leq N,\ 0 \leq r \leq d_i.
\]
Note that the mutation rule for $\hat{p}_{ir}$ is the same as for $p_{ir}$. Now, equation~\eqref{eq:exchr} becomes
\[
x_k x_k^\prime = \sum_{r=0}^{d_k} (\hat{p}_{kr} v_{k;>}^r v_{k;<}^{d_k-r})^{1/d_k} u_{k;>}^r u_{k;<}^{d_k-r}.
\]
The expression $(\hat{p}_{kr} v_{k;>}^r v_{k;<}^{d_k-r})^{1/d_k}$ is a monomial in the stable variables.

\end{enumerate}

\paragraph{Generalized cluster structure.} Two seeds $\Sigma$ and $\Sigma^\prime$ are called \emph{mutation equivalent} if there's a sequence $\Sigma_1, \ldots, \Sigma_m$ such that $\Sigma_1 = \Sigma$, $\Sigma_m = \Sigma^\prime$, and such that $\Sigma_{i+1}$ and $\Sigma_i$ are adjacent for each~$i$. For a fixed seed $\Sigma$, the set of all seeds that are mutation equivalent to $\Sigma$ is called the \emph{generalized cluster structure} and is denoted as $\gc(\Sigma)$ or simply $\gc$.

\paragraph{Generalized cluster algebra.} Let $\gc$ be a generalized cluster structure constructed as above. Define $\mathbb{A} := \mathbb{Z}[x_{N+1},\ldots,x_{N+M}]$ and $\bar{\mathbb{A}} := \mathbb{Z}[x_{N+1}^{\pm 1},\ldots, x_{N+M}^{\pm 1}]$. Choose a \emph{ground ring} $\hat{\mathbb{A}}$, which is a subring of $\bar{\mathbb{A}}$ that contains $\mathbb{A}$. The $\hat{\mathbb{A}}$-subalgebra of $\mathcal{F}$ given by
\begin{equation}
\mathcal{A} := \mathcal{A(\gc)} := \hat{\mathbb{A}}[\ \text{cluster variables from all seeds in }\gc\ ]
\end{equation}
is called the \emph{generalized cluster algebra}. For any seed $\Sigma := ((x_1,\ldots,x_{N}),\tilde{B},\mathcal{P})$, set
\begin{equation}\label{eq:laurring}
\mathcal{L}(\Sigma) := \hat{\mathbb{A}}[x_1^{\pm 1},\ldots,x_N^{\pm 1}]
\end{equation}
to be \emph{the ring of Laurent polynomials} associated with $\Sigma$, and define
\begin{equation}
\bar{\mathcal{A}} :=\bar{\mathcal{A}}(\gc):=
\bigcap_{\Sigma \in \gc}\mathcal{L}(\Sigma). 
\end{equation}
The algebra $\bar{\mathcal{A}}$ is called the \emph{generalized upper cluster algebra}. The \emph{generalized Laurent phenomenon} states that $\mathcal{A} \subseteq \bar{\mathcal{A}}$. 

\paragraph{Upper bounds.} Let $\mathbb{T}_N$ be a labeled $N$-regular tree. Associate with each vertex a seed so that adjacent seeds are adjacent in the tree\footnote{Multiple vertices might receive the same seed, and for this reason the tree is considered labeled. Identifying the vertices with the same seeds (up to permutations of cluster variables), one obtains an unlabeled $N$-regular graph, which encodes all mutations between distinct seeds.}, and if a seed $\Sigma^\prime$ is adjacent to $\Sigma$ in direction $k$, label the corresponding edge in the tree with number $k$. A \emph{nerve} $\mathcal{N}$ in $\mathbb{T}_N$ is a subtree on $N+1$ vertices such that all its edges have different labels (for instance, a star is a nerve). An \emph{upper bound} $\bar{\mathcal{A}}(\mathcal{N})$ is defined as the algebra
\begin{equation}
\bar{\mathcal{A}}(\mathcal{N}) := \bigcap_{\Sigma \in V(\mathcal{N})}\mathcal{L}(\Sigma) 
\end{equation}
where $V(\mathcal{N})$ stands for the vertex set of $\mathcal{N}$. Upper bounds were first defined and studied in~\cite{upper_bounds}. Let $L$ be the number of isolated variables in $\gc$. For the $i$th nontrivial string in $\mathcal{P}$, let $\tilde{B}(i)$ be a $(d_{i}-1) \times L$ matrix such that the $r$th row consists of the exponents of the isolated variables in $p_{ir}$ (recall that $p_{ir}$ is a monomial in the stable variables). The following result was proved in \cite{double}:

\begin{proposition}\label{p:upperb}
Assume that the extended exchange matrix has full rank and let $\rank \tilde{B}(i) = d_i - 1$ for any nontrivial string in $\mathcal{P}$. Then the upper bounds $\bar{\mathcal{A}}(\mathcal{N})$ do not depend on the choice of $\mathcal{N}$ and hence coincide with the generalized upper cluster algebra $\bar{\mathcal{A}}$.
\end{proposition}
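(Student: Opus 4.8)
The plan is to adapt the Berenstein--Fomin--Zelevinsky argument for ordinary upper bounds to the generalized exchange relation \eqref{eq:exchr}, proving the two assertions in turn: first that $\bar{\mathcal{A}}(\mathcal{N})$ is independent of the nerve $\mathcal{N}$, and then that this common value is $\bar{\mathcal{A}}$. The second assertion is a formal consequence of the first. Writing $L(\Sigma) := \hat{\mathbb{A}}[x_1^{\pm 1},\ldots,x_N^{\pm 1}]$ for the Laurent ring of a seed $\Sigma$, every $\bar{\mathcal{A}}(\mathcal{N})$ is an intersection of such rings over a subset $V(\mathcal{N})$ of all seeds, hence contains $\bar{\mathcal{A}}$; conversely, every seed $\Sigma$ is the center of a star nerve $\mathcal{N}_\Sigma$, so if all nerve upper bounds coincide with a common ring $U$, then $U = \bar{\mathcal{A}}(\mathcal{N}_\Sigma) \subseteq L(\Sigma)$ for each $\Sigma$, whence $U \subseteq \bar{\mathcal{A}}$ and therefore $U = \bar{\mathcal{A}}$.

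For the independence I would reduce to a local comparison. The nerves in $\mathbb{T}_N$ are joined by elementary re-rooting moves, each replacing one vertex of the nerve by an adjacent seed, and since $\mathbb{T}_N$ is connected it suffices to show that a single such move preserves the upper bound. The base computation is the pairwise intersection of two adjacent Laurent rings: for $\Sigma' = \mu_k\Sigma$, setting $R := \hat{\mathbb{A}}[x_i^{\pm 1} : i \neq k]$ and letting $P_k \in R$ denote the right-hand side of \eqref{eq:exchr}, one expands an element of $L(\Sigma) = R[x_k^{\pm 1}]$ in powers of $x_k$ and uses the exchange relation $x_k x_k' = P_k$ to show that membership in $L(\Sigma') = R[(x_k')^{\pm 1}]$ forces the appropriate divisibilities by $P_k$, giving $L(\Sigma) \cap L(\Sigma') = R[x_k,x_k']$, the ring of genuine polynomials in the two exchanged variables.

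The genuinely new content, and the step I expect to be the main obstacle, is propagating this through the full nerve when the strings are nontrivial. For $d_k > 1$ the exchange polynomial $P_k$ has $d_k+1$ terms whose coefficients $p_{kr}$ are nonconstant monomials in the stable variables, and the multi-seed comparison underlying a re-rooting move requires that the several exchange polynomials attached to the seeds of a nerve share no common cluster-variable factor and admit no spurious monomial relations among their terms. This is exactly where the two hypotheses enter: full rank of the extended exchange matrix $\tilde{B}$ guarantees, as in the ordinary theory, that each $P_k$ is divisible by no cluster variable and is not a unit times a monomial, while the condition $\rank \tilde{B}(i) = d_i-1$ ensures that the coefficients recorded by the rows of $\tilde{B}(i)$ are independent enough that the $d_i+1$ terms of the $i$th exchange polynomial cannot partially cancel or collapse. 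I expect the bulk of the work to consist in establishing this coprimality and pushing it through the local intersection lemma; once that is in hand, the connectivity of the nerve graph and the reduction in the first paragraph finish the argument.
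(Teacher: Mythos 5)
Note first that the paper itself contains no proof of this proposition: it is imported background, introduced by the sentence ``The following result was proved in~\cite{double}.'' So the only meaningful comparison is with the proof in~\cite{double}, and your outline is indeed a map of that proof: it follows the Berenstein--Fomin--Zelevinsky scheme, and your first two steps are correct ingredients of it. The reduction of the second assertion to nerve-independence via star nerves is right, and the two-ring lemma $L(\Sigma)\cap L(\Sigma')=R[x_k,x_k']$ is also right --- in fact it needs neither of the two hypotheses: expanding $f\in L(\Sigma)$ in powers of $x_k$ with coefficients in $\operatorname{Frac}(R)$ and using uniqueness of that expansion, membership in $L(\Sigma')$ forces $P_k^{\,j}\mid c_{-j}$ for every $j>0$, for any nonzero $P_k$.

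The genuine gap is that everything the proposition actually asserts is concentrated in the step you defer. Both hypotheses enter only through the comparison of three or more Laurent rings (invariance of the upper bound under a single re-rooting move), and your proposal stops at saying that they ``should'' yield the required coprimality. What has to be proved, and is not, is: (a) that full rank of $\tilde{B}$ forces the exchange polynomials of an arbitrary seed in $\gc$ to be pairwise coprime --- for binomials this is the classical argument about proportional exponent vectors, but for a $(d_k+1)$-term generalized exchange polynomial it is not automatic; (b) that the condition $\rank\tilde{B}(i)=d_i-1$, i.e.\ multiplicative independence of the monomials $p_{i1},\dots,p_{i,d_i-1}$ in the isolated variables, prevents the generalized exchange polynomial from acquiring monomial or repeated factors and keeps it coprime to the exchange polynomials of adjacent seeds; and (c) the lemma that pushes this coprimality through the intersection $\bigcap_j \hat{\mathbb{A}}[x_1^{\pm1},\dots,x_j,x_j',\dots,x_N^{\pm1}]$ to conclude invariance under one elementary move on nerves. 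Note also that your formulation ``share no common cluster-variable factor'' is weaker than what is needed: the relevant polynomials must have no common non-unit factor of any kind. Without (a)--(c) carried out, the text is a correct plan coinciding with the strategy of~\cite{double}, but not a proof.
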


\paragraph{Generalized cluster structures on varieties.} 
Let $V$ be a Zariski open subset of $\mathbb{C}^{N+M}$, $\mathcal{O}(V)$ be the ring of regular functions, and let $\mathbb{C}(V)$ be the field of rational functions on $V$. As before, let $\gc$ be a generalized cluster structure, and assume that $f_1,\ldots, f_{N+M}$ is a transcendence basis of $\mathbb{C}(V)$ over $\mathbb{C}$. Pick an extended cluster $(x_1,\ldots,x_{N+M})$ in $\gc$ and define a field isomorphism $\theta: \mathcal{F}_{\mathbb{C}} \rightarrow \mathbb C(V)$ via $\theta: x_i \mapsto f_i$, $1 \leq i \leq N+M$, where $\mathcal{F}_{\mathbb{C}} := \mathcal{F} \otimes \mathbb{C}$ is the extension by complex scalars of $\mathcal{F}$. The pair $(\gc, \theta)$ (or sometimes just $\gc$) is called a \emph{generalized cluster structure on $V$}. It's called \emph{regular} if $\theta(x)$ is a regular function for every variable $x$. Choose a ground ring as
\[
\hat{\mathbb{A}}:= \mathbb Z[x_{N+1}^{\pm 1},\ldots, x_{N+M^\prime}^{\pm 1}, x_{N+M^\prime+1},\ldots, x_{N+M}],
\]
where $\theta(x_{N+i})$ does not vanish on $V$ if and only if $1 \leq i \leq M^\prime$. Set $\mathcal{A}_{\mathbb C} := \mathcal{A} \otimes \mathbb{C}$ and $\bar{\mathcal{A}}_{\mathbb C}:= \bar{\mathcal{A}}\otimes \mathbb C$. 

\begin{proposition}\label{p:starfish}
Let $V$ be a Zariski open subset of $\mathbb C^{N+M}$ and $(\gc,\theta)$ be  a generalized cluster structure on $V$ with $N$ cluster and $M$ stable variables. Suppose there exists an extended cluster $\tilde{\mathbf{x}} = (x_1,\ldots,x_{N+M})$ that satisfies the following properties:
\begin{enumerate}[(i)]
\item For each $1 \leq i \leq N+M$, $\theta(x_i)$ is regular on $V$, and for each $1 \leq i \neq j \leq N+M$, $\theta(x_i)$ is coprime with $\theta(x_j)$ in $\mathcal{O}(V)$;\label{p:sfcopri}
\item For any cluster variable $x_k^\prime$ obtained via the generalized exchange relation~\eqref{eq:exchr} applied to $\tilde{\mathbf{x}}$ in direction $k$, $\theta(x_k^\prime)$ is regular on $V$ and coprime with $\theta(x_k)$ in $\mathcal{O}(V)$.\label{p:sfregi}
\end{enumerate}
Then $(\gc,\theta)$ is a regular generalized cluster structure on $V$. If additionally
\begin{enumerate}[(i)]
\setcounter{enumi}{2}
\item each regular function on $V$ belongs to $\theta(\bar{\mathcal{A}}_{\mathbb{C}}(\gc))$,\label{c:natiso}
\end{enumerate}
then $\theta$ is an isomorphism between $\bar{\mathcal{A}}_{\mathbb{C}}(\gc)$ and $\mathcal{O}(V)$.
\end{proposition}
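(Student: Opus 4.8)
The plan is to reduce both assertions to a single codimension-one regularity criterion, exploiting that $V$, being open in $\mathbb{C}^{N+M}$, is smooth and hence $\mathcal{O}(V)$ is a Noetherian normal (Krull) domain. Consequently a function $f \in \mathbb{C}(V)$ lies in $\mathcal{O}(V)$ if and only if $v_{\mathfrak{p}}(f) \geq 0$ for every prime divisor $\mathfrak{p}$ of $V$, where $v_{\mathfrak{p}}$ is the valuation of the local ring $\mathcal{O}_{V,\mathfrak{p}}$. I would read the coprimality conditions (i) and (ii) divisorially: that $\theta(x_i)$ and $\theta(x_j)$ are coprime means no prime divisor contains both, i.e. $\codim_V\{\theta(x_i)=\theta(x_j)=0\}\geq 2$.

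First I would establish the core claim: if $f \in \mathcal{F}_{\mathbb{C}}$ is a Laurent polynomial with coefficients in $\hat{\mathbb{A}}$ in the image cluster $\theta(\tilde{\mathbf{x}})$, and also in each of the $N$ clusters obtained from $\tilde{\mathbf{x}}$ by a single mutation, then $\theta(f)\in\mathcal{O}(V)$. Fix a prime divisor $\mathfrak{p}$. Under $\theta$ the inverted frozen variables become units of $\mathcal{O}(V)$, the remaining coefficients become regular functions, so every coefficient has nonnegative valuation at $\mathfrak{p}$. By (i) at most one of $\theta(x_1),\ldots,\theta(x_N)$ lies in $\mathfrak{p}$. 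If none does, each monomial in the initial Laurent expansion has $v_{\mathfrak{p}}\geq 0$ (the $\theta(x_j)$ are units at $\mathfrak{p}$), whence $v_{\mathfrak{p}}(\theta(f))\geq 0$. If exactly $\theta(x_i)\in\mathfrak{p}$, I switch to the $i$th mutated cluster, whose variables are $\theta(x_j)$ for $j\neq i$ together with $\theta(x_i^\prime)$: by (i) the former and by (ii) the latter are coprime to $\theta(x_i)$, so none lies in $\mathfrak{p}$, and the Laurent expansion of $\theta(f)$ in this cluster again has every monomial of nonnegative valuation at $\mathfrak{p}$. As $\mathfrak{p}$ was arbitrary, $\theta(f)$ is regular.

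The first assertion follows at once: every cluster variable lies in $\mathcal{A}(\gc)\subseteq\bar{\mathcal{A}}(\gc)$ by the generalized Laurent phenomenon, hence is a Laurent polynomial in each nerve cluster, so the core claim makes its $\theta$-image regular; thus $(\gc,\theta)$ is regular. For the second assertion, the star centred at $\tilde{\mathbf{x}}$ (the initial seed with its $N$ neighbours) is a nerve $\mathcal{N}$, and $\bar{\mathcal{A}}(\gc)\subseteq\bar{\mathcal{A}}(\mathcal{N})$ since the latter intersects over fewer seeds. Elements of $\bar{\mathcal{A}}(\mathcal{N})$ are by definition Laurent in all nerve clusters, so the core claim gives $\theta(\bar{\mathcal{A}}_{\mathbb{C}}(\gc))\subseteq\mathcal{O}(V)$. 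The reverse inclusion is exactly hypothesis (iii), and $\theta$ is injective as the restriction of the field isomorphism $\mathcal{F}_{\mathbb{C}}\to\mathbb{C}(V)$; hence $\theta$ restricts to an isomorphism $\bar{\mathcal{A}}_{\mathbb{C}}(\gc)\xrightarrow{\ \sim\ }\mathcal{O}(V)$.

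I expect the main obstacle to be the bookkeeping in the core claim rather than any conceptual difficulty. The delicate points are, first, justifying that $\mathcal{O}(V)$ is Krull so that the valuative criterion applies (immediate here from smoothness of $V$), and second, confirming that the divisorial reading of coprimality is precisely strong enough that at every prime divisor $\mathfrak{p}$ at least one nerve cluster has all of its potential denominators outside $\mathfrak{p}$. The crux is the case $\theta(x_i)\in\mathfrak{p}$: one must verify that the new variable $\theta(x_i^\prime)$ \emph{together with} the surviving $\theta(x_j)$ genuinely avoid $\mathfrak{p}$, which is the only place where conditions (i) and (ii) enter and which is exactly what makes the one-step neighbourhood, rather than the entire structure $\gc$, sufficient.
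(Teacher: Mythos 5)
Your proof is correct and follows essentially the same route as the proof this proposition rests on: the paper does not prove it itself but refers to Proposition 3.37 in the Gekhtman--Shapiro--Vainshtein book and to the generalized-cluster extension of Proposition 3.6 of Fomin--Pylyavskyy, and that argument is precisely yours — regularity via the valuative criterion on the normal variety $V$, with the coprimality hypotheses ensuring that at every prime divisor at least one cluster of the star nerve centred at $\tilde{\mathbf{x}}$ has all of its potential denominators invertible. The one justification worth tightening is your divisorial reading of coprimality: it needs $\mathcal{O}(V)$ to be a UFD rather than merely a Krull domain (in a general Krull domain two coprime elements can lie in a common height-one prime), which does hold here because $\mathcal{O}(V)$ is a localization of the polynomial ring (discard the codimension-two part of the complement by algebraic Hartogs and invert the equations of the codimension-one components), not as a consequence of smoothness alone.
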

In the case of ordinary cluster structures, the proof of Proposition \ref{p:starfish} is available in \cite{dasbuch} (\mbox{Proposition 3.37}) and in a more general setup in~\cite{fomin6} (\mbox{Proposition 6.4.1}). As explained in \cite{double}, Proposition \ref{p:starfish} is a direct corollary of a natural extension of \mbox{Proposition 3.6} in \cite{tensordiags} to the case of generalized cluster structures. When $\theta$ is an isomorphism between $\bar{\mathcal{A}}_{\mathbb{C}}(\gc)$ and $\mathcal{O}(V)$, these algebras are also said to be \emph{naturally isomorphic}.  A practical way of verifying Condition~\ref{c:natiso} of Proposition~\ref{p:starfish} is based on Proposition~\ref{p:upperb}.

\paragraph{Poisson structures in $\gc$.} Let $\{\cdot, \cdot\}$ be a Poisson bracket on $\mathcal{F}$ (or on $\mathcal{F}_{\mathbb{C}}$) and let $\tilde{\mathbf{x}}$ be any extended cluster in $\gc$. We say that $\tilde{\mathbf{x}}$ is \emph{log-canonical} if $\{x_i,x_j\} = \omega_{ij} x_i x_j$ for all $1 \leq i,j \leq N+M$, where $\omega_{ij} \in \mathbb{Q}$ (or $\omega_{ij} \in \mathbb{C}$ for $\mathcal{F}_{\mathbb C}$). We call the generalized cluster structure \emph{compatible} with the bracket if any extended cluster in $\gc$ is log-canonical. Let $\Omega:=(\omega_{ij})_{i,j=1}^{N+M}$ be the \emph{coefficient matrix} of the bracket with respect to the extended cluster $\tilde{\mathbf{x}}$. The following proposition is a natural generalization of Theorem 4.5 from \cite{dasbuch}.

\begin{proposition}\label{p:compb}
Let $\Sigma = (\tilde{\mathbf{x}}, \tilde{B}, \mathcal{P})$ be an extended seed in $\mathcal{F}$ that satisfies the following properties:
\begin{enumerate}[(i)]
\item The extended cluster $\tilde{\mathbf{x}}$ is log-canonical with respect to the bracket;\label{pr:c1}
\item For a diagonal matrix with positive entries $D$ such that $DB$ is skew-symmetric, there exists a diagonal $N \times N$ matrix $\Delta$ such that $\tilde{B}\Omega = \begin{bmatrix}\Delta & 0\end{bmatrix}$ and such that $D\Delta$ is a multiple of the identity matrix;\label{pr:c2}
\item The Laurent polynomials $\hat{p}_{ir}$ are Casimirs of the bracket.\label{pr:c3}
\end{enumerate}
Then any other seed in $\gc$ satisfies properties \ref{pr:c1}, \ref{pr:c2} (with the same $\Delta$) and \ref{pr:c3}. In particular, $\gc$ is compatible with $\{\cdot, \cdot \}$.
\end{proposition}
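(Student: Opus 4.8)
The plan is to use that any two seeds in $\gc$ are \emph{mutation equivalent}, i.e.\ joined by a finite chain of single mutations; so by induction it suffices to prove that one mutation in an arbitrary direction $k$ sends a seed satisfying \ref{pr:c1}--\ref{pr:c3} to another such seed, with the \emph{same} matrix $\Delta$. Throughout, the Poisson bracket on $\mathcal{F}$ is fixed; only the cluster (the coordinate system) changes, and I write $\Omega' = (\omega_{ij}')$ for the coefficient matrix of the bracket with respect to the mutated cluster $\tilde{\mathbf{x}}'$, which differs from $\tilde{\mathbf{x}}$ only in the $k$th slot.

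First I would dispose of \ref{pr:c3}. By the remark preceding the proposition, the $\hat{p}_{ir}$ obey the same mutation rule as the $p_{ir}$: they are unchanged for $i \neq k$ and merely reindexed by $r \mapsto d_k - r$ for $i = k$. Hence the collection $\{\hat p_{ir}'\}$ coincides with $\{\hat p_{ir}\}$, and since the bracket itself is unchanged, every $\hat p_{ir}'$ is again a Casimir. This also lets me discard the $\hat p$-factors in all subsequent bracket computations.

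The heart of the argument is \ref{pr:c1}. Writing the exchange relation in the form of comment~4),
\[
x_k x_k' = \sum_{r=0}^{d_k} \bigl(\hat p_{kr}\, v_{k;>}^{r} v_{k;<}^{d_k - r}\bigr)^{1/d_k}\, u_{k;>}^{r} u_{k;<}^{d_k - r} =: \sum_{r=0}^{d_k} T_r,
\]
each $T_r$ is, up to the Casimir factor $\hat p_{kr}^{1/d_k}$, a Laurent monomial $M_r = \prod_{i \neq k} x_i^{e_i^{(r)}}$ in the remaining cluster and stable variables whose exponents are affine in $r$ with a common slope: a direct inspection of the $u$- and $v$-monomials gives $e_i^{(r)} = r b_{ki}/d_k$ when $b_{ki} > 0$ and $e_i^{(r)} = -(d_k - r) b_{ki}/d_k$ when $b_{ki} < 0$, so that $\partial e_i^{(r)}/\partial r = b_{ki}/d_k$ in every case. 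For $j \neq k$, log-canonicity of $\tilde{\mathbf{x}}$ together with the Casimir property gives $\{x_j, T_r\} = c_{jr}\, x_j T_r$ with $c_{jr} = -\sum_{i \neq k} e_i^{(r)} \omega_{ij}$, whence
\[
\frac{\partial c_{jr}}{\partial r} = -\frac{1}{d_k} \sum_{i=1}^{N+M} b_{ki}\,\omega_{ij} = -\frac{1}{d_k}\,(\tilde B \Omega)_{kj} = 0,
\]
the last equality because condition~\ref{pr:c2} forces $(\tilde B\Omega)_{kj} = \Delta_{kj} = 0$ for every $j \neq k$ (as $\Delta$ is diagonal). Thus the scaling factor $c_j := c_{jr}$ is independent of $r$, every term $T_r$ scales the same way, and so $\{x_j, x_k x_k'\} = c_j\, x_j\, x_k x_k'$. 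Combining this with $\{x_j, x_k\} = \omega_{jk} x_j x_k$ and dividing by $x_k$ yields $\{x_j, x_k'\} = (c_j - \omega_{jk})\, x_j x_k'$; together with the inherited log-canonicity of the unchanged pairs this proves $\tilde{\mathbf{x}}'$ is log-canonical and records $\omega_{jk}' = c_j - \omega_{jk}$, $\omega_{kk}' = 0$, $\omega_{ij}' = \omega_{ij}$ for $i,j \neq k$.

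Finally, for \ref{pr:c2}: the skew-symmetrizing matrix $D$ is invariant under matrix mutation, so it serves for $B'$ as well, and it remains only to check $\tilde B'\Omega' = \begin{bmatrix}\Delta & 0\end{bmatrix}$ with the same $\Delta$, after which $D\Delta$ being a multiple of the identity is inherited verbatim. Feeding the explicit $\Omega'$ computed above—which agrees with $\Omega$ off the $k$th row and column—into this identity, together with the stated exchange-matrix mutation rule for $\tilde B'$, reduces to the same row-by-row bookkeeping as in the ordinary cluster case (Theorem 4.5 of \cite{dasbuch}): one verifies that the off-diagonal entries of $\tilde B'\Omega'$ vanish and that its $k$th diagonal entry is unchanged. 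I expect this last verification to be purely computational. The genuine obstacle—and the only place where the generalized, multi-term nature of the exchange relation really intervenes—is the $r$-independence of $c_{jr}$ above; it is precisely there that hypotheses \ref{pr:c2} and \ref{pr:c3} are indispensable, since without the diagonal form $\tilde B\Omega = \begin{bmatrix}\Delta & 0\end{bmatrix}$ the several monomials $T_r$ would scale by different factors and their sum would fail to be log-canonical.
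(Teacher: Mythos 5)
Your proposal is correct, but a direct comparison with ``the paper's proof'' is not quite possible: the paper offers no proof of Proposition~\ref{p:compb} at all, stating only that it is ``a natural generalization of Theorem 4.5 from \cite{dasbuch}'' (with the generalized version originating in \cite{double}). What you have written is, in effect, the argument the paper leaves implicit, and you execute its genuinely new part correctly. The point where the generalized, $(d_k+1)$-term exchange relation departs from the ordinary two-term case is exactly the one you isolate: each term $T_r$ must scale by the same factor under bracketing with $x_j$, and this follows because the exponents $e_i^{(r)}$ are affine in $r$ with common slope $b_{ki}/d_k$, so that $\partial c_{jr}/\partial r = -(1/d_k)(\tilde B\Omega)_{kj}$ vanishes by condition~\ref{pr:c2}, while condition~\ref{pr:c3} removes the $\hat p_{kr}^{1/d_k}$ factors from the computation (your sum over $i\neq k$ extends to all $i$ since $b_{kk}=0$). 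Your treatment of \ref{pr:c3} under mutation (pure reindexing $r\mapsto d_k-r$) is also right. The only place where you assert rather than compute is the final verification that $\tilde B'\Omega' = \begin{bmatrix}\Delta & 0\end{bmatrix}$; this deferral is legitimate, not a gap, because evaluating $c_j$ at $r=d_k$ gives $\omega'_{jk} = \sum_{b_{ki}>0} b_{ki}\omega_{ji} - \omega_{jk}$, which is literally the ordinary-case formula for the mutated coefficient matrix, and $\tilde B$ mutates by the ordinary rule as well, so the remaining bookkeeping coincides entry-for-entry with the proof of Theorem~4.5 in \cite{dasbuch}. In short: your route is the intended one, and it supplies more detail than the paper itself does.
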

\noindent Condition~\ref{pr:c2} has the following interpretation, which is used in practice. For each $1\leq i \leq N$, define $y_i := \prod_{j=1}^{N+M} x_j^{b_{ij}}$. Then~\ref{pr:c2} is equivalent to $\{\log y_i, \log x_j\} = \delta_{ij}\Delta_{ii}$, where $\delta_{ij}$ is the Kronecker symbol. The variable $y_i$ is called the \emph{$y$-coordinate} of the cluster variable $x_i$. Note that Condition~\ref{pr:c2} implies that $\tilde{B}$ has full rank.

\paragraph{Toric actions.} Given an extended cluster $(x_1,\ldots,x_{N+M})$ in $\gc$, a \emph{local toric action} (of rank $s$) is an action $\mathcal{F}_{\mathbb{C}} \curvearrowleft (\mathbb{C}^*)^s$ by field automorphisms given on the variables $x_i$'s as
\[
x_i.(t_1,\ldots,t_s) \mapsto x_i \prod_{j=1}^{s} t_j^{\omega_{ij}}, \ \ \ t_j \in \mathbb{C}^*,
\]
where $W:= (\omega_{ij})$ is an integer-valued $(N+M) \times s$ matrix of rank $s$ called the \emph{weight matrix} of the action. We say that two local toric actions of \mbox{rank $s$} defined on some extended clusters $\tilde{\mathbf{x}}$ and $\tilde{\mathbf{x}}^\prime$ are \emph{compatible} 
if the composition of mutations that takes $\tilde{\mathbf{x}}$ to $\tilde{\mathbf{x}}^\prime$ intertwines the actions. A collection of pairwise compatible local toric actions of rank $s$ defined for every extended cluster is called a \emph{global toric action}. We also say that a local toric action is \emph{$\gc$-extendable} if it belongs to some global toric action.

\begin{proposition}\label{p:toric}
A local toric action with a weight matrix $W$ is uniquely $\gc$-extendable to a global toric action if $\tilde{B}W = 0$ and the Laurent polynomials $\hat{p}_{ir}$ are invariant with respect to the action.
\end{proposition}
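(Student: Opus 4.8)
The plan is to show that the mutation rule for cluster variables is compatible with the toric action exactly when the two stated conditions hold, and then to verify that the resulting local actions on all clusters are pairwise compatible, giving a global action; uniqueness follows because the action on the initial cluster determines $W$ and hence everything else. The starting point is the observation that a local toric action on an extended cluster $\tilde{\mathbf{x}}$ is encoded by its weight matrix $W$, and that to extend it to a neighboring cluster $\tilde{\mathbf{x}}'$ obtained by mutation in direction $k$ we must produce a weight matrix $W'$ such that the mutation map intertwines the two actions. Concretely, writing $x_i.(t) = x_i \prod_j t_j^{\omega_{ij}}$, I would first record how the weights transform under mutation and then check that the generalized exchange relation \eqref{eq:exchr} is respected.

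First I would compute the action on both sides of \eqref{eq:exchr}. Since $x_k^\prime = x_k^{-1}\sum_{r=0}^{d_k} p_{kr} u_{k;>}^{r} v_{k;>}^{[r]} u_{k;<}^{d_k-r} v_{k;<}^{[d_k-r]}$, and since the stable $\tau$-monomials $v_{k;>}^{[r]}, v_{k;<}^{[d_k-r]}$ together with the $p_{kr}$ are monomials in the frozen variables, I would track the total $t_j$-weight picked up by each summand on the right-hand side. The cluster $\tau$-monomials contribute, for the $r$th term, a weight exponent governed by $\sum_i b_{ki}\omega_{ij}$, i.e.\ by the $k$th row of $\tilde{B}W$; the frozen contributions coming from $p_{kr}$ and the stable $\tau$-monomials contribute the weight of $\hat p_{kr} v_{k;>}^r v_{k;<}^{d_k-r}$ raised to $1/d_k$, using the rewriting in comment (4) of the mutation definition. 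The condition $\tilde{B}W = 0$ forces the $u$-contributions in every summand to carry the \emph{same} $t_j$-weight (independent of $r$), namely that of $u_{k;>}^{r}u_{k;<}^{d_k-r}$ evaluated via the vanishing $k$th row of $\tilde{B}W$, so this common factor can be pulled out; and the invariance of the $\hat p_{kr}$ under the action guarantees the frozen part of each summand transforms uniformly as well. Hence the whole sum rescales by a single monomial in the $t_j$, which is precisely the weight that makes $x_k^\prime$ transform as $x_k^\prime.(t) = x_k^\prime \prod_j t_j^{\omega_{kj}'}$ for a well-defined new weight row $\omega_{kj}'$. This simultaneously defines $W'$ (with the rows other than $k$ unchanged) and verifies that the mutation intertwines the two local actions.

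The second half is bookkeeping: I would verify that $W'$ again satisfies $\tilde{B}'W'=0$ and that the $\hat p_{ir}'$ remain invariant, so that the hypotheses propagate and the construction can be iterated along any mutation path. For $\tilde B' W' = 0$ this is a direct consequence of the matrix mutation formula together with $\tilde B W = 0$ (the same linear-algebra identity that underlies the invariance of $\ker \tilde B$ under mutation); for the $\hat p_{ir}'$ one uses that their mutation rule coincides with that of the $p_{ir}$ and that invariance is a pointwise condition preserved under the induced transformation of frozen monomials. Pairwise compatibility of the local actions on any two clusters then follows because any two are joined by a mutation path and each elementary step intertwines the actions; uniqueness of the extension is immediate, since a global toric action of rank $s$ is determined by its weight matrix on a single cluster, which is the given $W$. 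The main obstacle I expect is the first computation: one must check carefully that the $t_j$-weight of the $r$th summand in \eqref{eq:exchr} is genuinely independent of $r$, which is where both hypotheses ($\tilde B W = 0$ for the cluster part, invariance of $\hat p_{kr}$ for the frozen part) are used in tandem, and where the fractional exponent $1/d_k$ in the rewriting of comment (4) must be handled so that the extracted weight row $W'$ comes out integer-valued.
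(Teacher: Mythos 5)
First, a point of reference: the paper itself does not prove Proposition~\ref{p:toric}; it is recalled as background, with the proof deferred to \cite{double} and to Lemma 5.3 of \cite{dasbuch}. So your proposal has to be judged against the standard argument, and in outline you reproduce that argument correctly: show that the field action determined by $W$ on the initial extended cluster rescales the whole exchange sum \eqref{eq:exchr} by a single monomial in the $t_j$, so that the mutated variable again transforms diagonally with an integer weight row; check that the hypotheses propagate to the adjacent seed ($\tilde B'W'=0$ via the matrix mutation rule, invariance of the new $\hat p$'s because mutation merely reverses the string $\hat p_{k,r}\mapsto \hat p_{k,d_k-r}$ and leaves the frozen variables and their weights untouched); iterate along mutation paths; and conclude uniqueness because a compatible collection is forced to be the restriction of the one field action determined by $W$ on the initial cluster.

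However, the justification you give for the central step is false as stated, and this is exactly where the proof would break if executed literally. You claim that $\tilde BW=0$ by itself makes the cluster parts $u_{k;>}^{r}u_{k;<}^{d_k-r}$ of the summands carry an $r$-independent weight, and that invariance of $\hat p_{kr}$ by itself makes the frozen parts transform uniformly. Neither holds: the $k$th row of $\tilde BW$ sums over all $N+M$ columns, so it couples the cluster and frozen contributions and cannot be split between them. A minimal counterexample: take a trivial string, $N=2$, $M=1$, $b_{k1}=0$, $b_{k2}=1$, $b_{k3}=-1$, so that \eqref{eq:exchr} reads $x_kx_k'=x_2+x_3$. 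Here $\tilde BW=0$ forces only $\omega_{2j}=\omega_{3j}$; the cluster parts of the two summands are $x_2$ and $1$, whose weights differ unless $\omega_{2j}=0$, and likewise for the frozen parts $1$ and $x_3$. The correct mechanism is: invariance of $\hat p_{kr}=p_{kr}^{d_k}/q_{kr}$ says precisely that the frozen factor $p_{kr}v_{k;>}^{[r]}v_{k;<}^{[d_k-r]}$ of the $r$th summand has $t_j$-exponent $\bigl(r\,a_j+(d_k-r)\,b_j\bigr)/d_k$, where $a_j,b_j$ are the $t_j$-exponents of $v_{k;>},v_{k;<}$ --- that is, it linearizes in $r$ the contribution of the floors in the stable $\tau$-monomials and of $p_{kr}$ --- and only then does $\tilde BW=0$ enter, making the total exponent of the $r$th summand, namely $\bigl(r(u_j^{+}+a_j)+(d_k-r)(u_j^{-}+b_j)\bigr)/d_k$ with $u_j^{\pm}$ the $t_j$-exponents of $u_{k;>}^{d_k},u_{k;<}^{d_k}$, constant in $r$: the quantities $u_j^{+}+a_j$ and $u_j^{-}+b_j$ are exactly the positive and negative parts of the $k$th row of $\tilde BW$, which are equal since the row sums to zero. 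With this repair, the rest of your outline (integrality of the new weight row $\omega'_{kj}$, propagation of both hypotheses, pairwise compatibility, uniqueness) goes through as you describe.
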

As noted in \cite{double}, this proposition is a natural extension of Lemma 5.3 in \cite{dasbuch}. For the purposes of this paper, it suffices to assume that $\hat{p}_{ir}$ are invariant with respect to the action; however, in the case of ordinary cluster structures of geometric type, the statement of the proposition is \mbox{\emph{if and only if}.}

\paragraph{Quasi-isomorphisms that arise from global toric actions.} Let $\gc_1(\Sigma_1)$ and $\gc_2(\Sigma_2)$ be generalized cluster structures with initial extended seeds $\Sigma_1:=(\tilde{\mathbf{x}}, \tilde{B}_1, \mathcal{P}_1)$ and $\Sigma_2 := (\tilde{\mathbf{f}}, \tilde{B}_2, \mathcal{P}_2)$, and let $\mathcal{F}_1$ and $\mathcal{F}_2$ be the corresponding ambient fields. Assume the following:
\begin{itemize}
\item There is the same number of cluster and stable variables in $\tilde{\mathbf{x}}$ and $\tilde{\mathbf{f}}$;
\item The numbers $d_1,\ldots,d_N$ from the definition of the generalized cluster structure are equal for both $\gc_1$ and $\gc_2$;
\item The strings $\mathcal{P}_1$ and $\mathcal{P}_2$ are the same in the following sense: If one picks $p_{ir}$ and substitutes all $x_i$'s with $f_i$'s, one obtains the $r$th component of the $i$th string from $\mathcal{P}_2$, and vice versa;
\item The extended exchange matrices $\tilde{B}_1$ and $\tilde{B}_2$ are the same in all but the last column, which corresponds to a stable variable;
\item There are integer-valued vectors $u = (u_1,\ldots,u_{n+m})^t$ and $v = (v_1,\ldots,v_{n+m})^t$ that define local toric actions (of rank $1$) on $\tilde{\mathbf{x}}$ and $\tilde{\mathbf{f}}$, respectively, and they are $\gc$-extendable.
\end{itemize}
\begin{proposition}\label{p:compar}

Assume that $\frac{v_i-u_i}{u_{N+M}}$ is an integer for each $1 \leq i \leq N+M$ and $u_{N+M} = v_{N+M}$. Define a field isomorphism $\theta :\mathcal{F}_2 \rightarrow \mathcal{F}_1$ on the generators as $\theta(f_i):=x_i x_{N+M}^{\left(\frac{v_i-u_i}{u_{N+M}}\right)}$, $1 \leq i \leq N+M$. If $\tilde{\mathbf{x}}^\prime := (x_1^\prime,\ldots,x_{N+M}^\prime)$ and $\tilde{\mathbf{f}}^\prime := (f_1^\prime,\ldots,f_{N+M}^\prime)$ are two extended clusters obtained from $\tilde{\mathbf{x}}$ and $\tilde{\mathbf{f}}$ via the same sequence of mutations (i.e., the mutations followed the same indices), and $(u_1^\prime,\ldots, u^\prime_{N+M})^t$ and $(v_1^\prime,\ldots,v^\prime_{N+M})^t$ are the weight vectors of the global toric actions in the extended clusters $\tilde{\mathbf{x}}^{\prime}$ and $\tilde{\mathbf{f}}^\prime$, then
\[
\theta(f_i^\prime) = x_i^\prime x_{N+M}^{\left(\frac{v_i^\prime - u_i^\prime}{u_{N+M}}\right)}, \ \ 1 \leq i \leq N+M.
\]
\end{proposition}
The above proposition\footnote{As of October 2023, we have posted a paper on arXiv on birational quasi-isomorphisms, where a more clear and laconic formulation of this proposition is provided.} is a generalization of Lemma~8.4 from~\cite{exotic} to the case of generalized cluster structures. The map $\theta$ is an instance of a \emph{quasi-isomorphism} defined by Fraser in~\cite{fraser}. 

\paragraph{Quiver.} 
A \emph{quiver} is a directed multigraph with no $1$- and $2$-cycles. Pick an extended seed $(\tilde{\mathbf{x}},\tilde{B},\mathcal{P})$ and let $D:=\diag(d_1^{-1},\ldots, d_N^{-1})$ be a diagonal matrix with $d_i$'s defined as above. Assume that $DB$ is skew-symmetric, where $B$ is the principal part of $\tilde{B}$. Then the matrix 
\[
\hat{B}:= \begin{bmatrix}
DB & \tilde{B}^{[N+1,N+M]} \\
-(\tilde{B}^{[N+1,N+M]})^T & 0
\end{bmatrix}
\]
is the adjacency matrix of a quiver $Q$, in which each vertex $i$ corresponds to a variable $x_i \in \tilde{\mathbf{x}}$. The vertices that correspond to cluster variables are called \emph{mutable}, the vertices that correspond to stable variables are called \emph{frozen}, and the vertices that correspond to isolated variables are called \emph{isolated}. For each $i$, the number $d_i$ is called the \emph{multiplicity} of the $i$th vertex. If one mutates the extended seed $(\tilde{x},\tilde{B},\mathcal{P})$, then the quiver of the new seed can be obtained from the initial quiver via the following steps:
\begin{enumerate}[1)]
\item For each path $i \rightarrow k \rightarrow j$, add an arrow $i \rightarrow j$;
\item If there is a pair of arrows $i \rightarrow j$ and $j \rightarrow i$, remove both;
\item Flip the orientation of all arrows going in and out of the vertex $k$.
\end{enumerate} 
The above process is also called a \emph{quiver mutation in direction $k$} (or \emph{at vertex $k$}). Instead of describing the matrix $\tilde{B}$, we describe the corresponding quiver and multiplicities $d_i$'s.
\subsection{Poisson-Lie groups}\label{s:plgrps}
In this section, we briefly recall relevant concepts from Poisson geometry. A more detailed account can be found in \cite{chari}, \cite{etingof} and \cite{rs}.

\paragraph{Poisson-Lie groups.} A \emph{Poisson bracket $\{\cdot, \cdot\}$} on a commutative algebra is a Lie bracket that satisfies the Leibniz rule in each slot. Given a manifold $M$, a Poisson bivector field on $M$ is a section $\pi \in \Gamma(M,\bigwedge^2 TM)$ such that $\{f,g\}:=\pi(df\wedge dg)$ is a Poisson bracket on the space of smooth functions on $M$. A Lie group $G$ endowed with a Poisson bivector field $\pi$ is called a \emph{Poisson-Lie group} if for any $g,h \in G$, $\pi_{gh} = (dL_g \otimes dL_g)\pi_h + (dR_h \otimes dR_h)\pi_g$, where $L_g$ and $R_h$ are the left and right translations by $g$ and $h$, respectively. Let $\mathfrak{g}$ be the Lie algebra of $G$ and $r \in \mathfrak g \otimes \mathfrak g$. If $G$ is a connected Lie group, then the bivector field\footnote{If $G$ is simple and complex, then any bivector field $\pi$ that yields the structure of a Poisson-Lie group on $G$ is of this form for some $r \in \mathfrak{g}\otimes\mathfrak{g}$.} $\pi_g := (dL_g\otimes dL_g) r - (dR_g \otimes dR_g)r$ defines the structure of a Poisson-Lie group on $G$ if and only if the following conditions are satisfied:
\begin{enumerate}[1)]
\item The symmetric part of $r$ is $\ad$-invariant;\label{i:rc1}
\item The $3$-tensor $[r,r]:=[r_{12},r_{13}] + [r_{12},r_{23}] + [r_{13},r_{23}]$ is $\ad$-invariant, where $(a\otimes b)_{12} = a \otimes b \otimes 1$, $(a \otimes b)_{13} = a \otimes 1 \otimes b$ and $(a\otimes b)_{23} = 1 \otimes a \otimes b$, $a,b \in \mathfrak{g}$. \label{i:rc2}
\end{enumerate}
The \emph{Classical Yang-Baxter equation (CYBE)} is the equation $[r,r] = 0$. For simple complex Lie algebras~$\mathfrak{g}$, Belavin and Drinfeld in~\cite{bd,bd2} classified solutions of the CYBE that have a nondegenerate symmetric part. The classification was partially extended by Hodges in~\cite{hodges} to the case of reductive complex Lie algebras (however, Hodges required the symmetric part of $r$ to be a multiple of the Casimir element). A full classification of solutions of the CYBE with an arbitrary nondegenerate $\ad$-invariant symmetric part in the case of reductive complex Lie algebras was obtained by Delorme in~\cite{bdr}.

\paragraph{The Belavin-Drinfeld classification.} Let $\mathfrak{g}$ be a reductive complex Lie algebra endowed with a nondegenerate symmetric invariant bilinear form $\langle\,,\,\rangle$, and let $\Pi$ be a set of simple roots of $\mathfrak{g}$. 
A \emph{Belavin-Drinfeld triple} (for conciseness, a \emph{BD triple}) is a triple $(\Gamma_1, \Gamma_2, \gamma)$ with $\Gamma_1, \Gamma_2 \subset \Pi$ and $\gamma: \Gamma_1 \rightarrow \Gamma_2$ a nilpotent isometry. The nilpotency condition means that for any $\alpha \in \Gamma_1$ there exists a number $j$ such that $\gamma^j(\alpha) \notin \Gamma_1$. Decompose $\mathfrak{g}$ as $\mathfrak{g} = \mathfrak{n}_+ \oplus \mathfrak{h} \oplus \mathfrak{n}_-$, where $\mathfrak{n}_+ = \bigoplus_{\alpha > 0} \mathfrak{g}_{\alpha}$ and $\mathfrak{n}_- = \bigoplus_{\alpha > 0} \mathfrak{g}_{-\alpha}$ are nilpotent subalgebras, $\mathfrak{g}_\alpha$ are root subspaces and $\mathfrak{h}$ is a Cartan subalgebra. For every positive root $\alpha$, choose $e_{\alpha} \in \mathfrak{g}_\alpha$ and $e_{-\alpha} \in \mathfrak{g}_{-\alpha}$ such that $\langle e_\alpha,e_{-\alpha}\rangle = 1$, and set $h_{\alpha}:=[e_{\alpha},e_{-\alpha}]$. 
Let $\mathfrak{g}_{\Gamma_1}$ and $\mathfrak{g}_{\Gamma_2}$ be the simple Lie subalgebras of $\mathfrak{g}$ generated by $\Gamma_1$ and $\Gamma_2$. Extend $\gamma$ to an isomorphism $\mathbb{Z}\Gamma_1 \xrightarrow{\sim} \mathbb{Z}\Gamma_2$, and then define $\gamma : \mathfrak{g}_{\Gamma_1} \rightarrow \mathfrak{g}_{\Gamma_2}$ via $\gamma(e_{\alpha}) = e_{\gamma(\alpha)}$ and $\gamma(h_{\alpha}) = h_{\gamma(\alpha)}$. Let $\gamma^* : \mathfrak{g}_{\Gamma_2} \rightarrow \mathfrak{g}_{\Gamma_1}$ be the conjugate of $\gamma$ with respect to the form on $\mathfrak{g}$. Extend both $\gamma$ and $\gamma^*$ to $[\mathfrak{g},\mathfrak{g}]$ via setting $\gamma|_{\mathfrak{g}_{\Gamma_1}^{\perp}} = 0$ and $\gamma^*|_{\mathfrak{g}_{\Gamma_2}^{\perp}} = 0$. For an element $r \in \mathfrak{g}\otimes \mathfrak{g}$, set $R_+, R_- :\mathfrak{g}\rightarrow\mathfrak{g}$ to be the linear transformations determined by $\langle R_+(x), y\rangle = \langle r, x\otimes y\rangle$ and $\langle R_-(y), x\rangle = -\langle r, x\otimes y\rangle$, $x, y \in \mathfrak{g}$. Let $\pi_{>}$, $\pi_{<}$ and $\pi_0$ be the projections onto $\mathfrak{n}_+$, $\mathfrak{n}_-$ and $\mathfrak{h}$, respectively. In terms of $R_+$ and $R_-$, the CYBE assumes the form
\begin{equation}\label{eq:rendcyb}
[R_+(x), R_+(y)] = R_+([R_+(x),y] + [x, R_-(y)]), \ \ x, y \in \mathfrak{g}.
\end{equation}
Let $R_0 : \mathfrak{h} \rightarrow \mathfrak{h}$ be a linear transformation that satisfies the following conditions:
\begin{equation}\label{eq:r0}
R_0 + R_0^* = \id_{\mathfrak{h}};
\end{equation}
\begin{equation}\label{eq:ralg}
R_0(\alpha-\gamma(\alpha)) = \alpha, \ \ \alpha \in \Gamma_1,
\end{equation}
where $\id_{\mathfrak{h}} : \mathfrak{h} \rightarrow \mathfrak{h}$ is the identity and $R_0^*$ is the adjoint of $R_0$. If $\mathfrak{g}$ is simple, then the solutions $R_0$ of equations~\eqref{eq:r0}-\eqref{eq:ralg} form an affine subspace of $\hom(\mathfrak{h},\mathfrak{h})$ (linear maps) of dimension $k_{\bg}(k_{\bg}-1)/2$.

\begin{theorem}\label{t:bd}(Belavin, Drinfeld)
Under the above setup, if
\begin{equation}\label{eq:rplus}
R_+ = \frac{1}{1-\gamma} \pi_{>} - \frac{\gamma^*}{1-\gamma^*}\pi_{<} + R_0 \pi_0,
\end{equation}
where $R_0$ is any solution of the system \eqref{eq:r0}-\eqref{eq:ralg}, then $R_+$ satisfies the CYBE~\eqref{eq:rendcyb}. Moreover, 
\begin{equation}\label{eq:plusmin}
R_+ - R_- = \id_{\mathfrak{g}}.
\end{equation}
Conversely, if $R_+ : \mathfrak{g} \rightarrow \mathfrak{g}$ is any linear transformation that satisfies equation \eqref{eq:plusmin}, then $R_+$ assumes the form~\eqref{eq:rplus} for a suitable decomposition of $\mathfrak{g}$, for some Belavin-Drinfeld triple and some choice of root vectors $e_{\alpha}$.
\end{theorem}
The matrix $R_+$ from the theorem is called a \emph{classical $R$-matrix}. In this form, the theorem follows from Theorem~6.3 in~\cite{hodges}. It is important that the form on $\mathfrak{g}$ is fixed; however, if $\mathfrak{g}$ is simple, then all nondegenerate symmetric invariant bilinear forms are multiples of one another, so the theorem yields a full classification of solutions $r \in \mathfrak{g}\otimes \mathfrak{g}$ of the CYBE with nondegenerate $\ad$-invariant symmetric parts.

\paragraph{The Drinfeld double.}
Let $G$ be a reductive complex connected Poisson-Lie group endowed with a nondegenerate symmetric invariant bilinear form on $\mathfrak{g}$ and with a Poisson bivector field defined as \[\pi_g := (dL_g \otimes dL_g)r - (dR_g\otimes dR_g)r\] for some $r \in \mathfrak{g}\otimes \mathfrak{g}$ that satisfies the conditions of Theorem~\ref{t:bd}. Let $R_+$ and $R_-$ be defined from $r$ as in the previous paragraph, and set $\mathfrak{d} := \mathfrak g \oplus \mathfrak g$ to be the direct sum of Lie algebras. Define a nondegenerate symmetric invariant bilinear form on $\mathfrak{d}$ as
\[
\langle (x_1,y_1), (x_2,y_2) \rangle = \langle x_1, x_2 \rangle - \langle y_1, y_2 \rangle, \ \ x_1, \,x_2,\, y_1,\, y_2 \in \mathfrak{g}.
\]
As a vector space, $\mathfrak{d}$ splits into the direct sum of the following isotropic Lie subalgebras:
\[
\mathfrak{g}^{\delta} := \{(x, x) \ | \ x \in \mathfrak{g}\}, \ \ \mathfrak{g}^* := \{(R_+(x), R_-(x)) \ | \ x \in \mathfrak{g}\}.
\]
Set $R^{\mathfrak{d}}_+ := P_{\mathfrak{g}^*}$, where $P_{\mathfrak{g}^*}$ is the projection of $\mathfrak{d}$ onto $\mathfrak{g}^*$, and let $r^{\mathfrak{d}} \in \mathfrak{d}\otimes \mathfrak{d}$ be the $2$-tensor that corresponds to $R^{\mathfrak{d}}_+$. Then $R_+^{\mathfrak{d}}$ yields the structure of a Poisson-Lie group on the Lie group $D(G):=G \times G$ via the Poisson bivector field $\pi^{\mathfrak{d}}_{(g,h)} := (dL_{(g,h)}\otimes dL_{(g,h)}) r^{\mathfrak{d}} - (dR_{(g,h)}\otimes dR_{(g,h)})r^{\mathfrak{d}}$, ${(g,h)} \in D(G)$. The Poisson-Lie group $D(G)$ is called the \emph{Drinfeld double} of $G$.

The Poisson bracket on $D(G)$ can be written in the form
\[
\{f_1,f_2\} = \langle R_+(E_L f_1), E_Lf_2 \rangle - \langle R_+(E_R f_1), E_Rf_2 \rangle + \langle \nabla_X^R f_1, \nabla^R_Y f_2\rangle - \langle \nabla_X^L f_1, \nabla_Y^L f_2\rangle,
\]
where $\nabla^L f_i = (\nabla_X^L f_i, -\nabla_Y^L f_i)$ and $\nabla^R f_i = (\nabla_X^R f_i, -\nabla_Y^R f_i)$ are the left and the right gradients, respectively, $E_Lf_i = \nabla^L_X f_i+ \nabla^L_Yf_i$ and $E_Rf_i = \nabla^R_Xf_i + \nabla^R_Yf_i$. We define the gradients on $G$ as\footnote{This convention is opposite to the one in~\cite{plethora} and~\cite{rs}, but in this way the left gradient is the gradient in the left trivialization, and the right gradient is the gradient in the right trivialization of the group.}
\[
\langle \nabla^L f|_g, x\rangle = \left. \frac{d}{dt}\right|_{t=0} f(g\exp (tx)), \ \ \ \langle \nabla^R f|_g, x\rangle = \left. \frac{d}{dt}\right|_{t=0} f(\exp(tx)g), \ \ g \in G, \ x \in \mathfrak{g}.
\]
 The group $G$ can be identified with the connected Poisson-Lie subgroup $G^{\delta}$ of $D(G)$ that corresponds to the Lie subalgebra $\mathfrak{g}^\delta$. The Poisson bracket $\{\cdot, \cdot \}_G$ on $G$ can be expressed as
\[
\{f_1,f_2\}_G = \langle R_+(\nabla^L f_1), \nabla^L f_2 \rangle - \langle R_+(\nabla^R f_1), \nabla^R f_2 \rangle.
\]
Additionally, the connected Poisson-Lie subgroup $G^*$ of $D(G)$ that corresponds to $\mathfrak{g}^*$ is called the \emph{dual Poisson-Lie group of $G$}. The Poisson structure on $G^*$ (which is induced from $D(G)$) can be modeled locally in the group $G$ via the map 
\[
G^* \ni (g,h) \mapsto gh^{-1} \in G.
\]
The image of this map is an open dense subset of $G$  denoted as $G^\dagger$ (however, the map is not injective in general).

Following \cite{plethora}, we consider a more general Poisson bracket on $G\times G$ that is defined by a pair of classical $R$-matrices $R_+^c$ and $R_+^r$ (the meaning of the upper indices is unveiled later in the text). For such a pair, the Poisson bracket is defined as
\begin{equation}\label{eq:brackgen}
\{f_1,f_2\} = \langle R_+^c(E_L f_1), E_Lf_2 \rangle - \langle R_+^r(E_R f_1), E_Rf_2 \rangle + \langle \nabla_X^R f_1, \nabla^R_Y f_2\rangle - \langle \nabla_X^L f_1, \nabla_Y^L f_2\rangle.
\end{equation}
We will frequently abuse the terminology and call $G\times G$ endowed with bracket~\eqref{eq:brackgen} the Drinfeld double of $G$. However, the bracket~\eqref{eq:brackgen} yields the structure of a Poisson-Lie group on $G\times G$ if and only if $R^c_+ = R^r_+$. 

\paragraph{Symplectic foliation and Poisson submanifolds.} Let $(M,\pi)$ be a Poisson manifold. An immersed submanfiold $S \subseteq M$ is called a \emph{Poisson submanifold} if $\pi|_S \in \Gamma(S,\bigwedge^2TS)$. Examples of Poisson submanifolds include nonsingular parts of the zero loci of frozen variables (see Section~\ref{s:pgfroz}). Let $\pi^\# : TM^* \rightarrow TM$ be a morphism of vector bundles defined as $\langle\pi^\#(\xi),\eta\rangle := \langle \pi,\xi\wedge \eta\rangle$, $\xi,\eta \in T_p^*M$, $p \in M$. 
The Poisson bivector $\pi$ is called \emph{nondegenerate} if $\pi^\#$ is an isomorphism of vector bundles. A \emph{symplectic leaf} is a maximal (by inclusion) connected Poisson submanifold $S$ of $M$ for which $\pi|_S$ is nondegenerate. It is a theorem that any Poisson manifold $M$ is a union of its symplectic leaves.
\subsection{Desnanot-Jacobi identities}
We will frequently use the following Desnanot-Jacobi identities, which can be easily derived from short Pl\"ucker relations:

\begin{proposition}\label{p:dj13} Let $A$ be an $m \times (m+1)$ matrix with entries in an arbitrary field. Then for any $1\leq i < j < k\leq (m+1)$ and $1\leq \alpha \leq m$, the following identity holds:
\[
\det A^{\hat{i}} \det A_{\hat{\alpha}}^{\hat{j}\hat{k}} + \det A^{\hat{k}} \det A_{\hat{\alpha}}^{\hat{i}\hat{j}} = \det A^{\hat{j}} \det A_{\hat{\alpha}}^{\hat{i}\hat{k}},
\]
where the hatted upper (lower) indices indicate that the corresponding column (row) is removed.
\end{proposition}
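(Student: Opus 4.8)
The plan is to reduce the Desnanot--Jacobi identity to a short Pl\"ucker relation by interpreting the minors of $A$ as maximal minors of a suitable square matrix. Recall the setup: $A$ is an $m \times (m+1)$ matrix, so its columns give $m+1$ vectors in an $m$-dimensional space, and the quantity $\det A^{\hat{\alpha}}$ (delete column $\alpha$) is a maximal minor. The claimed identity, however, mixes minors of different sizes: $\det A^{\hat{i}}$ is an $m \times m$ minor (one column removed, no rows removed), while $\det A_{\hat{\alpha}}^{\hat{j}\hat{k}}$ is an $(m-1) \times (m-1)$ minor (two columns and one row removed). To unify these, I would form the augmented $(m+1) \times (m+1)$ square matrix $\tilde{A}$ obtained by adjoining to $A$ an extra row, chosen so that the $(m-1)$-minors of $A$ reappear as complementary cofactors. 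The natural choice is to append a row consisting of a single $1$ in position $\alpha$ and zeros elsewhere: then Laplace expansion of the relevant maximal minors of $\tilde{A}$ along that row recovers exactly the products $\det A^{\hat{i}} \det A_{\hat{\alpha}}^{\hat{j}\hat{k}}$ up to sign.

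Concretely, I would first set up notation for the Pl\"ucker coordinates. Consider the $(m+1) \times (m+1)$ matrix whose rows are the $m$ rows of $A$ together with the standard unit row $e_\alpha^t$. Its maximal minors are indexed by which column is omitted; call the minor omitting column $c$ by $P_c$. Expanding $P_c$ along the last row via cofactors along position $\alpha$, one gets $P_c = \pm \det A_{\hat{\alpha}}^{\hat{c}}$ when $c \neq \alpha$, since deleting column $c$ and then the unit entry leaves the $(m-1)\times (m-1)$ minor of $A$ with column $c$ and column $\alpha$ removed and row $\alpha'$ (the row that met the $1$) removed. Meanwhile the ordinary maximal minors $\det A^{\hat{i}}$ are themselves Pl\"ucker coordinates of the point in the Grassmannian $\mathrm{Gr}(m, m+1)$ defined by the rows of $A$. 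The key is that both families of minors — the $m$-minors of $A$ and the $(m-1)$-minors $\det A_{\hat{\alpha}}^{\hat{c}}$ — are the two families of Pl\"ucker coordinates appearing in a single three-term short Pl\"ucker relation once the index sets are aligned correctly.

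The second step is to invoke the short (three-term) Pl\"ucker relation itself. For a generic point of a Grassmannian, the quadratic Pl\"ucker relations specialize, when the index sets differ in a single element, to the classical three-term form
\[
p_{S \cup \{i\}}\, p_{S \cup \{k\}} \;-\; p_{S \cup \{j\}}\, p_{S \cup \{k\}}' \;=\; \cdots,
\]
but in the cleanest formulation the relevant instance reads
\[
p_{i}\, q_{jk} \;-\; p_{j}\, q_{ik} \;+\; p_{k}\, q_{ij} \;=\; 0,
\]
where $p_c = \det A^{\hat{c}}$ and $q_{cd} = \det A_{\hat{\alpha}}^{\hat{c}\hat{d}}$ with appropriate signs fixed by the column orderings $i<j<k$. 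Rearranging the signed three-term relation gives precisely
\[
\det A^{\hat{i}} \det A_{\hat{\alpha}}^{\hat{j}\hat{k}} + \det A^{\hat{k}} \det A_{\hat{\alpha}}^{\hat{i}\hat{j}} = \det A^{\hat{j}} \det A_{\hat{\alpha}}^{\hat{i}\hat{k}},
\]
which is the assertion.

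I expect the main obstacle to be purely the sign bookkeeping: verifying that the alternating signs coming from the cofactor expansions along the adjoined unit row combine with the signs in the short Pl\"ucker relation to produce exactly the stated pattern $(+,+,=+)$ rather than some other distribution of signs. Because the hypothesis $i<j<k$ is imposed, the relative orderings of the deleted columns are fixed, so the signs are determined and the verification is a finite, if fiddly, check; there is no genuine case analysis beyond tracking the parity of the position of column $\alpha$ relative to $i,j,k$, and that parity cancels between the two factors in each product. An alternative route that sidesteps some of the sign tracking is to prove the identity directly over the field of rational functions in the entries of $A$, where it suffices to check it for the generic matrix: one treats both sides as polynomials, observes they are multilinear of the same multidegree in the rows and columns, and identifies them via their common specializations, but I would prefer the Pl\"ucker derivation since it explains \emph{why} the identity holds and matches the remark in the statement that it follows from short Pl\"ucker relations.
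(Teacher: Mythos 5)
Your overall strategy is the one the paper itself points to (the paper gives no proof of Proposition~\ref{p:dj13} beyond the remark that it follows from short Pl\"ucker relations), and the three-term relation you ultimately invoke, $p_i q_{jk} - p_j q_{ik} + p_k q_{ij} = 0$ with $p_c = \det A^{\hat c}$ and $q_{cd} = \det A_{\hat\alpha}^{\hat c\hat d}$, is indeed correct and rearranges to the stated identity. However, the unification device you build is broken as written. Since a hatted \emph{lower} index removes a row, $\alpha$ is a row index of $A$ (the ranges printed in the statement are evidently swapped: one needs $i<j<k\le m+1$ and $\alpha\le m$), so the unit vector $e_\alpha$ lives in the column space $F^m$ and can only be adjoined to $A$ as a \emph{column}, not as a row. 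Adjoining it as a row, as you propose, produces a square $(m+1)\times(m+1)$ matrix, which has no family of maximal minors ``indexed by which column is omitted''; moreover your formula $P_c = \pm\det A_{\hat\alpha}^{\hat c}$ asks for the determinant of $A_{\hat\alpha}^{\hat c}$, which is an $(m-1)\times m$ matrix, so it is not defined. Since the entire content of the proposition is precisely this bookkeeping (which you defer as ``fiddly but finite''), the middle step of your argument genuinely fails rather than merely being terse.

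The repair is small and confirms your sign intuition. Set $B := [\,A \mid e_\alpha\,]$, an $m\times(m+2)$ matrix, and for $1\le c<d\le m+2$ put $P_{cd} := \det B^{\hat c\hat d}$, the maximal minors indexed by the \emph{pair} of omitted columns. In this complementary-index notation the short Pl\"ucker relation for $\mathrm{Gr}(m,m+2)$ reads $P_{ik}P_{jl} = P_{ij}P_{kl} + P_{il}P_{jk}$ for $i<j<k<l$. Now $P_{c,m+2} = \det A^{\hat c}$, while for $c<d\le m+1$ expansion along the unit column gives $P_{cd} = (-1)^{m+\alpha}\det A_{\hat\alpha}^{\hat c\hat d}$. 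Taking $(i,j,k,l)=(i,j,k,m+2)$, each of the three products contains exactly one factor of the second kind, so the sign $(-1)^{m+\alpha}$ appears once in every term and cancels globally --- exactly the cancellation you anticipated --- leaving
\[
\det A^{\hat j}\,\det A_{\hat\alpha}^{\hat i\hat k} \;=\; \det A^{\hat k}\,\det A_{\hat\alpha}^{\hat i\hat j} \;+\; \det A^{\hat i}\,\det A_{\hat\alpha}^{\hat j\hat k},
\]
which is the proposition.
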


\begin{proposition}\label{p:dj22}
Let $A$ be an $m\times m$ matrix with entries in an arbitrary field. If $1 \leq i < j\leq m$ and $1\leq k < l\leq m$, then the following identity holds:
\[
\det A \det A^{\hat{i}\hat{j}}_{\hat{k}\hat{l}} = \det A^{\hat{i}}_{\hat{k}} \det A_{\hat{l}}^{\hat{j}} - \det A_{\hat{l}}^{\hat{i}} \det A_{\hat{k}}^{\hat{j}}.
\]
\end{proposition}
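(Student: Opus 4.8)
The plan is to realize the identity as a single three-term (short) Pl\"ucker relation among the maximal minors of the augmented matrix $M := [\,A \mid I_m\,]$, which is $m \times 2m$. First I would set up a dictionary between maximal minors of $M$ and minors of $A$. For an ordered list of $m$ columns of $M$, write $[c_1,\dots,c_m]$ for the determinant of the corresponding $m\times m$ submatrix, with the columns in the listed order. If one selects the $A$-columns indexed by a set $C \subseteq \{1,\dots,m\}$ together with the standard basis columns $e_r$ for $r$ in some set $R$ with $|C|+|R| = m$, then a Laplace expansion along the identity columns shows that $[\,\cdots]$ equals the minor of $A$ obtained by keeping the columns $C$ and deleting the rows $R$, up to an explicit sign. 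In particular, every determinant occurring in the proposition is $\pm$ a maximal minor of $M$.

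Concretely, set $S := \{1,\dots,m\}\setminus\{i,j\}$, a block of $m-2$ columns of the $A$-part. With this choice one gets, up to signs, $\det A = \pm[S,i,j]$ and $\det A^{\hat{i}\hat{j}}_{\hat{k}\hat{l}} = \pm[S,m+k,m+l]$, while the four single-deletion minors $\det A^{\hat{i}}_{\hat{k}}$, $\det A^{\hat{j}}_{\hat{l}}$, $\det A^{\hat{i}}_{\hat{l}}$, $\det A^{\hat{j}}_{\hat{k}}$ arise as the four brackets obtained by pairing one of the $A$-columns $\{i,j\}$ with one of the identity columns $\{m+k,\,m+l\}$; for instance $[S,i,m+k]$ deletes column $j$ and row $k$, hence matches $\det A^{\hat{j}}_{\hat{k}}$, and similarly for the others.

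I would then apply the short Pl\"ucker relation to the four free indices $i<j<m+k<m+l$ over the base $S$, namely
\[
[S,i,j]\,[S,m+k,m+l] - [S,i,m+k]\,[S,j,m+l] + [S,i,m+l]\,[S,j,m+k] = 0,
\]
and translate each bracket back through the dictionary. Collecting the signs then reproduces exactly the claimed identity. Since everything is a polynomial identity in the entries of $A$, no invertibility hypothesis is required; one could alternatively try to deduce the statement from Proposition~\ref{p:dj13} by appending a standard basis column, but the augmented-matrix route keeps the sign analysis most transparent.

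The main obstacle is precisely this sign bookkeeping. Each bracket $[S,\cdot,\cdot]$ differs from the associated minor of $A$ by two contributions: the sign of the permutation reordering the listed columns into increasing order, and the Laplace-expansion sign from eliminating the identity columns. The work is to check that these combine with the alternating $+,-,+$ pattern of the Pl\"ucker relation so that a common factor $(-1)^{i+j+k+l}$ can be cancelled from all three terms, leaving the surviving signs to yield $\det A\,\det A^{\hat{i}\hat{j}}_{\hat{k}\hat{l}} = \det A^{\hat{i}}_{\hat{k}}\det A^{\hat{j}}_{\hat{l}} - \det A^{\hat{i}}_{\hat{l}}\det A^{\hat{j}}_{\hat{k}}$. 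I expect this verification to be routine but the only genuinely delicate point in the argument.
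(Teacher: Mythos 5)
Your proof is correct and follows essentially the same route the paper intends: Proposition~\ref{p:dj22} is stated there precisely as a consequence of short Pl\"ucker relations, which is exactly your augmented-matrix argument with $M = [\,A \mid I_m\,]$. The sign bookkeeping does close as you expect --- each of the three Pl\"ucker terms carries the common factor $(-1)^{i+j+k+l}$, and cancelling it leaves exactly the claimed identity $\det A \det A^{\hat{i}\hat{j}}_{\hat{k}\hat{l}} = \det A^{\hat{i}}_{\hat{k}} \det A^{\hat{j}}_{\hat{l}} - \det A^{\hat{i}}_{\hat{l}} \det A^{\hat{j}}_{\hat{k}}$.
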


\section{Description of $D(\GL_n)$ and $\gc(\bg^r,\bg^c)$}\label{s:descr}
\subsection{BD graphs in type $A$}\label{s:bdgraphs}

In this section, we describe BD graphs that are attached to pairs of BD triples; the material is drawn from~\cite{plethora}. Let us identify the positive simple roots of $\sll_n(\mathbb C)$ with an interval $[1, n-1]$. We define $\bg^r := (\Gamma_1^r, \Gamma_2^r, \gamma_r)$ and $\bg^c := (\Gamma_1^c, \Gamma_2^c, \gamma_c)$ to be a pair of BD triples for $\sll_n(\mathbb C)$, and we name the first triple a \emph{row BD triple} and the other one a \emph{column BD triple}. Furthermore, if $\Gamma_1^r = \Gamma_1^c = \emptyset$, we call $(\bg^r, \bg^c)$ the \emph{standard} or \emph{trivial BD pair}.

\paragraph{BD graph for a pair of BD triples.}  The graph $G_{(\bg^r, \bg^c)}$ is defined in the following way. The vertex set of the graph consists of two copies of $[1,n-1]$, one of each is called the \emph{upper part} and the other one is the \emph{lower part}. We draw an edge between vertices $i$ and $n-i$ if they belong to the same part (if $i = n-i$, we draw a loop). If $\gamma_r(i) = j$, draw a directed edge from $i$ in the upper part to $j$ in the lower part; if $\gamma_c(i) = j$, draw a directed edge from $j$ in the lower part to $i$ in the upper part. The edges between vertices of the same part are called \emph{horizontal}, between different parts \emph{vertical}. Figure \ref{f:bdgraphs} provides two examples of BD graphs.
\begin{figure}[htb]
\begin{center}
\hspace{0.6in}\begin{subfigure}[t]{2.4in}
\includegraphics[scale=0.45]{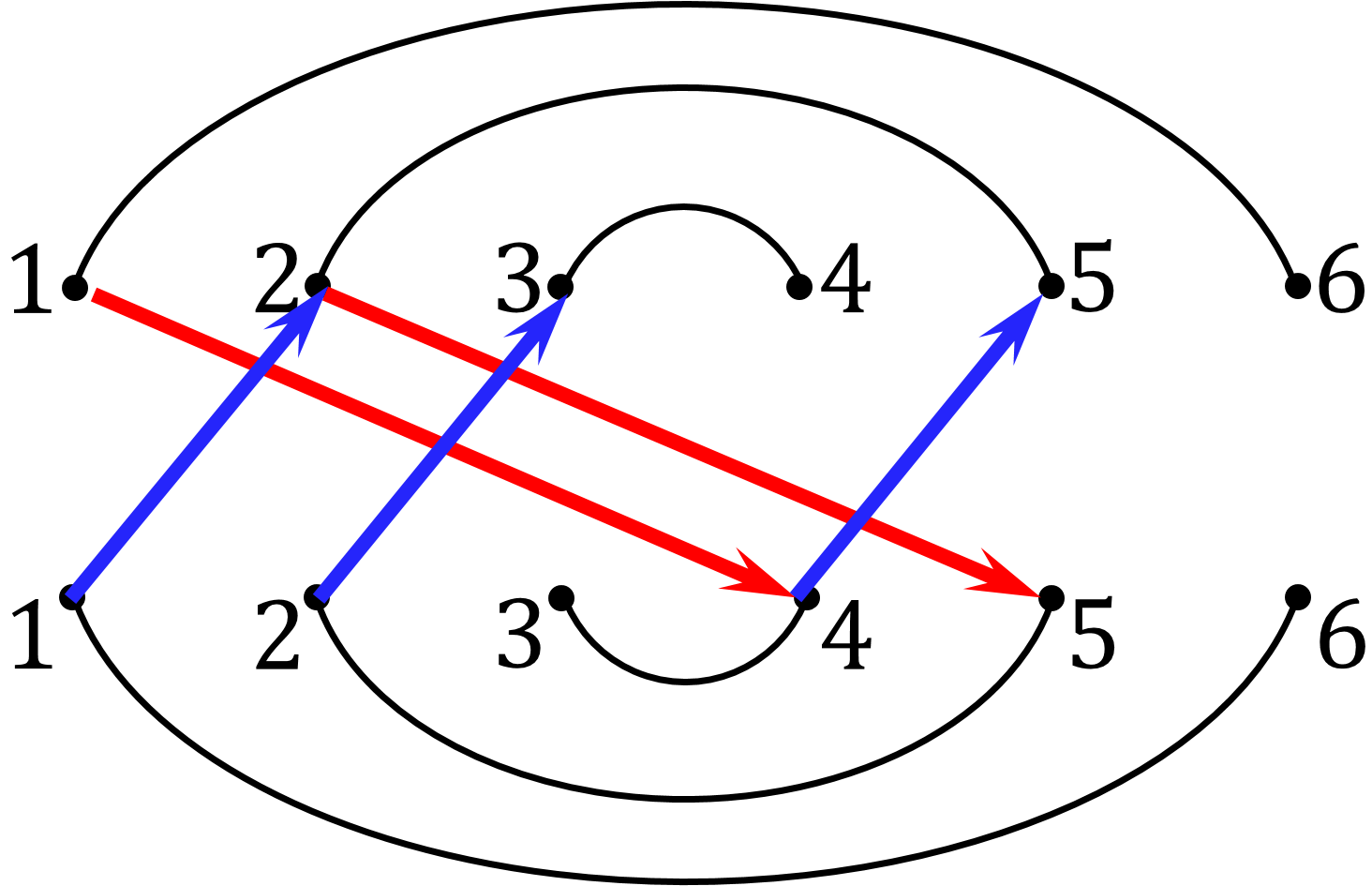}
\caption{$\Gamma_1^r = \{1,2\}$, $\Gamma_2^r = \{4,5\}$;\protect\newline\hphantom{\textbf{\emph{(a)}}} $\Gamma_1^c = \{2,3,5\}$, \ $\Gamma_2^c = \{1,2,4\}$.}
\label{f:bdgraphs_1}
\end{subfigure}\hspace{0.2in}
\begin{subfigure}[t]{2.4in}
\includegraphics[scale=0.45]{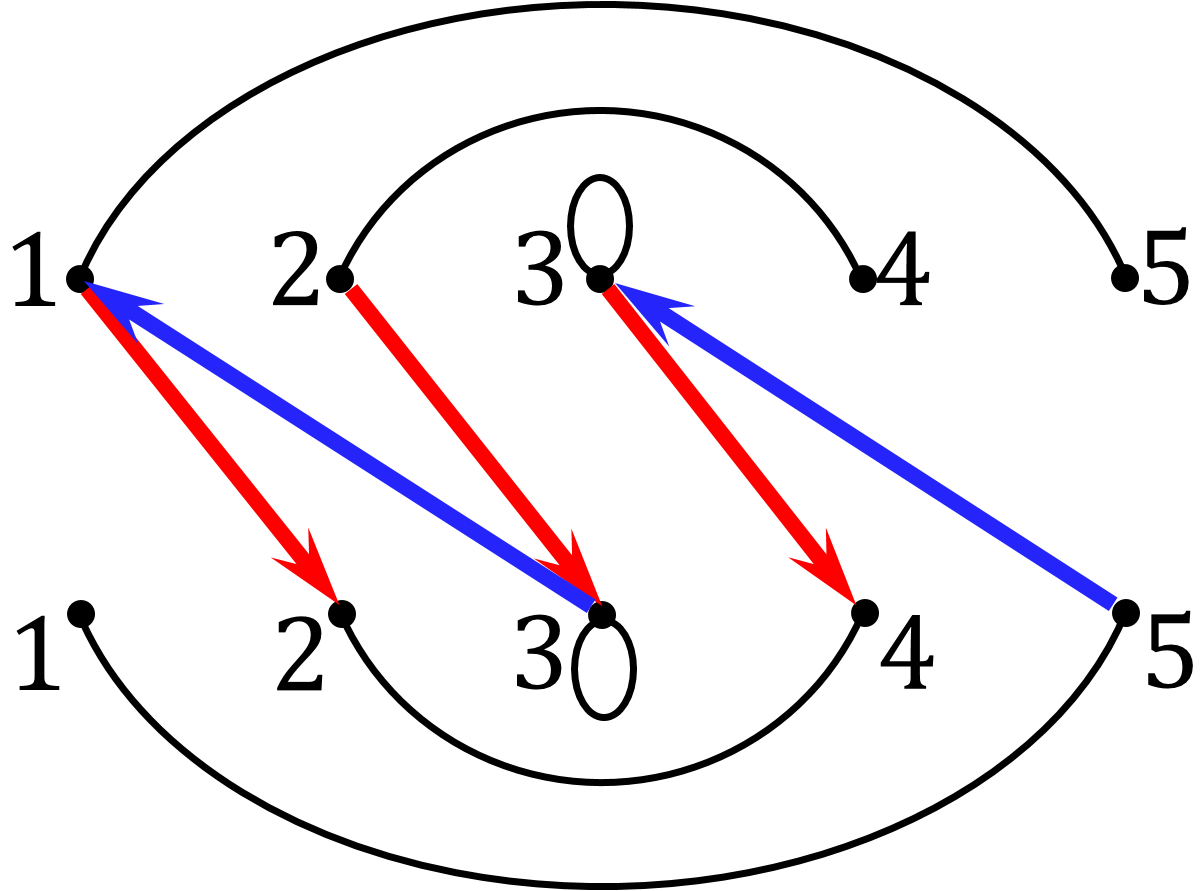}
\caption{$\Gamma_1^r = \{1,2,3\}$, $\Gamma_2^r = \{2,3,4\}$;\protect\newline\hphantom{\textbf{\emph{(b)}}} $\Gamma_1^c = \{1,3\}$, $\Gamma_2^c = \{3,5\}$.}
\label{f:bdgraphs_2}
\end{subfigure}
\caption{Examples of BD graphs. The vertical directed edges coming from $\bg^r$ and $\bg^c$ are painted in red and blue, respectively.}
\label{f:bdgraphs}
\end{center}
\end{figure}
\paragraph{Paths in $G_{(\bg^r,\bg^c)}$ and aperiodicity.} There is no orientation assigned to horizontal edges, hence we allow them to be traversed in both directions. An \emph{alternating path} in $G_{(\bg^r,\bg^c)}$ is a path in which horizontal and vertical edges alternate. A path is a \emph{cycle} if it starts where it ends. Now, we call the pair $(\bg^r, \bg^c)$ \emph{aperiodic} if $G_{(\bg^r, \bg^c)}$ has no alternating cycles (equivalently, the map $\gamma_c^{-1} w_0 \gamma_r w_0$ is nilpotent, where $w_0$ is the longest Weyl group element of $[1,n-1]$). In  examples in this paper, we denote alternating paths as $\cdots \rightarrow i \xrightarrow{X} i^\prime \xrightarrow{\gamma_r} j \xrightarrow{Y} j^\prime \xrightarrow{\gamma_c^*} i^\prime \rightarrow \cdots$, where $X$ and $Y$ indicate whether an edge is in the upper or the lower part of $G_{(\bg^r,\bg^c)}$, respectively, and $\gamma_r$ and $\gamma_c^*$ indicate a vertical edge directed downwards or upwards.
\paragraph{Oriented BD triples.} Let $\bg = (\Gamma_1,\Gamma_2,\gamma)$ be a BD triple. Since $\gamma$ is an isometry, if $\gamma(i) = j$ and $i+1 \in \Gamma_1$, then $\gamma(i+1) = j \pm 1$. We call the BD triple $\bg$ \emph{oriented} if $\gamma(i+1) = j+1$ for every $i \in \Gamma_1$ such that $i+1 \in \Gamma_1$. A pair of BD triples $(\bg^r, \bg^c)$ is called \emph{oriented} if both $\bg^r$ and $\bg^c$ are oriented. 
\paragraph{Runs.} Let $\bg = (\Gamma_1,\Gamma_2,\gamma)$ be a BD triple. For an arbitrary $i \in [1,n]$, set
\[
i_- := \max\{ j \in [0,n] \setminus \Gamma_1 \ | \ j < i\}, \ \ i_+ := \min \{j \in [1,n] \setminus \Gamma_1 \ | \ j \geq i \}.
\]
An \emph{$X$-run} of $i$ is the interval $\Delta(i) := [i_-+1, i_+]$. Replacing $\Gamma_1$ with $\Gamma_2$ in the above formulas, we obtain the definition of a \emph{$Y$-run} $\bar{\Delta}(i)$ of $i$. The $X$-runs partition the set $[1,n]$, and likewise the $Y$-runs. A run is called \emph{trivial} if it consists of a single element. Evidently, the map $\gamma$ can be viewed as a bijection between the set of nontrivial $X$-runs and the set of nontrivial $Y$-runs. For a pair of BD triples $(\bg^r,\bg^c)$, the runs that correspond to $\bg^r$ and $\bg^c$ are called \emph{row} and \emph{column} runs, respectively. We will indicate with an upper index $r$ or $c$ whether a run is from $\bg^r$ or $\bg^c$.

\begin{example}\label{ex:cgstr}
Consider the BD graphs on Figure \ref{f:bdgraphs}. Evidently, both are aperiodic and encode pairs of oriented BD triples. Here's a list of all runs that correspond to the graph on the right:
\begin{itemize}
\item Row runs: $\Delta_1^r = [1, 4],\ \Delta_2^r = [5],\ \Delta_3^r = [6]; \ \ \bar{\Delta}_1^r = [1], \ \bar{\Delta}_2^r = [2, 5], \ \bar{\Delta}_3^r = [6]$;
\item Column runs: $\Delta_1^c = [1,2], \ \Delta_2^c = [3,4],\  \Delta_3^c = [5], \ \Delta_3^c = [6]; \ \ \bar{\Delta}_1^c = [1],\ \bar{\Delta}_2^c = [2]$,  \mbox{$\bar{\Delta}_3^c = [3, 4]$}, $\bar{\Delta}_4^c = [5,6].$
\end{itemize}
\end{example}
\subsection{Construction of $\mathcal{L}$-matrices}\label{s:lmatr}

Let $X$ and $Y$ be two $n\times n$ matrices of indeterminates, which represent the standard coordinates on $\GL_n \times \GL_n$. For this section, fix an aperiodic oriented BD pair $\bg:=(\bg^r, \bg^c)$, and let $G_{\bg}$ be the BD graph associated with the pair, which is constructed in Section \ref{s:bdgraphs}. The construction described in this section follows Section 3.2 from \cite{plethora}.

We associate a matrix $\mathcal{L} = \mathcal{L}(X,Y)$ to every maximal alternating path in $G_{(\bg^r, \bg^c)}$ in the following way. If the path traverses a horizontal edge $i \rightarrow i^\prime$ in the upper part of the graph, we assign to the edge a submatrix of $X$ via\footnote{Note: if $j \in \Gamma_2^c$, then $\bar{\Delta}^c(j) = \bar{\Delta}^c(j+1)$, and similarly, if $i^\prime \in \Gamma_1^r$, then $\Delta^r(i^\prime) = \Delta^r(i^\prime + 1)$. Adding the ones in the formulas matters only for the beginning and the end of the path.}

\vspace{.1in}
\hspace{.85in}
\begin{minipage}{.8in}
{\large $X^{[1,\beta]}_{[\alpha,n]}$}
\end{minipage}
\begin{minipage}{3.5in}
$\beta:=$\ \  the right endpoint of $\Delta^c(i)$\ \  $= i_+(\Gamma_1^c)$;\\
$\alpha:=$\ \  the left endpoint of $\Delta^r(i^\prime+1)$\ \  $= (i^\prime+1)_-(\Gamma_1^r)+1$.
\end{minipage}
\\ \\
\noindent Similarly, we assign a submatrix of $Y$ to every horizontal edge $j^\prime \rightarrow j$ in the lower part of the graph that appears in the path via

\vspace{.1in}
\hspace{.85in}
\begin{minipage}{.8in}
{\large $Y^{[\bar{\beta},n]}_{[1,\bar{\alpha}]}$}
\end{minipage}
\begin{minipage}{3.5in}
$\bar{\beta}:=$\ \ the left endpoint of $\bar{\Delta}^c(j+1)$\ \ $= (j+1)_-(\Gamma_2^c)+1$;\\
$\bar{\alpha}:=$\ \ the right endpoint of $\bar{\Delta}^r(j^\prime)$\ \  $= j^\prime_+(\Gamma_2^r)$.
\end{minipage}
\\ \\
We call these submatrices $X$- and \emph{$Y$-blocks}, respectively. Now, the first block in the path becomes the bottom right corner of the matrix $\mathcal{L}$. As we move along the path, we collect the $X$- and $Y$-blocks and align them together according to the following patterns:

\noindent\hspace{0.5in}
\begin{minipage}{1.2in}
\[
i^\prime \xrightarrow{X} i \xrightarrow{\gamma_r} j \xrightarrow{Y} j^\prime
\]
\end{minipage}
\begin{minipage}{4in}
\vspace{5mm}
\[
\begin{array}{cc|}
\hline
\cdts  & y_{1n} \\
\vdts & \vdts \\
\cdts & y_{j_-+1,n} \\
\vdts & \vdts \\
\cdts & y_{jn} \\
\vdts & \vdts \\
\cdts & y_{j_+n} \\
\hline
 &\phantom{\vdots} \\
 & 
\end{array}
\begin{array}{cc}
 &\\
 &\phantom{\vdots} \\
\hline x_{i_-+1,1} \vphantom{y_{j_-+1},n} & \cdts \\
\vdts & \vdts \\
x_{i1} \vphantom{y_{jn}} & \cdts \\
\vdts & \vdts \\
x_{i_+,1} & \cdts \\
\vdts & \vdts \\
x_{n1} & \cdts \\
\hline 
\end{array}
\]
\end{minipage}

\vspace{5mm}
\noindent\hspace{0.5in}
\begin{minipage}{1.2in}
\[
j^\prime \xrightarrow{Y} j \xrightarrow{\gamma_c^*} i \xrightarrow{X} i^\prime
\]
\end{minipage}
\begin{minipage}{4in}
\begin{center}
\includegraphics[scale=0.5]{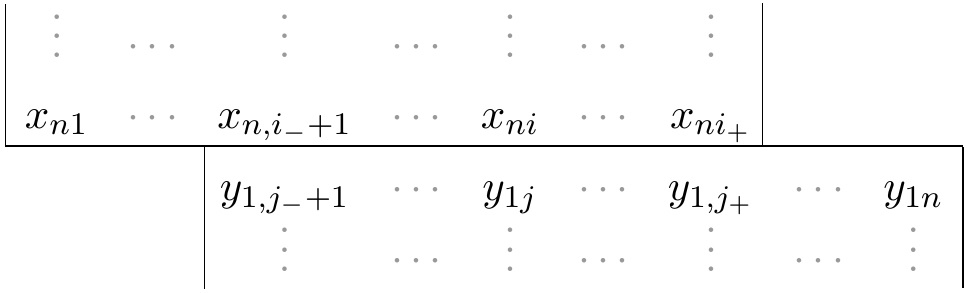}
\end{center}
\end{minipage}

\vspace{5mm}
\noindent The lower plus and minus in the first scheme correspond to $\bg^r$ (aligning along rows), and in the second scheme they correspond to $\bg^c$ (aligning along columns). In other words, once the first block is set in the bottom right part of $\mathcal{L}$, the algorithm of adding the blocks as one moves along the path can be described as follows:
\begin{enumerate}[1)]
\item If the $X$-block that corresponds to an edge $i^\prime \rightarrow i$ is placed in $\mathcal{L}$ and $\gamma_r(i) = j$, proceed to the edge $j\rightarrow j^\prime$ in the lower part of the graph and put the corresponding $Y$-block to the left of the $X$-block so that $y_{jn}$ and $x_{i1}$ are adjacent and belong to the same row;
\item If the $Y$-block that corresponds to an edge $j^\prime \rightarrow j$ is placed in $\mathcal{L}$ and $\gamma_c^*(j) = i$, proceed to the edge $i \rightarrow i^\prime$ in the upper part of the graph and put the corresponding $X$-block on top of the $Y$-block so that $x_{ni}$ and $y_{1j}$ are adjacent and belong to the same column;
\item Repeat until the path reaches its end.
\end{enumerate}

\begin{example}\label{ex:cg4}
Let $n=4$ and $\bg^r$ and $\bg^c$ be Cremmer-Gervais triples. In other words, $\gamma_r(i) = \gamma_c(i) = i+1$ for $i \in \{1, 2\}$ (see the BD graph below).

\vspace{3mm}
\begin{minipage}{1in}
\includegraphics[scale=0.45]{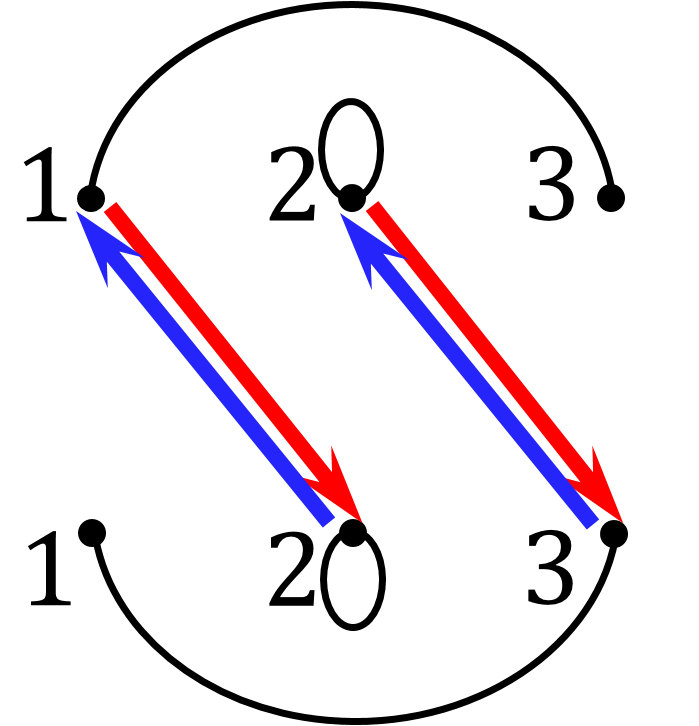}
\end{minipage}
\hspace{.4in}
\begin{minipage}{4in}
The runs in the upper part are $\Delta_1 = [1,3]$ and $\Delta_2 = [4]$; in the lower part, $\bar{\Delta}_1 = [1]$ and $\bar{\Delta}_2 = [2,4]$ (we don't leave an upper index, for $\bg^r = \bg^c$). There are two maximal alternating paths:
\[\begin{split}
&3 \xrightarrow{X} 1 \xrightarrow{\gamma_r} 2 \xrightarrow{Y} 2 \xrightarrow{\gamma_c^*}1 \xrightarrow{X} 3\\
&1 \xrightarrow{Y} 3 \xrightarrow{\gamma_c^*} 2 \xrightarrow{X} 2 \xrightarrow{\gamma_r} 3 \xrightarrow{Y} 1.
\end{split}\] 
\end{minipage}

\vspace{5mm}
\noindent Denote by $\mathcal{L}_1$ and $\mathcal{L}_2$ the matrices that correspond to these paths. For the first one, the blocks that correspond to the edges are $X_{[1,4]}^{[1,3]}$, $Y_{[1,4]}^{[2,4]}$, $X_{[1,1]}^{[1,3]}$, in the order in the path. Aligning these blocks according to the algorithm, we obtain $\mathcal{L}_1$ (see below). In a similar way one can obtain $\mathcal{L}_2$.
\[
\mathcal{L}_1(X,Y) = \begin{bmatrix}x_{41} & x_{42} & x_{43} & 0 & 0 & 0\\ y_{12} & y_{13} & y_{14} & 0 & 0 & 0\\ y_{22} & y_{23} & y_{24} & x_{11} & x_{12} & x_{13} \\ y_{32} & y_{33} & y_{34} & x_{21} & x_{22} & x_{23}\\ y_{42} & y_{43} & y_{44} & x_{31} & x_{32} & x_{33}\\  0 & 0 & 0 & x_{41} & x_{42} & x_{43}  \end{bmatrix}, \ \ \ 
\mathcal{L}_2(X,Y) = \begin{bmatrix}
y_{12} & y_{13} & y_{14} & 0 & 0 & 0\\
y_{22} & y_{23} & y_{24} & x_{11} & x_{12} & x_{13}\\
y_{32} & y_{33} & y_{34} & x_{21} & x_{22} & x_{23}\\
y_{42} & y_{43} & y_{44} & x_{31} & x_{32} & x_{33}\\
0 & 0 & 0 & x_{41} & x_{42} & x_{43}\\
0 & 0 & 0 & y_{12} & y_{13} & y_{14}
\end{bmatrix}
\]
\end{example}

\paragraph{Properties of $\mathcal{L}$-matrices.} Observe the following:
\begin{itemize}
\item The blocks are aligned in such a way that the indices in the blocks that correspond to the runs $\Delta^r$ and $\bar{\Delta}^r$ (or $\Delta^c$ and $\bar{\Delta}^c$) are in the same rows (columns);
\item For any variable $x_{ij}$ or $y_{ji}$ with $i> j$, there is a unique $\mathcal{L}(X,Y)$ that contains it on the diagonal;
\item If $i_1 \rightarrow i_2 \rightarrow \cdots \rightarrow i_{2n}$ is the maximal alternating path that gives rise to $\mathcal{L}$, then the size $N(\mathcal{L}) \times N(\mathcal{L})$ of $\mathcal{L}$ can be determined as
\[
N(\mathcal{L}) = \sum_{k=1}^{n} i_{2k-1}.
\]
\end{itemize}
\subsection{Initial cluster and $\gc(\bg)$}\label{s:iniclust}
In this section, we describe the initial extended cluster of the generalized cluster structure $\gc(\bg)$ on $\GL_n\times\GL_n$ induced by an aperiodic oriented BD pair $\bg = (\bg^r,\bg^c)$, as well as the choice of the ground ring.

\paragraph{Description of $\varphi$-, $f$- and $c$-functions.}
Set $U:= X^{-1} Y$ and define
\[
F_{kl}(X,Y) := | X^{[n-k+1,n]} \ Y^{[n-l+1,n]}|_{[n-k-l+1,n]}, \ \ k, l \geq 1, \ k + l \leq n-1;
\]
\[
\Phi_{kl}(X,Y) = [(U^0)^{[n-k+1,n]} \, U^{[n-l+1,n]} \, (U^2)^{[n]} \, \cdots \, (U^{n-k-l+1})^{[n]}], \ \ k,l \geq 1, \ k+l \leq n;
\]
set $\tilde{\varphi}_{kl}(X,Y) := \det \Phi_{kl}(X,Y)$ and
\begin{equation}\label{eq:ftphi}
f_{kl}(X,Y) := \det F_{kl}(X,Y), \ \ \varphi_{kl}(X,Y):=s_{kl} (\det X)^{n-k-l+1} \tilde{\varphi}_{kl}(X,Y),
\end{equation}
where
\[
s_{kl} = \begin{cases}
(-1)^{k(l+1)} &n \text{ is even,}\\
(-1)^{(n-1)/2 + k(k-1)/2 + l(l-1)/2} &n \text{ is odd.}
\end{cases}
\]
All $f$- and $\varphi$-functions are considered as cluster variables. The $c$-functions are defined via
\[
\det (X+\lambda Y) = \sum_{i=0}^{n} \lambda^{i} s_i c_i(X,Y),
\]
where $s_i = (-1)^{i(n-1)}$. Note that $c_{0} = \det X$ and $c_n = \det Y$. The functions $c_1,\ldots,c_{n-1}$ are considered as isolated stable variables, and the only nontrivial string, which is attached to $\varphi_{11}$, is given by the tuple $(1,c_1,\ldots,c_{n-1},1)$.

\paragraph{Description of $g$- and $h$-functions.} For $i > j$, let $\mathcal{L}$ be an $\mathcal{L}$-matrix such that $\mathcal{L}_{ss}(X,Y) = x_{ij}$ for some $s$, and let $N(\mathcal{L})$ be the size of $\mathcal{L}$. Set $g_{ij} := \det \mathcal{L}^{[s,N(\mathcal{L})]}_{[s,N(\mathcal{L})]}$. Similarly, if $\mathcal{L}$ is such that $\mathcal{L}_{ss}(X,Y) = y_{ji}$, we set $h_{ji} := \det \mathcal{L}^{[s,N(\mathcal{L})]}_{[s,N(\mathcal{L})]}$. In addition, we let $g_{ii} := \det X^{[i,n]}_{[i,n]}$ and  $h_{ii} := \det Y^{[i,n]}_{[i,n]}$, $1 \leq i \leq n$. The functions $h_{11}$ and $g_{11}$, as well as the determinants of the $\mathcal{L}$-matrices, are considered as stable variables.

\paragraph{Conventions.} The following identifications are frequently used in the text:
\begin{equation}\label{eq:fconv}
  \begin{aligned}
f_{n-l,l} &:= \varphi_{n-l,l},& &1 \leq l \leq n-1;\\
f_{0,l} &:= h_{n-l+1,n-l+1},& &1 \leq l \leq n-1;\\
f_{k,0} & := g_{n-k+1,n-k+1},& &1 \leq k \leq n-1.
\end{aligned}
\end{equation}
The above equalities are set in concordance with the defining formulas for the variables, for which one simply extends the range of the allowed indices. Furthermore, for $g$-functions we set
\begin{equation}\label{eq:gconv}
g_{n+1,i+1} := \begin{cases}
h_{1,j+1}  &\text{if} \ \gamma_c^*(j) = i,\\
1 &\text{otherwise;}
\end{cases} \ \ \ g_{i,0} := \begin{cases}
h_{jn} & \text{if} \ \gamma_r(i)=j,\\
1 & \text{otherwise;}
\end{cases} 
\end{equation}
and for $h$-functions we set
\begin{equation}\label{eq:hconv}
h_{j+1,n+1} := \begin{cases}
g_{i+1,1} &\text{if} \ \gamma_r(i) = j,\\
1 &\text{otherwise;}
\end{cases} \ \ \ h_{0,j} := \begin{cases}
g_{ni} &\text{if}\ \gamma_c^*(j) = i,\\
1 &\text{otherwise.}
\end{cases}
\end{equation}
The meaning of these identifications follows from the following observation: If $g_{ni} = \det \mathcal{L}^{[s,N(\mathcal{L})]}_{[s,N(\mathcal{L})]}$ and $\gamma_c^*(j) = i$, then $h_{1,j+1} = \det \mathcal{L}^{[s+1,N(\mathcal{L})]}_{[s+1,N(\mathcal{L})]}$; similarly, if $h_{jn} = \det \mathcal{L}^{[s,N(\mathcal{L})]}_{[s,N(\mathcal{L})]}$ and $\gamma_r(i) = j$, then $g_{i+1,1} = \det \mathcal{L}^{[s+1,N(\mathcal{L})]}_{[s+1,N(\mathcal{L})]}$.

\paragraph{Description of $\gc(\bg)$.} The description of the initial quiver is given later in Section~\ref{s:quiver}. The initial extended cluster is given by the union
\[\begin{split}
\{g_{ij}, \ h_{ji} \ | \ 1 \leq j \leq i \leq n\} \cup \{f_{kl} \ | \ k, l \geq 1&, \ k+l \leq n-1 \}\cup \\ &\cup \{ \varphi_{kl} \ | \ k, l \geq 1, \ k+l \leq n \} \cup \{ c_{i} \ | \ 1 \leq i \leq n-1\}.
\end{split}
\]
Let $\mathcal{L}_1(X,Y), \ldots, \mathcal{L}_m(X,Y)$ be the list of all $\mathcal{L}$-matrices in $\gc(\bg)$. The ground ring $\hat{\mathbb{A}} = \hat{\mathbb{A}}(\gc(\bg))$ is set to be
\[
\hat{\mathbb{A}} := \mathbb{C}[c_1, \ldots, c_{n-1}, h_{11}^{\pm 1}, g_{11}^{\pm 1}, \det \mathcal{L}_1,\ldots, \det \mathcal{L}_m].
\]
All mutation relations are ordinary except the mutation at $\varphi_{11}$. It is given by
\begin{equation}\label{eq:phiexch11}
\varphi_{11} \varphi_{11}^\prime = \sum_{r=0}^n c_r \varphi_{21}^r \varphi_{12}^{n-r}.
\end{equation}
A variable $\psi$ is frozen if and only if either $\psi = c_i$ for $0 \leq i \leq n$, or $\psi = g_{i+1,1}$ for $i \in \Pi\setminus \Gamma_1^r$, or $\psi = h_{1,j+1}$ for $j \in \Pi \setminus \Gamma_2^c$.

\subsection{Operators and the bracket}\label{s:opers}
In this section, we describe various operators and their properties used throughout the text, especially in sections on compatibility.

\paragraph{The operators $\gamma, \gamma^* : \gl_n(\mathbb C) \rightarrow \gl_n(\mathbb C)$.} Let $\bg := (\Gamma_1,\Gamma_2,\gamma)$ be an oriented BD triple. Let $\Delta_1,\ldots,\Delta_k$ be the list of all nontrivial $X$-runs, and set $\bar{\Delta}_1,\ldots,\bar{\Delta}_k$ to be the list of the corresponding $Y$-runs, where $\gamma(\Delta_i) = \bar{\Delta}_i$, $1 \leq i \leq k$. Set $\gl(\Delta_i)$ to be a subalgebra of $\gl_n(\mathbb{C})$ of the matrices that are zero outside of the block $\Delta_i \times \Delta_i$ (and similarly for $\gl(\Delta_i)$). Define $\gamma_i : \gl_n(\Delta_i) \rightarrow \gl_n(\bar{\Delta}_i)$ to be the map that shifts the $\Delta_i\times \Delta_i$ block to $\bar{\Delta}_i \times \bar{\Delta}_i$. Then the map $\gamma: \gl_n(\mathbb{C}) \rightarrow \gl_n(\mathbb{C})$ is defined as the direct sum $\gamma:=\bigoplus_{i=1}^k\gamma_i$ extended by zero to $\gl_n(\mathbb{C})$. Similarly, one sets $\gamma_i^* : \gl_n(\bar{\Delta}_i)\rightarrow \gl_n(\Delta_i)$ to be the map that shifts the $\bar{\Delta}_i \times \bar{\Delta}_i$ block to $\Delta_i \times \Delta_i$. The map $\gamma^* : \gl_n(\mathbb{C}) \rightarrow \gl_n(\mathbb{C})$ is obtained as the direct sum $\gamma^*:=\bigoplus_{i=1}^k\gamma_i^*$ extended by zero to $\gl_n(\mathbb{C})$.

\begin{remark}The resulting maps were denoted in~\cite{plethora} as $\mathring{\gamma}$ and $\mathring{\gamma}^*$, in order to distinguish them from their $\sll_{n}$-counterparts that were constructed in Section~\ref{s:plgrps} (note: $\gamma|_{\sll_n(\mathbb{C})}$ may be different from the map constructed in Section~\ref{s:plgrps} on the Cartan subalgebra of $\sll_n(\mathbb{C})$).
\end{remark}

\begin{example}
Let us consider a BD pair defined by its BD graph below (note: $\Gamma_1^c = \emptyset$):

\vspace{3mm}
\begin{minipage}{1in}
\noindent\includegraphics[scale=0.45]{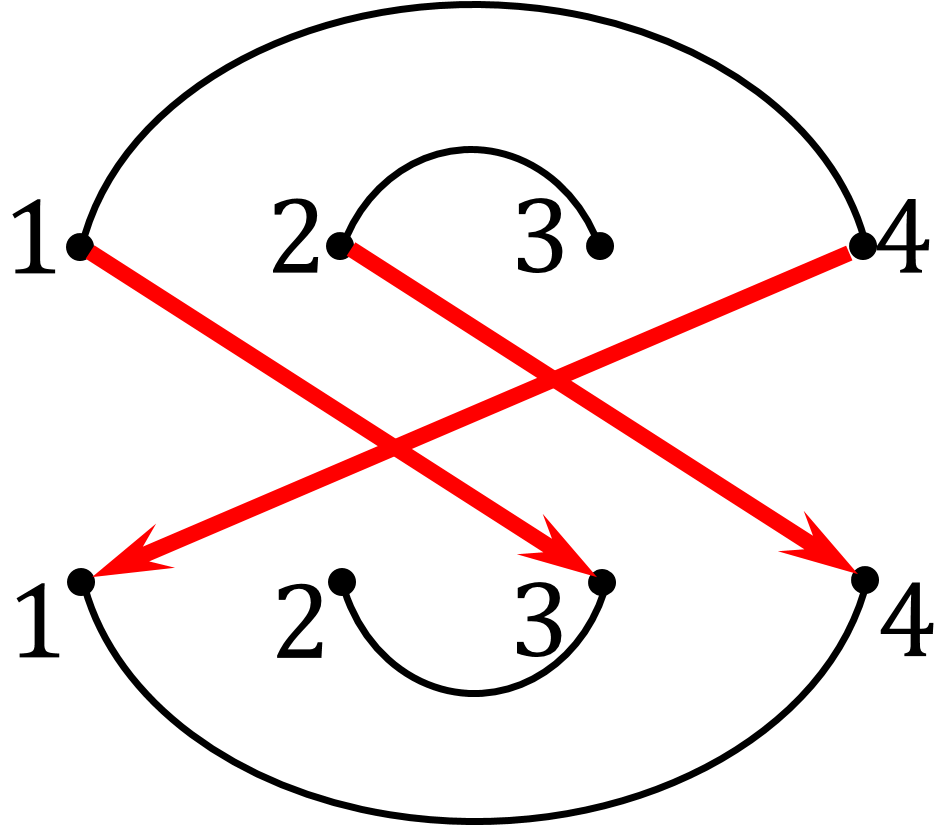}
\end{minipage}
\hspace{.7in}
\begin{minipage}{4.5in}
Let $\gamma:=\gamma_r$. Its action on $\gl_5(\mathbb C)$ is given by
\[
\gamma \begin{bmatrix} a_{11} & a_{12} & a_{13} & a_{14} & a_{15}\\a_{21} & a_{22} & a_{23} & a_{24} & a_{25}\\ a_{31} & a_{32} & a_{33} & a_{34} & a_{35}\\ a_{41} & a_{42} & a_{43} & a_{44} & a_{45}\\a_{51} & a_{52} & a_{53} & a_{54} & a_{55}\end{bmatrix} = \begin{bmatrix} a_{44} & a_{45} & 0 & 0 & 0 \\ a_{54} & a_{55} & 0 & 0 & 0 \\ 0 &0  &  a_{11} & a_{12} & a_{13}\\ 0& 0 & a_{21} & a_{22} & a_{23}\\0 & 0 & a_{31} & a_{32} & a_{33}   \end{bmatrix},
\]
\end{minipage}
\vspace{5mm}

\noindent Similarly, the action of $\gamma^*$ is given by
\[
\gamma^* \begin{bmatrix} a_{11} & a_{12} & a_{13} & a_{14} & a_{15}\\a_{21} & a_{22} & a_{23} & a_{24} & a_{25}\\ a_{31} & a_{32} & a_{33} & a_{34} & a_{35}\\ a_{41} & a_{42} & a_{43} & a_{44} & a_{45}\\a_{51} & a_{52} & a_{53} & a_{54} & a_{55}\end{bmatrix} = \begin{bmatrix}a_{33} & a_{34} & a_{35} & 0 & 0\\ a_{43} & a_{44} & a_{45} & 0 & 0 \\ a_{53} & a_{54} & a_{55} & 0 & 0 \\ 0 & 0 & 0 & a_{11} & a_{12} \\ 0 & 0 & 0 & a_{21} & a_{22}\end{bmatrix}.
\]
\end{example}

\paragraph{The group homomorphisms $\tilde{\gamma}$ and $\tilde{\gamma}^*$.} The maps $\gamma, \gamma^* : \gl_n(\mathbb C) \rightarrow \gl_n(\mathbb C)$ are not Lie algebra homomorphisms; however, their restrictions to the Borel subalgebras $\mathfrak{b}_+$ and $\mathfrak{b}_-$ are Lie algebra homomorphisms, hence we can define group homomorphisms $\tilde{\gamma}, \tilde{\gamma}^* : \mathfrak{B}_{\pm} \rightarrow \mathfrak{B}_{\pm}$, where $\mathfrak{B}_+$ and $\mathfrak{B}_-$ are the corresponding Borel subgroups. Notice that if the BD triple is oriented and $N_{\pm}$ is a unipotent (upper or lower) triangular matrix, then $\tilde{\gamma}(N_{\pm}) = \gamma(N_\pm - I) + I$, where $I$ is the identity matrix, and similarly for $\tilde{\gamma}^*$. Likewise, let $\GL(\Delta)\hookrightarrow\GL_n$ be the group of invertible $|\Delta|\times|\Delta|$ matrices viewed as a block in $\GL_n$ that occupies $\Delta\times \Delta$; since $\gamma : \gl(\Delta) \rightarrow \gl(\bar{\Delta})$ is an isomorphism of Lie algebras, it can be integrated to an isomorphism of groups $\gamma : \GL(\Delta) \rightarrow \GL(\bar{\Delta})$ (and similarly for $\gamma^*$).

\begin{remark}
The maps $\tilde{\gamma}$ and $\tilde{\gamma}^*$ were denoted in~\cite{plethora} as $\exp(\gamma)$ and $\exp(\gamma^*)$. We have changed the notation to avoid a possible confusion with the matrix exponential.
\end{remark}

\paragraph{Differential operators.} For a rational function $f \in \mathbb C(\GL_n \times \GL_n)$, set
\[
\nabla_X f := \left( \frac{\partial f}{\partial x_{ji}} \right) _{i,j=1}^{n}, \ \ \nabla_Y f := \left( \frac{\partial f}{\partial y_{ji}} \right) _{i,j=1}^{n}.
\]
Define
\begin{align*}
E_Lf &:= \nabla_X f \cdot X + \nabla_Y f \cdot Y, & E_R f &:= X \nabla_X f + Y \nabla_Y f, \\
\xi_L f &:= \gamma_c(\nabla_X f \cdot X) + \nabla_Y f \cdot Y, & \xi_R f &:= X\nabla_X f + \gamma_r^*(Y \nabla_Y f), \\
\eta_L f &:= \nabla_X f \cdot X + \gamma_c^*(\nabla_Y f \cdot Y), & \eta_R f&:= \gamma_r(X \nabla_X f) + Y \nabla_Y f.
\end{align*}
Let $\ell$ denote $r$ or $c$. Define subalgebras
\[
\mathfrak{g}_{\Gamma^\ell_1} := \bigoplus_{i=1}^{k} \gl(\Delta_i^\ell), \ \ \ \  \mathfrak{g}_{\Gamma^\ell_2} := \bigoplus_{i=1}^{k} \gl(\bar{\Delta}^\ell_i),
\]
where $\gl(\Delta_i^\ell)$ and $\gl(\bar{\Delta}_i^\ell)$ are constructed above. Let $\pi_{\Gamma_1^\ell}$ and $\pi_{\Gamma_2^\ell}$ be the projections onto $\mathfrak{g}_{\Gamma^\ell_1}$ and $\mathfrak{g}_{\Gamma^\ell_2}$, respectively; also, let $\pi_{\hat{\Gamma}_1^\ell}$ and $\pi_{\hat{\Gamma}_2^\ell}$ be the projections onto the orthogonal complements of $\mathfrak{g}_{\Gamma^\ell_1}$ and $\mathfrak{g}_{\Gamma^\ell_2}$ with respect to the trace form. There are numerous identities that relate the differential operators among each other and with the projections; they are easily derivable and extensively used in the paper. Let us mention some of them:
\begin{align*}
E_L &= \xi_L + (1-\gamma_c) (\nabla_X X), & E_R &= \xi_R + (1-\gamma_r^*)(Y\nabla_Y),\\
E_L &= \eta_L + (1-\gamma_c^*)(\nabla_Y Y), & E_R &= \eta_R + (1-\gamma_r)(X \nabla X),\\
\xi_L &= \gamma_c(\eta_L) + \pi_{\hat{\Gamma}_2^c}(\nabla_Y Y), & \xi_R &= \gamma_r^*(\eta_R) + \pi_{\hat{\Gamma}_1^r} (X \nabla_X),\\
\eta_L &= \gamma_c^*(\xi_L) + \pi_{\hat{\Gamma}_1^c} (\nabla_X X), & \eta_R &= \gamma_r(\xi_R) + \pi_{\hat{\Gamma}_2^r}(Y\nabla_Y).
\end{align*}

\paragraph{The bracket and $R_0$.} 
For any choice of $(R_0^r,R_0^c)$ on $\SL_n\times \SL_n$, the variables $c_0,c_1,\ldots,c_{n-1},c_n$ are Casimirs of the Poisson bracket. However, there is only one choice of $(R_0^r,R_0^c)$ for which these variables are Casimirs on $\GL_n\times \GL_n$:

\begin{enumerate}[a)]
\item The functions $c_0, c_1,\ldots, c_{n-1},c_n$ are Casimirs if and only if the identity matrix is an eigenvector of both $R_0^r$ and $R_0^c$ (in this case, $R_0^r(I) = R_0^c(I) = (1/2)I$ from $R_0 + R_0^* = \id_{\mathfrak h}$).
\end{enumerate}

\noindent However, there is an important alternative choice of $(R_0^r,R_0^c)$:

\begin{enumerate}[b)]
\item For a BD triple $(\Gamma_1,\Gamma_2,\gamma)$, a solution $R_0$ of the system~\eqref{eq:r0}-\eqref{eq:ralg} is such that
\begin{equation}\begin{aligned}\label{eq:roid}
R_0 (1-\gamma) &= \pi_{\Gamma_1} + R_0 \pi_{\hat{\Gamma}_1} \qquad&\qquad R_0(1-\gamma^*) &= -\gamma^* +R_0 \pi_{\hat{\Gamma}_2}\\
R_0^*(1-\gamma) &= -\gamma + R_0^*\pi_{\hat{\Gamma_1}}\qquad&\qquad R_0^*(1-\gamma^*) &= \pi_{\Gamma_2} + R_0^*\pi_{\hat{\Gamma}_2}
\end{aligned}\end{equation}
(the identities are viewed relative the Cartan subalgebra $\mathfrak{h}$ of $\gl_n$).
\end{enumerate}
Note that these conditions do not follow from the system~\eqref{eq:r0}-\eqref{eq:ralg}. For instance, if $I_{\Delta}:=\sum_{i\in\Delta}e_{ii}$, the first condition specifies the value of $R_0$ on $I_{\Delta}-I_{\bar{\Delta}}$ as
\[
R_0(I_{\Delta}-I_{\bar{\Delta}}) = I_{\Delta}.
\]
Choosing $R_0^r$ and $R_0^c$ that equation satisfy~\eqref{eq:roid} eases some of the computations with Poisson brackets, so this choice is employed in the proofs; however, in Section~\ref{s:depr} we show that the results of the paper hold regardless of the choice of $(R_0^r,R_0^c)$. Moreover, when $R_0:= R_0^r = R_0^c$ and $R_0$ satisfies equation~\eqref{eq:roid}, the connected Poisson dual $\GL_n^*$ of $\GL_n$ can be viewed as a subgroup of the direct product of certain parabolic subgroups modulo a relation (see Section~\ref{s:pgfroz} for details). Lastly, the Poisson bracket~\eqref{eq:brackgen} attains the following form on $\GL_n\times\GL_n$:
\[
\{f_1,f_2\} = \langle R_+^c (E_Lf_1), E_L f_2\rangle - \langle R_+^r(E_R f_1), E_R f_2\rangle + \langle X \nabla_X f_1, Y\nabla_Y f_2 \rangle - \langle \nabla_X f_1 \cdot X, \nabla_Y f_2 \cdot Y \rangle.
\]

\subsection{Invariance properties}
In this section, we describe the invariance properties of the functions from the initial extended cluster.
\paragraph{Invariance properties of $f$- and $\varphi$-functions.} Let $f$ be any $f$-function and $\tilde{\varphi}$ be any $\tilde{\varphi}$-function (recall that $\tilde{\varphi}$ differs from $\varphi$ by a factor of $\det X$; see equation~\eqref{eq:ftphi}). Pick any unipotent upper triangular matrix $N_+$, a pair of any unipotent lower triangular matrices $N_-$ and $N_-^\prime$, and let $A$ be any invertible matrix. Then
\begin{equation}\label{eq:invfphi}
f(X,Y) = f(N_+X N_-, N_+ Y N_-^\prime), \ \ \ \tilde{\varphi}(X,Y) = \tilde{\varphi}(AXN_-, AYN_-).
\end{equation}
Let $\mathfrak{b}_+$ and $\mathfrak{b}_-$ be the subspaces of upper and lower triangular matrices. The infinitesimal version of equation~\eqref{eq:invfphi} is 
\begin{equation}\begin{aligned}
\nabla_X f \cdot X, \ \nabla_Y f\cdot Y \in &\,\mathfrak{b}_-, \qquad&\qquad &E_Rf \in \mathfrak{b}_+;\\
E_L\tilde{\varphi} \in &\,\mathfrak{b}_-, \qquad&\qquad &E_R\tilde{\varphi} = 0.
\end{aligned}\end{equation}
Moreover, 
\begin{equation}\begin{aligned}\label{eq:invarfphi}
\pi_0 E_L \log f &= \text{const}, \qquad&\qquad \pi_0 E_R \log f &= \text{const}, \\
\pi_0 E_L \log \varphi &= \text{const}, \qquad&\qquad \pi_0 E_R \log \varphi &= \text{const},
\end{aligned}\end{equation}
where $\pi_0$ is the projection onto the space of diagonal matrices; by \emph{const} we mean that the left-hand sides of the formulas do not depend on $(X,Y)$. For the $c$-functions, 
\[
\pi_0 E_L \log c_i = \pi_0 E_R \log c_i = I, \ \ \ 0 \leq i \leq n,
\]
where $I$ is the identity matrix.

\paragraph{Invariance properties of $g$- and $h$-functions.} Let $\psi$ be any $g$- or $h$-function, and let $N_+$ and $N_-$ be any unipotent upper and lower triangular matrices. Then
\begin{equation}\label{eq:uniinvar}
\psi(N_+ X, \tilde{\gamma}_r(N_+)Y) = \psi(X\tilde{\gamma}_c^*(N_-),YN_-) = \psi(X,Y).
\end{equation}
Let $T$ be any diagonal matrix; then we also have
\begin{equation}\label{eq:diaginv}
\begin{aligned}
\psi(X\tilde{\gamma}_c^*(T),YT) &= \hat{\xi}_L(T)\psi(X,Y), &  \psi(TX,\tilde{\gamma}_r(T)Y) &= \hat{\xi}_R(T) \psi(X,Y),\\
\psi(XT,Y\tilde{\gamma}_c(T)) &= \hat{\eta}_L(T) \psi(X,Y), & \psi(\tilde{\gamma}_r^*(T)X,TY) &= \hat{\eta}_R(T) \psi(X,Y).
\end{aligned}\end{equation}
where $\hat{\xi}_R$, $\hat{\xi}_L$, $\hat{\eta}_R$ and $\hat{\eta}_L$ are constants that depend only on $T$ and $\psi$ (they can be viewed as characters on the group of invertible diagonal matrices). The infinitesimal version of equation $\eqref{eq:uniinvar}$ is
\begin{equation}\label{eq:infinvpsi}
\xi_L \psi \in \mathfrak{b}_-,\ \ \ \xi_R \psi \in \mathfrak{b}_+,
\end{equation}
and the infinitesimal version of equation \eqref{eq:diaginv} is
\begin{equation}
\begin{aligned}\label{eq:xiconst}
\pi_0 \xi_L \log \psi &= \text{const}, \qquad&\qquad \pi_0 \xi_R \log \psi &= \text{const},\\
\pi_0 \eta_L \log \psi &= \text{const}, \qquad&\qquad \pi_0 \eta_R \log \psi &= \text{const}.
\end{aligned}
\end{equation}
Finally, let us mention the results of Lemma 4.4 and Corollary~4.6 from~\cite{plethora}. If $\Delta^r$, $\Delta^c$, $\bar{\Delta}^r$ and $\bar{\Delta}^c$ are any $X$- and $Y$- row and column runs (trivial or not), then
\begin{equation}\begin{aligned}\label{eq:deltatraces}
\tr( (\nabla_X\log \psi\cdot X)_{\Delta^c}^{\Delta^c}) &= \text{const}, \qquad&\qquad \tr( (X \nabla_X\log \psi)_{\Delta^r}^{\Delta^r}) &= \text{const}, \\
\tr( (\nabla_Y \log \psi\cdot Y)_{\bar{\Delta}^c}^{\bar{\Delta}^c}) & = \text{const}, \qquad&\qquad \tr( (Y\nabla_Y \log \psi)_{\bar{\Delta}^r}^{\bar{\Delta}^r}) & = \text{const};
\end{aligned}\end{equation}
also,
\begin{equation}\label{eq:trconsti}\begin{aligned}
\tr( \nabla_X\log \psi\cdot X) &= \text{const}, \qquad&\qquad \tr(X \nabla_X\log \psi) &= \text{const}, \\
\tr( \nabla_Y \log \psi\cdot Y) & = \text{const}, \qquad&\qquad \tr(Y\nabla_Y \log \psi) & = \text{const}.
\end{aligned}
\end{equation}
\begin{remark}
Notice that there are four identities~\eqref{eq:diaginv} for diagonal elements and only two~\eqref{eq:uniinvar} for unipotent ones. The other two identities for unipotent matrices that one might think of do not hold.
\end{remark}
\subsection{Initial quiver}\label{s:quiver}
In this section, we describe the initial quiver for $\gc(\bg)$ defined by an aperiodic oriented BD pair $\bg = (\bg^r,\bg^c)$. We first describe the quiver for the trivial BD pair (based on~\cite{double}), and then we explain the necessary adjustments for a nontrivial BD pair. For particular examples of quivers, see Section~\ref{s:exs}. Throughout the section, we assume that $n \geq 3$ (the case $n=2$ is described in~\cite{double}).

\subsubsection{The quiver for the trivial BD pair}
Below one can find pictures of the neighborhoods of all variables in the initial quiver in the case of the trivial BD pair. A few of remarks beforehand:
\begin{itemize}
\item The circled vertices are mutable (in the sense of ordinary exchange relations~\eqref{eq:ordexchrel}), the square vertices are frozen, the rounded square vertices may or may not be mutable depending on the indices and the hexagon vertex is a mutable vertex with a generalized mutation relation (see equations~\eqref{eq:exchr} and~\eqref{eq:phiexch11});
\item Since $c_1,\ldots,c_{n-1}$ are isolated variables, they are not shown on the resulting quiver;
\item For $k=2$ and $n > 3$, the vertices $\varphi_{1k}$ and $\varphi_{k-1,2}$ coincide; hence, the pictures provided below suggest that there are two edges pointing from $\varphi_{21}$ to $\varphi_{12}$ (however, there is only one arrow in $n = 3$).
\end{itemize}

\begin{figure}[htb]
\begin{center}
\includegraphics[scale=0.75]{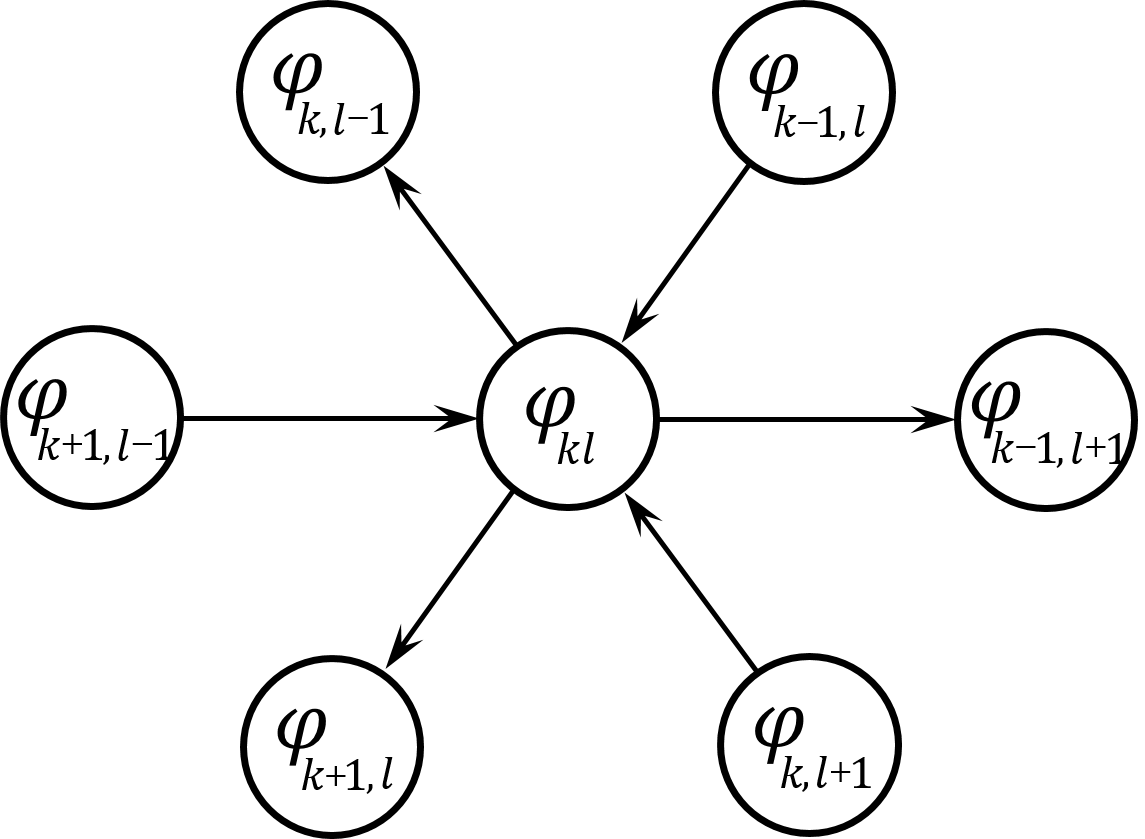}
\end{center}
\caption{The neighborhood of $\varphi_{kl}$ for $k,l \neq 1$, $k+l < n$.}
\label{f:nbd_phikl}
\end{figure}

\vspace{5mm}
\begin{figure}[htb]
\begin{subfigure}[t]{3in}
\begin{center}
\includegraphics[scale=0.75]{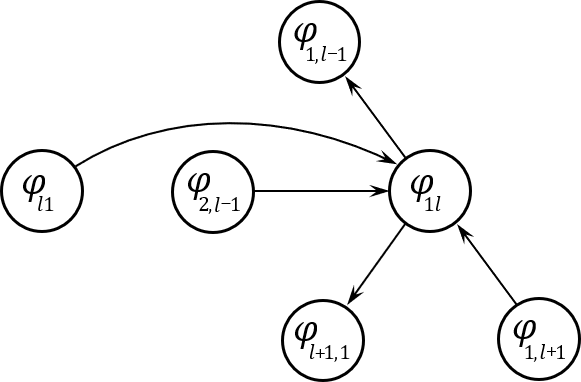}
\end{center}
\subcaption{Case $2 \leq l \leq n-2$.}
\label{f:nbd_phi1l}
\end{subfigure}
\begin{subfigure}[t]{2.6in}
\begin{center}
\includegraphics[scale=0.75]{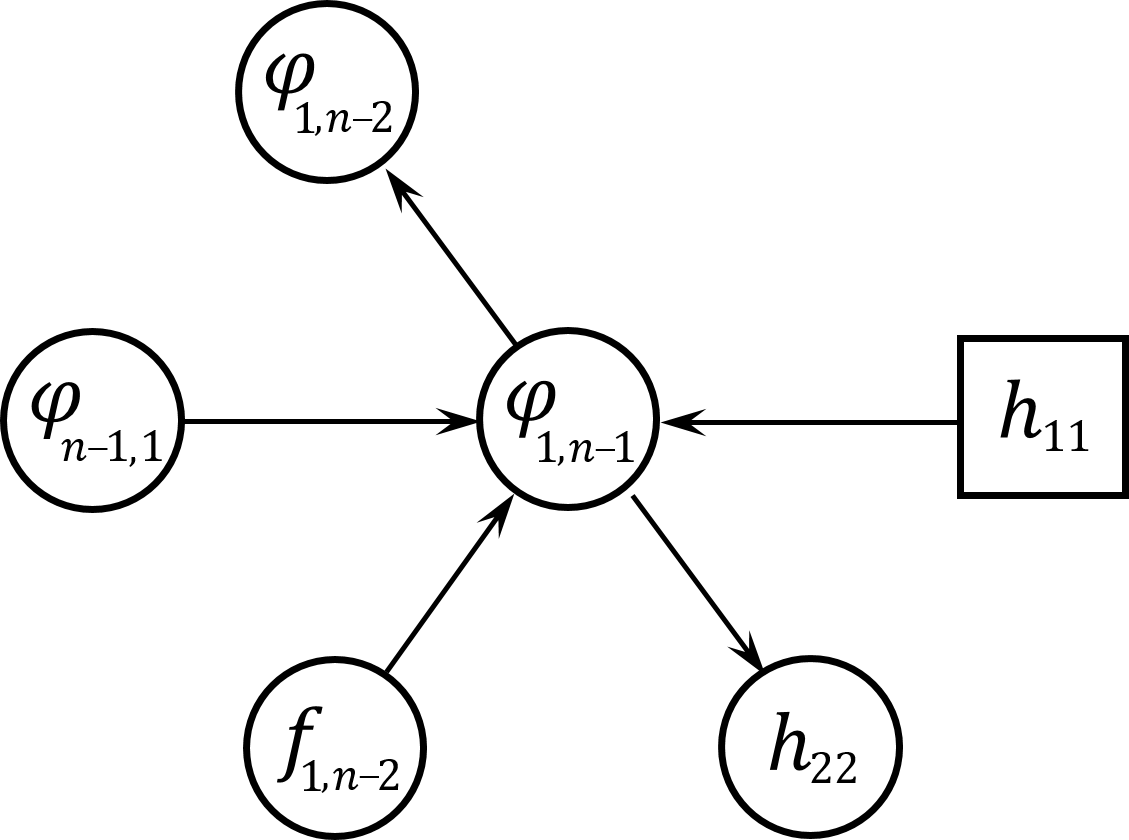}
\end{center}
\subcaption{Case $l = n-1$.}
\label{f:nbd_phi1n1}
\end{subfigure}
\caption{The neighborhood of $\varphi_{1l}$ for $2 \leq l \leq n-1$.}
\label{f:nbd_phi1}
\end{figure}

\vspace{5mm}
\begin{figure}[htb]
\begin{subfigure}[t]{2.9in}
\begin{center}
\includegraphics[scale=0.75]{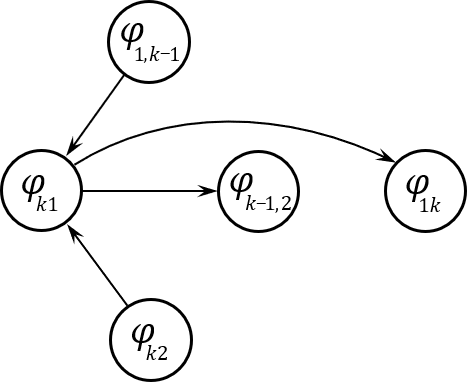}
\end{center}
\subcaption{Case $2 \leq k \leq n-2$.}
\label{f:nbd_phik1}
\end{subfigure}
\begin{subfigure}[t]{2.7in}
\begin{center}
\includegraphics[scale=0.75]{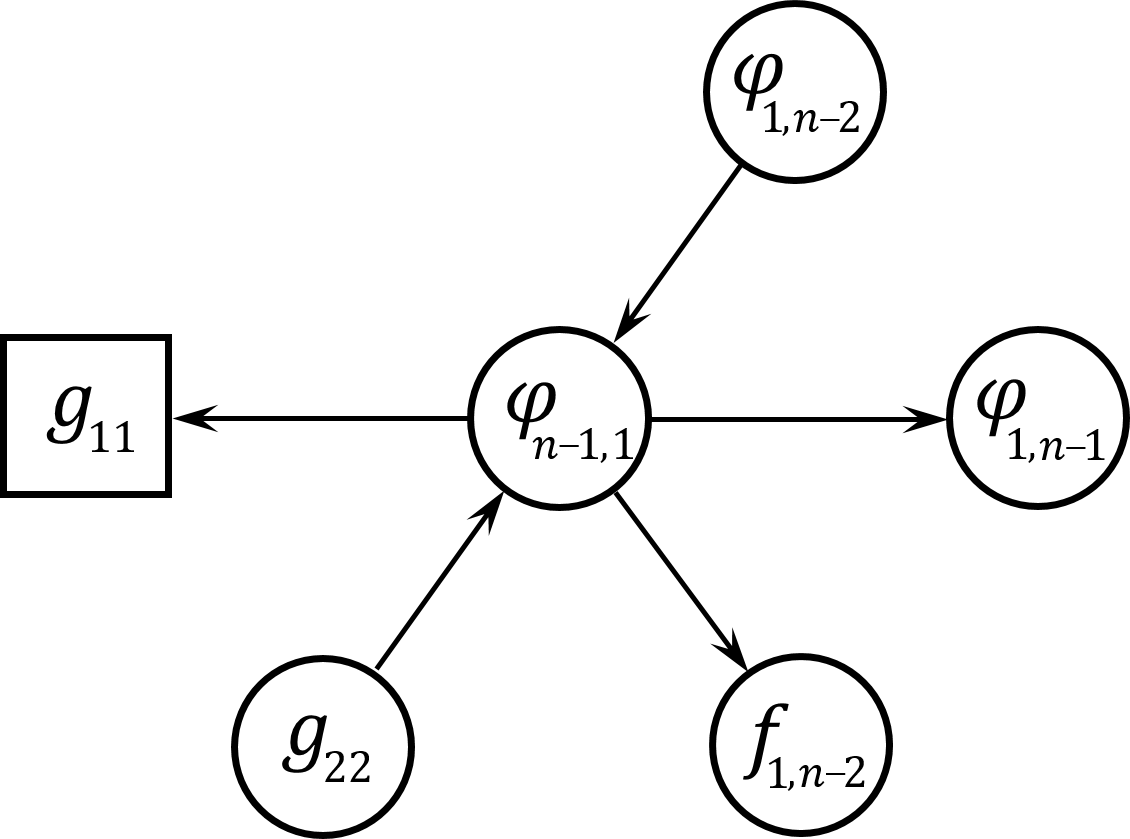}
\end{center}
\subcaption{Case $k = n-1$.}
\label{f:nbd_phin11}
\end{subfigure}
\caption{The neighborhood of $\varphi_{k1}$ for $2 \leq k \leq n-1$.}
\label{f:nbd_phik}
\end{figure}

\vspace{5mm}
\begin{figure}[htb]
\begin{subfigure}[t]{3in}
\begin{center}
\includegraphics[scale=0.75]{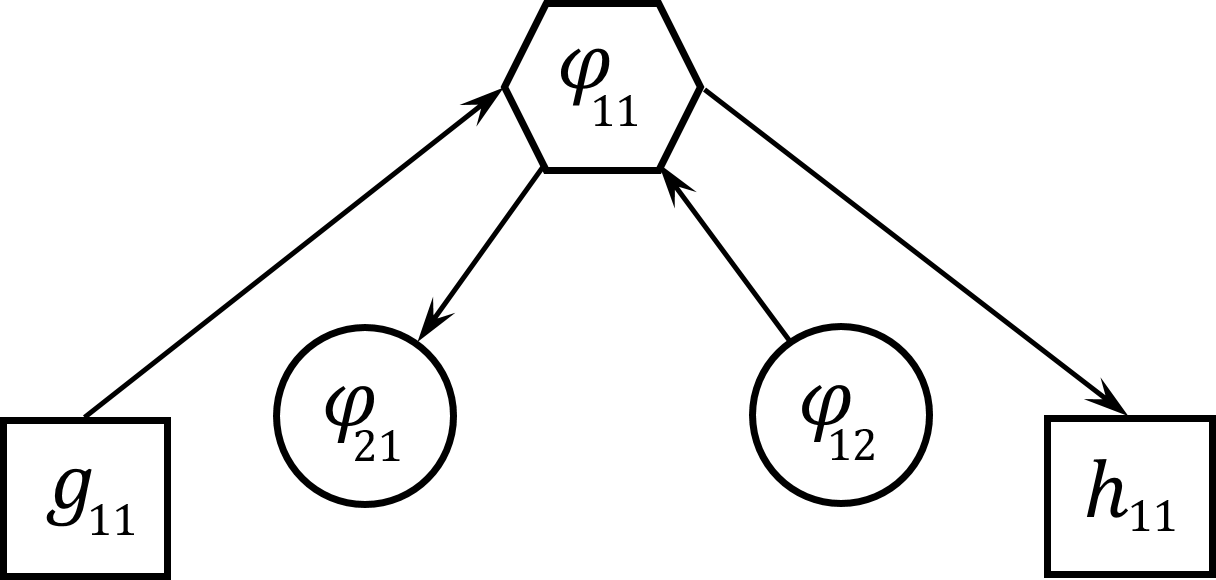}
\end{center}
\subcaption{Case $k=l=1$.}
\label{f:nbd_phi11}
\end{subfigure}
\begin{subfigure}[t]{2.5in}
\begin{center}
\includegraphics[scale=0.75]{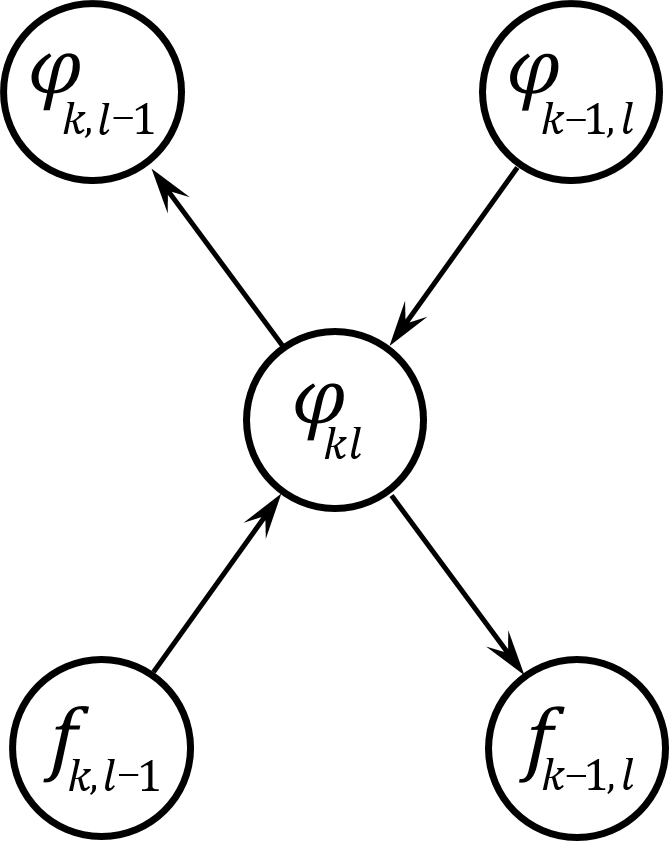}
\end{center}
\subcaption{Case $k + l = n$.}
\label{f:nbd_phikl_b}
\end{subfigure}
\caption{The neighborhood of $\varphi_{kl}$ for (a) $k=l=1$ and (b) $k+l=n$.}
\label{f:nbd_boundary}
\end{figure}

\vspace{5mm}
\begin{figure}[htb]
\begin{center}
\includegraphics[scale=0.75]{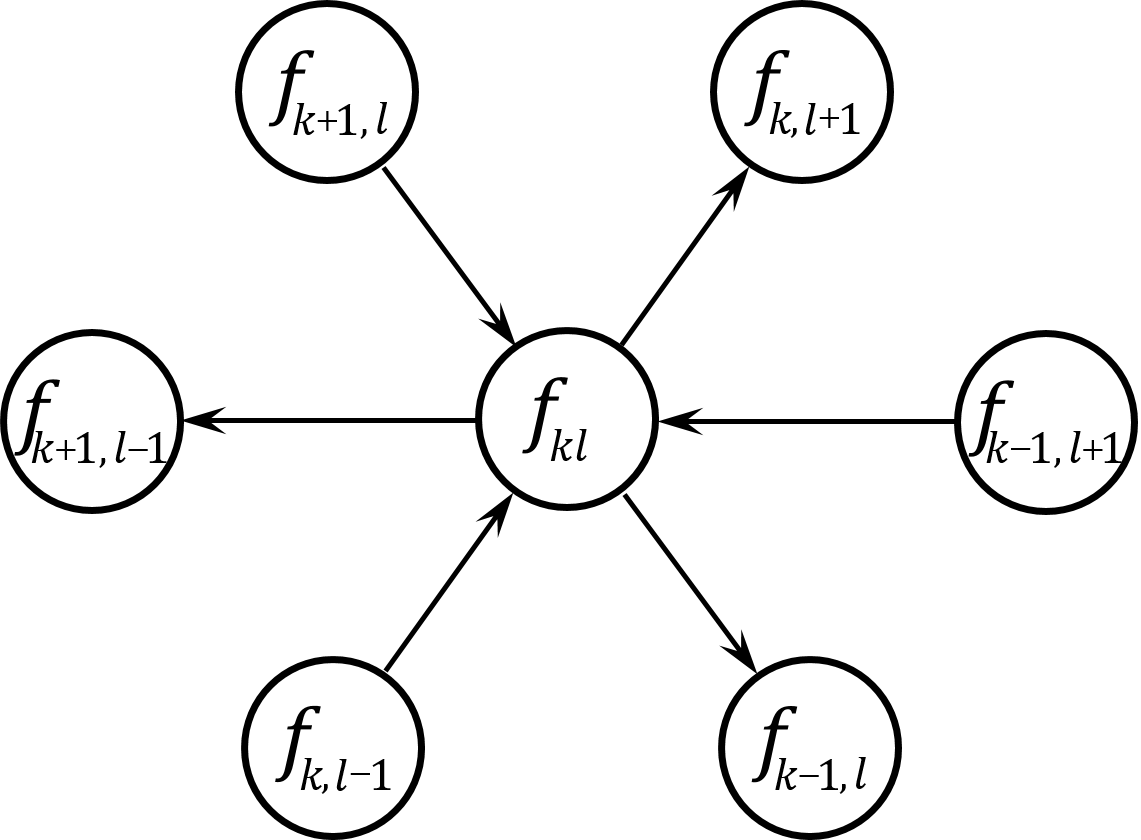}
\end{center}
\caption{The neighborhood of $f_{kl}$ for $k+l <n$ (convention~\eqref{eq:fconv} is in place).}
\label{f:nbd_fkl}
\end{figure}

\vspace{5mm}
\begin{figure}[htb]
\begin{subfigure}[t]{2.8in}
\begin{center}
\includegraphics[scale=0.75]{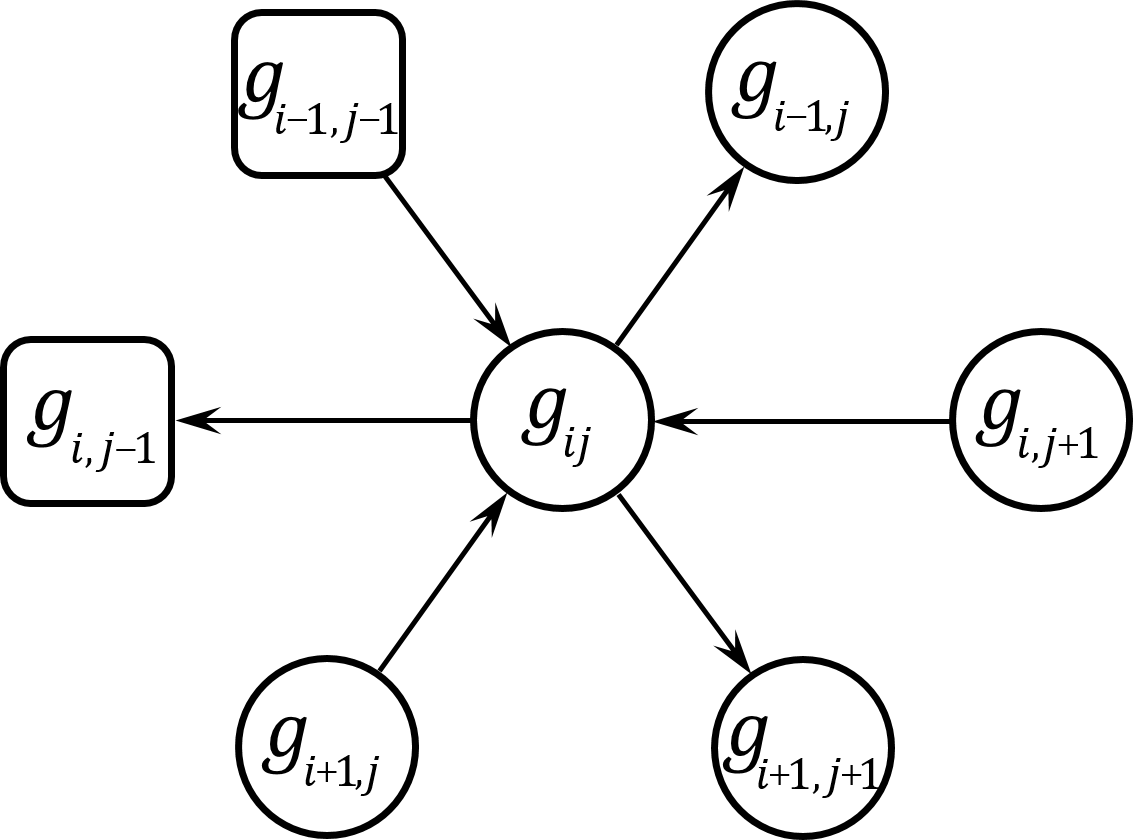}
\end{center}
\subcaption{Case $i < n$.}
\label{f:nbd_gij}
\end{subfigure}
\begin{subfigure}[t]{2.8in}
\begin{center}
\includegraphics[scale=0.75]{inbd_gnj}
\end{center}
\subcaption{Case $i = n$.}
\label{f:nbd_gnj}
\end{subfigure}
\caption{The neighborhood of $g_{ij}$ for $1 < j \leq i\leq n$ (convention~\eqref{eq:gconv} is in place).}
\label{f:nbd_g}
\end{figure}

\vspace{5mm}
\begin{figure}[htb]
\begin{subfigure}[t]{1.8in}
\begin{center}
\includegraphics[scale=0.75]{inbd_g11}
\end{center}
\subcaption{Case $i=1$.}
\label{f:nbd_g11}
\end{subfigure}
\begin{subfigure}[t]{1.8in}
\begin{center}
\includegraphics[scale=0.75]{inbd_gi1}
\end{center}
\subcaption{Case $i < n$.}
\label{f:nbd_gi1}
\end{subfigure}
\begin{subfigure}[t]{1.8in}
\begin{center}
\includegraphics[scale=0.75]{inbd_gn1}
\end{center}
\subcaption{Case $i=n$.}
\label{f:nbd_gn1}
\end{subfigure}
\caption{The neighborhood of $g_{i1}$ for $1 \leq i \leq n$.}
\label{f:nbd_g1}
\end{figure}

\vspace{5mm}
\begin{figure}[htb]
\begin{subfigure}[t]{2.8in}
\begin{center}
\includegraphics[scale=0.75]{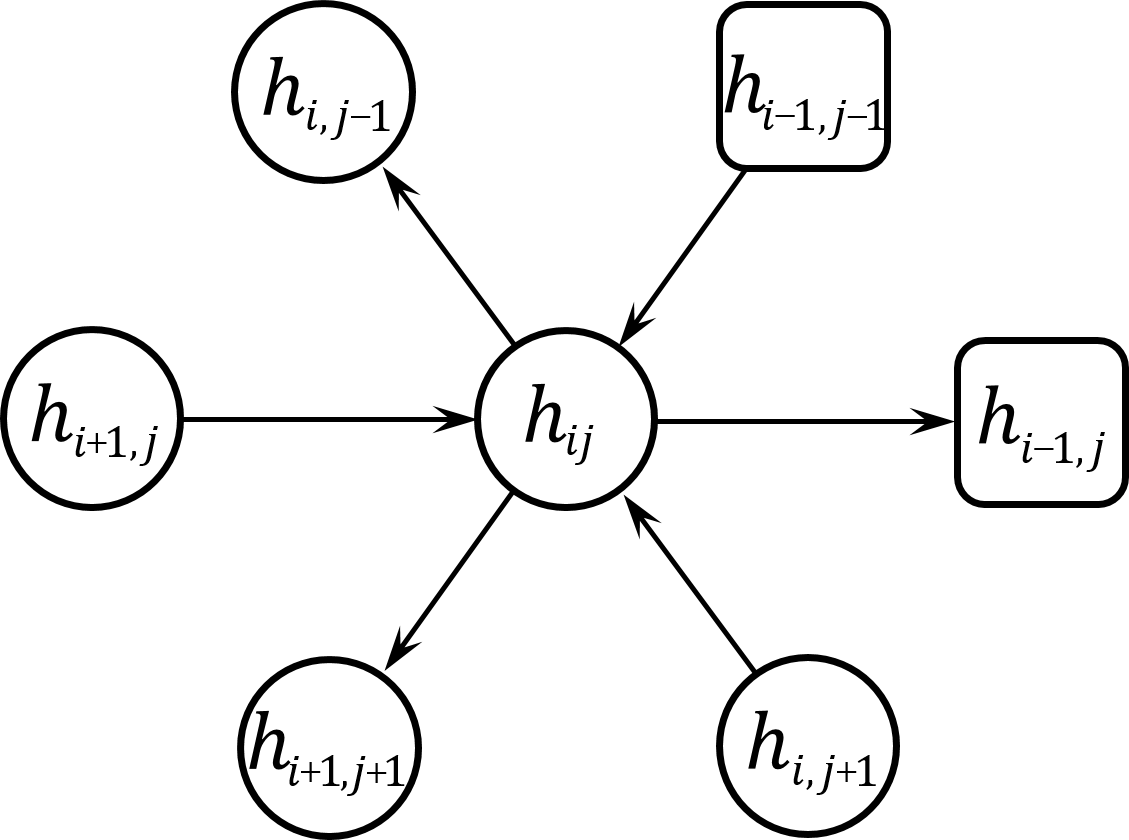}
\end{center}
\subcaption{Case $j < n$.}
\label{f:nbd_hij}
\end{subfigure}
\begin{subfigure}[t]{2.8in}
\begin{center}
\includegraphics[scale=0.75]{inbd_hin}
\end{center}
\subcaption{Case $j=n$.}
\label{f:nbd_hin}
\end{subfigure}
\caption{The neighborhood of $h_{ij}$ for $1 < i < j \leq n$.}
\label{f:nbd_h}
\end{figure}

\vspace{5mm}
\begin{figure}[htb]
\begin{subfigure}[t]{2.8in}
\begin{center}
\includegraphics[scale=0.75]{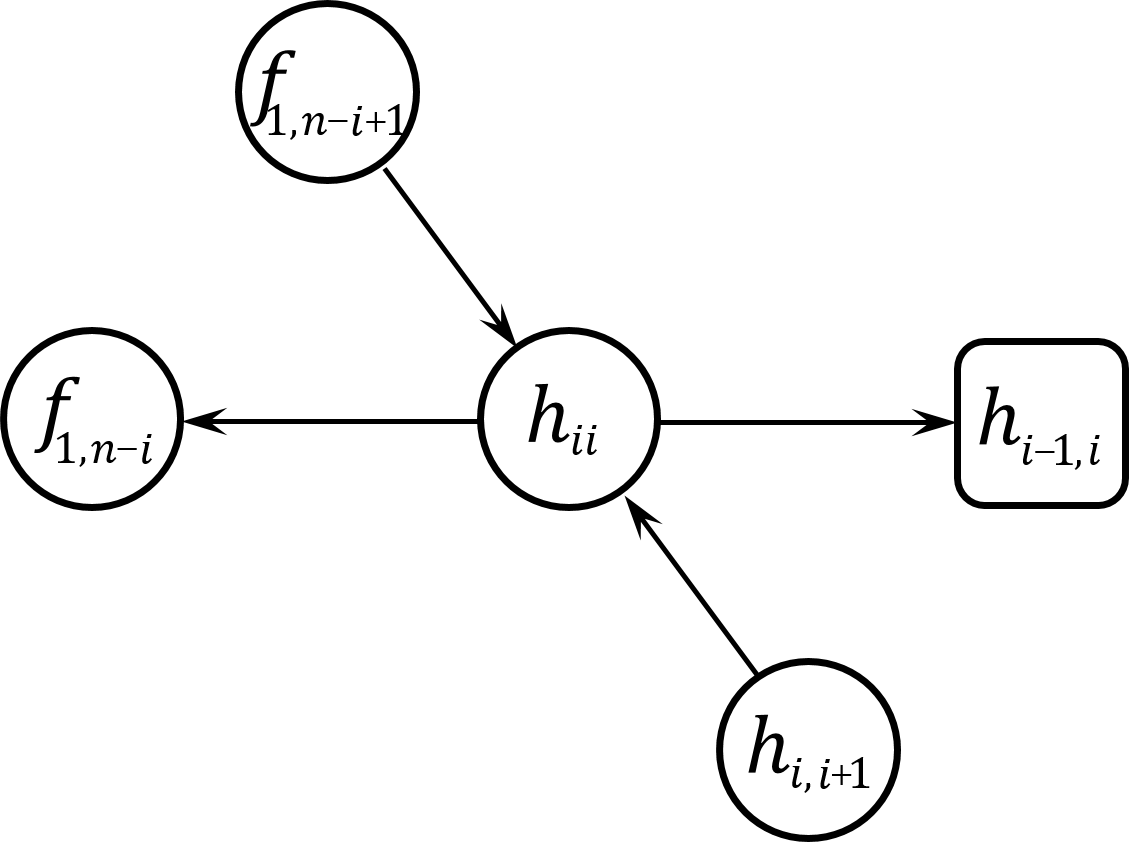}
\end{center}
\subcaption{Case $1 < i = j < n$.}
\label{f:nbd_hii}
\end{subfigure}
\begin{subfigure}[t]{2.8in}
\begin{center}
\includegraphics[scale=0.75]{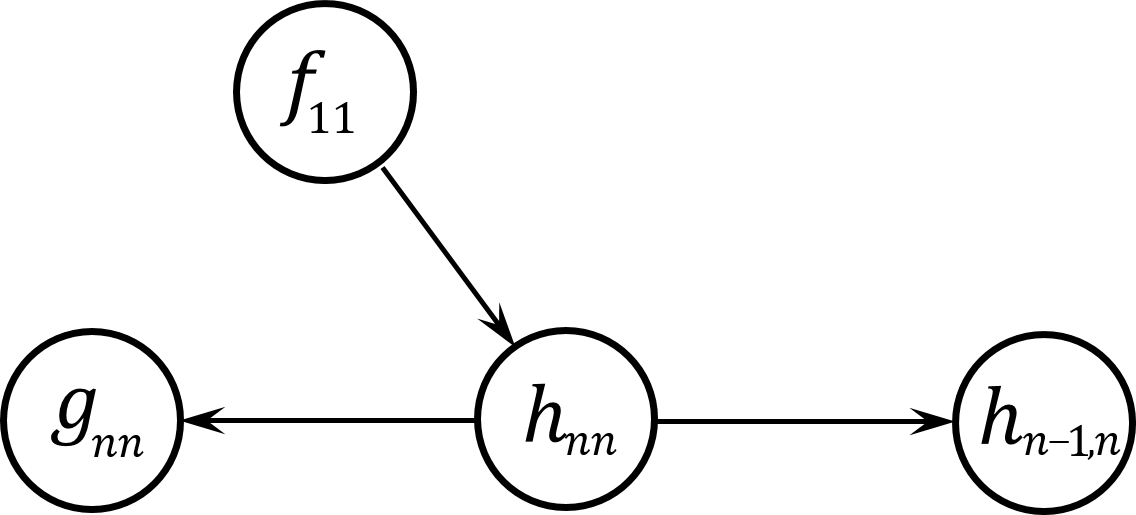}
\end{center}
\subcaption{Case $i = j = n$.}
\label{f:nbd_hnn}
\end{subfigure}
\caption{The neighborhood of $h_{ij}$ for $1 < i = j \leq n$.}
\label{f:nbd_hi}
\end{figure}
\vspace{5mm}
\begin{figure}[htb]
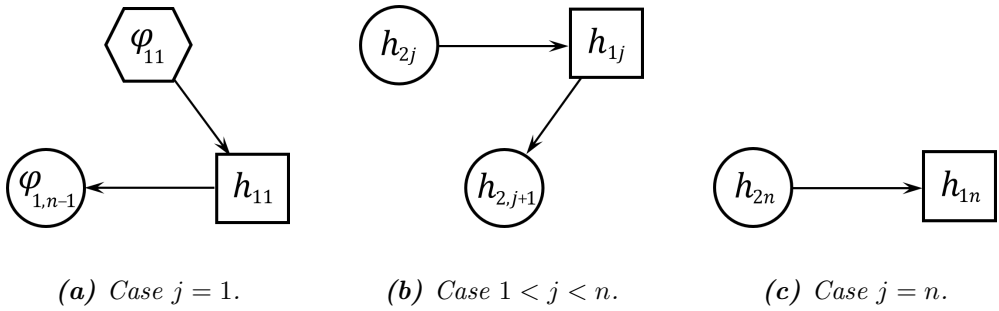

\begin{subfigure}[t]{1.8in}
\begin{center}
\includegraphics[scale=0.75]{inbd_h11}
\end{center}
\subcaption{Case $j=1$.}
\label{f:nbd_h11}
\end{subfigure}
\begin{subfigure}[t]{1.8in}
\begin{center}
\includegraphics[scale=0.75]{inbd_h1j}
\end{center}
\subcaption{Case $1 < j < n$.}
\label{f:nbd_h1j}
\end{subfigure}
\begin{subfigure}[t]{1.8in}
\begin{center}
\includegraphics[scale=0.75]{inbd_h1n}
\end{center}
\subcaption{Case $j=n$.}
\label{f:nbd_h1n}
\end{subfigure}
\caption{The neighborhood of $h_{1j}$.}
\label{f:nbd_h1}
\end{figure}

\clearpage

\subsubsection{The quiver for a nontrivial BD pair (algorithm)}
If $\bg = (\bg^r, \bg^c)$ is nontrivial, one proceeds as follows. First, draw the quiver for the case of the trivial BD pair, employing the neighborhoods as described above. Second, add new arrows as prescribed by the following algorithm:
\begin{enumerate}[1)]
\item If $i \in \Gamma_1^r$, unfreeze $g_{i+1,1}$ and add additional arrows, as indicated in Figure \ref{f:nbd_extra}\subref{f:nbd_gi11};
\item If $j \in \Gamma_2^c$, unfreeze $h_{1,j+1}$ and add additional arrows, as indicated in Figure \ref{f:nbd_extra}\subref{f:nbd_h1j1};
\item Repeat for all roots in $\Gamma_1^r$ and $\Gamma_2^c$.
\end{enumerate}
Note that the algorithm does not depend on the order of the roots of $\Gamma_1^r$ and $\Gamma_2^c$. Indeed, adding new arrows corresponds to adding a certain matrix (determined by the figure) to the current adjacency matrix of the quiver; since addition of matrices is commutative, the order of the roots is irrelevant. 

\vspace{5mm}
\begin{figure}[htb]
\begin{subfigure}[t]{2.8in}
\begin{center}
\includegraphics[scale=0.75]{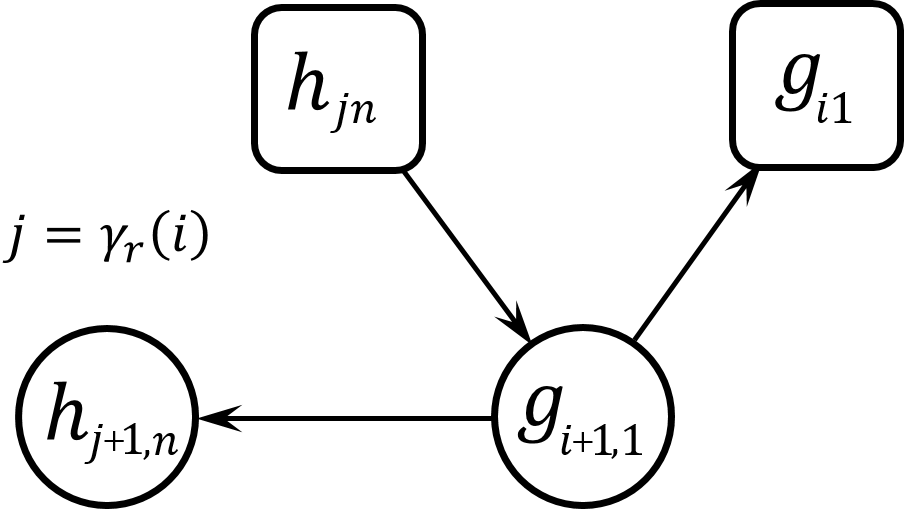} 
\end{center}
\subcaption{Case $i \in \Gamma_1^r$.}
\label{f:nbd_gi11}
\end{subfigure}
\begin{subfigure}[t]{2.8in}
\begin{center}
\includegraphics[scale=0.75]{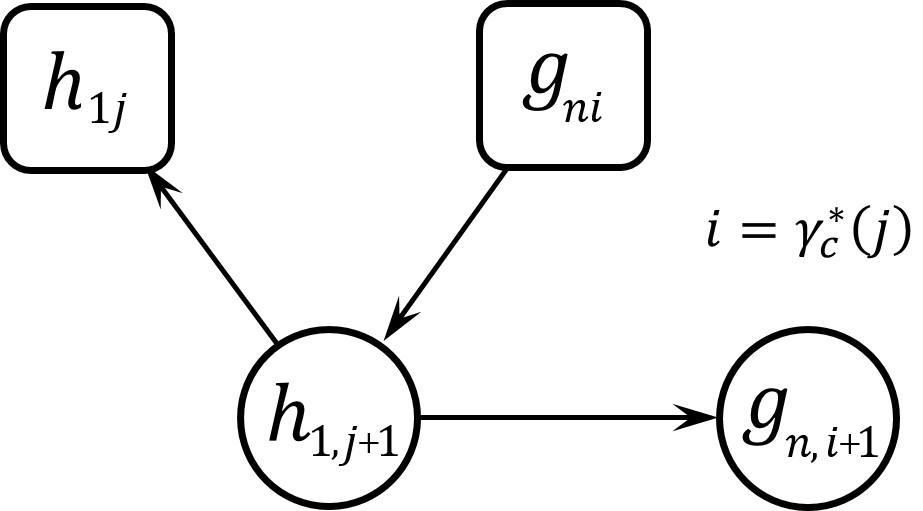}
\end{center}
\subcaption{Case $j \in \Gamma_2^c$.}
\label{f:nbd_h1j1}
\end{subfigure}
\caption{Additional arrows for $g_{i+1,1}$ and $h_{1,j+1}$.}
\label{f:nbd_extra}
\end{figure}
\subsubsection{The quiver for a nontrivial BD pair (explicit)}
As an alternative to the algorithm described in the previous paragraph, we provide explicit neighborhoods of the variables $g_{i1}$, $h_{1i}$, $g_{ni}$, $h_{in}$, $1 \leq i \leq n$ in the case of a nontrivial BD pair. All the other neighborhoods are the same as in the case of the trivial BD pair.

\vspace{5mm}
\begin{figure}[htb]
\begin{center}
\begin{subfigure}[t]{2.6in}
\begin{center}
\includegraphics[scale=0.75]{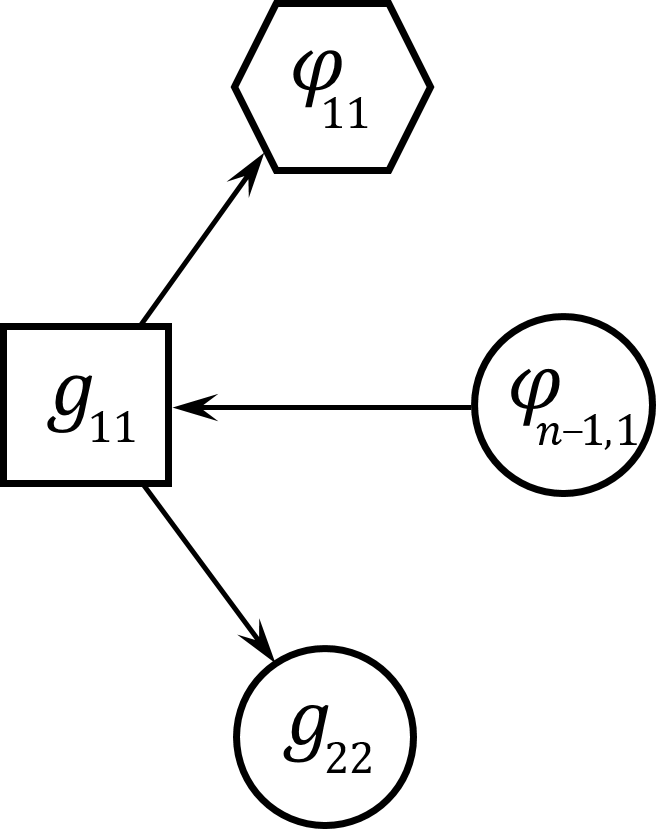} 
\end{center}
\subcaption{Case $1 \notin \Gamma_1^r$.}
\label{f:inbd_g11_0}
\end{subfigure}
\begin{subfigure}[t]{2.6in}
\begin{center}
\includegraphics[scale=0.75]{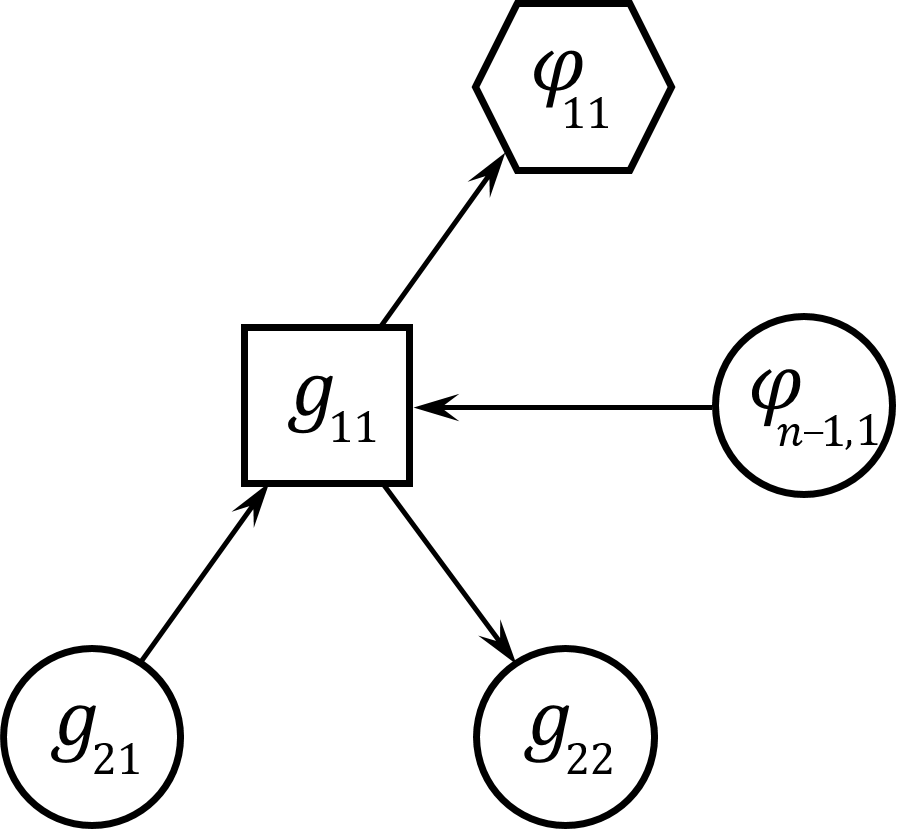}
\end{center}
\subcaption{Case $1 \in \Gamma_1^r$.}
\label{f:inbd_g11_1}
\end{subfigure}
\caption{The neighborhood of $g_{11}$.}
\label{f:inbd_g11}
\end{center}
\end{figure}

\vspace{5mm}
\begin{figure}[htb]
\begin{center}
\begin{subfigure}[t]{2.6in}
\begin{center}
\includegraphics[scale=0.75]{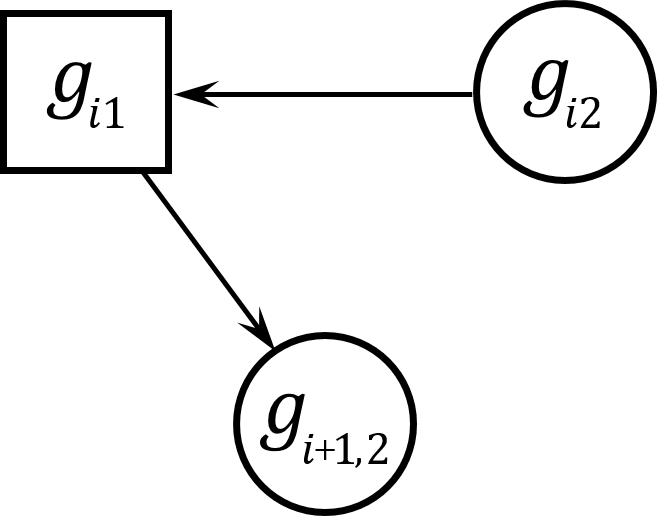} 
\end{center}
\subcaption{Case $i-1,i \notin \Gamma_1^r$.}
\label{f:inbd_gi1_0}
\end{subfigure}
\begin{subfigure}[t]{2.6in}
\begin{center}
\includegraphics[scale=0.75]{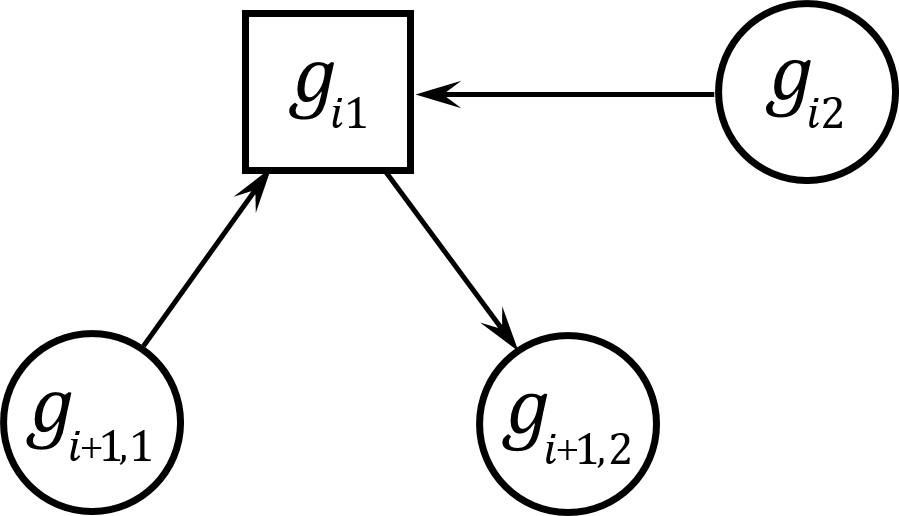}
\end{center}
\subcaption{Case $i-1 \notin \Gamma_1^r$, $i \in \Gamma_1^r$, $j:=\gamma_r(i)$.}
\label{f:inbd_gi1_1}
\end{subfigure}
\begin{subfigure}[t]{2.6in}
\vspace{4mm}
\begin{center}
\includegraphics[scale=0.75]{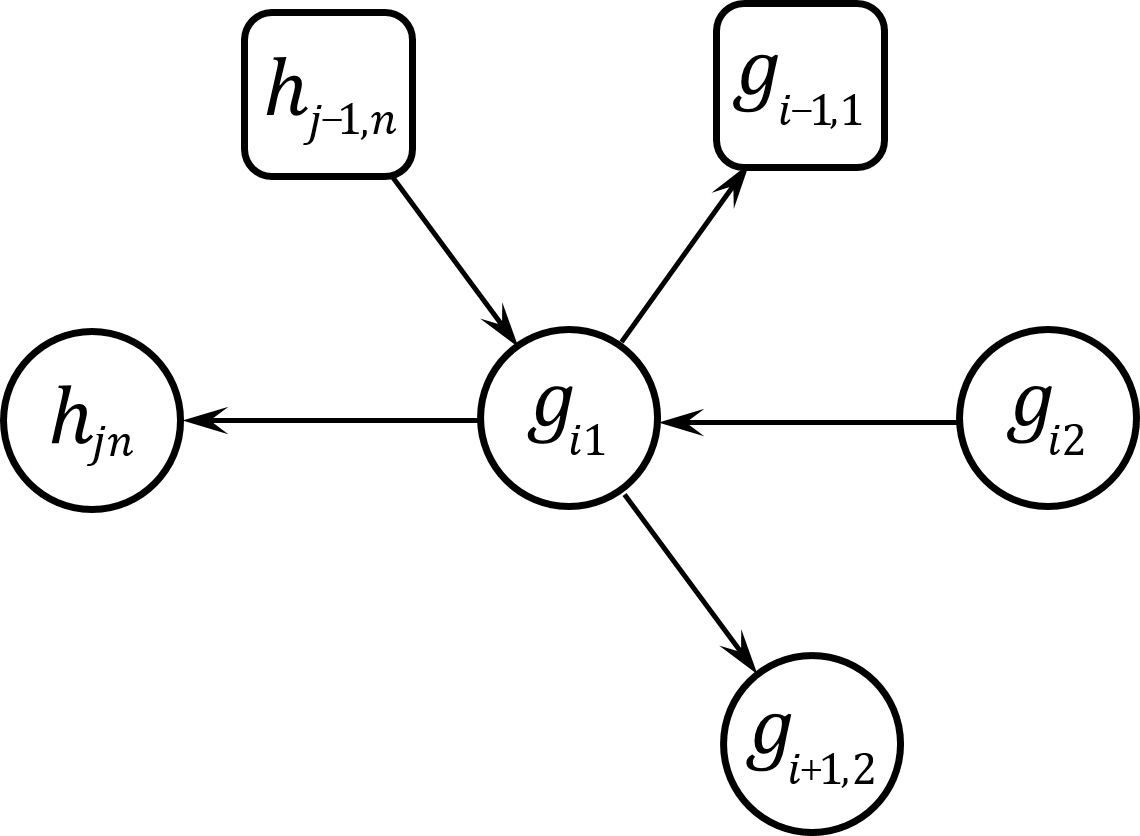}
\end{center}
\subcaption{Case $i-1 \in \Gamma_1^r$, $i \notin \Gamma_1^r$,\newline\hphantom{Cas} $j-1:=\gamma_r(i-1)$.}
\label{f:inbd_gi1_2}
\end{subfigure}
\hspace{7mm}
\begin{subfigure}[t]{2.6in}
\vspace{4mm}
\begin{center}
\includegraphics[scale=0.75]{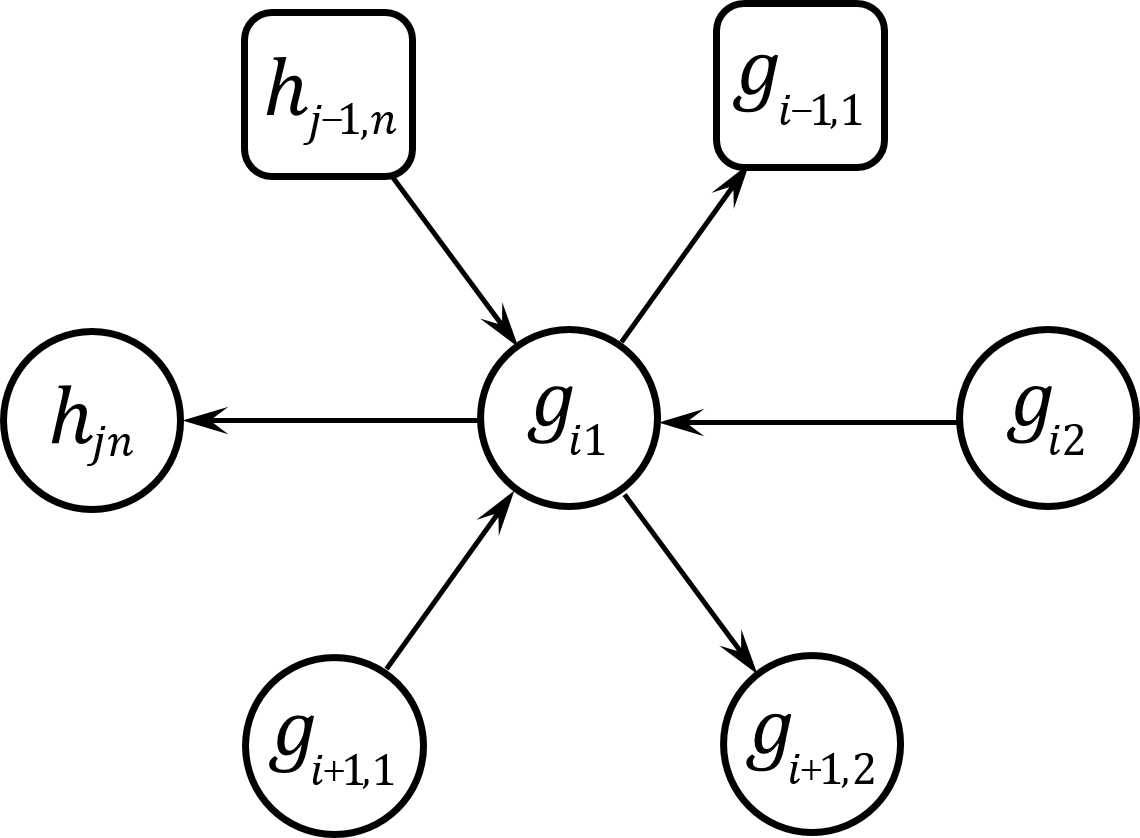}
\end{center}
\subcaption{Case $i-1,i\in \Gamma_1^r$, $j:=\gamma_r(i)$.}
\label{f:inbd_gi1_3}
\end{subfigure}
\caption{The neighborhood of $g_{i1}$ for $1 < i < n$.}
\label{f:inbd_gi1}
\end{center}
\end{figure}

\vspace{5mm}
\begin{figure}[htb]
\begin{center}
\begin{subfigure}[t]{2.6in}
\begin{center}
\includegraphics[scale=0.75]{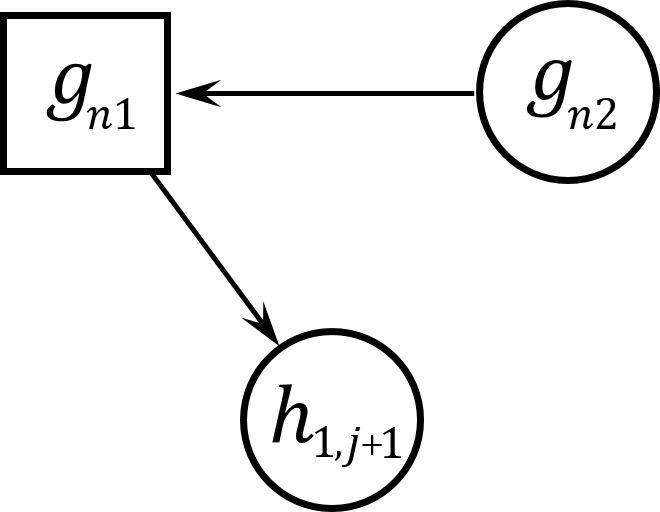} 
\end{center}
\subcaption{Case $1 \notin \Gamma_1^c$, $n-1 \notin \Gamma_1^r$.}
\label{f:inbd_gn1_0}
\end{subfigure}
\begin{subfigure}[t]{2.6in}
\begin{center}
\includegraphics[scale=0.75]{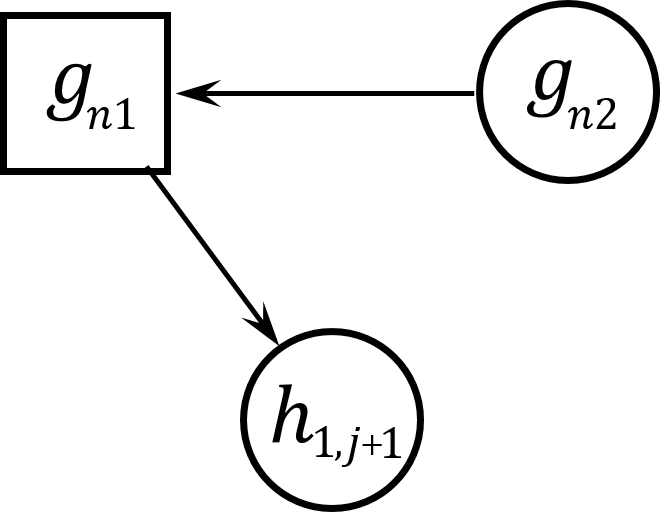}
\end{center}
\subcaption{Case $1\in\Gamma_1^c$, $n-1 \notin \Gamma_1^r$, $j:=\gamma_c(1)$}
\label{f:inbd_gn1_1}
\end{subfigure}
\vspace{5mm}
\begin{subfigure}[b]{2.6in}
\vspace{4mm}
\begin{center}
\includegraphics[scale=0.75]{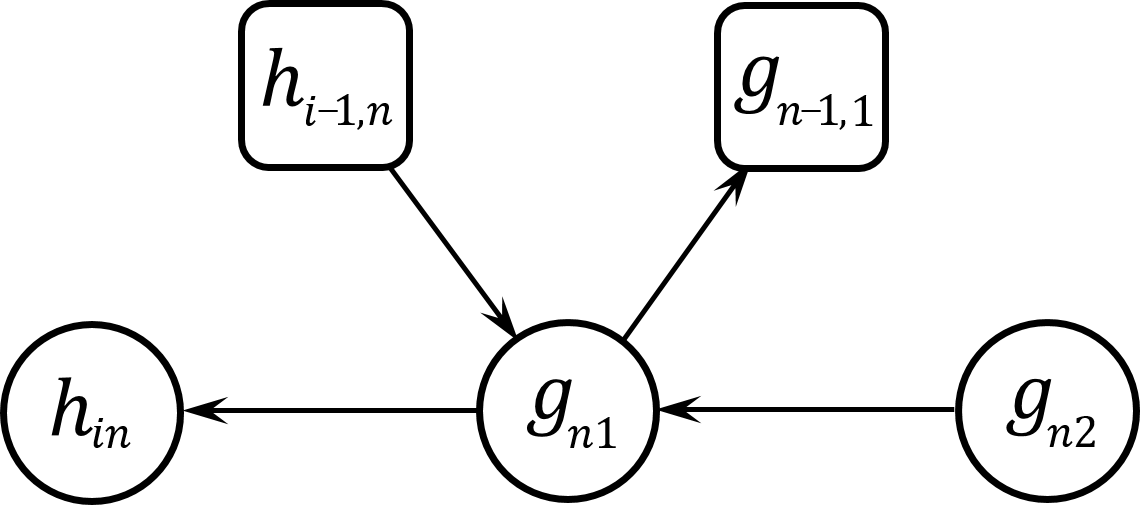}
\end{center}
\subcaption{Case $1\notin\Gamma_1^c$, $n-1 \in \Gamma_1^r$, \newline\hphantom{Cas} $i-1:=\gamma_r(n-1)$.}
\label{f:inbd_gn1_2}
\end{subfigure}
\hspace{7mm}
\begin{subfigure}[b]{2.6in}
\vspace{4mm}
\begin{center}
\includegraphics[scale=0.75]{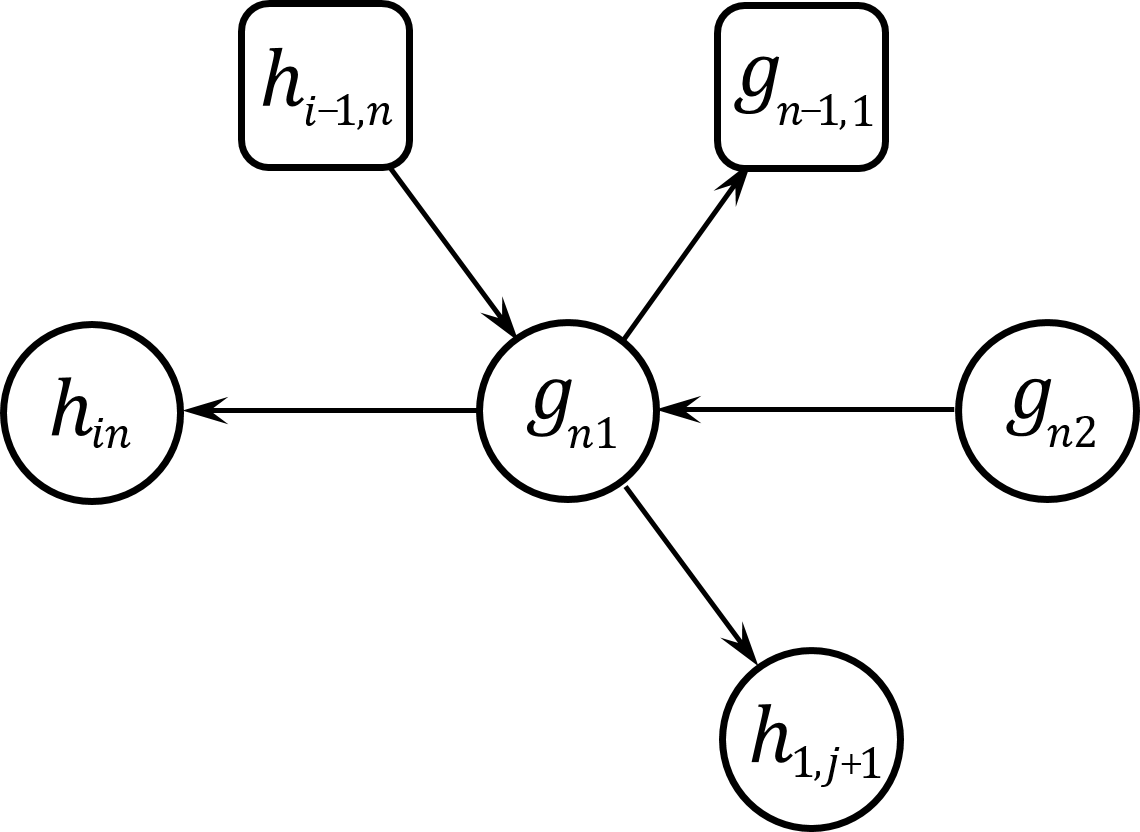}
\end{center}
\subcaption{Case $1\in\Gamma_1^c$, $n-1 \in \Gamma_1^r$,\newline\hphantom{Cas} $j:=\gamma_c(1)$, \mbox{$i-1:=\gamma_r(n-1)$}.}
\label{f:inbd_gn1_3}
\end{subfigure}
\caption{The neighborhood of $g_{n1}$.} 
\label{f:inbd_gn1}
\end{center}
\end{figure}

\vspace{5mm}
\begin{figure}[htb]
\begin{center}
\begin{subfigure}[t]{2.6in}
\begin{center}
\includegraphics[scale=0.75]{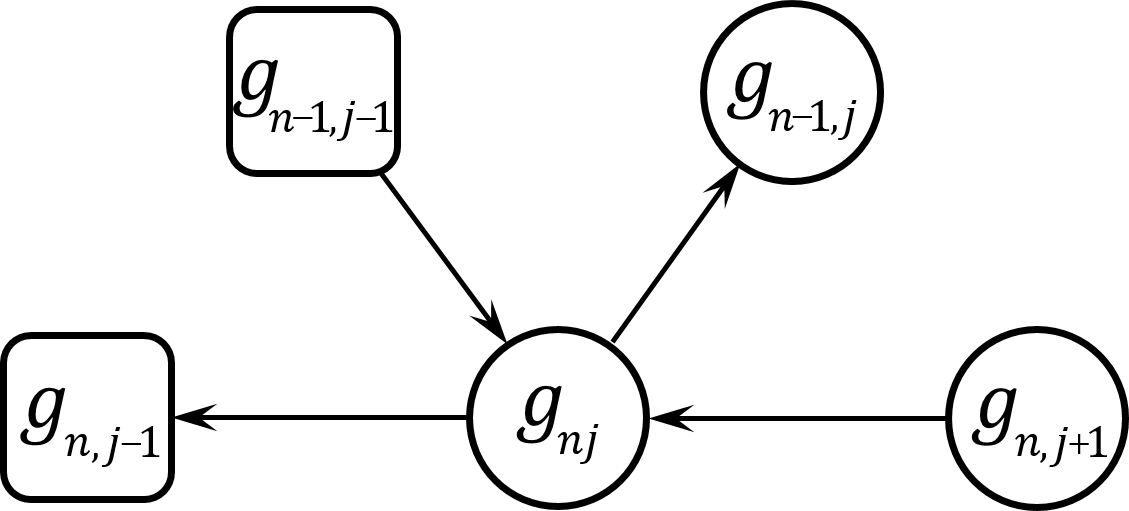} 
\end{center}
\subcaption{Case $j-1,j\notin\Gamma_1^c$.}
\label{f:inbd_gnj_0}
\end{subfigure}
\begin{subfigure}[t]{2.6in}
\begin{center}
\includegraphics[scale=0.75]{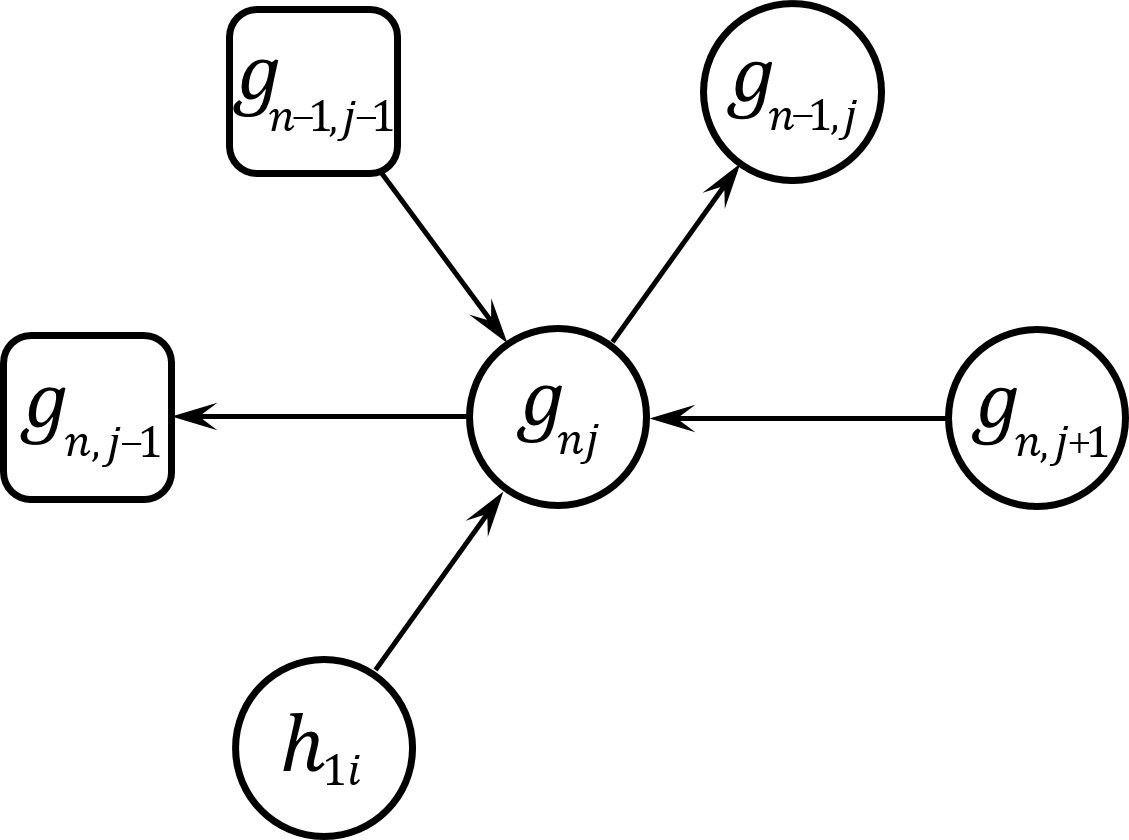}
\end{center}
\subcaption{Case $j-1\in\Gamma_1^c$, $j \notin \Gamma_1^c$,\newline\hphantom{Cas} $i-1:=\gamma_c(j-1)$.}
\label{f:inbd_gnj_1}
\end{subfigure}
\begin{subfigure}[b]{2.6in}
\vspace{4mm}
\begin{center}
\includegraphics[scale=0.75]{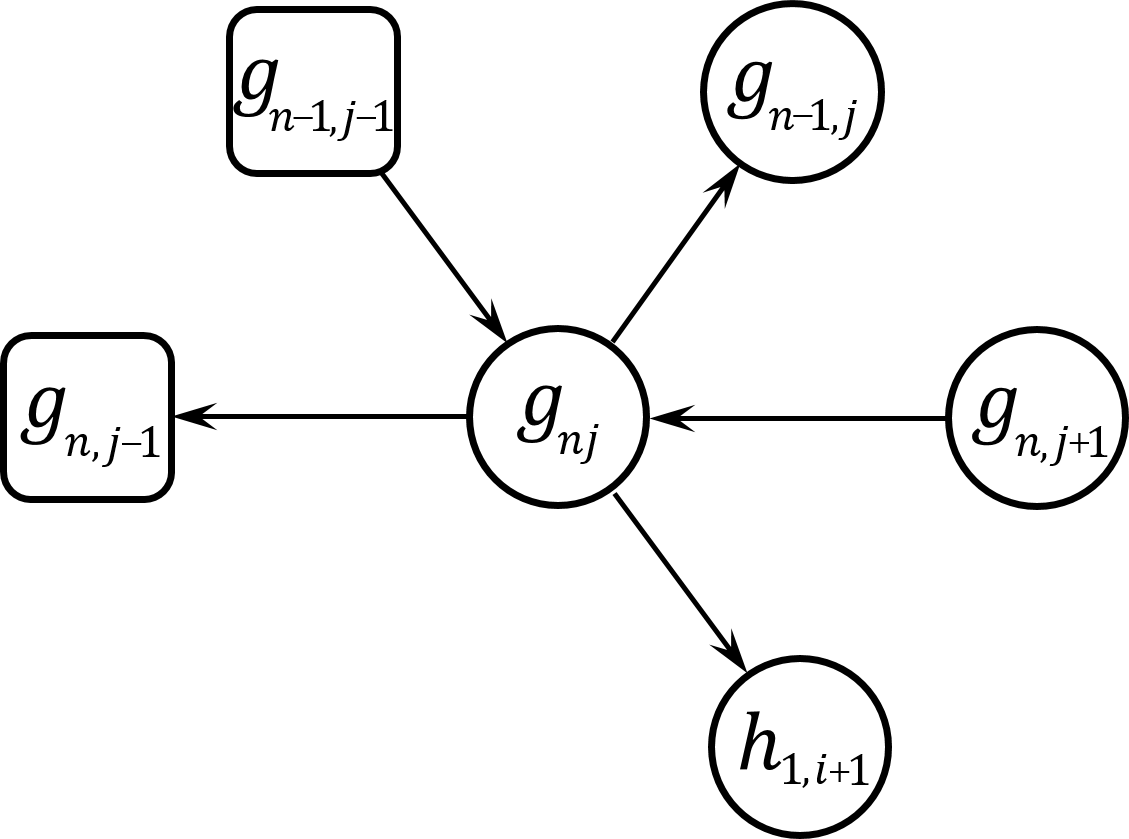}
\end{center}
\subcaption{Case $j-1\notin\Gamma_1^c$, $j \in \Gamma_1^c$, $i:=\gamma_c(j)$.}
\label{f:inbd_gnj_2}
\end{subfigure}
\hspace{7mm}
\begin{subfigure}[b]{2.6in}
\vspace{4mm}
\begin{center}
\includegraphics[scale=0.75]{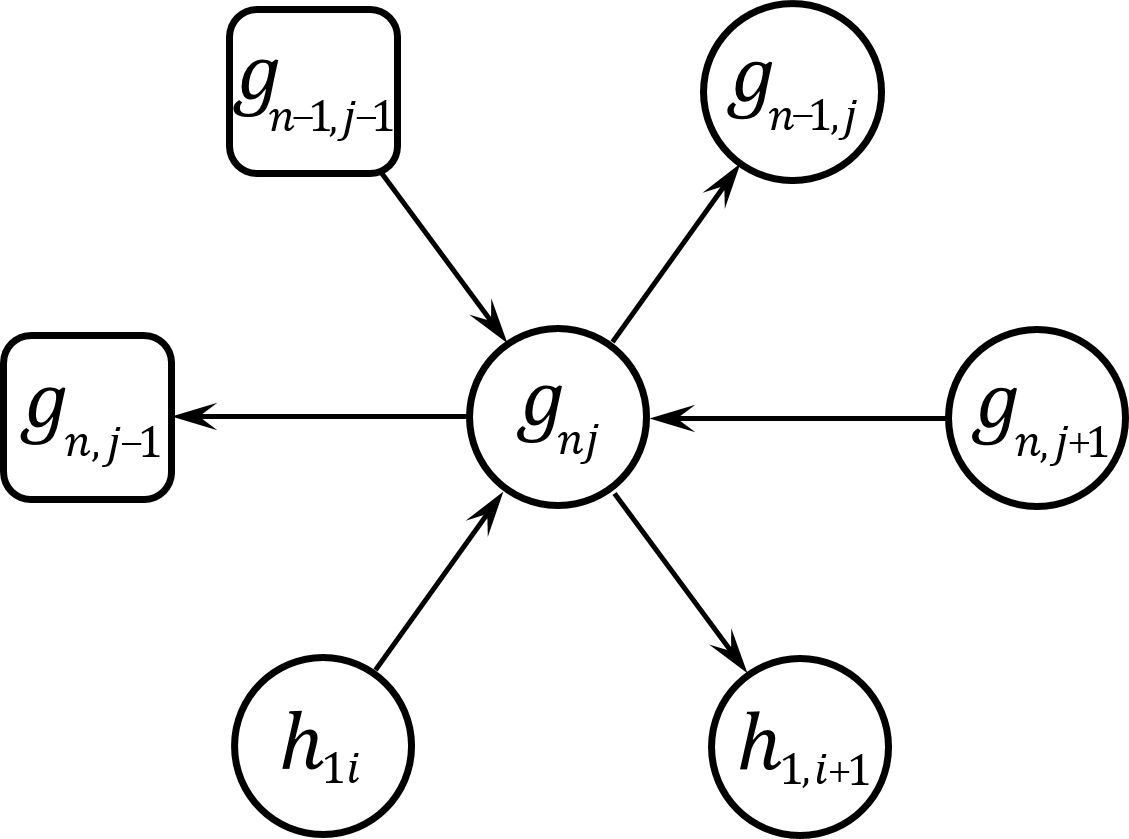}
\end{center}
\subcaption{Case $j-1,j\in\Gamma_1^c$, $i:=\gamma_c(j)$.}
\label{f:inbd_gnj_3}
\end{subfigure}
\caption{The neighborhood of $g_{nj}$ for $2 \leq j \leq n$.}
\label{f:inbd_gnj}
\end{center}
\end{figure}

\vspace{5mm}
\begin{figure}[htb]
\begin{subfigure}[t]{2.6in}
\begin{center}
\includegraphics[scale=0.75]{inbd_hnn}
\end{center}
\subcaption{Case $n-1 \notin \Gamma_2^r$.}
\label{f:inbd_hnn_0}
\end{subfigure}
\begin{subfigure}[t]{2.6in}
\begin{center}
\includegraphics[scale=0.75]{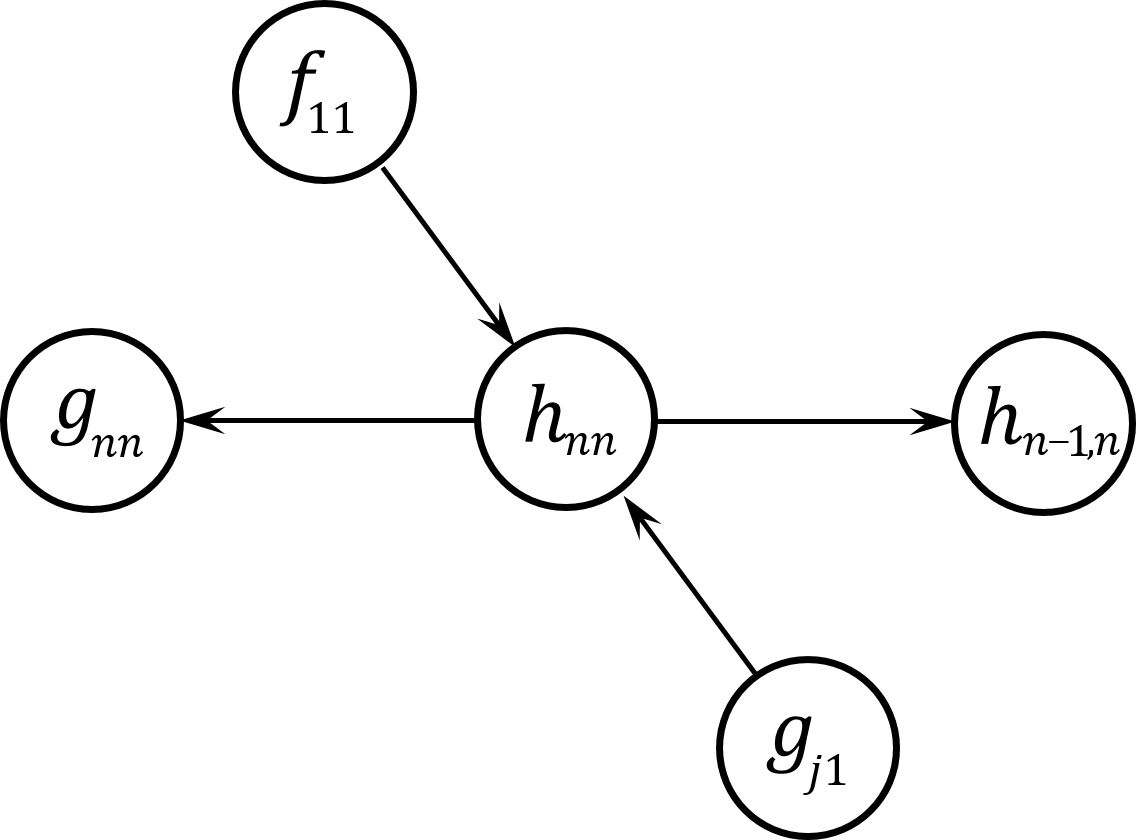}
\end{center}
\subcaption{Case $n-1 \in \Gamma_2^r$, $j-1:=\gamma_r^*(n-1)$.}
\label{f:nbd_hnn_1}
\end{subfigure}
\caption{The neighborhood of $h_{nn}$.}
\label{f:inbd_hnn}
\end{figure}

\vspace{5mm}
\begin{figure}[htb]
\begin{center}
\begin{subfigure}[t]{2.6in}
\begin{center}
\includegraphics[scale=0.75]{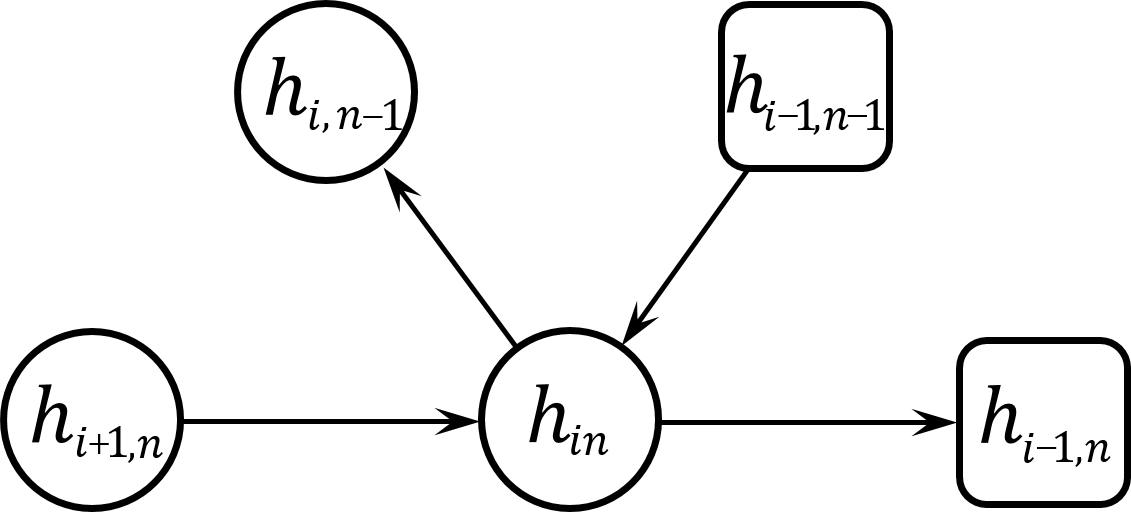} 
\end{center}
\subcaption{Case $i-1,i\notin\Gamma_2^r$.}
\label{f:inbd_hin_0}
\end{subfigure}
\begin{subfigure}[t]{2.6in}
\begin{center}
\includegraphics[scale=0.75]{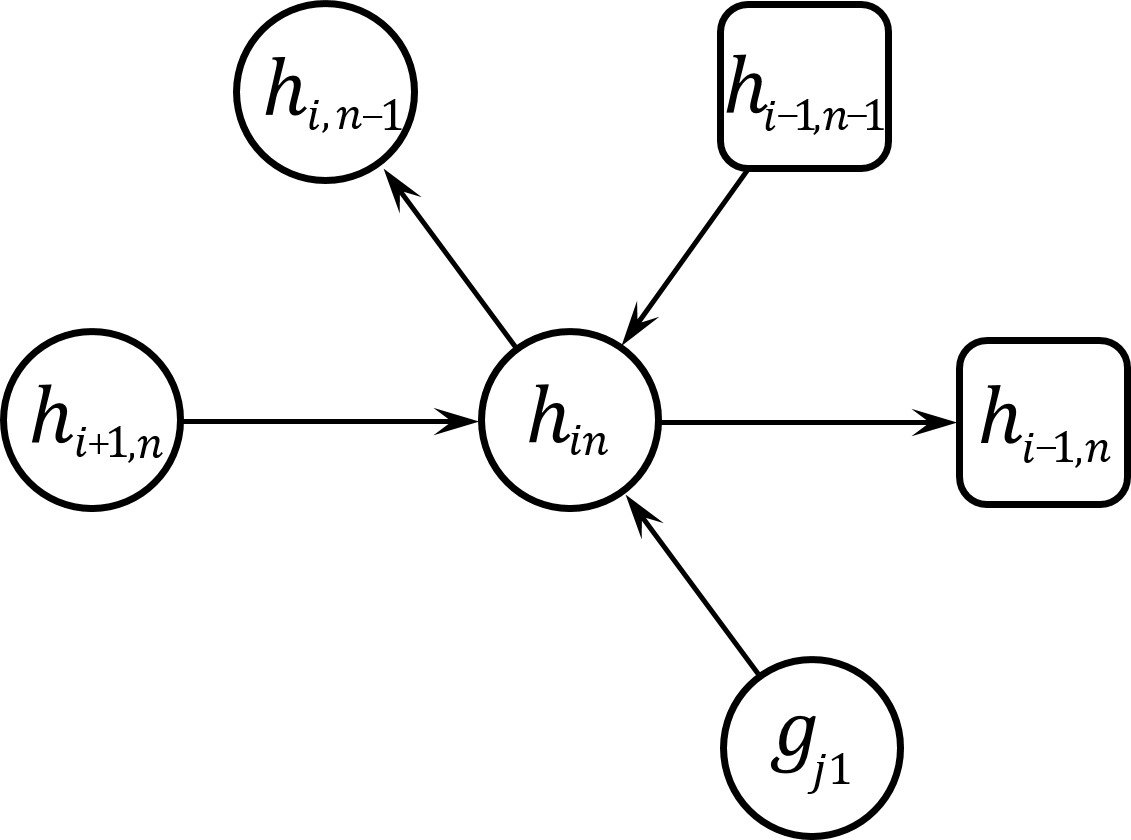}
\end{center}
\subcaption{Case $i-1\in\Gamma_2^r$, $i \notin \Gamma_2^r$, \newline\hphantom{Cas}$j-1:=\gamma_r^*(i-1)$.}
\label{f:inbd_hin_1}
\end{subfigure}
\hspace{7mm}
\begin{subfigure}[t]{2.6in}
\vspace{4mm}
\begin{center}
\includegraphics[scale=0.75]{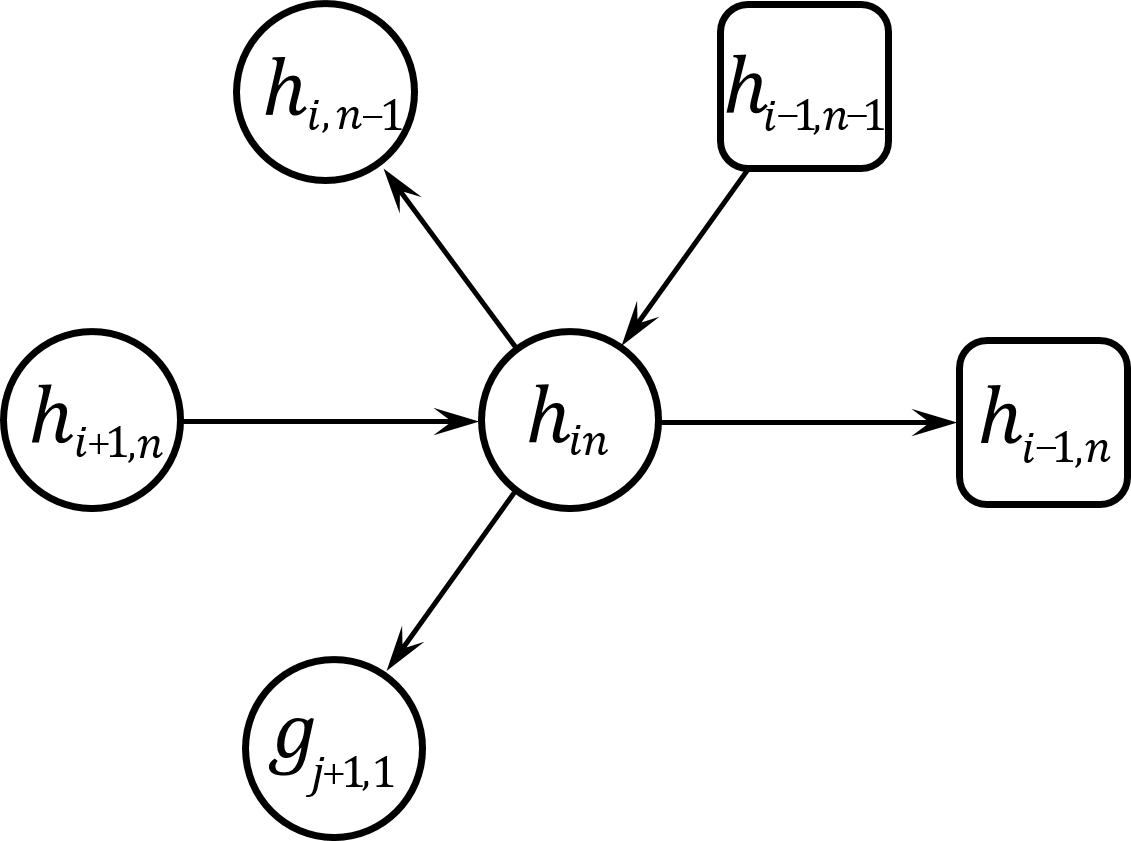}
\end{center}
\subcaption{Case $i-1\notin\Gamma_2^r$, $i \in \Gamma_2^r$, $i:=\gamma_r^*(j)$.}
\label{f:inbd_hin_2}
\end{subfigure}
\hspace{7mm}
\begin{subfigure}[t]{2.6in}
\vspace{4mm}
\begin{center}
\includegraphics[scale=0.75]{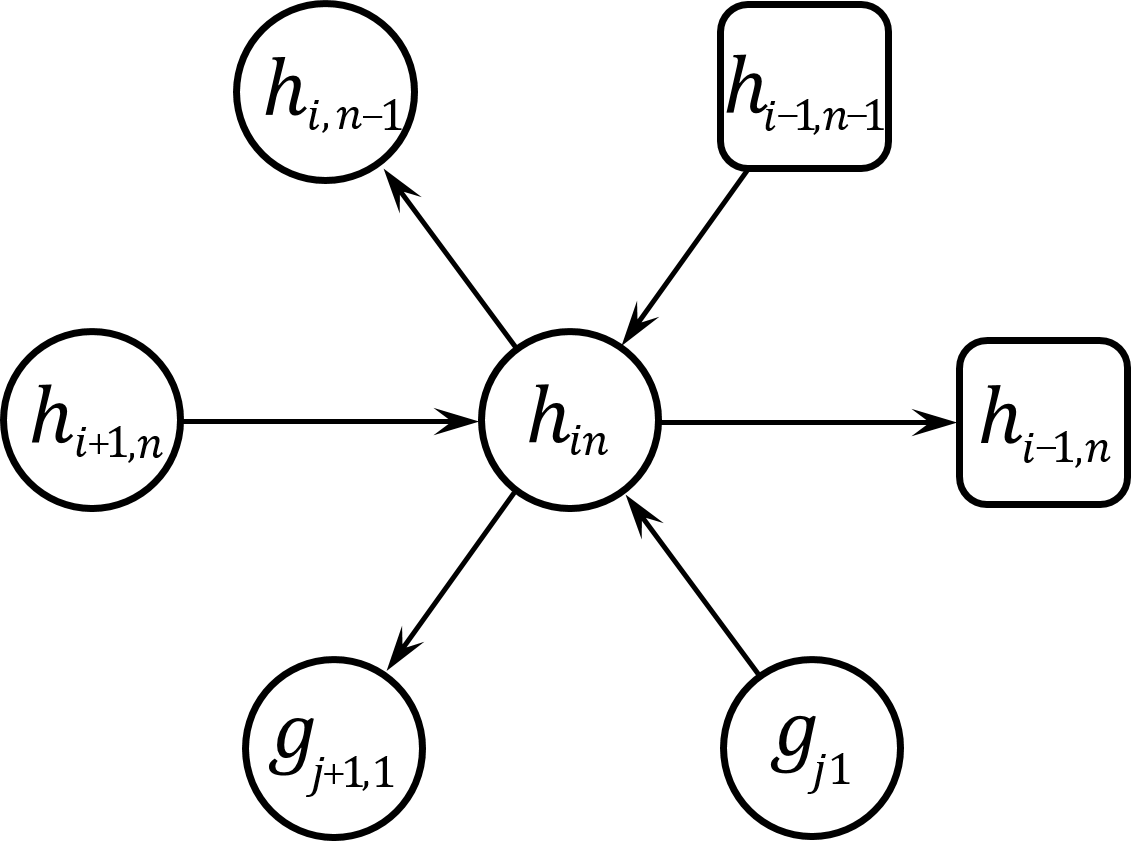}
\end{center}
\subcaption{Case $i-1,i\in\Gamma_2^r$, $i:=\gamma_r^*(j)$.}
\label{f:inbd_hin_3}
\end{subfigure}
\caption{The neighborhood of $h_{in}$ for $2 \leq j \leq n-1$.} 
\label{f:inbd_hin}
\end{center}
\end{figure}

\vspace{5mm}
\begin{figure}[htb]
\begin{center}
\begin{subfigure}[t]{2.6in}
\begin{center}
\includegraphics[scale=0.75]{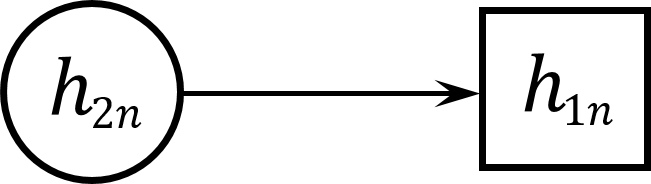} 
\end{center}
\subcaption{Case $1\notin \Gamma_2^r$, $n-1\notin \Gamma_2^c$.}
\label{f:inbd_h1n_0}
\end{subfigure}
\begin{subfigure}[t]{2.6in}
\begin{center}
\includegraphics[scale=0.75]{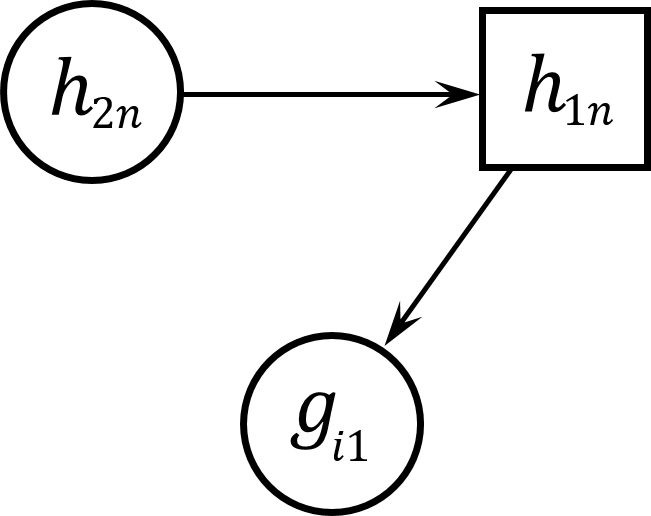}
\end{center}
\subcaption{Case $1\in \Gamma_2^r$, $n-1\notin\Gamma_2^c$, $i-1:=\gamma_r^*(1)$.}
\label{f:inbd_h1n_1}
\end{subfigure}
\begin{subfigure}[b]{2.6in}
\vspace{4mm}
\begin{center}
\includegraphics[scale=0.75]{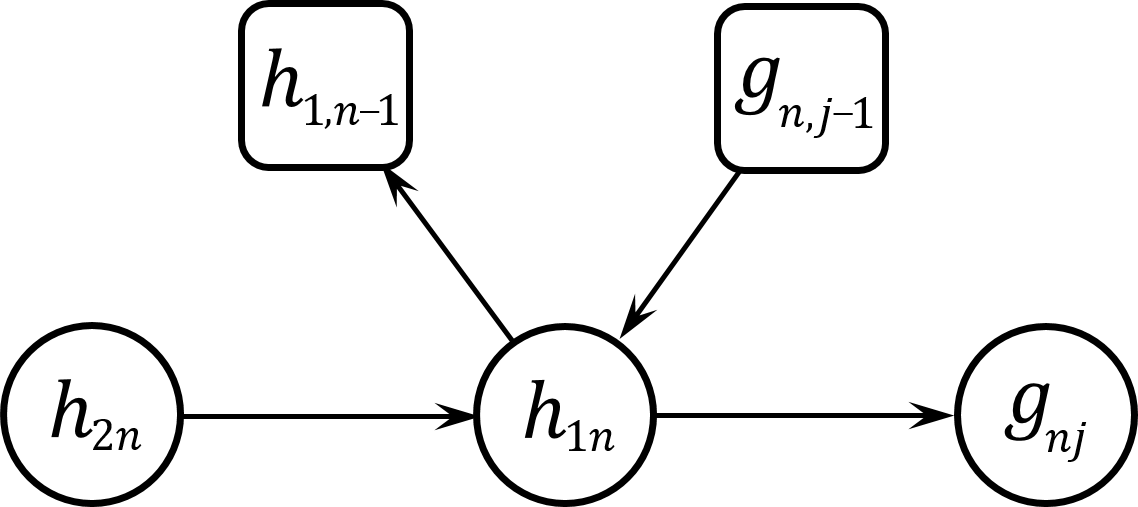}
\end{center}
\subcaption{Case $1\notin \Gamma_2^r$, $n-1\in \Gamma_2^c$,\newline\hphantom{Cas} $j-1:=\gamma_c^*(n-1)$.}
\label{f:inbd_h1n_2}
\end{subfigure}
\hspace{7mm}
\begin{subfigure}[b]{2.6in}
\vspace{4mm}
\begin{center}
\includegraphics[scale=0.75]{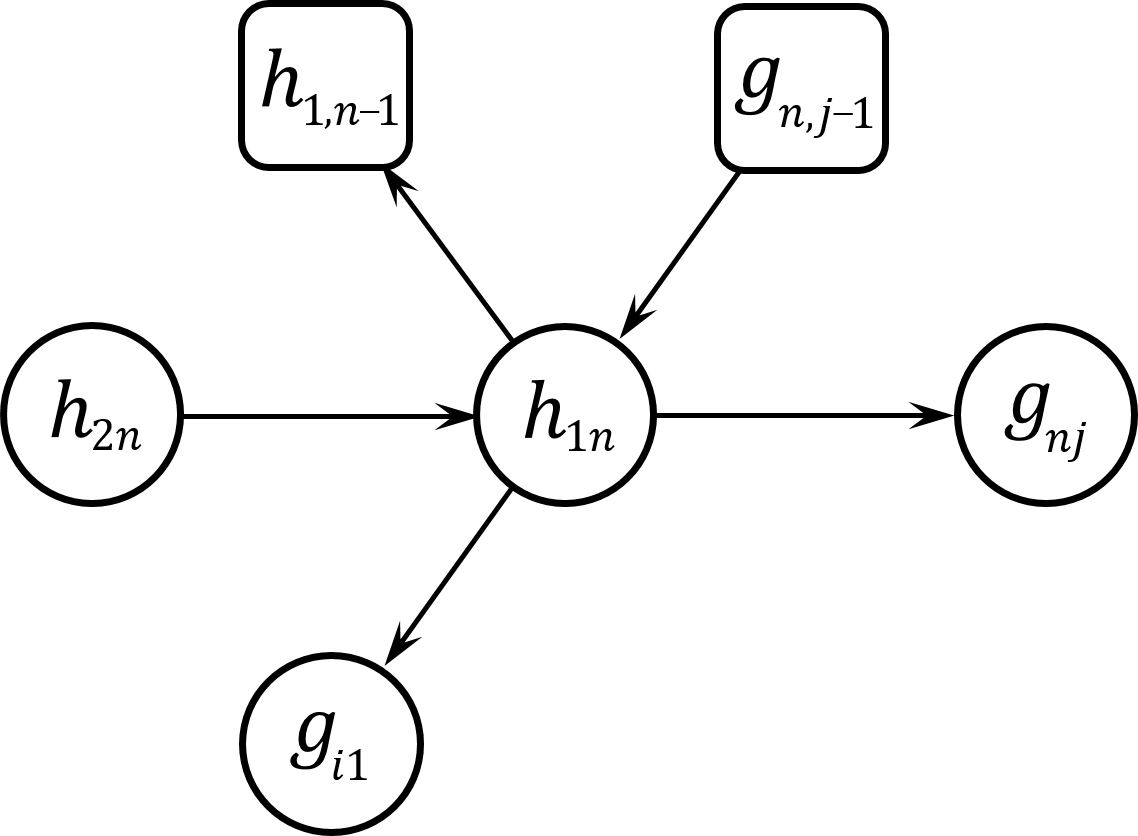}
\end{center}
\subcaption{Case $1\in\Gamma_2^r$, $n-1 \in \Gamma_2^c$,\newline\hphantom{Cas} $i-1:=\gamma_r^*(1)$, $j-1:=\gamma_c^*(n-1)$.}
\label{f:inbd_h1n_3}
\end{subfigure}
\caption{The neighborhood of $h_{1n}$.} 
\label{f:inbd_h1n}
\end{center}
\end{figure}

\vspace{5mm}
\begin{figure}[htb]
\begin{center}
\begin{subfigure}[t]{2.6in}
\begin{center}
\includegraphics[scale=0.75]{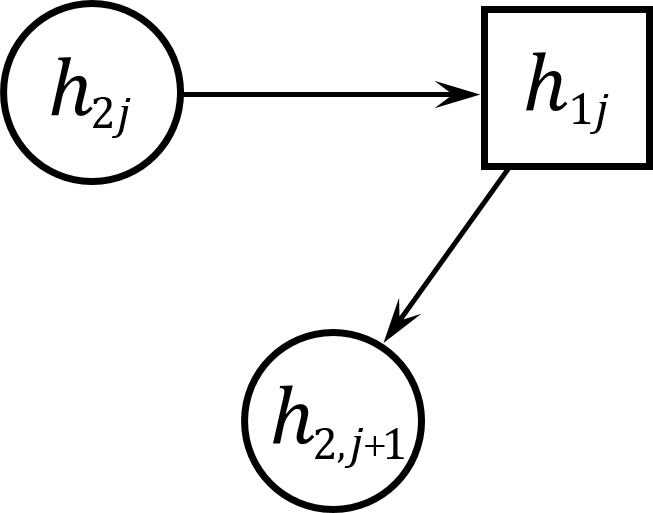} 
\end{center}
\subcaption{Case $j-1,j\notin\Gamma_2^c$.}
\label{f:inbd_h1j_0}
\end{subfigure}
\begin{subfigure}[t]{2.6in}
\begin{center}
\includegraphics[scale=0.75]{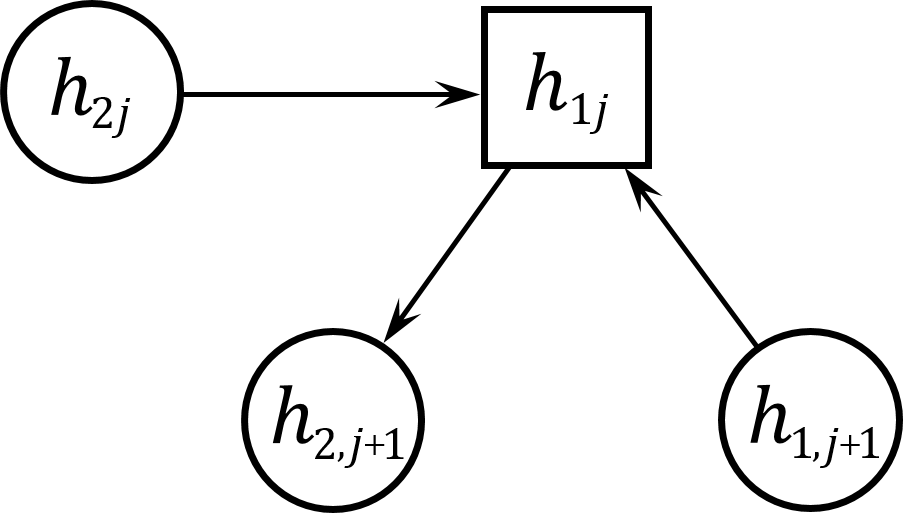}
\end{center}
\subcaption{Case $j-1\notin\Gamma_2^c$, $j \in \Gamma_2^c$.}
\label{f:inbd_h1j_1}
\end{subfigure}
\begin{subfigure}[t]{2.6in}
\vspace{4mm}
\begin{center}
\includegraphics[scale=0.75]{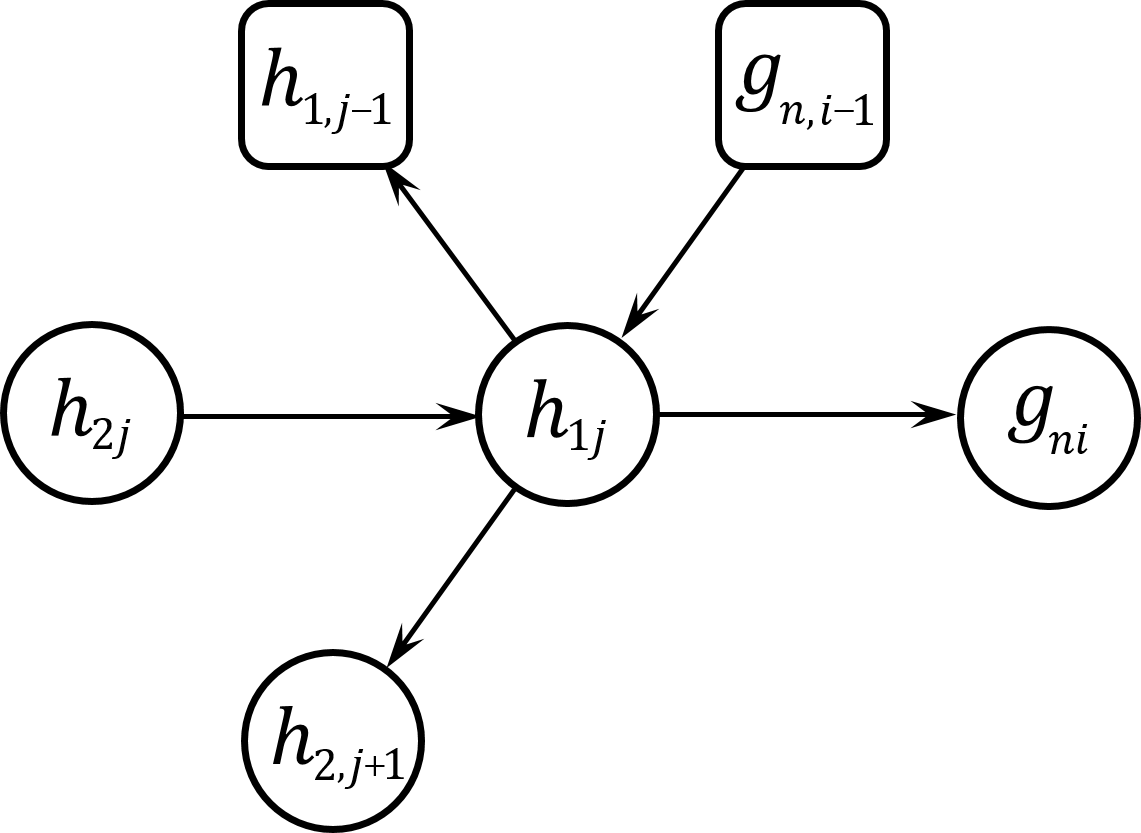}
\end{center}
\subcaption{Case $j-1\in\Gamma_2^c$, $j \notin \Gamma_2^c$,\newline\hphantom{Cas} $i-1:=\gamma_c^*(j-1)$.}
\label{f:inbd_h1j_2}
\end{subfigure}
\hspace{7mm}
\begin{subfigure}[t]{2.6in}
\vspace{4mm}
\begin{center}
\includegraphics[scale=0.75]{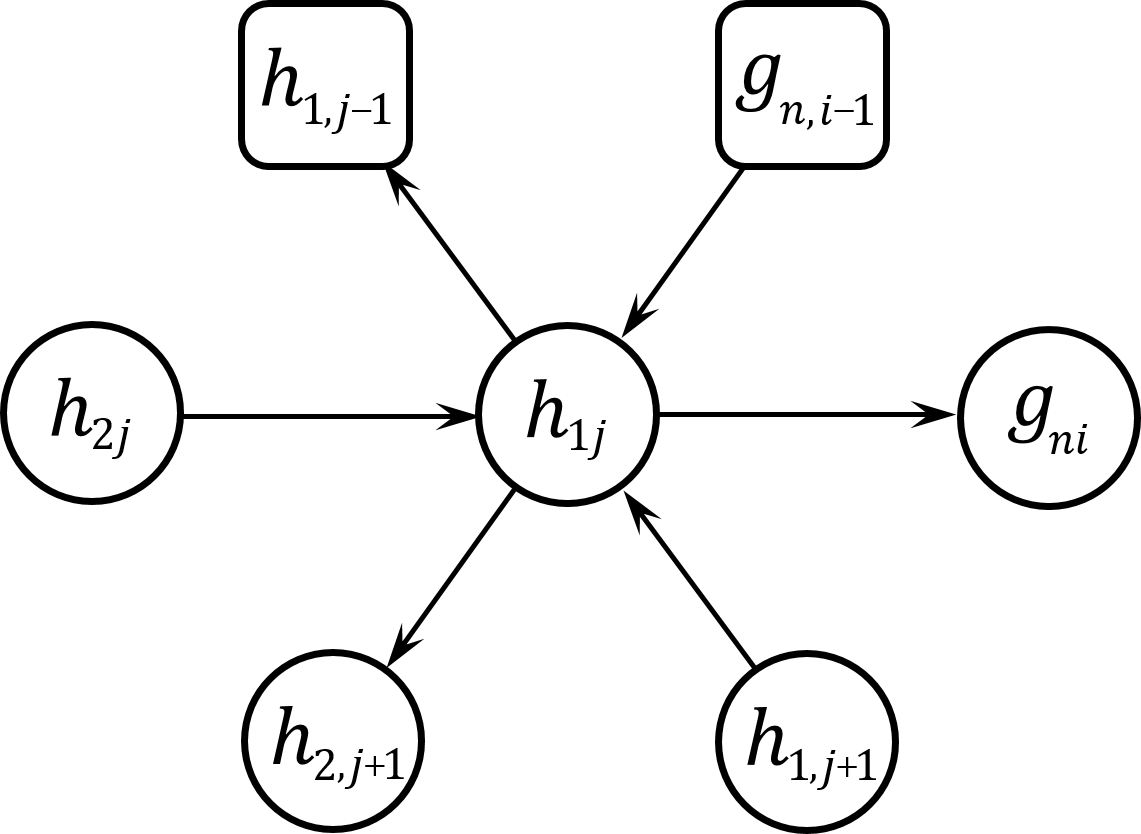}
\end{center}
\subcaption{Case $j-1,j\in\Gamma_2^c$, $i:=\gamma_c^*(j)$.}
\label{f:inbd_h1j_3}
\end{subfigure}
\caption{The neighborhood of $h_{1j}$ for $1<j<n$.} 
\label{f:inbd_h1j}
\end{center}
\end{figure}
\clearpage
\vspace{5mm}
\begin{figure}[htb]
\begin{center}
\begin{subfigure}[t]{2.6in}
\begin{center}
\includegraphics[scale=0.75]{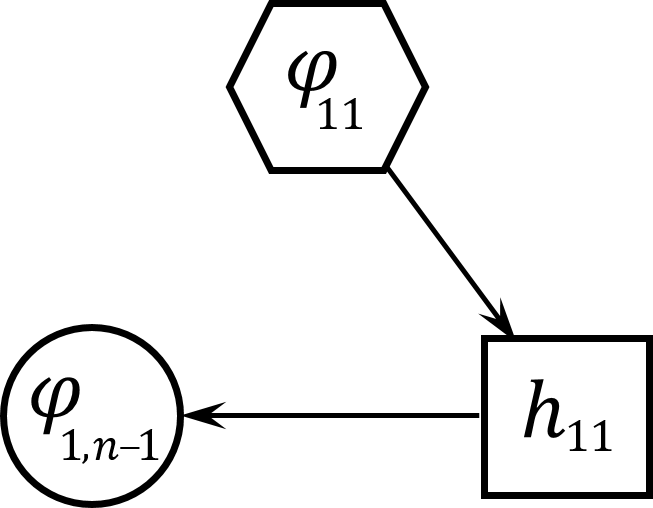} 
\end{center}
\subcaption{Case $1 \notin \Gamma_2^c$.}
\label{f:inbd_h11_0}
\end{subfigure}
\begin{subfigure}[t]{2.6in}
\begin{center}
\includegraphics[scale=0.75]{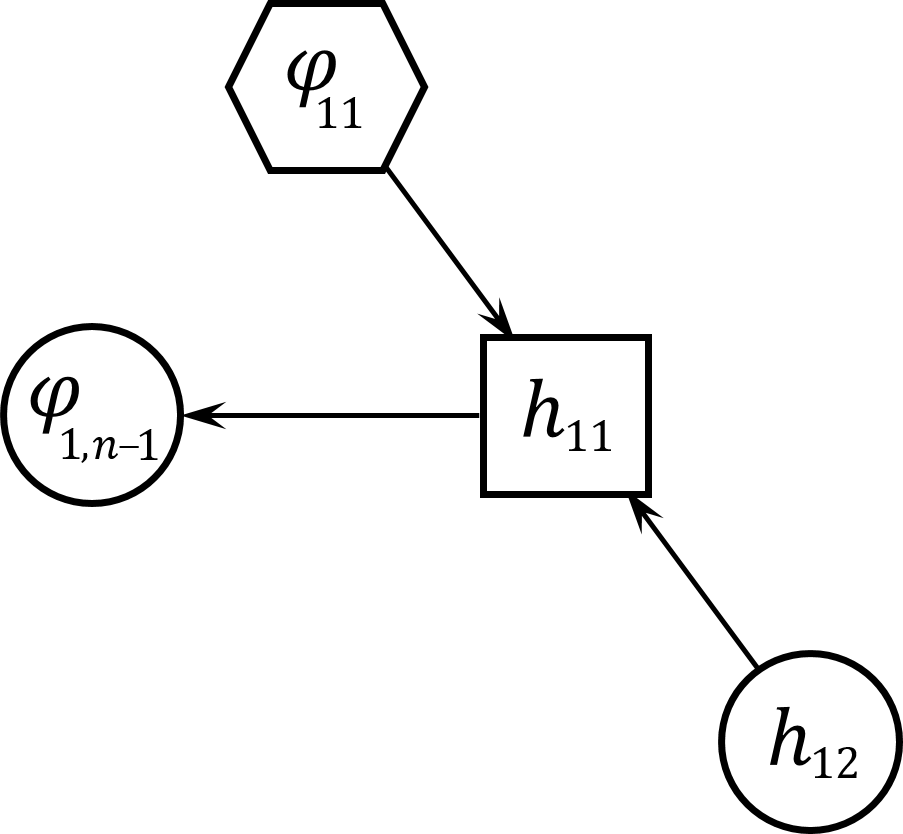}
\end{center}
\subcaption{Case $1 \in \Gamma_2^c$.}
\label{f:inbd_h11_1}
\end{subfigure}
\caption{The neighborhood of $h_{11}$.}
\label{f:inbd_h11}
\end{center}
\end{figure}
\begin{remark}
Once the initial quiver is constructed for a BD pair $\bg = (\bg^r,\bg^c)$, one can obtain the initial quiver for the cluster structure $\mathcal{C}(\bg)$ on $\GL_n$ described in~\cite{plethora}, in the following way: 1) remove all $f$- and $\varphi$-vertices; 2) for each $1 \leq i \leq n$, merge the vertex $h_{ii}$ with the vertex $g_{ii}$ (but retain the edges); 3) in the resulting quiver, remove the loop at the vertex $g_{nn}$.
\end{remark}
\subsection{Toric action}\label{s:tor}
Let $\bg = (\bg^r, \bg^c)$ be an aperiodic oriented BD pair that defines the generalized cluster structure $\gc(\bg)$, and let $\mathfrak{h}^{\sll_n}$ be the Cartan subalgebra of $\sll_n$. For each $\ell\in \{r,c\}$, define a subalgebra
\[
\mathfrak{h}_{\bg^{\ell}} := \{h \in \mathfrak{h}^{\sll_n} \ | \ \alpha(h) = \beta(h) \ \text{if} \ \gamma^j_{\ell}(\alpha) = \beta \ \text{for some} \ j\}.
\]
Notice that its dimension is
\[
\dim\mathfrak{h}_{\bg^\ell} = k_{\bg^\ell} = |\Pi \setminus \Gamma^\ell|,
\]
where $\Pi$ is the set of simple roots. Let $\mathcal{H}_{\bg^r}$ and $\mathcal{H}_{\bg^c}$ be the connected subgroups of $\SL_n$ that correspond to $\mathfrak{h}_{\bg^r}$ and $\mathfrak{h}_{\bg^c}$, respectively. We let $\mathcal{H}_{\bg^r}$ act upon $D(\GL_n)$ on the left and $\mathcal{H}_{\bg^c}$ to act upon $D(\GL_n)$ on the right; that is,
\[
\begin{split}
H.(X,Y) = (HX,HY), \ \ &H \in \mathcal{H}_{\bg^r};\\
(X,Y).H = (XH,YH),  \ \  &H \in \mathcal{H}_{\bg^c}.
\end{split}
\]
We also let scalar matrices act upon $D(\GL_n)$ via
\[
(aI,bI).(X,Y) = (aX,bY), \ \ a, b \in \mathbb{C}^*.
\]
As we shall see in Section~\ref{s:torpr}, the left-right action of $\mathcal{H}_{\bg^r} \times \mathcal{H}_{\bg^c}$ together with the action by scalar matrices induces a global toric action on $\gc(\bg)$ of rank $k_{\bg^r} + k_{\bg^c}+2$.

\subsection{Poisson-geometric properties of frozen variables}\label{s:pgfroz} As we explained above, if $(R_0^r,R_0^c)$ is chosen in such a way that $R_0^r(I) = R_0^c(I) = 1/2$, then the frozen variables $c_0,c_1,\ldots,c_{n-1},c_n$ are Casimirs of the Poisson bracket (for the case of $D(\SL_n)$, the statement is true for any $(R_0^r,R_0^c)$). In particular, the symplectic leaves of the Poisson bracket are contained in the level sets of these Casimirs. The other frozen variables are given by the determinants of $\mathcal{L}$-matrices. Given such a frozen variable $\psi(X,Y) := \det\mathcal{L}(X,Y)$, the proposition below, which was proved in~\cite{plethora}, implies that the nonsingular part of the zero locus of $\psi$ is a Poisson submanifold; hence, it foliates into a union of its own symplectic leaves. However, we do not know\footnote{After publishing this paper, we became aware of  results by Yakimov et. al., from which it follows that indeed the zero loci foliate into a union of symplectic leaves of $D(\GL_n)$. This also requires a verification of the fact that the frozen variables are irreducible as elements of $\mathcal{O}(\GL_n)$, which is done in Section~\ref{s:coprim}.} if those symplectic leaves are also symplectic leaves of $D(\GL_n)$.

For a Belavin-Drinfeld triple $\bg = (\Gamma_1,\Gamma_2,\gamma)$, let $\mathcal{P}_+(\Gamma_1)$ and $\mathcal{P}_-(\Gamma_2)$ be the upper and lower parabolic subgroups of $\GL_n$ determined by the root data $\Gamma_1$ and $\Gamma_2$, respectively. Define a subgroup $\mathcal{D} \subseteq \mathcal{P}_+(\Gamma_1)\times \mathcal{P}_-(\Gamma_2)$ via
\[
\mathcal{D}:=\{(g_1,g_2)\in \mathcal{P}_+(\Gamma_1)\times \mathcal{P}_-(\Gamma_2) \ | \ \tilde{\gamma}(\Pi_{\Gamma_1}(g_1)) = \Pi_{\Gamma_2}(g_2)\}
\]
where $\Pi_{\Gamma_1} :\mathcal{P}_+(\Gamma_1) \rightarrow \Pi_{\Delta} \GL_n(\Delta)$ and $\Pi_{\Gamma_2} : \mathcal{P}_-(\Gamma_2) \rightarrow \prod_{\bar{\Delta}} \GL_n(\bar{\Delta})$ are group projections ($\Delta$ and $\bar{\Delta}$ are nontrivial $X$- and $Y$-runs, respectively), and $\GL_n(\Delta)$ are invertible $|\Delta|\times|\Delta|$ matrices embedded into $\GL_n$ as a $\Delta\times\Delta$ block (and likewise $\GL_n(\bar{\Delta})$). Given a Belavin-Drinfeld pair $(\bg^r,\bg^c)$, denote the respective groups $\mathcal{D}$ as $\mathcal{D}^r$ and $\mathcal{D}^c$.

\begin{proposition}
For any $\mathcal{L}$-matrix in $\gc(\bg)$, the following statements hold:
\begin{enumerate}[(i)]
\item \mbox{For any $(g_1,g_2) \in \mathcal{D}^r$, $\det \mathcal{L}(g_1X,g_2Y) = \chi^r(g_1,g_2) \det \mathcal{L}(X,Y)$, where $\chi^r$ is a character on $\mathcal{D}^r$;}
\item \mbox{For any $(g_1,g_2) \in \mathcal{D}^c$, $\det \mathcal{L}(Xg_1,Yg_2) = \chi^c(g_1,g_2) \det \mathcal{L}(X,Y)$, where $\chi^c$ is a character on $\mathcal{D}^c$;}
\item $\det \mathcal{L}(X,Y)$ is log-canonical with any $x_{ij}$ or $y_{ij}$.
\end{enumerate}
\end{proposition}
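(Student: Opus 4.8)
The plan is to regard the semi-invariance statements (i) and (ii) as the structural heart and to derive the log-canonicity (iii) from them by a formal computation with the bracket \eqref{eq:brackgen}. For (i), note that $(X,Y)\mapsto(g_1X,g_2Y)$ is linear in the entries of $X$ and $Y$, and that every entry of an $\mathcal{L}$-matrix is such an entry; hence $\mathcal{L}(g_1X,g_2Y)$ is obtained from $\mathcal{L}(X,Y)$ by a linear transformation of its rows. The claim I would prove is that for $(g_1,g_2)\in\mathfrak{D}_-^r$ this transformation is \emph{left multiplication by a single invertible matrix} $\Phi^r(g_1,g_2)$, and that $\Phi^r$ is block lower-unitriangular in the staircase ordering of the rows of $\mathcal{L}$ up to a diagonal Cartan scaling. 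Granting this, $(g_1,g_2)\mapsto\Phi^r(g_1,g_2)$ is a homomorphism, so $\chi^r:=\det\Phi^r$ is a character and taking determinants gives (i); part (ii) is the mirror statement for right multiplication and the column-alignment scheme, giving $\chi^c:=\det\Phi^c$.

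Since $\mathfrak{D}_-^r$ is connected, I would verify the matrix claim infinitesimally and integrate: for $\zeta\in\gl_n$ one differentiates along $(R_+^r(\zeta)X,\,R_-^r(\zeta)Y)$ and checks that the first-order change of $\mathcal{L}$ equals $N(\zeta)\mathcal{L}$ with $N(\zeta)$ strictly lower triangular in the staircase order plus a diagonal piece. This is the main obstacle, and it is exactly where the combinatorics of \cite{plethora} enter: one must track, block by block, how the rows of $X$ (aligned along the runs $\Delta^r$) and of $Y$ (aligned along $\bar{\Delta}^r$) sit inside $\mathcal{L}$, and verify that the geometric series $\tfrac{1}{1-\gamma_r}\pi_{>}$ and $-\tfrac{\gamma_r^*}{1-\gamma_r^*}\pi_{<}$ inside $R_\pm^r$ produce shifts that are matched precisely by the exit-point zero-padding, so that adding a lower $\mathcal{L}$-row --- which vanishes in the complementary block --- to a higher one is a genuine row operation. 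The Cartan term $R_0^r\pi_0$ contributes the diagonal scaling whose trace is $d\chi^r$, and aperiodicity is what guarantees that the staircase closes up so that $N(\zeta)$ is truly triangular.

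Part (iii) then follows formally. Put $\varphi:=\log\det\mathcal{L}$ and take $f_2=\log x_{ij}$ in \eqref{eq:brackgen}; the last two terms vanish because $x_{ij}$ is independent of $Y$, leaving $\langle R_+^c(E_L\varphi),E_L\log x_{ij}\rangle-\langle R_+^r(E_R\varphi),E_R\log x_{ij}\rangle$, where $E_L\log x_{ij}=\nabla_X^L\log x_{ij}$ and $E_R\log x_{ij}=\nabla_X^R\log x_{ij}$. Differentiating (i) and (ii) along $\mathfrak{d}_-^r$ and $\mathfrak{d}_-^c$, using the double decomposition $\mathfrak{d}=\mathfrak{g}^\delta\oplus\mathfrak{d}_-^{r}$ (resp.\ $\oplus\,\mathfrak{d}_-^{c}$) together with $R_+-R_-=\id$ from \eqref{eq:plusmin}, yields $R_+^r(E_R\varphi)=\nabla_X^R\varphi-s$ and $R_+^c(E_L\varphi)=\nabla_X^L\varphi-s'$, where $s,s'$ are the \emph{point-independent} $\mathfrak{g}^\delta$-components attached to $\chi^r,\chi^c$. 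The two self-pairings $\langle\nabla_X^L\varphi,\nabla_X^L\log x_{ij}\rangle$ and $\langle\nabla_X^R\varphi,\nabla_X^R\log x_{ij}\rangle$ are equal by $\Ad$-invariance of the trace form (since $\nabla^R=\Ad_X\nabla^L$) and cancel, leaving
\[
\{\varphi,\log x_{ij}\}=\frac{(sX-Xs')_{ij}}{x_{ij}}.
\]

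It remains to observe that $s$ and $s'$ are \emph{diagonal}. From the decomposition one finds $\langle s,\zeta\rangle=d\chi^r(\zeta)$ for all $\zeta$, and since $\chi^r$ is a character of the solvable dual group $\mathfrak{D}_-^r$ its differential annihilates the nilpotent directions $\mathfrak{n}_{+}\oplus\mathfrak{n}_{-}$; with respect to the trace form this forces $s$ to be diagonal, and likewise $s'$. Hence $(sX-Xs')_{ij}=(\sigma_i-\sigma'_j)x_{ij}$ and $\{\varphi,\log x_{ij}\}=\sigma_i-\sigma'_j$ is constant. The computation for $y_{ij}$ is identical except that the two $Y$-gradient terms of \eqref{eq:brackgen} no longer vanish and instead supply the cancellation of the remaining cross-pairings, again producing a constant $\sigma_i-\sigma'_j$; this is precisely log-canonicity.
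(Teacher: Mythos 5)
First, a calibration: the paper itself never proves this proposition --- it is imported verbatim from \cite{plethora} and explicitly flagged as ``not used anywhere in this paper'' --- so your argument has to stand entirely on its own. The part that does stand is your reduction of (iii) to (i)--(ii), which is correct and is a genuinely cleaner route than the direct bracket computation in the literature: writing the infinitesimal form of (i) as $(R_+^r)^*(\nabla_X^R\varphi)+(R_-^r)^*(\nabla_Y^R\varphi)=s$ with $s$ constant, and using $(R_\pm^\ell)^*=-R_\mp^\ell$ together with \eqref{eq:plusmin}, does give your identity $R_+^r(E_R\varphi)=\nabla_X^R\varphi-s$ (likewise on the left); the $\Ad$-invariance cancellation is valid; and $s,s'$ are diagonal because an algebraic character is trivial on unipotent elements, while $(R_+^r\xi,R_-^r\xi)$ is a pair of nilpotent matrices for $\xi\in\mathfrak{n}_+\oplus\mathfrak{n}_-$. (Your stated reason --- solvability of $\mathfrak{D}_-^r$ --- is false for nontrivial $\bg^r$: the projection of $\mathfrak{d}_-^r$ onto the first factor is a Lie algebra homomorphism whose image contains full $\sll_2$-triples; argue via unipotence instead. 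Note also that you correctly read the proposition as a statement about the coupled pairs $(R_+^r\xi,R_-^r\xi)$; for genuinely independent scalings of $X$ and $Y$ the semi-invariance is simply false.)

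The genuine gap is in (i)--(ii), on which all of (iii) depends. They are sketched, not proved: the block-by-block verification is named as ``the main obstacle'' and deferred to the combinatorics of \cite{plethora}. Worse, the framework you propose to verify is structurally inadequate in the Cartan direction. For nilpotent $\zeta$ the row-operation picture $\delta\mathcal{L}=N(\zeta)\mathcal{L}$ is indeed the right mechanism, but for $\xi\in\mathfrak{h}$ the pair $\bigl(R_0^r\xi,\,(R_0^r-\id)\xi\bigr)$ rescales the rows of the $X$-blocks and of the $Y$-blocks of $\mathcal{L}$ by unrelated constants, and this is never of the form $N\mathcal{L}$ for a constant $N$: one needs a two-sided correction $\delta\mathcal{L}=P\mathcal{L}+\mathcal{L}Q$ with $Q$ a non-scalar diagonal matrix. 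Such $P,Q$ exist --- equivalently, every monomial of $\det\mathcal{L}$ acquires the same weight --- precisely when $\langle R_0^r\xi,\alpha\rangle=\langle(R_0^r-\id)\xi,\gamma_r\alpha\rangle$ for all $\alpha\in\Gamma_1^r$, which via \eqref{eq:r0} is exactly condition \eqref{eq:ralg}. Your sketch never invokes \eqref{eq:r0}--\eqref{eq:ralg}, and no proof can avoid them: taking $R_0=\frac12\id_{\mathfrak{h}}$ (which satisfies \eqref{eq:r0} but violates \eqref{eq:ralg}) in the Cremmer--Gervais example of Section~\ref{s:lmatr}, the direction $\bigl(\frac12e_{11},-\frac12e_{11}\bigr)$ gives the two families of monomials of $\det\mathcal{L}_1$ (those picking row $1$ of $X$ and those not) the distinct weights $0$ and $-\frac12$, so $\det\mathcal{L}_1$ is not semi-invariant along it. Since your derivation of (iii) requires constancy of the derivative of $\log\det\mathcal{L}$ along \emph{all} of $\mathfrak{d}_-^r$, Cartan directions included, this hole propagates directly into your log-canonicity claim.
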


\begin{remark}
Assume that $\bg^r= \bg^c$ and $R_0:=R_0^r=R_0^c$ is chosen so that formulas~\eqref{eq:roid} are satisfied. Then the connected dual Poisson group $\GL_n^*$, viewed as a subgroup of $D(\GL_n)=\GL_n\times\GL_n$, is a subgroup of $\mathcal{D}$ as well. In the case of $D(\SL_n)$, such an issue with the choice of $R_0$ does not arise, so $\SL_n^* \subseteq \mathcal{D}$ (hence the determinants of the $\mathcal{L}$-matrices are semi-invariant with respect to the action of $\SL_n^*$ on the right and on the left).
\end{remark}
\section{Regularity}\label{s:regular}
Let $\bg = (\bg^r, \bg^c)$ be a BD pair that defines a generalized cluster structure $\gc(\bg)$ on $D(\GL_n)$ with the initial seed described in Section~\ref{s:descr}. In this section, we show\footnote{As of October 2023, these results can be obtained via the birational quasi-isomorphisms $\mathcal{U}$ with no additional computations. See our new paper on arXiv. } that the mutation of any cluster variable from the initial seed in $\gc(\bg)$ produces a regular function. We will prove in Section~\ref{s:coprim} that $\gc(\bg)$ satisfies coprimality conditions~\ref{p:sfcopri} and~\ref{p:sfregi} of Proposition~\ref{p:starfish}, which implies that $\gc(\bg)$ is a regular generalized cluster structure on $D(\GL_n)$.

\begin{proposition}\label{p:regular}
The mutation of the initial cluster of $\gc(\bg)$ in any direction yields a regular function.
\end{proposition}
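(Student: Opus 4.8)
The plan is to verify condition~\ref{p:sfregi} of Proposition~\ref{p:starfish} one mutation at a time, reading off each exchange relation from the neighborhoods recorded in Section~\ref{s:quiver}. By construction the right-hand side of every exchange relation is a sum of products of initial cluster and stable variables, each of which is a determinant and hence an honest polynomial in the entries of $X$ and $Y$; so what must be shown is that the mutated variable $x_k$ on the left divides this polynomial in $\mathcal{O}(D(\GL_n))$ and that the quotient is regular. The universal tool is the Desnanot--Jacobi identity: I would match each ordinary exchange relation to an instance of Proposition~\ref{p:dj13} or~\ref{p:dj22}, in which all terms but one are recognized as products of known cluster variables and the remaining determinant is precisely the new variable $x_k^\prime$. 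Since that determinant is a minor of $X$, $Y$, or an $\mathcal{L}$-matrix, it is automatically a polynomial, and regularity on $D(\GL_n)=\GL_n\times\GL_n$ follows at once (possibly after absorbing harmless powers of the invertible functions $\det X$, $\det Y$).

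I would organize the cases by the provenance of the mutated variable. The $f$-, $\varphi$- and $c$-functions, together with the single generalized mutation $\varphi_{11}\varphi_{11}^\prime=\sum_{r=0}^n c_r\varphi_{21}^r\varphi_{12}^{n-r}$, are defined without reference to the BD data, so their regularity is exactly the content of the corresponding computation in~\cite{double}; here the only thing to check is that the aperiodic-oriented modification does not disturb their neighborhoods, i.e.\ that the extra arrows of Figure~\ref{f:nbd_extra} touch only the $g$- and $h$-vertices. For a $g_{ij}$ or $h_{ji}$ lying on the diagonal of an $\mathcal{L}$-matrix, the variable and its exchange partner are nested trailing minors $\det\mathcal{L}^{[s,N(\mathcal{L})]}_{[s,N(\mathcal{L})]}$, and the three-term identity of Proposition~\ref{p:dj22}, applied with the bordering first row and first column, reproduces the exchange relation and exhibits $x_k^\prime$ as one of the participating trailing minors; this is the $D(\GL_n)$ analogue of the $\mathcal{L}$-matrix computations of~\cite{plethora} and is routine once the incident arrows are matched to the minor indices.

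The genuinely new and delicate part, which I expect to be the main obstacle, is the behavior at the unfrozen vertices $g_{i+1,1}$ (for $i\in\Gamma_1^r$) and $h_{1,j+1}$ (for $j\in\Gamma_2^c$), together with the neighboring vertices to which the extra arrows of Figure~\ref{f:nbd_extra} attach. At these vertices the exchange relation differs from the standard one, and to identify $x_k^\prime$ I would lean on the observation recorded in Section~\ref{s:iniclust}: when $\gamma_c^*(j)=i$ the functions $g_{ni}=\det\mathcal{L}^{[s,N(\mathcal{L})]}_{[s,N(\mathcal{L})]}$ and $h_{1,j+1}=\det\mathcal{L}^{[s+1,N(\mathcal{L})]}_{[s+1,N(\mathcal{L})]}$ are consecutive trailing minors of one and the same $\mathcal{L}$-matrix, and symmetrically $h_{jn}$ and $g_{i+1,1}$ are consecutive trailing minors when $\gamma_r(i)=j$. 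The plan is to use these identifications to rewrite the modified relation once more as a Desnanot--Jacobi identity on a single $\mathcal{L}$-matrix, so that $x_k^\prime$ is again a minor of it. The delicate bookkeeping lies in checking that the extra arrows prescribed by Figure~\ref{f:nbd_extra} match exactly the rows and columns that Proposition~\ref{p:dj22} (or~\ref{p:dj13}) requires, and that no term is lost in passing between the $X$- and $Y$-blocks of the $\mathcal{L}$-matrix across the $\gamma$-edge. Once this matching is in place, regularity follows immediately from the determinantal description, completing the verification of condition~\ref{p:sfregi}.
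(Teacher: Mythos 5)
Your overall strategy (verifying condition~\ref{p:sfregi} of Proposition~\ref{p:starfish} via Desnanot--Jacobi identities, and splitting cases by provenance) is the right one, and your first two cases are sound, but the case analysis has a genuine gap: it misses precisely the mutations that constitute the paper's actual proof. The vertices you single out as the ``genuinely new and delicate part'' --- the unfrozen $g_{i+1,1}$ ($i\in\Gamma_1^r$) and $h_{1,j+1}$ ($j\in\Gamma_2^c$) --- require no new work: these are functions $g_{ij}$, $h_{ji}$ with $i>j$, and the paper disposes of all such mutations, unfrozen vertices included, by citing Theorem~6.1 of \cite{plethora}. What genuinely needs proof are the mutations at the diagonal functions $g_{ii}$ and $h_{ii}$, $2\leq i\leq n$, and these fall through the cracks of your decomposition: they are not $\varphi$- or $f$-vertices; they are not trailing minors of $\mathcal{L}$-matrices, so your second case excludes them; and no extra arrow of Figure~\ref{f:nbd_extra} attaches to them except when $i=n$, so your third case misses $2\leq i\leq n-1$ entirely. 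Nor can they be deferred to \cite{double} via the conventions $f_{0,l}=h_{n-l+1,n-l+1}$, $f_{k,0}=g_{n-k+1,n-k+1}$: although $g_{ii}$, $h_{ii}$ themselves are BD-independent, their exchange relations involve the off-diagonal functions $h_{i-1,i}$, $h_{i,i+1}$ (resp.\ $g_{i+1,i}$, $g_{i,i-1}$), and when $n-1\in\Gamma_2^r$ (resp.\ $n-1\in\Gamma_1^c$) these are trailing minors of $\mathcal{L}$-matrices carrying an extra attached block, hence \emph{different polynomials} than in the standard case. The quiver is combinatorially unchanged at these vertices, but the right-hand side of the exchange relation is a new polynomial, so divisibility by $h_{ii}$ (resp.\ $g_{ii}$) must be re-proved from scratch; this is exactly where the paper restricts to the nontrivial situation $n-1\in\Gamma_2^r$ or $n-1\in\Gamma_1^c$ and does new computations.

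Moreover, the repair is not quite the one you propose. For $2\leq i<n$ the paper's identity is not a Desnanot--Jacobi identity ``on a single $\mathcal{L}$-matrix'': it applies Proposition~\ref{p:dj13} (for $h_{ii}$), respectively Proposition~\ref{p:dj22} twice (for $g_{ii}$), to a \emph{hybrid} auxiliary matrix $A$ obtained by gluing the matrix $F_{1,n-i+1}$ (resp.\ $\tilde{F}_{n-i+1,1}$) of an $f$-function onto the block $C$ that the neighboring functions share inside their $\mathcal{L}$-matrix; the mutated variable is then read off as a minor of this hybrid matrix, which is neither an $\mathcal{L}$-matrix nor a submatrix of one. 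For $i=n$ the paper argues instead by direct expansion and cancellation of terms --- and only there does your consecutive-trailing-minors observation enter, e.g.\ in writing $h_{n-1,n}=y_{n-1,n}\,g_{i+1,1}-y_{nn}\det H^{\hat{1}}_{\hat{2}}$ with $\gamma_r(i)=n-1$. So as written, your proposal would verify the easy cases twice and leave the hard ones unaddressed.
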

\begin{proof}
The regularity at $g_{ij}$ and $h_{ji}$ for $i > j$ follows from Theorem 6.1 in \cite{plethora}; for $\varphi$- and $f$-functions, the regularity follows from Section 6.4 in \cite{double}. Therefore, all we need to prove is that the mutation at any $g_{ii}$ or $h_{ii}$ in the case of an aperiodic oriented BD pair yields a regular function.
\\

\noindent \emph{Mutation at $h_{ii}$.} First of all, note that if $n-1 \notin \Gamma^r_2$, then, according to the construction in Section~\ref{s:lmatr}, the functions $h_{i-1,i}$ for $2 \leq i \leq n$ coincide with the ones in the case of the standard BD pair. This situation was already studied in \cite{double}, so let us assume that $n-1 \in \Gamma^r_2$. For $i < n$, the mutation at $h_{ii}$ can be written as
\begin{equation}\label{eq:hiimut}
h_{ii} h^\prime_{ii} = h_{i,i+1} f_{1,n-i+1} + f_{1,n-i} h_{i-1,i}.
\end{equation}
Let $\mathcal{L}$ be the $\mathcal{L}$-matrix that defines the functions $h_{i-1,i}$, $2 \leq i \leq n$, and let $H_{i-1,i}$ be a submatrix of $\mathcal{L}$ such that $h_{i-1,i} = \det H_{i-1,i}$. Then $H_{i-1,i}$ can be written as a block-diagonal matrix
\[
H_{i-1,i} = \begin{bmatrix} Y^{[i,n]}_{[i-1,n]} & * \\ 0 & C \end{bmatrix},
\]
where $C$ is some $(m-1) \times m$ matrix and the asterisk denotes the part of $H_{i-1,i}$ that's not relevant to the proof. Recall that $F_{1,n-i+1} = | X^{[n,n]}\, Y^{[i,n]} |_{[i-1,n]}$. Define a block-diagonal matrix $A$ as
\[
A:= \begin{bmatrix} F_{1,n-i+1} & * \\ 0 & C\end{bmatrix}
\]
and let $N$ be the index of the last column of $A$. According to the Desnanot-Jacobi identity from Proposition~\ref{p:dj13}, we see that
\begin{equation}\label{eq:dnnhii}
\det A^{\hat{1}} \det A^{\hat{2} \hat{N}}_{\hat{1}} + \det A^{\hat{N}} \det A^{\hat{1}\hat{2}}_{\hat{1}} = \det A^{\hat{1}\hat{N}}_{\hat{1}} \det A^{\hat{2}}.
\end{equation}
Now notice that
\[
\det A^{\hat{1}} = h_{i-1,i}, \ \det A^{\hat{2}\hat{N}}_{\hat{1}} = f_{1,n-i} \det C^{\hat{m}}, \ \det A^{\hat{N}} = f_{1,n-i+1} \det C^{\hat{m}},
\]
\[
\hat A^{\hat{1}\hat{2}}_{\hat{1}} = h_{i,i+1}, \ \det A^{\hat{1}\hat{N}}_{\hat{1}} = h_{ii} \det C^{\hat{m}},
\]
hence equation~\eqref{eq:dnnhii} becomes
\[
h_{i-1,i} f_{1,n-i} \det C^{\hat{m}} + f_{1,n-i+1} \det C^{\hat{m}} = h_{ii} \det C^{\hat{m}} \det A^{\hat{2}}.
\]
Dividing both sides by $\det C^{\hat{m}}$ and comparing the resulting expression with equation \eqref{eq:hiimut}, we see that $h^\prime_{ii} = \det A^{\hat{2}}$. Hence $h^\prime_{ii}$ is a regular function.

Now let's study the mutation at $h_{nn}$. Since we assume $n-1 \in \Gamma_2^r$, let $\gamma_r(i) = n-1$. Then the mutation reads
\[
h_{nn} h^\prime_{nn} = f_{11} g_{i+1,1} + g_{nn} h_{n-1,n}.
\]
Set $H:= H_{n-1,n}$. Then $h_{n-1,n} = y_{n-1,n} g_{i+1,1} - y_{nn} \det H^{\hat{1}}_{\hat{2}}$ and
\[\begin{split}
h_{nn} h^\prime_{nn} &= (y_{nn} x_{n,n-1} - y_{n-1,n} x_{nn}) g_{i+1,1} + x_{nn} (y_{n-1,n} g_{i+1,1} - y_{nn} \det H^{\hat{1}}_{\hat{2}}) = \\ &= h_{nn} (x_{n,n-1} g_{i+1,1} - x_{nn} \det H^{\hat{1}}_{\hat{2}}).
\end{split}
\]
Therefore, $h^\prime_{nn} = x_{n,n-1} g_{i+1,1} - x_{nn} \det H^{\hat{1}}_{\hat{2}}$ is a regular function.
\\

\noindent \emph{Mutation at $g_{ii}$.} As in the previous case, if $n-1 \notin \Gamma_1^c$, then the functions $g_{i+1,i}$ coincide with the ones in case of the standard BD pair, which was already treated in \cite{double}. Therefore, assume $n-1 \in \Gamma_1^c$, which implies there is a $Y$-block attached to the bottom of the leading $X$-block of the functions $g_{i+1,i}$. For $i < n$, the mutation at $g_{ii}$ is given by
\[
g_{ii} g^\prime_{ii} = f_{n-i,1}g_{i-1,i-1}g_{i+1,i} + f_{n-i+1,1}g_{i+1,i+1}g_{i,i-1}.
\]
Define $\tilde{F}_{n-i,1} := [Y^{[n, n]} \, X^{[i, n]}]_{[i, n]}$. Note that $\det (\tilde{F}_{n-i,\ 1})^{\hat 2} = (-1)^{n-i}f_{n-i, 1}$. Let $G_{i,i-1}$ be a submatrix of the $\mathcal{L}$-matrix such that $\det G_{i,i-1} = g_{i,i-1}$; it can be written as
\[
G_{i,i-1} = \begin{bmatrix}
X^{[i-1,n]}_{[i, n]} & 0\\
\ast & C
\end{bmatrix},
\]
where $C$ is some $m \times (m-1)$ matrix. Define
\[
A := A(i-1) := \begin{bmatrix}
\tilde{F}_{n-i+1,1} & 0 \\
\ast & C
\end{bmatrix}
\]
Let $N$ be the index of the last row of $A$. The Desnanot-Jacobi identity from Proposition~\ref{p:dj22} tells us that
\[
\det A \cdot \det A^{\hat 1 \hat 2}_{\hat 1 \hat N} = \det A^{\hat 1}_{\hat 1} \det A^{\hat 2}_{\hat N} - \det A^{\hat 1}_{\hat N} \det A^{\hat 2}_{\hat 1}.
\]
Deciphering the last identity yields
\[
\det A \cdot g_{ii} \det C_{\hat m} = g_{i, i-1} (-1)^{n-i+1}f_{n-i+1, 1} \det C_{\hat m} - g_{i-1,i-1} \det C_{\hat m} \det A^{\hat 2}_{\hat 1}
\]
or
\begin{equation}\label{eq:mut1}
\det A \cdot g_{ii} = g_{i, i-1} (-1)^{n-i+1}f_{n-i+1, 1}  - g_{i-1,i-1} \det A^{\hat 2}_{\hat 1}.
\end{equation}
Let $B:= A(i) = A_{\hat 1}^{\hat 2}$. The Desnanot-Jacobi identity from Proposition~\ref{p:dj22} for $B$ yields
\begin{equation}\label{eq:mut2}
\det B \cdot g_{i+1, i+1} = g_{i+1, i} (-1)^{n-i} f_{n-i, 1} - g_{i i} \det B_{\hat 1}^{\hat 2}.
\end{equation}
Now, multiply equations \eqref{eq:mut1} by $g_{i+1,i+1}$ and \eqref{eq:mut2} by $g_{i-1, i-1}$, substitute $\det A_{\hat{1}}^{\hat{2}} \cdot g_{i+1,i+1} \cdot g_{i-1, i-1}$ in equation \eqref{eq:mut1} with the right-hand side (RHS) of equation \eqref{eq:mut2} and combine the terms. These algebraic manipulations result in
\[
g_{ii} (-1)^{n-i+1}(g_{i+1,i+1}\det A - g_{i-1, i-1} \det B_{\hat 1}^{\hat 2}) = g_{i,i-1}f_{n-i+1,1} g_{i+1,i+1} + g_{i+1,i} f_{n-i,1} g_{i-1,i-1}.
\]
Thus the mutation at $g_{ii}$ for $1 < i < n$ yields a regular function.

Now consider the mutation at $g_{nn}$. Since we assume $n-1 \in \Gamma_1^c$, let $\gamma_c(n-1)=j$. Then the mutation at $g_{nn}$ reads
\[
g_{nn} g^\prime_{nn} = g_{n-1, n-1}h_{nn}h_{1, j+1} + f_{11} g_{n, n-1}.
\]
Since $g_{nn} = x_{nn}$, all we need to check is that the RHS is divisible by $x_{nn}$. Let $G := G_{n, n-1}$. Expanding $g_{n,n-1}$ along the first row, we obtain $g_{n, n-1} = x_{n, n-1} h_{1,j+1} - x_{nn} G^{\hat{2}}_{\hat{1}}$. Writing out $g_{n-1,n-1}$ and $f_{11}$, we see that
\[\begin{split}
g_{nn} g^\prime_{nn} = (x_{n-1,n-1}x_{nn} - x_{n-1,n}x_{n-1,n})y_{nn} h_{1,j+1}+ \\+ (x_{n-1,n}y_{nn} - y_{n-1,n}x_{nn})(x_{n, n-1} h_{1,j+1} - x_{nn} \det G^{\hat{2}}_{\hat{1}})
\end{split}
\]
After expanding the brackets, it's easy to see that there are two terms $x_{n-1,n}x_{n,n-1}y_{nn}h_{1,j+1}$ with opposite signs, hence they cancel each other out; all the other terms are divisible by $x_{nn}$. Thus the proposition is proved.
\end{proof}
\section{Completeness}\label{s:complet}
In this section, we prove part~\ref{c:natiso} of Proposition~\ref{p:starfish}, which asserts that any regular function belongs to the upper cluster algebra. Together with the results on regularity from Section~\ref{s:regular}, we will conclude that the ring of regular functions on $D(\GL_n)$ can be identified with the upper cluster algebra.

\subsection{Birational quasi-isomorphisms $\mathcal{U}$}\label{s:birat}
For this section, let us fix an aperiodic oriented BD pair $\bg := (\bg^r, \bg^c)$, let $D(\GL_n)_{\bg}$ be the corresponding Drinfeld double, and let $\gc(\bg)$ be the generalized cluster structure on $D(\GL_n)_{\bg}$. We consider another BD pair $\tilde{\bg}$ obtained from $\bg$ by removing a root from $\Gamma_1^r$ (or from $\Gamma_1^c$) and its image in $\Gamma_1^r$ (or in $\Gamma_2^c$; see the cases below), and define another Drinfeld double\footnote{We loosely refer to $D(\GL_n)_{\bg}$ as the Drinfeld double of $\GL_n$ even when $\bg^r \neq \bg^c$; strictly speaking, it is a Drinfeld double if and only if $\bg^r = \bg^c$.} $D(\GL_n)_{\tilde{\bg}}$ endowed with the generalized cluster structure $\gc(\tilde{\bg})$. The objective of this section is to construct a certain rational map \[\mathcal{U}:D(\GL_n)_{\tilde{\bg}}\dashrightarrow D(\GL_n)_{\bg},\] which we later recognize as a quasi-isomorphism in the sense of Proposition~\ref{p:compar} and as a birational automorphism of $\GL_n\times \GL_n$. In view of these two properties, we refer to the maps $\mathcal{U}$ as \emph{birational quasi-isomorphisms}\footnote{We do not provide a general definition of birational quasi-isomorphisms, but we use this term for any map $\mathcal{U}$ constructed in this section. We will give a comprehensive general treatment of these objects in our future publications.  \emph{Update, October 2023:} we have posted a paper on arXiv with a general formalism of birational quasi-isomorphisms.}.

\paragraph{Notation.} We denote by $(X,Y)$ the standard coordinates on $D(\GL_n)$ (regardless of the associated BD pair). If $\psi$ is a cluster or stable variable in $\gc(\bg)$, then by $\tilde{\psi}$ we denote the corresponding variable in $\gc(\tilde{\bg})$; that is, $\psi$ and $\tilde{\psi}$ are either the variables attached to the same vertices in the initial quivers or in the quivers that are obtained via the same sequences of mutations. All $g$-, $h$-, $f$-, $\varphi$- and $c$- functions in the initial extended cluster of $\gc(\tilde{\bg})$ are marked with a tilde as well.

\paragraph{Removing the rightmost root from a row run.} Let $\Delta^r = [p+1,p+k]$ be a nontrivial row $X$-run in $\bg$ and let $\bar{\Delta}^r = [q+1,q+k] := \gamma_r(\Delta^r)$ be the corresponding row $Y$-run. Define $\tilde{\bg} = (\tilde{\bg}^r, \bg^c)$ with $\tilde{\bg}^r = (\tilde{\Gamma}_1^r, \tilde{\Gamma}_2^r, \gamma_r|_{\tilde{\Gamma}_1^r})$ given by $\tilde{\Gamma}_1^r = \Gamma_1^r \setminus \{p+k-1\}$ and $\tilde{\Gamma}_2^r = \Gamma_2^r \setminus \{q+k-1\}$. Let us examine the difference between the $\mathcal{L}$-matrices in $\gc(\bg)$ and $\gc(\tilde{\bg})$. For any $\mathcal{L}$-matrix $\mathcal{L}(X,Y)$ in $\gc(\bg)$, let $\tilde{\mathcal{L}}(X,Y)$ be a matrix obtained from $\mathcal{L}(X,Y)$ via removing the last row of each $Y$-block of the form $Y^{J}_{[1,q+k]}$. If $\mathcal{L}(X,Y)$ arises from a maximal alternating path in $G_{\bg}$ that does not pass through the edge $(p+k-1)\xrightarrow{\gamma_r}(q+k-1)$, then $\tilde{\mathcal{L}}(X,Y)$ is an $\mathcal{L}$-matrix in $\gc(\tilde{\bg})$ that arises from the same path in $G_{\tilde{\bg}}$. However, if $\mathcal{L}^*(X,Y):=\mathcal{L}(X,Y)$ is constructed from a path that does pass through $(p+k-1)\xrightarrow{\gamma_r}(q+k-1)$, then $\tilde{\mathcal{L}}^*(X,Y)$ is a reducible matrix with blocks that correspond to the remaining two $\mathcal{L}$-matrices in $\gc(\tilde{\bg})$. Let us set $s_0$ to be the number such that $\mathcal{L}^*_{s_0s_0}(X,Y) = x_{p+k,1}$. Define a rational map $\mathcal{U} : D(\GL_n)_{\tilde{\bg}} \dashrightarrow D(\GL_n)_{\bg}$ via the following data:
\begin{equation}\label{eq:alp_coef}
\alpha_i(X,Y) := \frac{1}{\tilde{g}_{p+k,1}(X,Y)} {\det \tilde{\mathcal{L}}^*}^{[s_0, N(\tilde{\mathcal{L}}^*)]}_{\{s_0-k+i\}\cup [s_0+1,N(\tilde{\mathcal{L}}^*)]}(X,Y), \ \ i=1,\ldots,k-1;
\end{equation}
\begin{equation}\label{eq:u0}
U_0(X,Y) = I + \sum_{i = 1}^{k-1} \alpha_i(X,Y) e_{q+i,q+k};
\end{equation}
\begin{equation}\label{eq:defu}
U_+(X,Y) := \left(\prod_{k\geq 1}^{\leftarrow} \tilde{\gamma}_r^k(U_0)\right)U_0;
\end{equation}
\begin{equation}\label{eq:udef_rr}
\mathcal{U}(X,Y) := \left(U_+(X,Y)X,U_+(X,Y)Y\right).
\end{equation}

\begin{proposition}\label{p:rightmost}
Let $\mathcal{U}:D(\GL_n)_{\tilde{\bg}} \dashrightarrow D(\GL_n)_{\bg}$ be the rational map given by equation~\eqref{eq:udef_rr}. Then the map $\mathcal{U}$ acts on the cluster and stable variables via the following formulas:
\begin{equation}\label{eq:rightmost}
\mathcal{U}^*(g_{ij}(X,Y)) = \begin{cases}
\tilde{g}_{ij}(X,Y)\tilde{g}_{p+k,1}(X,Y) \ &\text{if}\ \mathcal{L}^*_{ss}(X,Y) = x_{ij} \ \text{for} \ s < s_0;\\
\tilde{g}_{ij}(X,Y) \ &\text{otherwise};
\end{cases}
\end{equation}
\begin{equation}\label{eq:rightmost2}
\mathcal{U}^*(h_{ij}(X,Y)) = \begin{cases}
\tilde{h}_{ij}(X,Y)\tilde{g}_{p+k,1}(X,Y) \ &\text{if}\ \mathcal{L}^*_{ss}(X,Y) = y_{ij} \ \text{for} \ s < s_0;\\
\tilde{h}_{ij}(X,Y) \ &\text{otherwise};
\end{cases}
\end{equation}
if $\psi$ is any $\varphi$-, $f$- or $c$- function in the initial extended cluster, then
\begin{equation}\label{eq:rightmost3}
\mathcal{U}^*(\psi(X,Y)) = \tilde{\psi}(X,Y).
\end{equation}
\end{proposition}
Note that the first lines in equations \eqref{eq:rightmost} and \eqref{eq:rightmost2} reflect the fact that $\tilde{\mathcal{L}}^*(X,Y)$ is a reducible matrix with blocks equal to a pair of $\mathcal{L}$-matrices from $\gc(\tilde{\bg})$. The proof of the above proposition is exactly the same as in~\cite{plethora}.

\paragraph{Motivation of formulas~\eqref{eq:alp_coef}-\eqref{eq:udef_rr}.} Though the formulas are complicated, they are designed in concordance with the invariance properties of the variables. The $\varphi$-, $f$- and $c$-variables are the same in the initial extended clusters of $\gc(\bg)$ and $\gc(\tilde{\bg})$, and they are all invariant with respect to the left action $N_+.(X,Y) = (N_+X,N_+Y)$ by unipotent upper triangular matrices. Since $U_+$ is such, we see that formula~\eqref{eq:rightmost3} holds. Now, if $\psi$ is a $g$- or $h$-function, recall that one of its invariance properties reads
\[
\psi(N_+X,\tilde{\gamma}_r(N_+)Y) = \psi(X,Y).
\]
Notice that $\tilde{\gamma}_r(U_+)\cdot U_0 = U_+$; therefore,
\[
\mathcal{U}^*(\psi(X,Y)) = \psi(U_+X,U_+Y) = \psi(U_+X,\tilde{\gamma}_r(U_+)U_0Y) = \psi(X,U_0Y);
\]
hence, the main part of the proof of Proposition~\ref{p:rightmost} is to show that 
\[
\psi(X,U_0Y) = \tilde{\psi}(X,Y)\tilde{g}_{p+k,1}^\varepsilon(X,Y)
\] 
for some $\varepsilon \geq 0$. A similar reasoning explains formulas for $\mathcal{U}$ for other choices of roots below.

\paragraph{The inverse of $\mathcal{U}$.} Though we do not need formulas for the inverse of $\mathcal{U}$ in the proofs (except in some simple cases), let us state them for completeness. Let $\theta_r:=\gamma_r|_{\tilde{\Gamma}_1^r}$ be the BD map for the triple $\tilde{\bg}^r$. The verification of the formulas is similar to the proof of Proposition~\ref{p:rightmost} and is based on an application of a series of long Pl\"ucker relations.
\begin{equation}\label{eq:alp_coef_inv}
\beta_i(X,Y) := -\frac{1}{g_{p+k,1}(X,Y)} {\det (\mathcal{L}}^*)^{[s_0, N(\tilde{\mathcal{L}}^*)]}_{\{s_0-k+i\}\cup [s_0+1,N(\tilde{\mathcal{L}}^*)]}(X,Y), \ \ i=1,\ldots,k-1;
\end{equation}
\begin{equation}\label{eq:uinv0}
\tilde{U}_0(X,Y) = I + \sum_{i = 1}^{k-1} \beta_i(X,Y) e_{q+i,q+k};
\end{equation}
\begin{equation}\label{eq:defuinv}
\tilde{U}_+(X,Y) := \left(\prod_{k\geq 1}^{\leftarrow} \tilde{\theta}_r^k(\tilde{U}_0)\right)\tilde{U}_0;
\end{equation}
\begin{equation}\label{eq:udef_rrinv}
\mathcal{U}^{-1}(X,Y) := \left(\tilde{U}_+(X,Y)X,\tilde{U}_+(X,Y)Y\right).
\end{equation}
The formulas for the inverse of $\mathcal{U}$ in the other cases below are obtained via the same scheme: 1) add an extra negative sign in front of the coefficients; 2) substitute $\tilde{\mathcal{L}}^*$ with $\mathcal{L}^*$ and the frozen variable in the denominator with the corresponding cluster variable from $\gc(\bg)$; 3) substitute $\tilde{\gamma}$ with $\tilde{\theta}$.

\paragraph{Removing the leftmost root from a row run.} As before, let $\Delta^r = [p+1,p+k]$ be a nontrivial row $X$-run in $\bg$ and let $\bar{\Delta}^r = [q+1,q+k] := \gamma_r(\Delta^r)$ be the corresponding row $Y$-run. Define $\tilde{\bg} = (\tilde{\bg}^r, \bg^c)$ with $\tilde{\bg}^r = (\tilde{\Gamma}_1^r, \tilde{\Gamma}_2^r, \gamma_r|_{\tilde{\Gamma}_1^r})$ given by $\tilde{\Gamma}_1^r = \Gamma_1^r \setminus \{p+1\}$ and $\tilde{\Gamma}_2^r = \Gamma_2^r \setminus \{q+1\}$. For an $\mathcal{L}$-matrix $\mathcal{L}(X,Y)$ in $\gc(\bg)$, let $\tilde{\mathcal{L}}(X,Y)$ be a matrix that is obtained from $\mathcal{L}(X,Y)$ by removing the first row of each $X$-block of the form $X^{J}_{[p+1,n]}$. If $\mathcal{L}(X,Y)$ arises from a path that does not traverse the edge $(p+1) \xrightarrow{\gamma_r} (q+1)$, then $\tilde{\mathcal{L}}(X,Y)$ is an $\mathcal{L}$-matrix in $\gc(\tilde{\bg})$; if it does traverse the latter edge, $\tilde{\mathcal{L}}(X,Y)$ is a reducible matrix with blocks that are $\mathcal{L}$-matrices in $\gc(\tilde{\bg})$. Let us denote the $\mathcal{L}$-matrix that corresponds to the latter path as $\mathcal{L}^*(X,Y)$, and let us denote by $s_0$ the number for which $\mathcal{L}^*_{s_0s_0}(X,Y) = x_{p+2,2}$.  
We define the rational map $\mathcal{U}:D(\GL_n)_{\tilde{\bg}} \dashrightarrow D(\GL_n)_{\bg}$ via the following data:
\begin{equation}
\alpha_i(X,Y) := (-1)^{i-1}\frac{1}{\tilde{g}_{p+2,1}(X,Y)} {\det \tilde{\mathcal{L}}^*}^{[s_0, N(\tilde{\mathcal{L}}^*)]}_{[s_0-1,N(\tilde{\mathcal{L}}^*)]\setminus\{s_0+i-1\}}(X,Y), \ \ i = 1,\ldots,k-1;
\end{equation}
\begin{equation}
U_0 := I + \sum_{i=1}^{k-1}\alpha_{i}(X,Y)e_{q+1,q+i+1};
\end{equation}
\begin{equation}
U_+:=\left(\prod_{k\geq 1}^{\leftarrow} \tilde{\gamma}_r^k(U_0)\right)U_0;
\end{equation}
\begin{equation}\label{eq:leftmostu}
\mathcal{U}(X,Y) := \left(U_+(X,Y)X,U_+(X,Y)Y\right).
\end{equation}
The next proposition corresponds to Theorem 7.3 in \cite{plethora} and can be proved in exactly the same way:
\begin{proposition}\label{p:leftmost}
Let $\mathcal{U}:D(\GL_n)_{\tilde{\bg}} \dashrightarrow D(\GL_n)_{\bg}$ be the rational map given by equation~\eqref{eq:leftmostu}. Then the map $\mathcal{U}$ acts on the cluster and stable variables via the following formulas:
\begin{equation}\label{eq:leftmost}
\mathcal{U}^*(g_{ij}(X,Y)) = \begin{cases}
\tilde{g}_{ij}(X,Y)\tilde{g}_{p+2,1}(X,Y) \ &\text{if}\ \mathcal{L}^*_{ss}(X,Y) = x_{ij} \ \text{for} \ s < s_0;\\
\tilde{g}_{ij}(X,Y) \ &\text{otherwise};
\end{cases}
\end{equation}
\begin{equation}\label{eq:leftmost2}
\mathcal{U}^*(h_{ij}(X,Y)) = \begin{cases}
\tilde{h}_{ij}(X,Y)\tilde{g}_{p+2,1}(X,Y) \ &\text{if}\ \mathcal{L}^*_{ss}(X,Y) = y_{ij} \ \text{for} \ s < s_0;\\
\tilde{h}_{ij}(X,Y) \ &\text{otherwise};
\end{cases}
\end{equation}\label{eq:leftmost3}
if $\psi$ is any $\varphi$-, $f$- or $c$- function in the initial extended cluster, then
\begin{equation}
\mathcal{U}^*(\psi(X,Y)) = \tilde{\psi}(X,Y).
\end{equation}
\end{proposition}

\paragraph{Removing roots from column runs.} For a BD triple $\bg = (\Gamma_1,\Gamma_2,\gamma)$, let us define the \emph{opposite} BD triple $\bg^{\text{op}}$ as $\bg^{\text{op}}:= (\Gamma_2,\Gamma_1,\gamma^*)$; likewise, if $\bg = (\bg^r,\bg^c)$ is a BD pair, we call $\bg^{\text{op}} := ((\bg^c)^{\text{op}}, (\bg^r)^{\text{op}})$ the \emph{opposite} BD pair. As explained in \cite{plethora}, the $\mathcal{L}$-matrices in $\gc(\bg)$ and $\gc(\bg^{\text{op}})$ are related via the involution
\[
\mathcal{L}(X,Y) \mapsto \mathcal{L}(Y^t, X^t)^t.
\]
In particular, the involution $(X,Y)\mapsto (Y^t, X^t)$ maps $g$- and $h$-functions from $\gc(\bg)$ to $h$- and $g$-functions from $\gc(\bg^{\text{op}})$. This allows one to translate the construction of the rational maps from the case of removing a pair of roots from row runs to the case of removing a pair of roots from column runs. In the latter case, for some unipotent lower triangular matrix $U_0:=U_0(X,Y)$, we set
\[
U_- := U_0 \prod_{k \geq 1}^{\rightarrow} (\tilde{\gamma}_c^*)^k(U_0)
\]
and define the rational map $\mathcal{U} : D(\GL_n)_{\tilde{\bg}} \dashrightarrow D(\GL_n)_{\bg}$ via
\[
\mathcal{U}(X,Y) := (XU_-(X,Y),YU_-(X,Y)).
\]
As one can observe in the previous cases, the entries of the matrix $U_0$ belong to the localization $\mathcal{O}(D(\GL_n))[\tilde{\psi}^{\pm}_{\square}]$ where the variable $\tilde{\psi_{\square}}$ is a stable variable in $\gc(\tilde{\bg})$ such that $\psi_{\square}$ is a cluster variable in $\gc(\bg)$ (see the paragraph on notation above). Let $\Delta^c = [p+1,p+k]$ be a nontrivial column $X$-run and $\gamma(\Delta^c) = [q+1,q+k]$ be the corresponding column $Y$-run. Then, if $p+1$ and $q+1$ are removed, the variable $\psi_{\square}$ is $\psi_{\square}=h_{1,q+2}$; if $p+k-1$ and $q+k-1$ are removed, then $\psi_{\square} = h_{1,q+k}$.

\paragraph{The action of $\mathcal{U}$ upon other clusters.} 
Let $\tilde{\bg}$ be obtained from $\bg$ in one of the four ways described above (i.e., by removing a pair of rightmost or leftmost roots from row or column runs), and let $\psi_{\square}$ be the variable that is cluster in $\gc(\bg)$ and such that the corresponding variable $\tilde{\psi}_{\square}$ is stable in $\gc(\tilde{\bg})$. The following proposition corresponds to Proposition~7.4 in~\cite{plethora} and describes the action of the maps $\mathcal{U}$ defined above upon an extended cluster other than the initial one.
\begin{proposition}\label{p:comparu}
If $\psi$ and $\tilde{\psi}$ are cluster variables in $\gc(\bg)$ and $\gc(\tilde{\bg})$ that are obtained via the same sequences of mutations, then
\[
\mathcal{U}^*(\psi(X,Y)) = \tilde{\psi}(X,Y) \tilde{\psi}_{\square}(X,Y)^\lambda, \ \ \ \lambda:={\frac{\deg(\psi)-\deg(\tilde{\psi})}{\deg{\tilde{\psi}_{\square}}}}
\]
where $\deg$ denotes the polynomial degree.
\end{proposition}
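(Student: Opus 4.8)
The plan is to run an induction on the length of the mutation sequence, taking Propositions~\ref{p:rightmost} and~\ref{p:leftmost} as the base case and propagating the identity across mutations by means of the abstract comparison result, Proposition~\ref{p:compar}. First I would reduce to the two row cases: by the opposite-BD involution $(X,Y)\mapsto(Y^t,X^t)$, which intertwines $\gc(\bg)$ with $\gc(\bg^{\mathrm{op}})$ and swaps $g$- and $h$-functions (as recorded in the paragraph on column runs), the two column cases are carried to the two row cases, so it suffices to treat $\tilde\bg$ obtained by deleting a rightmost or leftmost root from a row run. In either situation Propositions~\ref{p:rightmost} and~\ref{p:leftmost} already establish the identity on the initial extended cluster with $\lambda\in\{0,1\}$; here $\tilde\psi_\square=\tilde g_{p+k,1}$ (resp.\ $\tilde g_{p+2,1}$) is the variable that is mutable in $\gc(\bg)$ and frozen in $\gc(\tilde\bg)$. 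For these initial variables the exponent $\lambda$ agrees with $(\deg\psi-\deg\tilde\psi)/\deg\tilde\psi_\square$, since $\lambda=1$ precisely when a factor of $\tilde\psi_\square$ is gained and $\lambda=0$ otherwise.

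To move off the initial cluster I would invoke Proposition~\ref{p:compar} with $\gc_1:=\gc(\tilde\bg)$ and $\gc_2:=\gc(\bg)$, after freezing $\psi_\square$ in $\gc(\bg)$ so that the two structures carry the same number of cluster and of stable variables. The hypotheses are then checked as follows. The multiplicities $d_1,\dots,d_N$ and the single nontrivial string $c_0,\dots,c_n$ attached to $\varphi_{11}$ are untouched by deleting a row root, so the strings coincide. The quiver of $\gc(\bg)$ differs from that of $\gc(\tilde\bg)$ only through the freezing of $\psi_\square$ and the extra arrows of Figure~\ref{f:nbd_gi11}, all of which are incident to $\psi_\square$; hence the two extended exchange matrices agree in every column except the stable column of $\tilde\psi_\square$, exactly as Proposition~\ref{p:compar} demands. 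For the rank-$1$ toric actions I would take on each side the overall scaling $(X,Y)\mapsto(tX,tY)$, whose weight on any homogeneous cluster variable is its polynomial degree; thus $u_i=\deg\tilde\psi_i$ on $\gc(\tilde\bg)$ and $v_i=\deg\psi_i$ on $\gc(\bg)$. These are $\gc$-extendable by Proposition~\ref{p:toric}: the condition $\tilde BW=0$ is precisely the homogeneity of every exchange relation (all of Desnanot--Jacobi determinantal type, while for $\varphi_{11}$ the relation $\varphi_{11}\varphi_{11}'=\sum_r c_r\varphi_{21}^r\varphi_{12}^{n-r}$ is homogeneous because each $c_r$ has degree $n$), and the $\hat p_{ir}$ are scaling-invariant.

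With these data the embedding $\theta$ produced by Proposition~\ref{p:compar} is determined on the initial generators by $\theta(\psi)=\tilde\psi\,\alpha^{v_i-u_i}$ with $\alpha^{u_{N+M}}=\tilde\psi_\square$, i.e.\ $\alpha=\tilde\psi_\square^{1/\deg\tilde\psi_\square}$; comparing with the base case shows $\theta=\mathcal{U}$ as maps of fields. Since in $\gc(\tilde\bg)$ the variable $\tilde\psi_\square$ is frozen, the sequences of mutations legal on both sides are exactly those avoiding $\psi_\square$, and Proposition~\ref{p:compar} then yields, for every such cluster,
\[
\mathcal{U}(\psi')=\tilde\psi'\,\alpha^{v_i'-u_i'}=\tilde\psi'\,\tilde\psi_\square^{(v_i'-u_i')/\deg\tilde\psi_\square}=\tilde\psi'\,\tilde\psi_\square^{(\deg\psi'-\deg\tilde\psi')/\deg\tilde\psi_\square},
\]
which is the assertion. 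Here $v_i'=\deg\psi'$ and $u_i'=\deg\tilde\psi'$ because the toric weight of a mutated variable under the scaling action is, by construction, its degree; and the exponent is automatically an integer since $\mathcal{U}(\psi')$ and $\tilde\psi'$ are genuine Laurent polynomials over the ground ring.

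I expect the crux to be the structural verification in the second paragraph: that deleting a single root perturbs the extended exchange matrix in only the one stable column of $\tilde\psi_\square$, so that Proposition~\ref{p:compar} is applicable. This is the point where one must read off from the quiver description in Section~\ref{s:quiver} that the sole discrepancy between $\gc(\bg)$ and $\gc(\tilde\bg)$ is the freezing of $\psi_\square$ together with its incident arrows, with no induced change among the mutable--mutable arrows. The remainder is bookkeeping of degrees and a direct appeal to the already-established comparison and toric-extendability results, following the same line of reasoning as the proof of Proposition~7.4 in~\cite{plethora}.
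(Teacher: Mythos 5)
Your proposal is correct and follows essentially the same route as the paper's proof: both arguments identify the comparison embedding $\theta$ of Proposition~\ref{p:compar} with $\mathcal{U}$ by equipping each structure with the rank-one toric action whose weights are the polynomial degrees, and by matching $\theta$ and $\mathcal{U}$ on the initial extended cluster via the formulas of Propositions~\ref{p:rightmost} and~\ref{p:leftmost}. The only difference is that you spell out the hypotheses of Proposition~\ref{p:compar} (freezing $\psi_{\square}$ to equalize the variable counts, the column-by-column comparison of the extended exchange matrices, $\gc$-extendability via Proposition~\ref{p:toric}, and the reduction of the column cases to the row cases), which the paper's proof leaves implicit.
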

\begin{proof}
The proposition is a straight consequence of Proposition~\ref{p:compar}. The required global toric actions in $\gc(\bg)$ and $\gc(\tilde{\bg})$ have their weight vectors formed by the degrees of the cluster and stable variables considered as polynomials, and the map $\theta$ coincides with the map $\mathcal{U}$. Indeed, if $\psi$ is any variable from the initial extended cluster such that $\deg(\psi) = \deg(\tilde{\psi})$, then $\theta(\psi) = \tilde{\psi} = \mathcal{U}(\psi)$. However, if $\psi$ and $\tilde{\psi}$ have different degrees, then the formulas for $\mathcal{U}$ (see Proposition~\ref{p:leftmost} or Proposition~\ref{p:rightmost}) suggest that $\deg\psi - \deg\tilde{\psi} = \deg \psi_{\square} = \deg \tilde{\psi}_{\square}$. Therefore, 
\[
\theta(\psi) = \tilde{\psi} \tilde{\psi}_{\square}^{\frac{\deg\psi-\deg\tilde{\psi}}{\det \tilde{\psi}_{\square}}} = \tilde{\psi} \tilde{\psi}_{\square} = \mathcal{U}(\psi).
\]
Thus the maps $\theta$ and $\mathcal{U}$ are the same (when viewed as maps between the rings generated by the initial extended clusters), and the conclusion of Proposition~\ref{p:compar} for the map $\theta$ is exactly the statement of Proposition~\ref{p:comparu}.
\end{proof}
 
For the next corollaries, if $\Sigma$ is any seed in $\gc(\bg)$, we set $\mathcal{L}_{\mathbb{C}}(\Sigma):=\mathcal{L}(\Sigma)\otimes \mathbb{C}$ to be the complexification of the ring of Laurent polynomials associated with the seed $\Sigma$ (see equation~\eqref{eq:laurring} for the definition). Likewise, $\tilde{\mathcal{L}}_{\mathbb{C}}(\tilde{\Sigma})$ denotes the ring of Laurent polynomials associated to a seed $\tilde{\Sigma} \in \gc(\tilde{\bg})$. The below corollaries appeared in~\cite{plethora} in a disguised form in the proof of Theorem~3.12.

\begin{corp}\label{c:comparu}
Let $\Sigma$ and $\tilde{\Sigma}$ be seeds in $\gc(\bg)$ and $\gc(\tilde{\bg})$ obtained via the same sequences of mutations from the initial seeds, and let $\mathcal{L}_{\mathbb{C}}(\Sigma)$ and $\tilde{\mathcal{L}}_{\mathbb{C}}(\tilde{\Sigma})$ be the corresponding rings of Laurent polynomials. If $\mathcal{O}(\GL_n) \subseteq \tilde{\mathcal{L}}_{\mathbb{C}}(\tilde{\Sigma})$, then $\mathcal{O}(\GL_n) \subseteq \mathcal{L}_{\mathbb{C}}(\Sigma)$.
\end{corp}
\begin{proof}

It's a consequence of Proposition~\ref{p:comparu} that $\mathcal{U}^*$ can be viewed as an isomorphism $\mathcal{L}_{\mathbb{C}}(\Sigma) \xrightarrow{\sim} \tilde{\mathcal{L}}_{\mathbb{C}}(\tilde{\Sigma})[\tilde{\psi}_{\square}^{\pm 1}]$. Since $\mathcal{U}^*(\mathcal{O}(\GL_n)) \subseteq \mathcal{O}(\GL_n)[\tilde{\psi}_{\square}^{\pm 1}] \subseteq \tilde{\mathcal{L}}_{\mathbb{C}}(\tilde{\Sigma})[\tilde{\psi}_{\square}^{\pm 1}]$, we see that $\mathcal{O}(\GL_n) \subseteq \mathcal{L}_{\mathbb{C}}(\Sigma)$.\qedhere
\end{proof}

\begin{corp}\label{c:comparu2}
Let $\tilde{\mathcal{N}}$ be a nerve in $\gc(\tilde{\bg})$ and $\mathcal{N}^\prime$ be the corresponding set of seeds in $\gc(\bg)$. Set $\mathcal{N} := \mathcal{N}^\prime \cup \Sigma_{\psi_{\square}}$ to be a nerve in $\gc(\bg)$, where $\Sigma_{\psi_{\square}}$ is a seed adjacent to any seed of $\mathcal{N}^\prime$ in the direction of $\psi_{\square}$. If $\mathcal{O}(D(\GL_n))\subseteq \bar{\mathcal{A}}_{\mathbb{C}}(\gc(\tilde{\bg}))$ and $\mathcal{O}(D(\GL_n)) \subseteq \mathcal{L}_{\mathbb{C}}(\Sigma_{\psi_{\square}})$, then $\mathcal{O}(D(\GL_n))\subseteq\bar{\mathcal{A}}_{\mathbb{C}}(\gc(\bg))$.
\end{corp}
\begin{proof}
Since $\bar{\mathcal{A}}_{\mathbb{C}}(\gc(\tilde{\bg}) = \bigcap_{\tilde{\Sigma} \in \tilde{\mathcal{N}}} \tilde{\mathcal{L}}_{\mathbb{C}}(\tilde{\Sigma})$, it follows from Corollary~\ref{c:comparu} that
\[
\mathcal{O}(\GL_n) \subseteq \bigcap_{\Sigma \in \mathcal{N}^\prime}\mathcal{L}_{\mathbb{C}}(\Sigma);
\]
since in addition $\mathcal{O}(D(\GL_n)) \subseteq \mathcal{L}_{\mathbb{C}}(\Sigma_{\psi_{\square}})$, we conclude that
\[
\mathcal{O}(\GL_n) \subseteq\bigcap_{\Sigma \in \mathcal{N}}\mathcal{L}_{\mathbb{C}}(\Sigma) = \bar{\mathcal{A}}_{\mathbb{C}}(\mathcal{N}) = \bar{\mathcal{A}}_{\mathbb{C}}(\gc(\bg)).\qedhere
\]
\end{proof}
The conclusion of Corollary~\ref{c:comparu2} corresponds to Condition~\ref{c:natiso} of Proposition~\ref{p:starfish}. Hence if the other two conditions of the proposition are satisfied, then $\mathcal{O}(D(\GL_n))$ is naturally isomorphic to $\bar{\mathcal{A}}_{\mathbb{C}}(\gc(\bg))$.
\subsection{Auxiliary mutation sequences}
As in~\cite{plethora}, we will use the same inductive argument on the size $|\Gamma_1^r|+|\Gamma_1^c|$ in order to prove that $\bar{\mathcal{\mathcal{A}}}$ is naturally isomorphic to $\mathcal{O}(D(\GL_n))$. The step of the induction is simple and relies upon Corollary~\ref{c:comparu2} and the existence of at least two different birational quasi-isomorphisms (i.e., arising from a removal of different roots). However, for the base of the induction, which is $|\Gamma_1^r|+|\Gamma_1^c| = 1$, we will need to express manually the standard coordinates $x_{ij}$ and $y_{ij}$ as elements of $\mathcal{L}_{\mathbb{C}}(\Sigma_{\psi_{\square}})$, where the seed $\Sigma_{\psi_{\square}}$ is adjacent to the initial one in the direction of $\psi_{\square}$ (see the previous section). This, in turn, will be substantially based on the Laurent phenomenon: If we know that a certain polynomial $p(X,Y)$ is a cluster variable, then $p(X,Y) \in \mathcal{L}_{\mathbb{C}}(\Sigma_{\psi_{\square}})$, and therefore $p(X,Y)$ can be used in the production of Laurent expressions\footnote{Note that the only invertible elements of $\mathcal{L}_{\mathbb{C}}(\Sigma_{\psi_{\square}})$ are monomials in the invertible frozen variables and cluster variables of $\Sigma_{\psi_{\square}}$, so if $p(X,Y)$ does not belong to $\Sigma_{\psi_{\square}}$, we cannot divide by $p(X,Y)$, but we can add it and multiply by it in the process.} for $x_{ij}$ or $y_{ij}$ even if we do not know a precise Laurent expansion of $p(X,Y)$ in terms of the variables of $\Sigma_{\psi_{\square}}$. Thus, the objective of this section is to enrich our database of cluster variables, which will be used in manufacturing Laurent expressions of the standard coordinates $x_{ij}$ and $y_{ij}$. 

\subsubsection{Sequence $B_s$ in the standard $\gc$}
For this section, let us consider only one generalized cluster structure on $D(\GL_n)$ induced by the standard BD pair. For $2 \leq s \leq n$, define a sequence of mutations $B_s$ as
\[\begin{split}
h_{sn} \rightarrow h_{s,n-1} \rightarrow \cdots \rightarrow h_{s,s+1} \rightarrow \\ \rightarrow h_{ss} \rightarrow f_{1,n-s} \rightarrow f_{2,n-s-1} \rightarrow \cdots \rightarrow f_{n-s,1} \rightarrow \\ \rightarrow g_{ss} \rightarrow g_{s,s-1} \rightarrow \cdots \rightarrow g_{s2}.
\end{split}
\]
The pathway of the sequence is illustrated in Figure~\ref{f:bs_highlight}. 
\begin{figure}
\begin{center}
\includegraphics[scale=0.42]{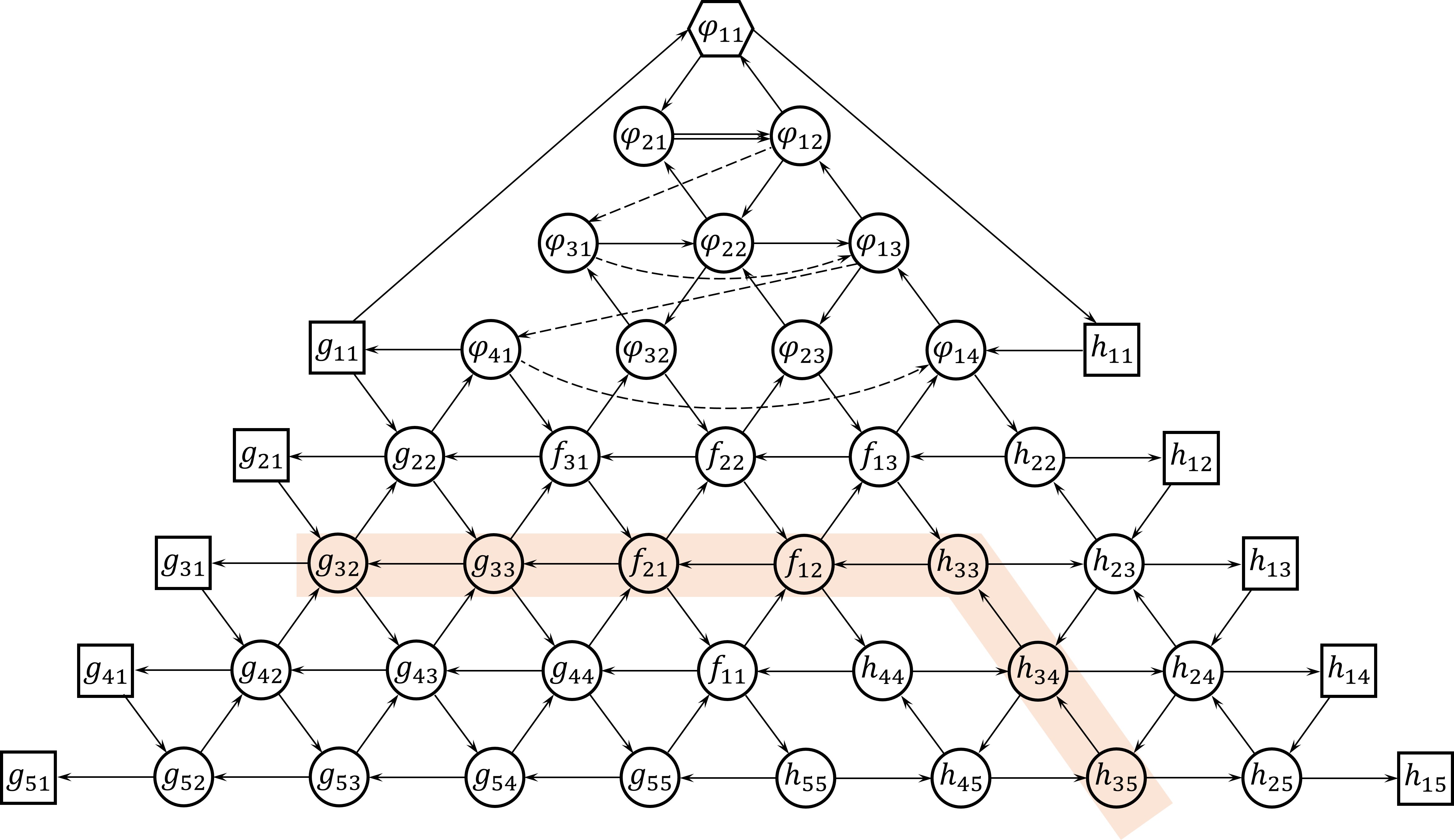}
\end{center}
\caption{The initial quiver of the standard $\gc$ in $n=5$. The vertices of the sequence $B_s$ for $s=3$ are highlighted.}
\label{f:bs_highlight}
\end{figure}
\begin{lemma}\label{l:base_ind}
Apply the mutation sequence $B_s$ to the initial seed. Then the resulting seed contains the following cluster variables:
\begin{equation}\label{eq:bshsn}
h^\prime_{s,n-i+1} = \det Y^{[n-i,n]}_{\{s-1\} \cup [s+1,s+i]}, \ \ i \in [1,n-s];
\end{equation}
\begin{equation}\label{eq:bsf}
f_{i,n-s-i+1}^\prime = \det [ X^{[n-i,n]} \ Y^{[s+i+1,n]}]_{\{s-1\}\cup [s+1,n]} , \  \ i \in [0,n-s];
\end{equation}
\begin{equation}\label{eq:bsgss}
g_{s,s-i+1}^\prime = \det X^{[s-i,n-i]}_{\{s-1\}\cup [s+1,n]}, \ \ i \in [1,s-1].
\end{equation}
\end{lemma}
\begin{proof}
\noindent The mutation at $h_{sn}$ reads
\[
h_{sn} h^\prime_{sn} = h_{s+1,n} h_{s-1,n-1} + h_{s,n-1} h_{s-1,n},
\]
which is simply
\[
y_{sn} h^\prime_{sn} = y_{s+1,n} \det \begin{bmatrix} y_{s-1,n-1} & y_{s-1,n}\\ y_{s,n-1} & y_{sn} \end{bmatrix} + y_{s-1,n} \det \begin{bmatrix} y_{s,n-1} & y_{sn} \\ y_{s+1,n-1} & y_{s+1,n} \end{bmatrix}
\]
hence $h^\prime_{sn} = \det Y^{[n,n-1]}_{\{s-1,s+1\}}$. Once we've mutated along the sequence $h_{sn} \rightarrow \cdots \rightarrow h_{s,n-i+1}$, the mutation at $h_{s,n-i}$ reads
\[
h_{s,n-i} h_{s,n-i}^\prime = h_{s,n-i-1} h_{s,n-i+1}^\prime + h_{s-1,n-i-1} h_{s+1,n-i}.
\]
This is a Desnanot-Jacobi identity from Proposition~\ref{p:dj13} applied to the matrix
\[
\begin{matrix}
 & \downarrow &  &  &  & \\
\rightarrow & y_{s-1,n-i-1} & y_{s-1,n-i} & y_{s-1,n-i+1} & \ldots & y_{s-1,n}\\
\rightarrow & y_{s,n-i-1} & y_{s,n-i} & y_{s,n-i+1} & \ldots & y_{s,n}\\
&\vdots & \vdots & \vdots & \ldots & \vdots\\
\rightarrow & y_{s+1+i,n-i-1} & y_{s+1+i,n-i} & y_{s+1+i,n-i+1} & \ldots & y_{s+1+i,n}
\end{matrix}
\]
with rows and columns chosen as indicated by arrows (the first two rows, the last row and the first column). We obtain $h^\prime_{s,n-i+1} = \det Y^{[n-i,n]}_{\{s-1\}\cup [s+1,s+i]}$. Next, the mutation at $h_{ss}$ is given by
\[
h_{ss} h^\prime_{ss} = f_{1,n-s}h_{s,s+1}^\prime + f_{1,n-s+1} h_{s+1,s+1}.
\]
This is a Desnanot-Jacobi identity from Proposition~\ref{p:dj22} applied to the matrix
\[
\begin{matrix}
& \downarrow & \downarrow & & & \\
\rightarrow & x_{s-1,n} & y_{s-1,s} & y_{s-1,s+1} & \ldots & y_{s-1,n}\\
\rightarrow & x_{sn} & y_{ss} & y_{s,s+1} & \ldots & y_{sn}\\
 & \vdots & \vdots &\vdots & \ldots & \vdots \\
 & x_{nn} & y_{ns} & y_{n,s+1} & \ldots & y_{nn}
\end{matrix}
\]
hence $h_{ss}^\prime = \det [X^{[n]} \ Y^{[s+1,n]}]_{\{s-1\}\cup [s+1,n]}$. Next, assuming the conventions $f_{0,n-j} = h_{j+1,j+1}$ and $f_{n-j,0} = g_{j+1,j+1}$ (see equation~\eqref{eq:fconv}), the subsequent mutations along the path $f_{1,n-s} \rightarrow \cdots \rightarrow f_{n-s,1}$ yield
\[
f_{i,n-s-i+1} f_{i,n-s-i+1}^\prime = f_{i+1,n-s-i+1} f_{i,n-s-i} + f_{i+1,n-s-i} f^\prime_{i-1,n-s-i+2} , \ i \in [1,n-s]
\]
Assuming by induction that $f_{i-1,n-s-i+2}^\prime = \det [X^{[n-i+1,n]} \ Y^{[s+i,n]}]_{\{s-1\}\cup [s+1,n]}$, the latter relation becomes a Desnanot-Jacobi identity from Proposition~\ref{p:dj22} for the matrix $[X^{[n-i,n]} \ Y^{[s+i-2,n]}]_{\{s-1\}\cup [s+1,n]}$ applied as indicated:
\[
\begin{matrix}
& \downarrow & & & \downarrow & & & &\\
\rightarrow & x_{s-1,n-i} & x_{s-1,n-i+1} & \ldots & x_{s-1,n} & y_{s-1,i+s} & y_{s-1,i+s+1} & \ldots & y_{s-1,n}\\
\rightarrow & x_{s,n-i} & x_{s,n-i+1} & \ldots & x_{s,n} & y_{s,i+s} & y_{s,i+s+1} & \ldots & y_{s,n}\\
 &\vdots & \vdots & \ldots & \vdots & \vdots & \vdots & \ldots & \vdots\\
& x_{n,n-i} & x_{n,n-i+1} & \ldots & x_{nn} & y_{n,i+s} & y_{n,i+s+1} & \ldots & y_{n,n}
\end{matrix}
\]
Therefore, $f_{i,n-s-i+1}^\prime = \det [X^{[n-i,n]} \ Y^{[s+i+1,n]}]_{\{s-1\}\cup[s+1,n]}$ (note that $f_{n-s,1}$ consists entirely of variables from $X$). Lastly, the mutations along the path $g_{ss}  \rightarrow \cdots \rightarrow g_{s2}$ read
\[
g_{s,s-i+1} g_{s,s-i+1}^\prime = g_{s,s-i} g_{s,s-i+2}^\prime + g_{s-1,s-i} g_{s+1,s-i+1},  i \in [1,s-1].
\] 
Assuming by induction $g^\prime_{s,s-i+2} = \det X^{[s-i+1,n-i+1]}_{\{s-1\}\cup[s+1,n]}$, apply the Desnanot-Jacobi identity to the matrix
\[
\begin{matrix}
&\downarrow & & & \downarrow \\
\rightarrow & x_{s-1,s-i} & x_{s-1,s-i+1} & \ldots & x_{s-1,n-i+1} \\
\rightarrow & x_{s,s-i} & x_{s,s-i+1} & \ldots & x_{s,n-i+1}\\
&\vdots& \vdots & \ldots & \vdots \\
&x_{n,s-i} & x_{n,s-i+1} & \ldots & x_{n,n-i+1}\\
\end{matrix}
\]
At last, we obtain that $g_{s,s-i+1}^\prime = \det X^{[s-i,n-i]}_{\{s-1\} \cup [s+1,n]}$.
\end{proof}

\subsubsection{Sequence $B_{s-k} \rightarrow \ldots \rightarrow B_{s}$ in the standard $\gc$}

\begin{figure}
\begin{center}
\includegraphics[scale=0.42]{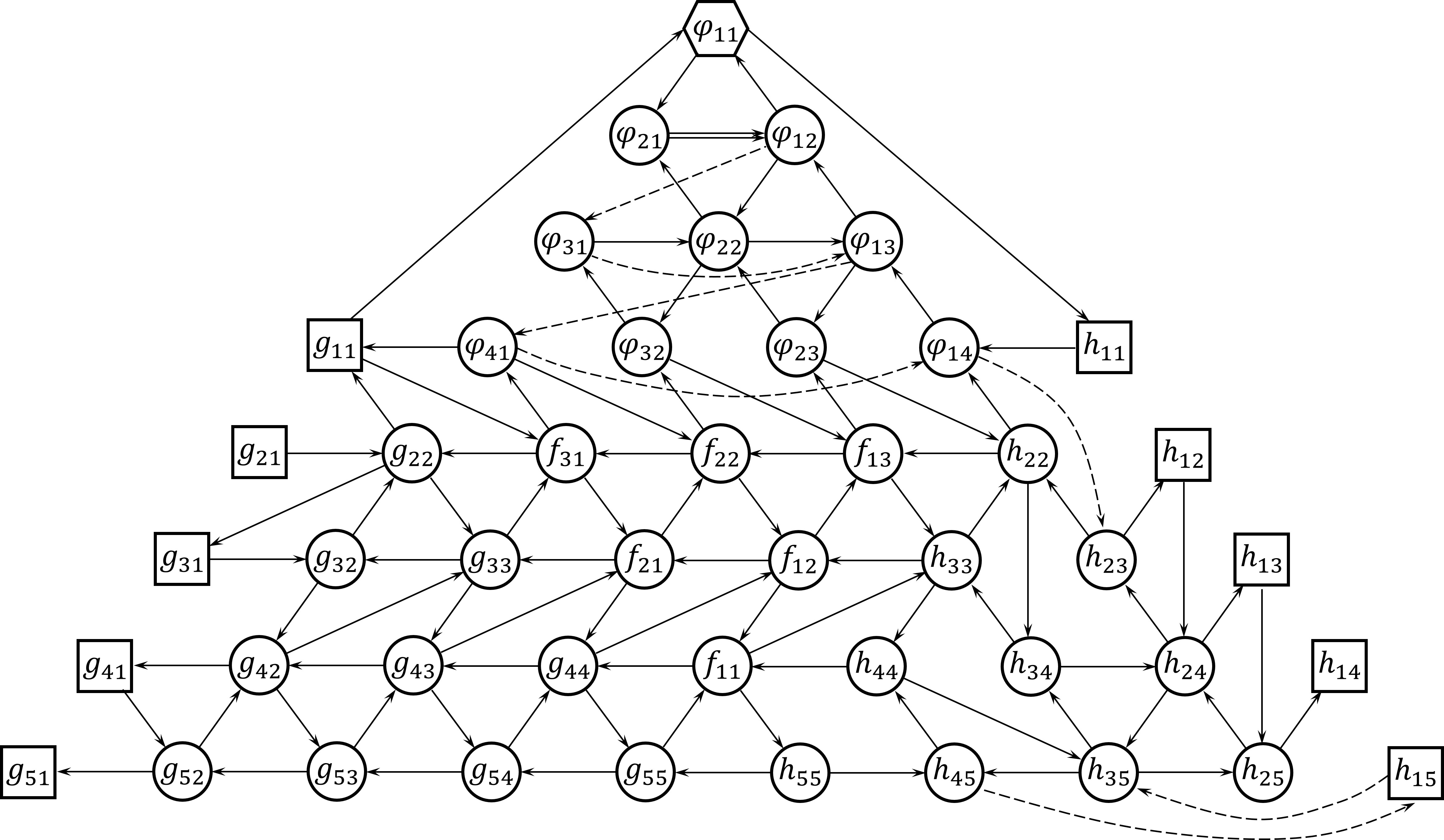}
\end{center}
\caption{The result of mutating the initial quiver along the sequence $B_2 \rightarrow B_3$ ($n=5$, the standard $\gc$).}
\label{f:b2b3}
\end{figure}

\begin{lemma}\label{l:seqs_c}
Let us apply the mutation sequence $B_{s-k}\rightarrow \cdots \rightarrow B_s$ to the initial seed. Then the resulting seed contains the following cluster variables:
\begin{equation}\label{eq:bskh}
h_{s,n-i+1}^\prime = \det Y_{\{s-k-1\}\cup[s+1,s+i]}^{[n-i,n]}, \ i \in [1,n-s];
\end{equation}
\begin{equation}\label{eq:bskf}
f^\prime_{i,n-s-i+1} = \det [ X^{[n-i,n]} Y^{[s+i+1,n]}]_{\{s-k-1\}\cup [s+1,n]}, \ \ i \in [0,n-s];
\end{equation}
\begin{equation}\label{eq:bskg}
 g^\prime_{s,s-i+1} = \det X^{[s-i,n-i]}_{\{s-k-1\}\cup[s+1,n]}, \ \ i \in [1,s-1].
\end{equation}
\end{lemma}
\begin{proof}
We prove by induction on $k$. For $k=0$, the formulas coincide with formulas~\eqref{eq:bshsn}-\eqref{eq:bsgss}. Let us apply the sequence $B_{s-k} \rightarrow \cdots \rightarrow B_{s-1}$ to the initial seed and assume that the formulas hold. We will show that the same formulas hold after a further mutation along the sequence $B_s$. The mutation at $h_{sn}$ reads
\[
h_{sn} h_{sn}^\prime = h^\prime_{s-1,n}h_{s+1,n} + h_{s-k-1,n} h_{s,n-1}.
\]
This is a Desnanot-Jacobi identity for the matrix
\[
\begin{matrix}
&\downarrow & \\
\rightarrow & y_{s-k-1,n-1} & y_{s-k-1,n}\\
\rightarrow & y_{s,n-1} & y_{sn}\\
\rightarrow & y_{s+1,n-1} & y_{s+1,n}
\end{matrix}
\]
hence $h_{sn}^\prime = \det Y^{[n-1,n]}_{\{s-k-1,s+1\}}$. The subsequent mutations are
\[
h_{s,n-i} h_{s,n-i}^\prime = h_{s+1,n-i} h^\prime_{s-1,n-i} + h_{s,n-i-1} h_{s,n-i+1}^\prime
\]
These are Desnanot-Jacobi identities applied to the matrix
\[
\begin{matrix}
 & \downarrow &  &  & \\
\rightarrow & y_{s-k-1,n-i-1} & y_{s-k-1,n-i} &\ldots&y_{s-k-1,n}\\
\rightarrow & y_{s,n-i-1} & y_{s,n-i} & \ldots & y_{sn}\\
& y_{s+1,n-i-1} & y_{s,n-i} & \ldots & y_{s+1,n}\\
& \vdots & \vdots & \ldots & \vdots \\
\rightarrow & y_{s+i+1,n-i-1} & y_{s+i+1,n-i} & \ldots & y_{s+i+1,n}
\end{matrix}
\]
Therefore, $h^\prime_{s,n-i} = \det Y^{[n-i-1,n]}_{\{s-k-1\}\cup [s+1,s+i+1]}$. Next, the mutation at $h_{ss}$ for $s < n$ reads
\[
h_{ss} h^\prime_{ss} = h_{s-1,s-1}^\prime h_{s+1,s+1} + h^\prime_{s,s+1} f_{1,n-s}.
\]
This is a Desnanot-Jacobi identity applied to the matrix
\[
\begin{matrix}
&\downarrow & \downarrow & & & \\
\rightarrow & x_{s-k-1,n} & y_{s-k-1,s} & y_{s-k-1,s+1} & \ldots & y_{s-k-1,n}\\
\rightarrow & x_{sn} & y_{ss} & y_{s,s+1} & \ldots & y_{sn}\\
& x_{s+1,n} & y_{s+1,s} & y_{s+1,s+1} & \ldots & y_{s+1,n}\\
& \vdots & \vdots & \vdots & \ldots & \vdots \\
& x_{nn} & y_{ns} & y_{n,s+1} & \ldots & y_{nn}
\end{matrix}
\]
hence $h^\prime_{ss} = \det [X^{[n,n]} \ Y^{[s+1,n]}]_{\{s-k-1\}\cup [s+1,n]}$. If $s = n$, then the mutation is
\[
h_{nn} h_{nn}^\prime = h^\prime_{n-1,n-1} + g_{nn} h_{n-k-1,n},
\]
which expands as
\[
y_{nn} h_{nn}^\prime = \det \begin{bmatrix} x_{n-k-1,n} & y_{n-k-1,n}\\ x_{nn} & y_{nn} \end{bmatrix} + x_{nn} y_{n-k-1,n} = x_{n-k-1,n}y_{nn},
\]
hence $h_{nn}^\prime = x_{n-k-1,n}$. The subsequent mutations along $f_{1,n-s} \rightarrow \cdots \rightarrow f_{n-s,1}$ read
\[
f_{i,n-s-i+1} f^\prime_{i,n-s-i+1} = f^\prime_{i,n-s-i+2} f_{i,n-s-i} + f_{i+1,n-s-i} f^\prime_{i-1,n-s-i+2}, \ i \in [1,n-s].
\]
These are Desnanot-Jacobi identities applied to the matrices of the form
\[
\begin{matrix}
& \downarrow & & & & & \downarrow & & & \\
\rightarrow & x_{s-k-1,n-i} & x_{s-k-1,n-i+1} & \cdots&x_{s-k-1,n} & y_{s-k-1,i+s} & y_{s-k-1,i+s+1} & \cdots & y_{s-k-1,n}\\
\rightarrow & x_{s,n-i} & x_{s,n-i+1} & \cdots&x_{sn} & y_{s,i+s} & y_{s,i+s+1} & \cdots & y_{sn}\\
& \vdots & \vdots& \cdots & \vdots & \vdots & \vdots &\cdots & \vdots \\
& x_{n,n-i} & x_{n,n-i+1} & \cdots & x_{nn}& y_{n,i+s} & y_{n,i+s+1} & \cdots & y_{nn}\\
\end{matrix}
\]
hence $f_{i,n-s-i+1}^\prime = \det [X^{[n-i,n]} \ Y^{[s+i+1,n]}]_{\{s-k-1\} \cup [s+1,n]}$. Lastly, consider the consecutive mutations along the path $g_{ss} \rightarrow \cdots \rightarrow g_{s2}$. The mutation at $g_{s,s-i+1}$ yields
\[
g_{s,s-i+1}g^\prime_{s,s-i+1} = g_{s-k-1,s-i+1} g_{s+1,s-i+1}^\prime + g_{s,s-i} g_{s,s-i+2}^\prime, \ \ i \in [1,s-1].
\]
This is a Desnanot-Jacobi identity for the matrix
\[
\begin{matrix}
& \downarrow & & & & \downarrow\\
\rightarrow & x_{s-k-1,s-i} & x_{s-k-1,s-i+1} & \cdots & x_{s-k-1,n-i} & x_{s-k-1,n-i+1}\\
\rightarrow & x_{s,s-i} & x_{s,s-i+1} & \cdots & x_{s,n-i} & x_{s,n-i+1}\\
& x_{s+1,s-i} & x_{s+1,s-i+1} & \cdots & x_{s+1,n-i} & x_{s+1,n-i+1}\\
& \vdots & \vdots & \cdots & \vdots & \vdots\\
& x_{n,s-i} & x_{n,s-i+1} & \cdots & x_{n,n-i} & x_{n,n-i+1}
\end{matrix}
\]
Thus the lemma is proved.
\end{proof}

\begin{remark}\label{r:xvar} Applying the sequence $B_{n-k} \rightarrow \cdots \rightarrow B_{n}$ to the initial seed, we obtain 
\[
h^\prime_{nn} = x_{s-k-1,n}, \ \ g^\prime_{n,n-i+1} = x_{s-k-1,n-i}, \ \ i \in [1,n-1].
\]
Therefore, this mutation sequence provides an alternative way of showing that $x_{ij}$'s are cluster variables (another sequence is shown in \cite{double}, but it doesn't translate well to a nontrivial BD pair). Figure~\ref{f:b2b3} illustrates the quiver in $n=5$ obtained after applying $B_2\rightarrow B_3$.
\end{remark}

\subsubsection{Sequence $B_{s-k} \rightarrow \ldots \rightarrow B_s$ in the case $|\Gamma_1^r|+|\Gamma_1^c| = 1$}
\begin{lemma}\label{l:seq_cc}
Let $\bg:=(\bg^r,\bg^c)$ be a BD pair such that $\Gamma_1^r = \{p\}$, $\Gamma_2^r = \{q\}$, and $\Gamma_1^c = \emptyset$, and let $\gc(\bg)$ be the corresponding generalized cluster structure on $D(\GL_n)$. Let $s$ and $k$ be nonnegative numbers that satisfy $2 \leq s-k \leq n$, $2 \leq s \leq n$, $s-k \neq q+1$. Apply the mutation sequence $B_{s-k} \rightarrow \cdots \rightarrow B_{s}$ to the initial seed of $\gc(\bg)$. Then the resulting seed contains the following cluster variables:
\begin{equation}\label{eq:fhsC}
h^\prime_{s,n-i+1} = \det Y^{[n-i,n]}_{\{s-k-1\} \cup [s+1,s+i]}, \ \ i \in [1,n-s]\setminus\{q-s\};
\end{equation}
\begin{equation}\label{eq:fminC}
f_{i,n-s-i+1}^\prime = \det [ X^{[n-i,n]} \ Y^{[s+i+1,n]}]_{\{s-k-1\}\cup [s+1,n]} , \  \ i \in [0,n-s];
\end{equation}
\begin{equation}\label{eq:gminC}
g_{s,s-i+1}^\prime = \det X^{[s-i,n-i]}_{\{s-k-1\}\cup [s+1,n]}, \ \ i \in [1,s-1].
\end{equation}
\end{lemma}
\begin{proof}
Let $\tilde{\bg}$ be the standard BD pair. Let $\mathcal{U} : D(\GL_n)_{\tilde{\bg}} \dashrightarrow D(\GL_n)_{\bg}$ be the birational quasi-isomorphism from Section~\ref{s:birat}. In this case, it is given by
\[
\mathcal{U}(X,Y):= (U_0X,U_0Y), \ \ U_0(X,Y) := I + \alpha(X,Y) e_{q,q+1}, \ \ \alpha(X,Y):=\frac{\det X_{\{p\}\cup[p+2,n]}^{[1,n-p]}}{\det X_{[p+1,n]}^{[1,n-p]}}.
\]
Now, notice that if $I \subseteq [1,n]$ and $J \subseteq [1,2n]$ are two sets of indices of the same size, and if either $\{q,q+1\}\subseteq I$ or $I \cap \{q\}  = \emptyset$, then
\[
\mathcal{U}^*(\det\begin{bmatrix}X & Y\end{bmatrix}_{I}^J) = \det\begin{bmatrix}X & Y\end{bmatrix}_{I}^{J}.
\]
Therefore, if $p(X,Y)$ is any polynomial from equations~\eqref{eq:fhsC}-\eqref{eq:gminC}, $\mathcal{U}^*(p(X,Y)) = p(X,Y)$; but since $\mathcal{U}$ is invertible and $p(X,Y)$ is a cluster variable in $\gc(\tilde{\bg})$ (see Lemma~\ref{l:seqs_c}), it follows from Proposition~\ref{p:comparu} that $p(X,Y)$ is a cluster variable in $\gc(\bg)$ as well.
\end{proof}
\begin{lemma}\label{l:bscc}
Let $\bg :=(\bg^r,\bg^c)$ be a BD pair such that $\Gamma_1^r = \emptyset$, $\Gamma_1^c = \{p\}$, $\Gamma_2^c = \{q\}$. Let $s$ and $k$ be nonnegative numbers that satisfy $2 \leq s-k \leq n$, $2 \leq s \leq n$. Apply the mutation sequence $B_{s-k} \rightarrow \cdots \rightarrow B_{s}$ to the initial seed of $\gc(\bg)$. Then the resulting seed contains the following cluster variables:
\begin{equation}\label{eq:fhsCC}
h^\prime_{s,n-i+1} = \det Y^{[n-i,n]}_{\{s-k-1\} \cup [s+1,s+i]}, \ \ i \in [1,n-s];
\end{equation}
\begin{equation}\label{eq:fminCC}
f_{i,n-s-i+1}^\prime = \det [ X^{[n-i,n]} \ Y^{[s+i+1,n]}]_{\{s-k-1\}\cup [s+1,n]} , \  \ i \in [0,n-s];
\end{equation}
\begin{equation}\label{eq:gminCC}
g_{s,s-i+1}^\prime = \det X^{[s-i,n-i]}_{\{s-k-1\}\cup [s+1,n]}, \ \ i \in [1,s-1] \setminus \{n-p\}
\end{equation}
\end{lemma}
\begin{proof}
The proof proceeds along the same lines as the proof of Lemma~\ref{l:seq_cc}. In this case, the birational quasi-isomorphism is given by
\[
\mathcal{U}(X,Y):=(XU_0,YU_0), \ \ U_0(X,Y) := I + \alpha(X,Y) e_{p+1,p}, \ \ \alpha(X,Y) := \frac{\det Y^{\{q\}\cup[q+2,n]}_{[1,n-q]}}{\det Y^{[q+1,n]}_{[1,n-q]}} \qedhere
\]
\end{proof}
\subsubsection{Sequence $W$ in the standard $\gc$}
 \begin{figure}
 \includegraphics[scale=0.4]{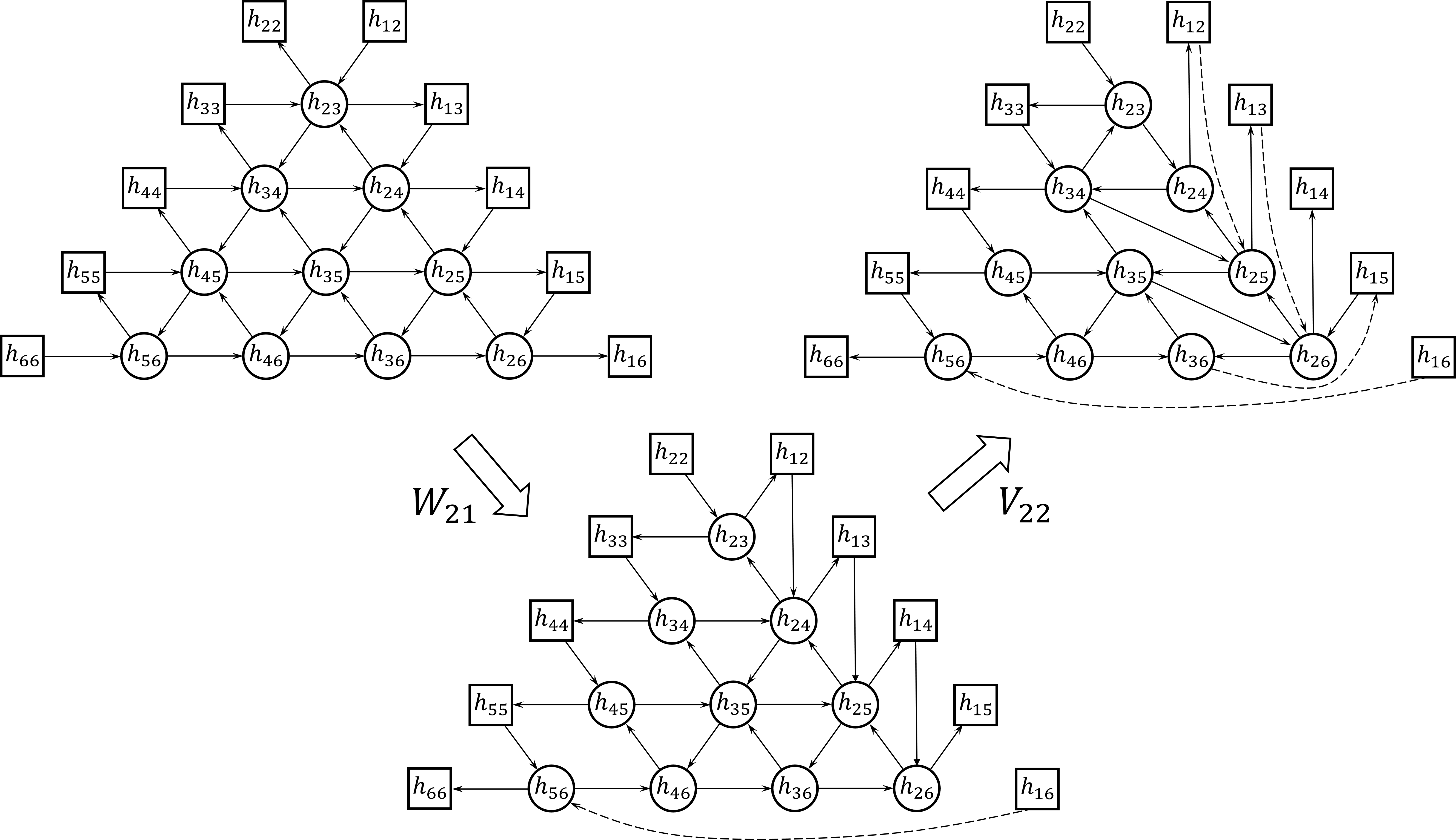}
 \caption{An illustration of the sequence $W_{21}$ and $W_{21}\rightarrow V_{22}$ in $n=6$. Vertices $h_{ii}$ are frozen for convenience, and the vertices that do not participate in mutations are removed.}
 \label{f:wseq}
 \end{figure}
Let $\bg$ be the trivial BD pair and $\gc(\bg)$ be the corresponding generalized cluster structure on $D(\GL_n)$. For $2 \leq s \leq n-1$ and $1 \leq t \leq n-s$, define a sequence of mutations $V_{s,t}$ by 
\[
h_{sn} \rightarrow h_{s,n-1}\rightarrow \cdots \rightarrow h_{s,s+t},
\]
and define a sequence $W_{s,t}$ as
\[
V_{s,t} \rightarrow V_{s+1,t} \rightarrow \cdots \rightarrow V_{n-t,t}.
\]
An illustration of the sequences is shown in Figure~\ref{f:wseq}.
\begin{lemma}\label{l:kilcol}
Apply the mutation sequence $W_{s,1}\rightarrow W_{s,2} \rightarrow \cdots \rightarrow W_{s,t-1} \rightarrow V_{s,t}$ to the initial seed of $\gc(\bg)$. Then the resulting seed contains the following cluster variables:
\[
h_{s,n-i}^{(t)} = \det Y^{[n-(t+i), n]}_{[s-1,s+t-2]\cup [s+t,s+t+i]}, \ \ i \in [0,n-s-t],
\]
where the upper index indicates the number of times the corresponding vertex of the quiver was mutated along the sequence.
\end{lemma}
\begin{proof}
Notice that $V_{s,1}$ is a part of $B_{s}$ sequence. Moreover, the cluster variables obtained along the sequence $W_{s,1}$ can be as well collected from the sequence $B_{s} \rightarrow \cdots \rightarrow B_{n-1}$. More generally, if we've already mutated along the sequence 
\[\begin{split}
W_{s,1} \rightarrow W_{s,2} \rightarrow \cdots \rightarrow W_{s,t-2} \rightarrow V_{s,t-1} \rightarrow V_{s+1,t-1} &\rightarrow \cdots \rightarrow V_{s+k-1,t-1}\rightarrow  \\ &\rightarrow h_{s+k,n}^{(t-2)} \rightarrow \cdots \rightarrow h_{s+k,n-i+1}^{(t-2)},
\end{split}
\]
the mutation at $h_{s+k,n-i}^{(t-2)}$ yields a cluster variable
\begin{equation}\label{eq:hskni}
h_{s+k,n-i}^{(t-1)} = \det Y^{[n-(t-1+i),n]}_{[s-1,s+(t-1)-2]\cup [s+(t-1)+k,s+(t-1)+i+k]}.
\end{equation}
Proceeding with the proof, the mutation of $h_{sn}^{(t-1)}$ can be written as
\[
h_{sn}^{(t-1)} h_{sn}^{(t)} = h_{s,n-1}^{(t-1)} h_{s-1,n-t+1} + h_{s+1,n}^{(t-1)}h_{s-1,n-t},
\]
which is a Desnanot-Jacobi identity applied to the matrix
\[
\begin{matrix}
  & \downarrow &   &   &  \\
  & y_{s-1,n-t} & y_{s-1,n-t+1} & \cdots & y_{s-1,n}\\
  & y_{s,n-t} & y_{s,n-t+1} & \cdots & y_{sn} \\
  & \vdots & \vdots & \cdots & \vdots \\
  & y_{s+t-3,n-t} & y_{s+t-3,n-t+1} & \cdots & y_{s+t-3,n}\\
\rightarrow & y_{s+t-2,n-t} & y_{s+t-2,n-t+1} & \cdots & y_{s+t-2,n}\\
\rightarrow & y_{s+t-1,n-t} & y_{s+t-1,n-t+1} & \cdots & y_{s+t-1,n}\\
\rightarrow & y_{s+t,n-t} & y_{s+t-1,n-t+1} & \cdots & y_{s+t-1,n}
\end{matrix}
\]
Proceeding along $h^{(t-1)}_{s,n} \rightarrow \cdots \rightarrow h^{(t-1)}_{s,n-i+1}$, the subsequent mutation at $h^{(t-1)}_{s,n-i}$ reads
\[
h^{(t-1)}_{s,n-i} h^{(t)}_{s,n-i} = h_{s,n-i+1}^{(t)} h^{(t-1)}_{s,n-i-1} + h_{s+1,n-i}^{(t-1)} h_{s-1,n-t-i}.
\]
This is again a Desnanot-Jacobi identity applied to the matrix
\[
\begin{matrix}
  & \downarrow &   &   &  \\
  & y_{s-1,n-(t+i)} & y_{s-1,n-(t+i)+1} & \cdots & y_{s-1,n}\\
  & \vdots & \vdots & \cdots & \vdots \\
\rightarrow & y_{s+t-2,n-(t+i)} & y_{s+t-2,n-(t+i)+1} & \cdots & y_{s+t-2,n}\\
\rightarrow & y_{s+t-1,n-(t+i)} & y_{s+t-1,n-(t+i)+1} & \cdots & y_{s+t-1,n}\\
  & \vdots & \vdots & \cdots & \vdots \\
\rightarrow & y_{s+t+i,n-(t+i)} & y_{s+t+i,n-(t+i)+1} & \cdots & y_{s+t+i, n}
\end{matrix}
\]
As for the variable $h_{s+k,n}^{(t-1)}$ in~\eqref{eq:hskni} for $k > 0$, the mutation relation is
\[
h_{s+k,n}^{(t)} h_{s+k,n}^{(t-1)} = h_{s+k,n-1}^{(t-1)} h_{s-1,n-t} + h_{s+k+1,n}^{(t-1)} h_{s+k-1,n}^{(t)}.
\]
This is a Desnanot-Jacobi identity applied to the matrix
\[
\begin{matrix}
  & \downarrow &   &   &  \\
  & y_{s-1,n-t} & y_{s-1,n-t+1} & \ldots & y_{s-1,n}\\
  & y_{s,n-t} & y_{s,n-t+1} & \ldots & y_{sn}\\
  & \vdots & \vdots & \ldots & \vdots\\
  & y_{s+(t-1)-2,n-t} & y_{s+(t-1)-2,n-t+1} & \ldots & y_{s+(t-1)-2,n}\\
\rightarrow & y_{s+t-2,n-t} & y_{s+t-2,n-t+1} & \ldots & y_{s+t-2,n}\\
\rightarrow & y_{s+(t-1)+k,n-t} & y_{s+(t-1)+k,n-t+1} & \ldots & y_{s+(t-1)+k,n}\\
\rightarrow & y_{s+t+k,n-t} & y_{s+t+k,n-t+1} & \ldots & y_{s+t+k,n}
\end{matrix}
\]
Lastly, for $i > 0$ and $k > 0$, the mutation at $h_{s+k,n-i}^{(t-1)}$ is
\[
h_{s+k,n-i}^{(t)} h_{s+k,n-i}^{(t-1)} = h^{(t)}_{s+k-1,n-i} h^{(t-1)}_{s+k+1,n-i} + h^{(t)}_{s+k,n-i+1} h^{(t-1)}_{s+k,n-i-1}.
\]
This is a Desnanot-Jacobi identity for
\[
\begin{matrix}
  & \downarrow &   &   &  \\
  & y_{s-1,n-(t+i)} & y_{s-1,n-(t+i)+1} & \cdots & y_{s-1,n}\\
  & \vdots & \vdots & \cdots & \vdots \\
\rightarrow & y_{s+t-2,n-(t+i)} & y_{s+t-2,n-(t+i)+1} & \cdots & y_{s+t-2,n}\\
\rightarrow & y_{s+(t-1)+k,n-(t+i)} & y_{s+(t-1)+k,n-(t+i)+1} & \cdots & y_{s+(t-1)+k,n}\\
& y_{s+t+k,n-(t+i)} & y_{s+t+k,n-(t+i)+1} & \cdots & y_{s+t+k,n}\\
  & \vdots & \vdots & \cdots & \vdots \\
	& y_{s+(t-1)+i+k,n-(t+i)} & y_{s+(t-1)+i+k,n-(t+i)+1} & \cdots & y_{s+(t-1)+i+k, n}\\
\rightarrow & y_{s+t+i+k,n-(t+i)} & y_{s+t+i+k,n-(t+i)+1} & \cdots & y_{s+t+i+k, n}\\
\end{matrix}
\]
Thus the lemma is proved.
\end{proof}
\subsubsection{Sequence $\mathcal{S}$ in the standard $\gc$}\label{s:seqs}
Let us briefly recall a special sequence of mutations from \cite{double} denoted as $\mathcal{S}$. The sequence was used in order to show that the entries of the matrix $U = X^{-1}Y$ belong to the upper cluster algebra, as well as to produce a generalized cluster structure on the variety $\GL_n^\dagger$ (see Section~\ref{s:plgrps} for the definition).
\begin{figure}
\begin{center}
\includegraphics[scale=0.4]{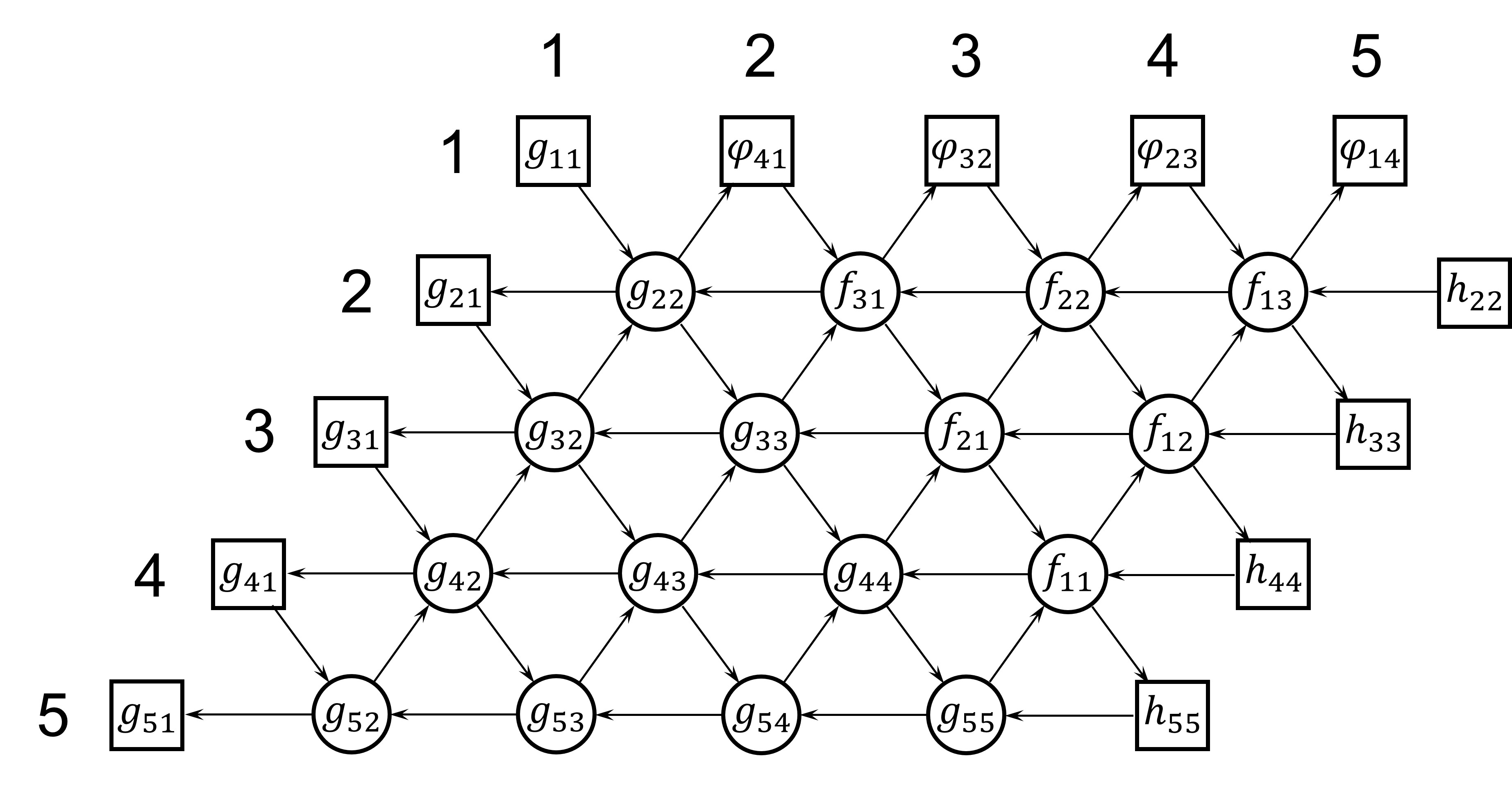}
\end{center}
\caption{Quiver $Q_0$ for $n=5$.}
\label{f:qzero}
\end{figure}
\paragraph{Quiver $Q_0$.} Let $Q$ be the initial quiver of the standard generalized cluster algebra $\gc$. Let us define a quiver $Q_0$ that consists of the vertices that contain all $g$- and $f$-functions, as well as all $h_{ii}$ for $2 \leq i \leq n$ and all $\varphi_{i,n-i}$ for $1 \leq i \leq n-1$; for convenience, let us freeze the vertices $\varphi_{i,n-i}$ and $h_{ii}$. Furthermore, we assign double indices $(i,j)$ to the vertices of $Q_0$, with $i$ enumerating the rows and running from top to bottom, and $j$ being responsible for the columns, running from left to right. Figure~\ref{f:qzero} represents $Q_0$ for $n=5$. The quiver $Q_0$ together with the functions attached to the vertices defines an ordinary cluster algebra of geometric type.

\paragraph{Sequence $\mathcal{S}_1$.} Let us mutate the quiver $Q_0$ along the diagonals starting from the bottom left and proceeding to the top right corner. More precisely: first, mutate at $(n,2)$; second, mutate along $(n-1,2) \rightarrow (n,3)$; third, mutate along $(n-2,2) \rightarrow (n-1,3) \rightarrow (n,4)$ and so on. The last mutation in the sequence is at the vertex $(2,n)$. Let us denote the resulting quiver as $Q_1$ and the resulting cluster variables as $\chi_{ij}^1$, $2 \leq i,\ j \leq n$. They are given by
\begin{equation}\label{eq:chij1}
\chi_{ij}^1 = \begin{cases}
\det X^{[1]\cup [j+1,n+j-i+1]}_{[i-1,n]} \ &\text{if} \ i > j\\
\det [X^{[1]\cup [j+1,n]} \ Y^{[n+i-j,n]}]_{[i-1,n]} \ & \text{if} \ i \leq j.
\end{cases}
\end{equation}

\paragraph{Sequence $\mathcal{S}_k$.} Once we've mutated along the sequence $\mathcal{S}_{k-1}$, the sequence $\mathcal{S}_k$ is defined as follows. First, freeze all the vertices in the $k$th row and in the $(n-k+2)$th column of the quiver $Q_{k-1}$. Then $\mathcal{S}_k$ is defined as a sequence of mutations along the diagonals: First, mutate at $(n,2)$; then mutate along $(n-1,2)\rightarrow (n,3)$, and so on. The resulting cluster variables are denoted as $\chi_{ij}^k$ and are given by
\[
\chi_{ij}^k = \begin{cases} \det X^{[1,k]\cup [j+k,n+j-i+k]}_{[i-k,n]} \ &\text{if} \ i-k+1>j\\
\det [X^{[1,k]\cup [j+k,n]} \ Y^{[n+i-j+1-k,n]}]_{[i-k,n]} \ &\text{if} \ i-k+1 \leq j.
\end{cases}
\]

\paragraph{Sequence $\mathcal{S}$.} The sequence $\mathcal{S}$ is defined as the composition $\mathcal{S}_{n-1} \circ \mathcal{S}_{n-2} \circ \cdots \circ \mathcal{S}_1$. The result of its application to the initial quiver is illustrated in Figure~\ref{f:sseqn4} for $n=4$. Notice that
\[
\chi_{k+1,j}^k = \det X \cdot (-1)^{(n-j-k+1)(n-k-1)} h_{k+1,n-j+2}(U), \ \ \ 2 \leq j \leq n-k+1.
\]
It was shown in \cite{double} that the entries of $U$ in the standard $\gc$ can be written as Laurent polynomials in terms of the following variables: $c$-functions, $\varphi$-functions and the functions $\chi_{k+1,j}^k$ obtained from the sequence $\mathcal{S}$.
\begin{figure}
\begin{center}
\includegraphics[scale=0.5]{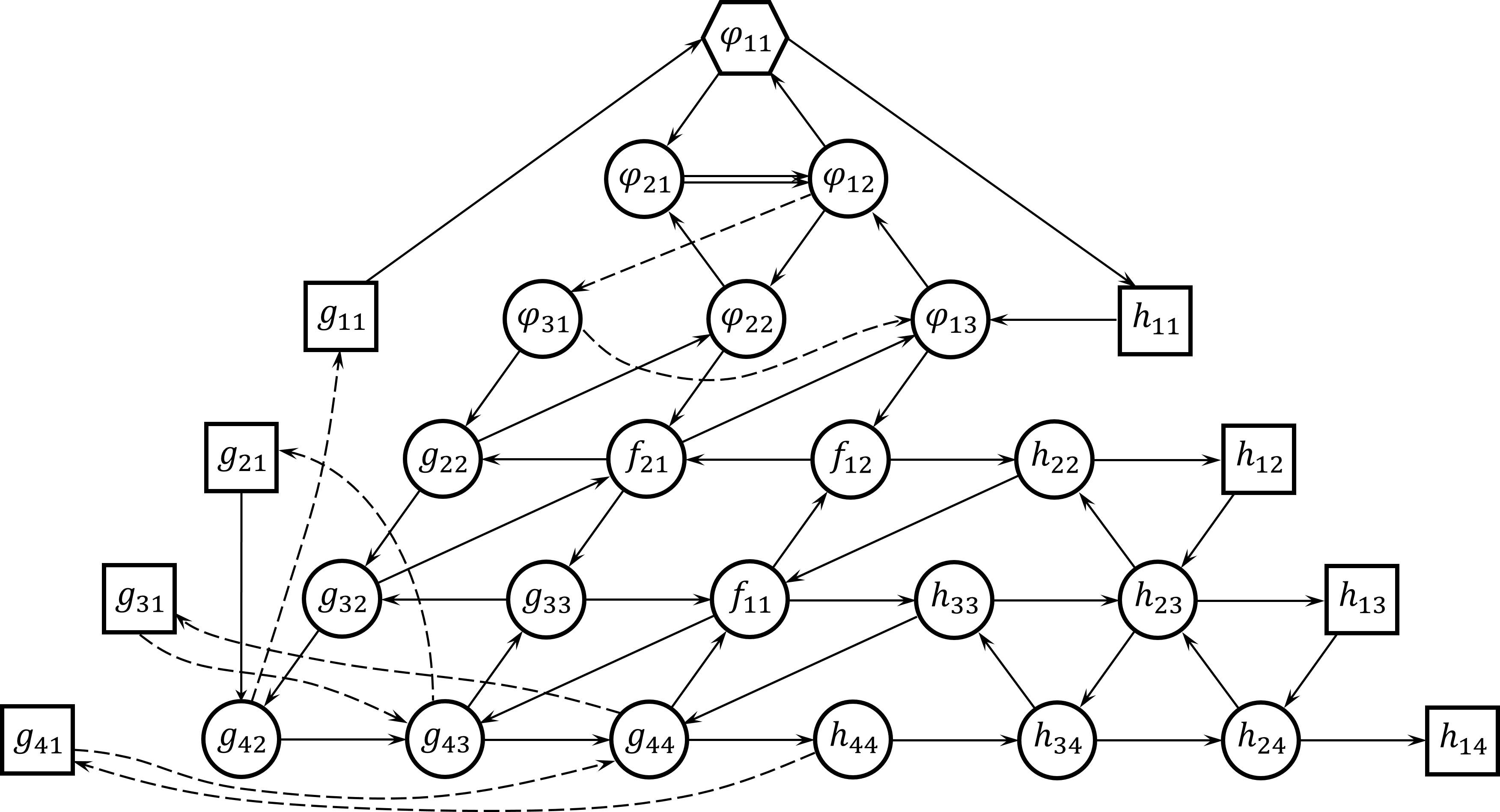}
\end{center}
\caption{An application of the sequence $\mathcal{S}$ to the initial quiver of the standard $\gc$, $n=4$.}
\label{f:sseqn4}
\end{figure}

\subsubsection{Sequence $\mathcal{S}$ in the case $|\Gamma_1^r|+|\Gamma_1^c| = 1$}
\begin{lemma}\label{l:sseqr}
Let $\bg:=(\bg^r,\bg^c)$ be a BD pair such that $\Gamma_1^r = \{p\}$, $\Gamma_2^r = \{q\}$ and $\Gamma_1^c = \emptyset$, and let $\gc(\bg)$ be the corresponding generalized cluster structure on $D(\GL_n)$. Apply the sequence $\mathcal{S}$ to the initial seed of $\gc(\bg)$. Then for $1 \leq k \leq n-1$, $2 \leq j \leq n-k+1$, the resulting seed contains the cluster variables
\begin{equation}\label{eq:chir}
\chi_{k+1,j}^k = \det X \cdot (-1)^{(n-j-k+1)(n-k-1)} h_{k+1,n-j+2}(U)
\end{equation}
\end{lemma}
\begin{proof}
The proof is similar to Lemma~\ref{l:seq_cc} and Lemma~\ref{l:bscc}.
\end{proof}

\begin{remark}
Though not needed in this paper, a similar lemma can be proved for the case $\Gamma_1^c = \emptyset$ and $\Gamma_1^r = \{p\}$, $\Gamma_1^r = \{q\}$. Then the resulting seed also contains the cluster variables~\eqref{eq:chir} except for $k = p$. 
\end{remark}
The case of a nontrivial $\bg^c$ will require a different result:
\begin{lemma}\label{l:cclust}
Let $\bg:=(\bg^r,\bg^c)$ be a BD pair such that $\Gamma_1^r = \emptyset$, $\Gamma_1^c = \{p\}$ and $\Gamma_2^c = \{q\}$, and let $\gc(\bg)$ be the corresponding generalized cluster structure on $D(\GL_n)$. There exist extended clusters $\Psi:=(\psi_1,\ldots,\psi_{2n})$ and $\tilde{\Psi}:=(\tilde{\psi}_1,\ldots,\tilde{\psi}_{2n})$ in $\gc(\bg)$ and $\gc(\tilde{\bg})$, respectively, such that $\psi_i(X,Y) = \tilde{\psi}_i(X,Y)$ if and only if $\psi_i \neq g_{n-p+1,1}$.
\end{lemma}
\begin{proof}
Indeed, if $p = 1$, then the initial extended clusters of $\gc(\bg)$ and $\gc(\tilde{\bg})$ satisfy the requirement; if $p > 1$, then let  $\Psi_{\mathcal{S}_1}$ and $\tilde{\Psi}_{\mathcal{S}_1}$ be the extended clusters in $\gc(\bg)$ and $\gc(\tilde{\bg})$, respectively, that are obtained from the initial extended clusters via an application of $\mathcal{S}_1$. Let $\mathcal{U}:D(\GL_n)_{\tilde{\bg}} \dashrightarrow D(\GL_n)_{\bg}$ be the birational quasi-isomorphism described in Section~\ref{s:birat}. It is given by
\[
\mathcal{U}(X,Y):=(XU_0,YU_0), \ \ U_0(X,Y) := I + \alpha(X,Y) e_{p+1,p}, \ \ \alpha(X,Y) := \frac{\det Y^{\{q\}\cup[q+2,n]}_{[1,n-q]}}{\det Y^{[q+1,n]}_{[1,n-q]}}.
\]
It follows that $\mathcal{U}(\chi_{ij}^1(X,Y)) = \chi_{ij}^1(X,Y)$, where $\chi_{ij}^1$ are defined in equation~\eqref{eq:chij1}; therefore, Proposition~\ref{p:comparu} implies that $\chi_{ij}^1$ are cluster variables of $\Psi_{\mathcal{S}_1}$. Since all the other variables (except $g_{n-p+1,1}(X,Y)$ and $\tilde{g}_{n-p+1,1}(X,Y)$) are equal as elements of $\mathcal{O}(D(\GL_n))$, we conclude that $\Psi_{\mathcal{S}_1}$ and $\tilde{\Psi}_{\mathcal{S}_1}$ are the required extended clusters.
\end{proof}

\subsection{Completeness for $|\Gamma_1^r| = 1$ and $|\Gamma_1^c| = 0$}
Let $\gc(\bg)$ be a generalized cluster structure on $D(\GL_n)$ defined by a BD pair with $\Gamma_1^r = \{p\}$, $\Gamma_2^r = \{q\}$ and $\Gamma_1^c = \emptyset$, and let $\gc(\tilde{\bg})$ be the standard generalized cluster structure.
\begin{lemma}\label{l:entru}
The entries of $U = X^{-1}Y$ in $\gc(\bg)$ belong to the upper cluster algebra.
\end{lemma}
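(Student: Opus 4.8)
The plan is to transport the standard-case computation of the entries of $U$ through the birational isomorphism $\mathcal{U}$ of Section~\ref{s:birat}. The first observation, which makes everything work, is that $\mathcal{U}$ fixes the matrix $U$: since $\mathcal{U}(X)=U_+W$ and $\mathcal{U}(Y)=U_+Z$ with $U_+$ unipotent upper triangular, one has $\mathcal{U}(U)=\mathcal{U}(X)^{-1}\mathcal{U}(Y)=W^{-1}U_+^{-1}U_+Z=W^{-1}Z=\tilde U$, so $\mathcal{U}(U_{ij})=\tilde U_{ij}$ for all $i,j$. The same cancellation, together with $\det U_+=1$ and the invariance~\eqref{eq:invfphi}, shows that $\mathcal{U}$ fixes every $c$-function and every $\varphi$-function, i.e. $\mathcal{U}(c_i)=\tilde c_i$ and $\mathcal{U}(\varphi_{kl})=\tilde\varphi_{kl}$. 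By the result of~\cite{double} recalled at the end of Section~\ref{s:seqs}, in the standard structure $\gc(\tilde\bg)$ each entry $\tilde U_{ij}$ is a Laurent polynomial in the $c$-functions, the $\varphi$-functions and the functions $\tilde\chi^{k}_{k+1,j}$ produced by the sequence $\mathcal{S}$, with every denominator a monomial in these variables and in the invertible stable variables.

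The second step is to run $\mathcal{S}$ in $\gc(\bg)$ and to compare its output with the one in $\gc(\tilde\bg)$ under $\mathcal{U}$; this is the row-case counterpart of Lemma~\ref{l:sseqc}. By Proposition~\ref{p:comparu} the action of $\mathcal{U}$ on the variable created at each mutation is dictated only by the difference of the polynomial degrees in the two structures, so the comparison reduces to degree bookkeeping along $\mathcal{S}$, carried out exactly as in the proofs of Lemmas~\ref{l:bs_c},~\ref{l:seq_cc} and~\ref{l:bscc}. The two initial quivers differ only through the arrows incident to $g_{p+1,1}$ (unfrozen in $\gc(\bg)$, stable in $\gc(\tilde\bg)$) and the accompanying shift of the $h$-variables along the infected path; tracking the single extra contribution $\deg g_{p+1,1}$ then shows that $\mathcal{U}(\chi^{k}_{k+1,j})=\tilde\chi^{k}_{k+1,j}$ for every $\mathcal{S}$-variable except one distinguished function, for which $\mathcal{U}(\chi)=\tilde\chi\,\tilde g_{p+1,1}$.

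I would then pull everything back. Substituting $\mathcal{U}(U_{ij})=\tilde U_{ij}$, $\mathcal{U}(c_i)=\tilde c_i$, $\mathcal{U}(\varphi_{kl})=\tilde\varphi_{kl}$ and the relations of the previous step into the standard-case Laurent expression for $\tilde U_{ij}$, and applying $\mathcal{U}^{-1}$, produces a Laurent expression for $U_{ij}$ in the $\mathcal{S}$-cluster of $\gc(\bg)$ whose only new factors are powers of $g_{p+1,1}$. Because $g_{p+1,1}$ is a cluster variable of $\gc(\bg)$, the Laurent phenomenon lets us multiply and divide by it freely, while all the remaining denominators are the same monomials in cluster variables and invertible stable variables as in the standard case. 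Hence every $U_{ij}$ lies in $\bar{\mathcal{A}}(\gc(\bg))$.

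The main obstacle is the degree bookkeeping of the second step: establishing the row-case analogue of Lemma~\ref{l:sseqc} and verifying that the factor introduced by $\mathcal{U}$ is always the cluster variable $g_{p+1,1}$, and never a non-invertible stable variable such as a $c$-function or a determinant of an $\mathcal{L}$-matrix. This distinction is precisely what keeps the pullback inside $\bar{\mathcal{A}}$ rather than in a larger localization, so alongside it I would check that no $c$-function and no $\det\mathcal{L}$ occurs in a denominator of the standard-case expression for $\tilde U_{ij}$.
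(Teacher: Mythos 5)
Your overall strategy (push the standard-case expressions for $\tilde U = W^{-1}Z$ through $\mathcal{U}$ using the sequence $\mathcal{S}$) is the paper's strategy, and your first step ($\mathcal{U}(U)=\tilde U$, and $\mathcal{U}$ fixing the $c$- and $\varphi$-functions) is correct. But your step 2 misreads the row case: \emph{no} variable produced by $\mathcal{S}$ acquires a factor of $\tilde g_{p+1,1}$. The variables distinguishing $\gc(\bg)$ from $\gc(\tilde{\bg})$ here are the $h$-functions $h_{1,n-q+1},\dots,h_{qn}$ on the infected path, and these are not vertices of $Q_0$ at all; moreover, the two initial quivers differ only by the arrows $g_{p+1,1}\to h_{q+1,n}$ and $h_{qn}\to g_{p+1,1}$, whose endpoints are never mutated along $\mathcal{S}$, so these arrows never enter any exchange relation used by $\mathcal{S}$. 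Consequently every exchange relation along $\mathcal{S}$ is literally identical in the two structures and all $\chi^k_{k+1,j}$ coincide. The ``one distinguished function with $\mathcal{U}(\chi)=\tilde\chi\,\tilde g_{p+1,1}$'' that you import from Lemma~\ref{l:sseqc} is a feature of the \emph{column} case, where the differing variable $g_{n-p+1,1}$ does sit inside $Q_0$; it has no analogue here, and this absence is not a technicality --- it is exactly what makes the lemma provable.

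The more serious problem is your concluding inference: ``$g_{p+1,1}$ is a cluster variable, so the Laurent phenomenon lets us multiply and divide by it freely, hence $U_{ij}\in\bar{\mathcal{A}}$.'' This is false as stated. Membership in the upper cluster algebra requires Laurent expressions in \emph{every} cluster of a nerve (Proposition~\ref{p:upperb}), and $g_{p+1,1}^{-1}$ fails to be a Laurent polynomial precisely in the cluster adjacent to $\Psi$ in the direction of $g_{p+1,1}$ --- the one cluster of a nerve in $\gc(\bg)$ that has no counterpart in $\gc(\tilde{\bg})$, and the entire reason this base case $|\Gamma_1^r|+|\Gamma_1^c|=1$ needs a hand-made argument. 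So if your step 2 were right, your step 3 would collapse; since step 2 is wrong, the argument can be repaired, but you must still split the nerve: (i) for every cluster adjacent to $\Psi$ in a direction other than $g_{p+1,1}$, transport the standard Laurent expressions via $\mathcal{U}$ and Proposition~\ref{p:comparu} (there, $g_{p+1,1}$-denominators are harmless because $g_{p+1,1}$ is still a cluster variable); (ii) for the direction $g_{p+1,1}$, observe that the expressions for the entries of $U$ involve only $\varphi$-, $c$- and $(\det X)^{-1}\chi^k_{k+1,j}$-functions, so $g_{p+1,1}$ simply does not occur and the same expressions remain Laurent after it is mutated. That is exactly the paper's proof. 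Your last worry is already settled by the standard-case result of~\cite{double}: the only inverted non-$\chi$ factor there is $\det W$, and $\det X=g_{11}$ is invertible in the ground ring $\hat{\mathbb{A}}$.
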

\begin{proof}
It was shown in~\cite{double} that the entries of $U$ can be expressed as Laurent polynomials in terms of the $\varphi$-variables, $c$-variables, and the variables $\chi^k_{k+1,j}$ (see Section~\ref{s:seqs}), as well as in terms of any mutations of these variables. By Lemma~\ref{l:sseqr}, all of these variables are present in $\gc(\bg)$; thus, the entries of $U$ belong to $\bar{\mathcal{A}}(\gc(\bg))$.\qedhere
\end{proof}

\begin{proposition}\label{p:recr}
Under the setup of the current section, the entries of $X$ and $Y$ belong to the upper cluster algebra.
\end{proposition}
\begin{proof}
Due to Corollary~\ref{c:comparu2}, it suffices to show that the entries of $X$ and $Y$ can be expressed as Laurent polynomials in the cluster $\Psi$ adjacent to the initial one in the direction of $g_{p+1,1}$. It follows from Lemma~\ref{l:seq_cc} and Remark~\ref{r:xvar} that all the entries of $X$ except the $q$th row belong to the upper cluster algebra, for they are themselves cluster variables. Due to Lemma~\ref{l:entru} and the relation $XU = Y$, all the entries of $Y$ except the $q$th row also belong to the upper cluster algebra. Therefore, we only need to find Laurent expressions for the $q$th rows of $X$ and $Y$.

The mutation at $g_{p+1,1}$ yields
\begin{equation}\label{eq:psisqr}
g^\prime_{p+1,1}(X,Y) = \det \begin{bmatrix} 
y_{qn} & x_{p,2} & x_{p,3} & \ldots & x_{p,n-p+1}\\
y_{q+1,n} & x_{p+1,2} & x_{p+1,3} & \ldots & x_{p+1,n-p+1}\\
0 & x_{p+2,2} & x_{p+2,3} & \ldots & x_{p+2,n-p+1}\\
\vdots & \vdots & \vdots & \ldots & \vdots \\
0 & x_{n,2} & x_{n,3} & \ldots & x_{n,n-p+1} 
\end{bmatrix},
\end{equation}
which can be seen via an appropriate application of a Pl\"ucker relation. Expanding $g^\prime_{p+1,1}(X,Y)$ along the first column yields
\[
g_{p+1,1}^\prime(X,Y) = y_{qn} g_{p+1,2}(X,Y)- y_{q+1, n} \det X_{\{p\} \cup [p+2,n]}^{[2,n-p+1]}.
\]
Since $p \neq q$, it follows from Lemma~\ref{l:seq_cc} that $\det X_{\{p\} \cup [p+2,n]}^{[2,n-p+1]}$ is a Laurent polynomial in terms of the variables of $\Psi$. Together with the above relation, we see that $y_{qn}$ is a Laurent polynomial in terms of the variables of $\Psi$ as well.

Let us assume by induction that for $i > q$ the variables $y_{qj}$ are already recovered, where $j \geq i$. Expanding the function $h_{q,i-1}(Y) = \det Y^{[i-1,n]}_{[q,q+n-i+1]}$ along the first row, we see that
\[
h_{q,i-1}(Y) = y_{q,i-1} h_{q+1,i}(Y) + P_1(Y)
\]
where $P_1(Y)$ is a polynomial in all entries of $Y^{[i-1,n]}_{[q,q+n-i+1]}$ except $y_{q,i-1}$, and hence $P_1(Y)$ is a Laurent polynomial in the variables of $\Psi$. Therefore, we've recovered all $y_{qi}$ for $i \geq q$. To proceed further, we make use of $f$-functions. The variable $x_{qn}$ can be recovered via expanding $f_{1,n-q}(X,Y)$ along the first row:
\[
f_{1,n-q}(X,Y) = x_{qn} h_{q+1,q+1}(Y) + P_2(X,Y)
\]
where $P_2$ is now a polynomial in all entries of $[X^{[n,n]}\, Y^{[q+1,n]}]_{[q,n]}$ except $x_{qn}$, and therefore $P_2(X,Y)$ is a Laurent polynomial in $\Psi$. If for some $i > q+1$ the variables $x_{qj}$ are already recovered, where $j \geq i$, then $x_{q,i-1}$ can be recovered via expanding $f_{n-i+1,i-q}(X,Y)$ along the first row:
\[
f_{n-i+1,i-q}(X,Y) = x_{q,i-1} f_{n-i+1,i-q-1}(X,Y) + P_3(X,Y)
\]
where $P_3(X,Y)$ is again a polynomial in entries that are already known to be Laurent polynomials in terms of $\Psi$. We conclude at this moment that the variables $x_{q,q+1},\ldots,x_{qn}$ are Laurent polynomials in $\Psi$. Using the same idea, we recover the variables $x_{q1},\ldots,x_{qq}$ consecutively starting from $x_{qq}$ and using the $g$-functions: Each $x_{qi}$ is recovered via the expansion along the first row of the function $g_{qi}(X,Y)$. 

Lastly, since $x_{q1}, \ldots, x_{qn}$ are recovered as Laurent polynomials in terms of $\Psi$, the remaining variables $y_{q1},\ldots,y_{q,q-1}$ are recovered via $XU = Y$. Thus all the entries of $X$ and $Y$ are Laurent polynomials in the variables of $\Psi$.
\end{proof}

\subsection{Completeness for $|\Gamma_1^r| = 0$ and $|\Gamma_1^c| = 1$}
Similarly to the previous section, let $\gc(\bg)$ be a generalized cluster structure on $D(\GL_n)$ defined by a BD pair with $\Gamma_1^c = \{p\}$, $\Gamma_2^c = \{q\}$ and $\Gamma_1^r = \emptyset$, and let $\gc(\tilde{\bg})$ be the standard generalized cluster structure. We need the following abstract result:
\begin{lemma}\label{l:field}
Let $\mathcal{F}$ be a field of characteristic zero, and let $\alpha$ and $\beta$ be distinct transcendental elements over $\mathcal{F}$ and such that $\mathcal{F}(\alpha) = \mathcal{F}(\beta)$. If there is a relation
\begin{equation}\label{eq:abk}
\sum_{k=1}^{m} (\alpha^k - \beta^k) p_k = 0
\end{equation}
for $p_k \in \mathcal{F}$, then all $p_k = 0$.
\end{lemma}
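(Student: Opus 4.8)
The plan is to argue by contradiction. Suppose some $p_k$ is nonzero and set $P(T):=\sum_{k=1}^m p_k T^k\in\mathcal{F}[T]$, a nonzero polynomial with $P(0)=0$ and $M:=\deg P\ge 1$. Relation~\eqref{eq:abk} says exactly that $P(\alpha)=P(\beta)$, so it suffices to force $P$ to be constant. The structural fact I would invoke is that, since $\alpha,\beta$ are transcendental over $\mathcal{F}$ with $\mathcal{F}(\alpha)=\mathcal{F}(\beta)$, the element $\beta$ is a generator of the rational function field $\mathcal{F}(\alpha)$; hence there exist $a,b,c,d\in\mathcal{F}$ with $ad-bc\ne 0$ and $\beta=(a\alpha+b)/(c\alpha+d)$. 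Equivalently there is a unique nontrivial $\mathcal{F}$-automorphism $\sigma$ of $K:=\mathcal{F}(\alpha)$ with $\sigma(\alpha)=\beta$ (nontrivial because $\alpha\ne\beta$), and since $P$ has coefficients in $\mathcal{F}$ the hypothesis becomes $P(\alpha)=\sigma(P(\alpha))$, i.e.\ the polynomial $P(\alpha)$ is $\sigma$-invariant.

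The main tool is a pole count in the transcendental variable $\alpha$, which settles every case with $c\ne 0$ at once. A nonconstant polynomial $P(\alpha)$ is regular at every finite point and has its only pole, of order $M$, at $\alpha=\infty$. When $c\ne 0$ the map $\alpha\mapsto\beta$ sends $\infty$ to the finite value $a/c$ and has its unique pole at the finite point $\alpha=-d/c$; consequently $P(\beta)$ is regular at $\alpha=\infty$ but has a pole of order $M$ at $\alpha=-d/c$. Two rational functions that agree must have the same polar divisor, so $P(\alpha)=P(\beta)$ is impossible once $M\ge1$. This contradiction shows $c=0$.

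It remains to treat the affine case $c=0$, where $\beta=\gamma\alpha+\delta$ with $\gamma\in\mathcal{F}^{\times}$ and $\sigma$ fixes $\infty$; here both sides are polynomials of degree $M$ and the pole argument is vacuous, so this is the crux. I would reduce to showing that an affine $\sigma\ne\mathrm{id}$ has no nonconstant invariant polynomial. If $\gamma=1$ then $\delta\ne0$, $\sigma$ is a nonzero translation, and a $\sigma$-fixed polynomial is periodic with period $\delta$; in characteristic zero periodicity forces constancy, which is precisely where the char-$0$ hypothesis is used. If $\gamma\ne1$, conjugating by the translation carrying the fixed point $\delta/(1-\gamma)$ to the origin turns $\sigma$ into $\alpha\mapsto\gamma\alpha$, and invariance of $\sum_j c_j\alpha^j$ amounts to $c_j(\gamma^j-1)=0$ for every $j$.

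The step I expect to be the genuine obstacle is this last scaling sub-case: a nonconstant invariant survives exactly when $\gamma$ is a nontrivial root of unity, so the conclusion depends on ruling that out. The resolution I anticipate is that in the intended applications the pair $(\alpha,\beta)$ is produced so that $\sigma$ is either a translation (covered by characteristic zero) or a dilation by a scalar that is not a root of unity; in either case $K^{\langle\sigma\rangle}=\mathcal{F}$, forcing $P(\alpha)\in\mathcal{F}$ and hence $\deg P=0$, contrary to $M\ge1$. Verifying that the relevant $\sigma$ is non-torsion — rather than an arbitrary finite-order affine map — is the point I would need to nail down from the construction of $\mathcal{U}$.
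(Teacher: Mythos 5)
Your case analysis is correct as far as it goes, and for $c\neq 0$ it is essentially the paper's own argument: the paper kills $p_m$ by examining the order of the pole at $x=-d/c$ and then descends by induction, while you compare the polar divisors of $P(\alpha)$ and $P(\beta)$ in one stroke; this is the same idea. Your translation sub-case ($\gamma=1$, $\delta\neq0$) is also right, and it is indeed the only place where characteristic zero is used. But the sub-case you flagged as the genuine obstacle is not something you failed to close: Lemma~\ref{l:field} as stated is \emph{false} there. Take $\mathcal{F}=\mathbb{Q}$, $\alpha=x$, $\beta=-x$: these are distinct transcendental elements with $\mathcal{F}(\alpha)=\mathcal{F}(\beta)$, yet $(\alpha^2-\beta^2)\cdot 1=0$ is a relation of the form \eqref{eq:abk} with $p_2=1\neq0$. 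More generally, if $\beta=\gamma\alpha+\delta$ with $\gamma\neq1$ a root of unity of order $e$ and $\alpha_0=\delta/(1-\gamma)$, then $0=(\alpha-\alpha_0)^e-(\beta-\alpha_0)^e=\sum_{k=1}^{e}\binom{e}{k}(-\alpha_0)^{e-k}(\alpha^k-\beta^k)$ is a nontrivial relation \eqref{eq:abk}. So no argument can eliminate the torsion case; a correct statement must exclude it by hypothesis.

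The paper's proof does not avoid this either: in the case $c=0$ it asserts the linear independence over $\mathcal{F}$ of the polynomials $x^k-((a/d)x+(b/d))^k$, $1\leq k\leq m$, and that assertion fails exactly in the torsion case (for $a/d=-1$, $b/d=0$ the even-$k$ members are identically zero). Your proposal is therefore more careful than the paper's proof: you isolated the precise dividing line — the conclusion holds whenever the transformation relating $\beta$ to $\alpha$ is genuinely fractional ($c\neq0$), a translation, or a dilation-plus-translation whose multiplier is not a nontrivial root of unity, and it fails otherwise. Your closing remark is also the right fix: for the lemma to serve its purpose in Lemma~\ref{l:entru_c}, one must verify that $\tilde{g}_{n-p+1,1}(X,Y)$ and $g_{n-p+1,1}(X,Y)/h_{1,q+1}(X,Y)$ are not related by a nontrivial torsion affine map over the field generated by the remaining variables of $\tilde{\Psi}$ (and, for that matter, that they do generate the same field, which the paper also leaves implicit). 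As written, both your argument and the paper's stop short of this verification, but yours makes the missing step explicit rather than hiding it behind a false independence claim.
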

\begin{proof}
Let us set $x:=\alpha$ for convenience. Since $\mathcal{F}(\alpha) = \mathcal{F}(\beta)$, we can express $\beta$ as $\beta = \frac{a x + b}{cx + d}$ with $ad-bc \neq 0$. Now, if $c = 0$, each $p_k$ in equation~\eqref{eq:abk} must be zero due to the linear independence of the polynomials $x^k - ((a/d)x+(b/d))^k$. Otherwise, if $c \neq 0$, we can look at the order of the pole $x = -d/c$ and show that $p_m = 0$, and then, via a descending induction starting at $m$, that all $p_k = 0$. Thus the statement holds.
\end{proof}

Contrary to Lemma~\ref{l:entru}, in the case of a nontrivial column BD triple we first treat the entries of $YX^{-1}$:
\begin{lemma}\label{l:entru_c}
The entries of $YX^{-1}$ belong to the upper cluster algebra of $\gc(\bg)$.
\end{lemma}
\begin{proof}
By Lemma~\ref{l:cclust}, there exist extended clusters $\Psi:=(\psi_1,\ldots,\psi_{2n})$ and $\tilde{\Psi}:=(\tilde{\psi}_1,\ldots,\tilde{\psi}_{2n})$ in $\gc(\bg)$ and $\gc(\tilde{\bg})$, respectively, that differ only in the variable $g_{n-p+1,1}$. Let $\mathcal{U}:D(\GL_n)_{\tilde{\bg}}\dashrightarrow D(\GL_n)_{\bg}$ be the birational quasi-isomorphism defined in Section~\ref{s:birat}.  
Let $\Psi^\prime$ be the extended cluster adjacent to $\Psi$ in the direction of $h_{1,q+1}$. By Corollary~\ref{c:comparu2}, it suffices to show that the entries of $YX^{-1}$ belong to the ring of Laurent polynomials $\mathcal{L}_{\mathbb{C}}(\Psi^\prime)$. Let us fix an entry $v:=(YX^{-1})_{ij}$; since $v \in \tilde{\mathcal{L}}_{\mathbb{C}}(\tilde{\Psi})$, we can write $v$ as
\begin{equation}\label{eq:vv1}
v = p_0 + \sum_{k\geq 1}(\tilde{g}_{n-p+1,1})^kp_k
\end{equation}
where $p_{i}$ are elements of $\tilde{\mathcal{L}}_{\mathbb{C}}(\tilde{\Psi})$ that do not contain $\tilde{g}_{n-p+1,1}$ (in other words, we view $\tilde{\mathcal{L}}_{\mathbb{C}}(\tilde{\Psi})$ as a polynomial ring in one variable $\tilde{g}_{n-p+1,1}$). Since $\mathcal{U}(YX^{-1}) = YX^{-1}$ and due to the choice of $\Psi$ and $\tilde{\Psi}$, we see that an application of $(\mathcal{U}^*)^{-1}$ yields
\begin{equation}\label{eq:vv2}
v = p_0 + \sum_{k \geq 1}\left( \frac{{g}_{n-p+1,1}}{{h}_{1,q+1}}\right)^k p_k,
\end{equation}
hence, subtracting equation~\eqref{eq:vv2} from equation~\eqref{eq:vv1}, we arrive at
\[
\sum_{k \geq 1}\left(\tilde{g}_{n-p+1,1}^k -  \left( \frac{g_{n-p+1,1}}{h_{1,q+1}}\right)^k \right) p_k = 0.
\]
By Lemma~\ref{l:field}, $p_k = 0$ for all $k \geq 1$. Therefore, there exists a Laurent expression for $v$ in terms of $\mathcal{L}_{\mathbb{C}}(\Psi)$ that does not involve a division by $h_{1,q+1}$ (for $\tilde{h}_{1,q+1}$ is not invertible in $\tilde{\mathcal{L}}_{\mathbb{C}}(\tilde{\Psi})$); therefore, if $h_{1,q+1} h_{1,q+1}^\prime = M$ is an exchange relation for $h_{1,q+1}$, substituting $h_{1,q+1}$ with $M/h_{1,q+1}^\prime$ in the Laurent expression for $v$ yields an expression in the ring $\mathcal{L}_{\mathbb{C}}(\Psi^\prime)$. Thus the lemma is proved.\qedhere
\end{proof}
\begin{proposition}\label{p:recc}
In the setup of the current section, all entries of $X$ and $Y$ belong to the upper cluster algebra of $\gc(\bg)$.
\end{proposition}
\begin{proof}
It follows from Lemma~\ref{l:bscc} that all entries of $X$ except the $p$th column are cluster variables in $\gc(\bg)$. Since the entries of $YX^{-1}$ belong to the upper cluster algebra due to Lemma~\ref{l:entru_c} and since $Y = (YX^{-1})X$, we see that all entries of $Y$ except the $p$th column also belong to the upper cluster algebra. The mutation at $h_{1,q+1}(X,Y)$ yields
\begin{equation}\label{eq:psisqc}
h^\prime_{1,q+1}(X,Y) = \begin{bmatrix}
x_{np} & x_{n,p+1} & 0 & \cdots & 0\\
y_{2q} & y_{2,q+1} & y_{2,q+2} & \cdots & y_{2n}\\
\vdots & \vdots & \vdots & \cdots & \vdots\\
y_{n-q+1,q} & y_{n-q+1,q+1} & y_{n-q+1,q+2} &\cdots & y_{n-q+1,n}
\end{bmatrix},
\end{equation}
and the expansion along the first row yields
\begin{equation}\label{eq:xnp}
h^\prime_{1,q+1}(X,Y) = x_{np} h_{2,q+1}(X,Y) - x_{n,p+1}\det Y^{\{q\}\cup[q+2,n]}_{[2,n-q+1]}.
\end{equation}
A further expansion of the minor $\det Y^{\{q\}\cup[q+2,n]}_{[2,n-q+1]}$ along its first column yields
\[
\det Y^{\{q\}\cup[q+2,n]}_{[2,n-q+1]} = \sum_{k=2}^{n-q+1} y_{kq} \det Y^{[q+2,n]}_{[2,k-1]\cup[k+1,n-q+1]}.
\]
In turn, the minors $\det Y^{[q+2,n]}_{[2,k-1]\cup[k+1,n-q+1]}$ are known to be cluster variables:
\[
\det Y^{[q+2,n]}_{[2,k-1]\cup[k+1,n-q+1]} = \begin{cases}
h_{3,q+2} \ &k=2,\\
h^{(k-2)}_{3,q+k} \ &2 < k < n-q+1,\\
h_{2,q+2} \ &k = n-q+1,
\end{cases}
\]
where the variables $h^{(k-2)}_{3,q+k}$ come from the $W$-sequences studied in Lemma~\ref{l:kilcol} (they are applicable in $|\Gamma_1^c| = 1$ as well, for the $h$-functions are the same as in the standard structure). It follows from equation~$\eqref{eq:xnp}$ that $x_{np}$ belongs to the upper cluster structure. Now, the rest of the proof is similar to Proposition~\ref{p:recr}: To recover the variables $x_{n-i,p}$ for $1 \leq i \leq p$, one uses the functions $g_{n-i,p}$; due to the relation $Y = (YX^{-1})X$ and Lemma~\ref{l:entru_c}, these variables together with $x_{np}$ yield $y_{np},\ldots, y_{pp}$. To proceed further, one recovers consecutively $y_{p,p+i}$ from $h_{p,p+i}$, and then one can obtain $x_{p,p+i}$ back from the relation $X = (YX^{-1})^{-1} Y$. Thus the proposition is proved. \qedhere
\end{proof}
\subsection{Coprimality}\label{s:coprim}
Let $\gc(\bg)$ be the generalized cluster structure on $D(\GL_n)$ induced by an aperiodic oriented \mbox{BD pair $\bg$}. In this section, we prove that all cluster and frozen variables from the initial extended cluster are irreducible as elements of $\mathcal{O}(D(\GL_n))$, as well as (for cluster variables) coprime\footnote{As of October 2023, these results are already outdated. We have published a work on birational quasi-isomorphisms and proved similar statements in a greater generality, thus covering the whole program on the GSV conjecture. The new proofs are clearer and simpler.} with their mutations in $\mathcal{O}(D(\GL_n))$. Together with Proposition~\ref{p:regular}, we will conclude that $\gc(\bg)$ is a regular generalized cluster structure. 
\begin{lemma}\label{l:irred}
Assume that $\bg$ is nontrivial and $\tilde{\bg}$ is obtained from $\bg$ by the removal of a pair of roots. Let $\psi_{\square}$ be the cluster variable from the initial cluster of $\gc(\bg)$ such that $\tilde{\psi}_{\square}$ is frozen in $\gc(\tilde{\bg})$ (see Section~\ref{s:birat}). Let $\tilde{\psi}\neq\tilde{\psi}_{\square}$ be a cluster or frozen variable in $\gc(\tilde{\bg})$ and $\psi$ be the corresponding variable in $\gc(\bg)$. Suppose that $\tilde{\psi}$ and $\tilde{\psi}_{\square}$ are irreducible as elements of $\mathcal{O}(D(\GL_n))$. Then there exist $f \in \mathcal{O}(D(\GL_n))$ and $\lambda \geq 0$ such that $f$ is coprime with $\psi_{\square}$ and $\psi = f \psi_{\square}^{\lambda}$; moreover, $\psi$ is irreducible in $\mathcal{O}(D(\GL_n))$ if and only if $\psi$ is not divisible by $\psi_{\square}$.
\end{lemma}
\begin{proof}
Indeed, let $\mathcal{U}:D(\GL_n)_{\tilde{\bg}} \dashrightarrow D(\GL_n)_{\bg}$ be the birational quasi-isomorphism constructed in Section~\ref{s:birat}. By Proposition~\ref{p:comparu}, $\mathcal{U}^*(\psi) = \tilde{\psi} \tilde{\psi}_{\square}^{\varepsilon}$ for some $\varepsilon \geq 0$. Assume that $\psi = f_1\cdot f_2$ for some regular coprime functions $f_1$ and $f_2$. Set
\[
\tilde{f}_i \tilde{\psi}_{\square}^{\lambda_i}:=\mathcal{U}^*(f_i), \ \ i \in \{1,2\}, \ \lambda_i \in \mathbb{Z}
\]
where $\tilde{f}_i \in \mathcal{O}(D(\GL_n))$ and $\tilde{\psi}_{\square}$ are coprime (by the assumption, $\tilde{\psi}_{\square}$ is irreducible, so we can find such $\tilde{f}_i$). Applying $\mathcal{U}^*$ to $\psi$, we arrive at
\[
\tilde{\psi}\tilde{\psi}_{\square}^{\varepsilon} = \tilde{f}_1\tilde{f}_2 \tilde{\psi}_{\square}^{\lambda_1+\lambda_2}.
\]
Since $\tilde{\psi}_{\square}$ is coprime with $\tilde{\psi}$, $\tilde{f}_1$ and $\tilde{f}_2$, we see that $\varepsilon = \lambda_1 + \lambda_2$; since $\tilde{\psi}$ is irreducible by the assumption, without loss of generality $\tilde{f}_2$ is a unit in $\mathcal{O}(\GL_n)$; that is, $\tilde{f}_2 = a \det X^{k}\det Y^{l}$ for some $k,l\in \mathbb{Z}$ and $a \in \mathbb{C}$. Since $(\mathcal{U}^*)^{-1}(\tilde{f}_2) = \tilde{f}_2$, we see that $\psi = \tilde{f}_2 f_1\psi_{\square}^{\lambda_2}$. Setting $f:=\tilde{f}_2f_1$ and $\lambda:=\lambda_2$ proves the first claim. Moreover, if $\psi$ is not divisible by $\psi_{\square}$, then $\lambda_2 = 0$, hence $f_2$ is a unit and thus $\psi$ is irreducible.
\end{proof}

\begin{proposition}\label{p:irred}
All cluster and frozen variables in the initial extended cluster of $\gc(\bg)$ are irreducible polynomials.
\end{proposition}
\begin{proof}
For the standard BD pair, it's the statement of Theorem~3.10 in~\cite{double}. For other BD pairs, let us use an induction on the size $|\Gamma_1^r| + |\Gamma_1^c|$. If $|\Gamma_1^r| + |\Gamma_1^c| = 1$, then the only variables from the initial extended cluster that differ from the case of the standard BD pair are $g$- and $h$-functions; these are irreducible by Frobenius theorem~\cite[p.~15]{schneider}. From now on, assume that $|\Gamma_1^r|+|\Gamma_1^c| \geq 2$.

Let $\tilde{\bg}$ be obtained from $\bg$ by removing a pair of leftmost or rightmost roots, and let $\psi_1$ be the variable that is cluster in $\gc(\bg)$ but such that $\tilde{\psi}_1$ is frozen in $\gc(\tilde{\bg})$. Let $\mathcal{U}_1 : D(\GL_n)_{\tilde{\bg}} \dashrightarrow D(\GL_n)_{\bg}$ be the associated birational quasi-isomorphism. Since $|\Gamma_1^r|+|\Gamma_1^c| \geq 2$, we can find yet another pair of roots from $\bg$ to remove; let us denote by $\psi_2$ the corresponding variable.

\emph{The variables $\psi_1$ and $\psi_2$ are irreducible.} Indeed, let us write $\psi_1 = f\psi_2^{\lambda_2}$ and $\psi_2 = g \psi_1^{\lambda_{1}}$ for some $\lambda_1,\lambda_2 \geq 0$ and $f,g\in\mathcal{O}(D(\GL_n))$ coprime with $\psi_2$ and $\psi_1$, respectively. Applying $\mathcal{U}_1$ to $\psi_1 = f \psi^{\lambda_2}_2$, we see that there exists $\tilde{f}$ coprime with $\tilde{\psi}_1$ and numbers $\theta \in \mathbb{Z}$, $\eta \geq 0$ such that 
\[
\tilde{\psi}_1 = \tilde{f}\tilde{\psi}_1^{\theta} \tilde{\psi}_2^{\lambda_2}\tilde{\psi}_1^{\eta\lambda_2}.
\]
It follows from the assumption of the induction that $1 = \theta + \eta\lambda_2$ and that $1 = \tilde{f}\tilde{\psi}_2^{\lambda_2}$. Since $\tilde{\psi}_2$ is not invertible in $\mathcal{O}(D(\GL_n))$, we conclude that $\lambda_2 = 0$. A similar argument shows that $\lambda_1 = 0$ (here one applies the other birational quasi-isomorphism). By Lemma~\ref{l:irred}, both $\psi_1$ and $\psi_2$ are irreducible.

\emph{Any cluster or frozen variable from the initial extended cluster is irreducible.} Indeed, let $\psi$ be such. If $\psi$ is not divisible by $\psi_1$ or $\psi_2$, then it follows from Lemma~\ref{l:irred} that $\psi$ is irreducible; otherwise, since $\psi_1$ and $\psi_2$ are irreducible, we can find $f \in \mathcal{O}(D(\GL_n))$ coprime with both $\psi_1$ and $\psi_2$, and numbers $\theta_1,\theta_2 \geq 1$ such that 
\[
\psi = f\psi_1^{\theta_1}\psi_2^{\theta_2}.
\]
Applying $\mathcal{U}_1^*$ to the above identity, we arrive at
\[
\tilde{\psi} \tilde{\psi}_1^{\varepsilon_1} = \tilde{f}\tilde{\psi}_1^{\eta_1} \tilde{\psi}_1^{\theta_1} \tilde{\psi}_2^{\theta_2}\tilde{\psi}_1^{\theta_2\zeta}
\]
where $\tilde{f}$ is coprime with $\tilde{\psi}_1$, $\eta_1, \in \mathbb{Z}$ and $\zeta \geq 0$. We see that $\varepsilon_1 = \eta_1 + \theta_1 + \theta_2\zeta$, hence $\tilde{\psi} = \tilde{f}\tilde{\psi}_2^{\theta_2}$. But since both $\tilde{\psi}$ and $\tilde{\psi}_2$ are coprime irreducible elements of $\mathcal{O}(D(\GL_n))$, we conclude that $\theta_2 = 0$. By Lemma~\ref{l:irred}, $\psi$ is irreducible. 
\end{proof}

\begin{proposition}\label{p:coprim}
Any cluster variable $\psi$ from the initial cluster of $\gc(\bg)$ is coprime with $\psi^\prime$.
\end{proposition}
\begin{proof}
As in the previous proposition, let us run an induction on the size $|\Gamma_1^r| + |\Gamma_1^c|$. For the standard BD pair, the statement was proved in~\cite{double}. For $|\Gamma_1^r|+|\Gamma_1^c| \geq 1$, let $\tilde{\bg}$ be obtained from $\bg$ by the removal of a pair of leftmost or rightmost roots, and let $\psi_{\square}$ be the cluster variable such that $\tilde{\psi}_{\square}$ is frozen in $\gc(\tilde{\bg})$. For any variable $\psi \neq \psi_{\square}$, since $\psi$ is irreducible (see Proposition~\ref{p:irred}), we can write $\psi^\prime = p \cdot \psi^{\lambda}$ for some $\lambda \geq 0$ and some $p$ coprime with $\psi$. Applying the corresponding birational quasi-isomorphism, we find $\varepsilon, \eta \geq 0$, $\theta \in \mathbb{Z}$ and an element $\tilde{p}$ coprime with $\tilde{\psi}_{\square}$ such that
\[
\tilde{\psi}^\prime \tilde{\psi}_{\square}^{\varepsilon} = \tilde{p} \tilde{\psi}_{\square}^{\theta} \tilde{\psi}^{\lambda} \tilde{\psi}_{\square}^{\eta \lambda}.
\]
Since $\tilde{\psi}^\prime$ and $\tilde{\psi}$ are coprime by the assumption of the induction, we see that $\lambda = 0$. Therefore, $\psi$ is coprime with $\psi^\prime$.

Now, let us address the case of $\psi = \psi_{\square}$. If $|\Gamma_1^r|+|\Gamma_1^c| \geq 2$, the coprimality of $\psi_{\square}$ with $\psi_{\square}^\prime$ follows from the existence of another birational quasi-isomorphism, which is associated with a different pair of roots. If $|\Gamma_1^r|+|\Gamma_1^c| = 1$, we observe from formula~\eqref{eq:psisqr}, formula~\eqref{eq:psisqc} and Frobenius theorem~\cite[p. 15]{schneider} that $\psi_{\square}^\prime$ is irreducible and coprime with $\psi_{\square}$. Thus the proposition is proved.
\end{proof}
 
Combining Proposition~\ref{p:regular}, Proposition~\ref{p:irred} and Proposition~\ref{p:coprim}, we see that $\gc(\bg)$ satisfies the first two conditions of Proposition~\ref{p:starfish}; thus, $\gc(\bg)$ is a regular generalized cluster structure.

\subsection{The final proof}
\begin{proposition}
Let $\gc(\bg)$ be a generalized cluster structure $\gc(\bg)$ on $D(\GL_n)$ that arises from an aperiodic oriented BD pair $\bg$. Then the ring of regular functions on $D(\GL_n)$ is naturally isomorphic to the the upper cluster algebra of $\gc(\bg)$.
\end{proposition}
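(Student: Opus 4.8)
The plan is to establish condition~\ref{c:natiso} of Proposition~\ref{p:starfish}; combined with the regularity proved in Section~\ref{s:regular}, which already supplies conditions~\ref{p:sfcopri} and~\ref{p:sfregi}, this identifies the upper cluster algebra $\bar{\mathcal{A}}_{\mathbb{C}}(\gc(\bg))$ with $\mathcal{O}(D(\GL_n))$. Since $g_{11} = \det X$ and $h_{11} = \det Y$ are invertible in the ground ring, the algebra $\mathcal{O}(\GL_n \times \GL_n)$ equals $\hat{\mathbb{A}}[x_{ij}, y_{ij}]$, so it suffices to place each entry $x_{ij}$ and $y_{ij}$ into $\bar{\mathcal{A}}$. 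Moreover, by Proposition~\ref{p:upperb} the upper bound $\bar{\mathcal{A}}(\mathcal{N})$ does not depend on the nerve $\mathcal{N}$ and coincides with $\bar{\mathcal{A}}$. Hence the whole task reduces to expressing every $x_{ij}$ and $y_{ij}$ as a Laurent polynomial in each extended cluster of one fixed nerve, that is, in the initial cluster together with each of its $N$ single-step mutations.

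I would carry this out by induction on $|\Gamma_1^r| + |\Gamma_1^c|$. The base cases are $|\Gamma_1^r| + |\Gamma_1^c| \leq 1$: the standard pair $\Gamma_1^r = \Gamma_1^c = \emptyset$ is treated in~\cite{double}, while the two single-root cases are precisely Propositions~\ref{p:recr} and~\ref{p:recc}. For the inductive step, assuming $|\Gamma_1^r| + |\Gamma_1^c| \geq 2$, I would delete a pair of roots related by $\gamma_r$ or $\gamma_c$ (rightmost or leftmost in a row or column run, as in Section~\ref{s:birat}) to obtain a BD pair $\tilde{\bg}$ with $|\tilde{\Gamma}_1^r| + |\tilde{\Gamma}_1^c| = |\Gamma_1^r| + |\Gamma_1^c| - 1$, for which completeness holds by the inductive hypothesis. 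The birational isomorphism $\mathcal{U} : \mathcal{F}_{(X,Y)} \to \mathcal{F}_{(W,Z)}$ then transports the Laurent data: once every $w_{kl}$, $z_{kl}$ is Laurent in some extended cluster of $\gc(\tilde{\bg})$, Propositions~\ref{p:rightmost}, \ref{p:leftmost} and~\ref{p:comparu} express $\mathcal{U}(x_{ij})$ and $\mathcal{U}(y_{ij})$ as Laurent polynomials in the matching cluster of $\gc(\bg)$ up to a division by the distinguished variable $\psi_{\square}$ (the variable that is a cluster variable of $\gc(\bg)$ but frozen in $\gc(\tilde{\bg})$); since $\psi_{\square}$ is a cluster variable of $\gc(\bg)$, pulling these back by $\mathcal{U}^{-1}$ yields legitimate Laurent expressions for $x_{ij}$ and $y_{ij}$ in every cluster in which $\psi_{\square}$ has not been mutated.

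The hard part, already flagged in the discussion opening Section~\ref{s:birat}, is the one mutation direction that $\mathcal{U}$ cannot reach: the direction of $\psi_{\square}$ itself. Because the explicit formulas for $\mathcal{U}(x_{ij})$ and $\mathcal{U}(y_{ij})$ involve dividing by $\psi_{\square}$, they give no usable Laurent expression in the cluster where $\psi_{\square}$ has been mutated. To bypass this, I would exploit that $|\Gamma_1^r| + |\Gamma_1^c| \geq 2$ permits a \emph{second}, independent choice of a removable pair of roots, yielding a different auxiliary pair $\tilde{\bg}'$ and isomorphism $\mathcal{U}'$ for which $\psi_{\square}$ remains a cluster variable; mutating at $\psi_{\square}$ inside $\gc(\tilde{\bg}')$ and pulling the resulting Laurent expressions back along $\mathcal{U}'$ then covers exactly the missing direction. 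This degree of freedom disappears when $|\Gamma_1^r| + |\Gamma_1^c| = 1$, which is precisely why those cases must be settled by hand in Propositions~\ref{p:recr} and~\ref{p:recc} and serve as the base of the induction.

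Having verified in this way that every $x_{ij}$ and $y_{ij}$, and hence every regular function, lies in $\bar{\mathcal{A}}_{\mathbb{C}}(\gc(\bg))$, condition~\ref{c:natiso} holds. Proposition~\ref{p:starfish} then yields that $\theta$ is an isomorphism between $\bar{\mathcal{A}}_{\mathbb{C}}(\gc(\bg))$ and $\mathcal{O}(D(\GL_n))$, which is the asserted coincidence.
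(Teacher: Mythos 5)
Your proposal is correct and follows essentially the same route as the paper: the paper's own (very terse) proof invokes the induction on $|\Gamma_1^r|+|\Gamma_1^c|$ from~\cite{plethora} with base case given by Propositions~\ref{p:recr} and~\ref{p:recc} and inductive step via Proposition~\ref{p:comparu}, which is precisely the strategy you spell out, including the key point --- already outlined in Section~\ref{s:birat} --- that for $|\Gamma_1^r|+|\Gamma_1^c|\geq 2$ a second choice of removable roots supplies a second isomorphism $\mathcal{U}'$ covering the mutation direction of $\psi_{\square}$ that the first $\mathcal{U}$ cannot reach.
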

\begin{proof}
The fact that $\gc(\bg)$ is a regular generalized cluster structure is the content of Section~\ref{s:regular} and Section~\ref{s:coprim}, hence we only need to verify the third condition of Proposition~\ref{p:starfish}. The proof is based on an inductive argument on the size $|\Gamma_1^r| + |\Gamma_1^c|$. The base of induction is ${|\Gamma_1^c| + |\Gamma_1^r| = 1}$, which is the content of Proposition~\ref{p:recr} and Proposition~\ref{p:recc}, and the inductive step is based on Corollary~\ref{c:comparu2} and the existence of at least two distinct birational quasi-isomorphisms. The proof can be executed verbatim as in~\cite{plethora}.
\end{proof}
\section{Toric action}\label{s:torpr}
Let $\bg = (\bg^r, \bg^c)$ be an aperiodic oriented BD pair that induces the generalized cluster structure $\gc(\bg)$ on $D(\GL_n)$. Let $\mathfrak{h}^{\sll_n}$ be the Cartan subalgebra of $\sll_n$. In Section~\ref{s:tor}, we defined subalgebras
\[
\mathfrak{h}_{\bg^{\ell}} := \{h \in \mathfrak{h}^{\sll_n} \ | \ \alpha(h) = \beta(h) \ \text{if} \ \gamma^j(\alpha) = \beta \ \text{for some} \ j\}
\]
and we let $\mathcal{H}_{\bg^r}$ and $\mathcal{H}_{\bg^c}$ be the connected subgroups of $\SL_n$ that correspond to $\mathfrak{h}_{\bg^r}$ and $\mathfrak{h}_{\bg^c}$, respectively. Then we let the groups $\mathcal{H}_{\bg^r}$ and $\mathcal{H}_{\bg^c}$ act upon $D(\GL_n)$ on the left and on the right, respectively, and we also defined an action by scalar matrices on each component of $D(\GL_n) = \GL_n \times \GL_n$. Note that $\dim \mathcal{H}_{\bg^r} = k_{\bg^r}:=|\Pi\setminus\Gamma_1^r|$ and $\dim \mathcal{H}_{\bg^c} = k_{\bg^c}:=|\Pi\setminus\Gamma_1^c|$, where $\Pi = [1,n-1]$ is the set of simple roots of type $A_n$. In this section, we show that the cumulative action of the three groups induces a global toric action on $\gc(\bg)$ of rank $k_{\bg^r}+k_{\bg^c}+2$.
\begin{lemma}\label{l:torsmv}
All cluster and frozen variables from the initial extended cluster are semi-invariant with respect to the left action by $\mathcal{H}_{\bg^r}$, the right action by $\mathcal{H}_{\bg^c}$, and the action by scalar matrices.
\end{lemma}
\begin{proof}
The $\varphi$-, $f$- and $c$-functions are semi-invariant with respect to the left action $T.(X,Y) = (TX,TY)$ and the right action $(X,Y).T = (XT,YT)$, where $T$ is any invertible diagonal matrix (see Theorem~6.1 in~\cite{double}). The $g$- and $f$-functions are semi-invariant with respect to the actions by $\mathcal{H}_{\bg^r}$ and $\mathcal{H}_{\bg^c}$ by Lemma~6.2 from~\cite{plethora}. Their semi-invariance relative the action $(a,b).(X,Y) = (aX,bY)$, $a,b \in \mathbb{C}^*$, follows from its infinitesimal counterpart~\eqref{eq:trconsti}.
\end{proof}

\paragraph{How the toric action is induced.} If $H \in \mathcal{H}_{\bg^r}$ and $\psi$ is any cluster or stable variable, then $\psi(HX,HY) = \chi(H) \psi(X,Y)$ for some character $\chi$ on $\mathcal{H}_{\bg^r}$ that depends on $\psi$. The character is a monomial in $k_{\bg^r}$ independent parameters that describe the group $\mathcal{H}_{\bg^r}$, and the exponents of the parameters become the weight vector assigned to $\psi$. Thus one induces a local toric action from the left-right action of $\mathcal{H}_{\bg^r}\times \mathcal{H}_{\bg^c}$ and the action by scalar matrices.

\begin{proposition}
The toric action induced by the left action of $\mathcal{H}_{\bg^r}$, the right action of $\mathcal{H}_{\bg^c}$ and the action by scalar matrices is $\gc$-extendable.
\end{proposition}
\begin{proof}
Let $\tilde{B}$ be the initial extended exchange matrix and $W$ be the weight matrix of the resulting toric action. 
The fact that $W$ has the full rank can be proved in exactly the same way as in~\cite{plethora} using the fact that the upper cluster algebra can be identified with the ring of regular functions on $D(\GL_n)$ (which is proved in Section~\ref{s:complet}): If we assume that $\rank W < k_{\bg^r} + k_{\bg^c}+2$, then one can construct a toric action of rank $1$ from the given action that leaves all cluster and stable variables invariant; but any $x_{ij}$ and $y_{ij}$ is a Laurent polynomial in the initial cluster and stable variables, and the constructed action does not fix them, which leads to a contradiction. Thus $\rank W = k_{\bg^r} + k_{\bg^c}+2$.

Now, let us show that $\tilde{B}W = 0$. This reduces to showing that if $\psi(X,Y) \psi^\prime(X,Y) = M(X,Y)$ is an exchange relation in the initial cluster, then $M(X,Y)$ is a semi-invariant of the three actions. For the exchange relations for $g$- and $h$- functions (except $h_{ii}$ and $g_{ii}$, $2 \leq i \leq n$), the latter was already shown in~\cite{plethora}. For $\varphi$- and $f$-functions, the statement was verified in~\cite{double}. Therefore, we need to check that $\tilde{B}W = 0$ holds for $h_{ii}$ and $g_{ii}$ when $2 \leq i \leq n$ (i.e., for the rows of $\tilde{B}$ that correspond to these functions). 

The mutation at $h_{ii}$ reads
\[
h_{ii} h_{ii}^\prime = h_{i-1,i}f_{1,n-i} + f_{1,n-i+1}h_{i,i+1}.
\]
Set $H := \diag(t_1,\ldots,t_n) \in \mathcal{H}_{\bg^r}$ and $M(X,Y)$ the right-hand side of the above mutation relation, and let us act by $H$ on $M(X,Y)$. If we set $h_{i,i+1}(HX,HY) = \alpha h_{i,i+1}(X,Y)$, where $\alpha = \alpha(t_1,\ldots,t_n)$, then $h_{i-1,i}(HX,HY) = t_{i-1}\alpha h_{i-1,i}(X,Y)$; similarly, if we write $f_{1,n-i}(HX,HY) = \beta f_{1,n-i}(X,Y)$, then $f_{1,n-i+1}(HX,HY) = t_{i-1} \beta f_{1,n-i+1}(X,Y)$. Overall, 
\[
M(HX,HY) = t_{i-1}\alpha \beta M(X,Y),
\] 
that is, it's a semi-invariant of $\mathcal{H}_{\bg^r}$.

Next, the mutation at $g_{ii}$ is
\[
g_{ii} g_{ii}^\prime = g_{i+1,i+1} f_{n-i+1,1} g_{i,i-1} + g_{i-1,i-1} f_{n-i,1} g_{i+1,i}.
\]
Let $M(X,Y)$ be the right-hand side of the latter mutation relation, and let us act by the same $H$ on $M(X,Y)$. If we write $g_{i+1,i+1}(HX,HY) = \alpha g_{i+1,i+1}(X,Y)$, where now we have some different $\alpha = \alpha(t_1,\ldots,t_n)$, then $g_{i-1,i-1}(HX,HY) = t_{i-1} t_i \alpha g_{i-1,i-1}(X,Y)$; next, if we write $f_{n-i,1}(HX,HY) = \beta f_{n-i,1}(X,Y)$, then $f_{n-i+1,1}(HX,HY) = t_{i-1} \beta f_{n-i+1,1}(X,Y)$; and lastly, if $g_{i+1,i}(HX,HY) = \gamma g_{i+1,i}(X,Y)$, then $g_{i,i-1}(HX,HY) = t_{i} \gamma g_{i,i-1}(X,Y)$. Overall, the action by $H$ on $M(X,Y)$ yields 
\[
M(HX,HY) = t_{i-1} t_i \alpha \beta \gamma M(X,Y).
\]
Reasoning along the same lines, one can prove that the RHS of the above mutation relations are also semi-invariant with respect to the right action by $\mathcal{H}_{\bg^c}$ and the action by scalar matrices. Lastly, the Casimirs $\hat{p}_{1r}$ from the statement of Proposition~\ref{p:toric} are given by $\hat{p}_{1r} = c_{r}^n g_{11}^{r-n} h_{11}^{-r}$, $1\leq r \leq n-1$. Their invariance was shown in~\cite{double}. Thus the toric action is $\gc$-extendable.
\end{proof}
\section{Log-canonicity in the initial cluster}\label{s:logcan}
The objective of this section is to prove that the brackets between all functions in the initial extended cluster are log-canonical. It was proved in \cite{plethora} that the brackets between $g$- and $h$-functions are such, so the rest is to show that $f$- and $\varphi$-functions are log-canonical between themselves and each other, as well as log-canonical with $g$- and $h$-functions. The former is straightforward:

\begin{proposition}
The $f$- and $\varphi$-functions are log-canonical between themselves and each other.
\end{proposition}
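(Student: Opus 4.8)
The plan is to show that for any two functions $\psi_1,\psi_2$ drawn from the $f$- and $\varphi$-functions the quantity $\{\log\psi_1,\log\psi_2\}$ is independent of $(X,Y)$, which is equivalent to log-canonicity. First I would replace each $\varphi_{kl}$ by the associated $\tilde\varphi_{kl}$: they differ by the factor $s_{kl}(\det X)^{n-k-l+1}$, and since $\det X = c_0$ is a Casimir, this substitution does not change any bracket. So it suffices to treat $\psi_1,\psi_2 \in \{f\text{-functions}, \tilde\varphi\text{-functions}\}$. I would then expand
\[
\{\log\psi_1,\log\psi_2\} = \langle R_+^c(E_L\log\psi_1), E_L\log\psi_2\rangle - \langle R_+^r(E_R\log\psi_1), E_R\log\psi_2\rangle + \langle X\nabla_X\log\psi_1, Y\nabla_Y\log\psi_2\rangle - \langle \nabla_X\log\psi_1\cdot X, \nabla_Y\log\psi_2\cdot Y\rangle,
\]
and treat the two $R$-matrix terms separately from the two $R$-free cross terms.

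For the $R$-matrix terms I would use the invariance properties. For every $\psi$ in question $E_L\log\psi\in\mathfrak{b}_-$ with constant diagonal part $a_\psi := \pi_0 E_L\log\psi$, while $E_R\log f\in\mathfrak{b}_+$ with constant diagonal part $b_f := \pi_0 E_R\log f$ and $E_R\log\tilde\varphi = 0$. Writing $R_+^\ell = \frac{1}{1-\gamma_\ell}\pi_> - \frac{\gamma_\ell^*}{1-\gamma_\ell^*}\pi_< + R_0^\ell\pi_0$, the $\gamma$-twisted off-diagonal pieces of $R_+^c(E_L\log\psi_1)$ land in $\mathfrak{n}_-$ and those of $R_+^r(E_R\log f_1)$ land in $\mathfrak{n}_+$; by orthogonality of $\mathfrak{n}_+$, $\mathfrak{h}$, $\mathfrak{n}_-$ they pair to zero against the triangular gradients $E_L\log\psi_2\in\mathfrak{b}_-$ and $E_R\log\psi_2\in\mathfrak{b}_+$. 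Only the diagonal contributions survive, leaving $\langle R_0^c a_{\psi_1}, a_{\psi_2}\rangle$ and $-\langle R_0^r b_{\psi_1}, b_{\psi_2}\rangle$, both constants (the second vanishing outright whenever a $\tilde\varphi$ is present). Thus the two $R$-matrix terms are constant for every pair and every choice of BD data.

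The two remaining cross terms involve neither $R^c$ nor $R^r$ and are therefore \emph{identical} to the corresponding terms of the standard double bracket studied in \cite{double}, since the functions $f_{kl},\tilde\varphi_{kl}$ do not depend on the BD pair. In \cite{double} these functions were shown to be log-canonical with respect to the standard bracket, so the sum of all four standard terms is constant; and the two standard $R$-matrix terms are constant by the $\gamma=0$ special case of the previous paragraph. Subtracting, the sum of the two cross terms is constant, and adding back the (non-standard) $R$-matrix terms of the previous paragraph shows $\{\log\psi_1,\log\psi_2\}$ is constant, i.e. the functions are log-canonical.

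The step I expect to be the genuine obstacle is the cross term $\langle X\nabla_X\log\psi_1, Y\nabla_Y\log\psi_2\rangle$. Here the invariance properties only control the \emph{sum} $E_R\log\psi = X\nabla_X\log\psi + Y\nabla_Y\log\psi$, not the individual right gradients, which are neither triangular nor of constant diagonal; the clean $R$-matrix cancellation therefore does not apply, and a direct verification would force one to expand the minors defining $f_{kl}$ and $\tilde\varphi_{kl}$. The device that avoids this grind is exactly the observation that these cross terms are insensitive to the BD data, so their constancy is imported from the standard case rather than recomputed.
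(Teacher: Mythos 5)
Your proposal is correct and follows essentially the same route as the paper: the triangularity and constant-diagonal invariance properties reduce both $R$-matrix terms to constants of the form $\langle R_0\,\pi_0(\cdot),\pi_0(\cdot)\rangle$, and the two $R$-free cross terms are identical to those of the standard bracket, so log-canonicity is imported from the standard case of~\cite{double}. The preliminary reduction from $\varphi_{kl}$ to $\tilde\varphi_{kl}$ via the Casimir $\det X$ is harmless but not needed, since $E_R\log\varphi_{kl}=(n-k-l+1)I\in\mathfrak{b}_+$ already satisfies the hypotheses used for the cancellation.
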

\begin{proof}
Notice that if a function $\phi$ satisfies $\pi_0 E_R \phi \in \mathfrak{b}_+$ and $\pi_0 E_L \phi \in \mathfrak{b}_-$, and if $\pi_0 E_R \log \phi = \text{const}$ and $\pi_0 E_L \log \phi = \text{const}$, then the first two terms of the bracket of two such functions are constant:
\[\begin{split}
&\langle R_+^c(E_L \log \phi_1), E_L \log \phi_2\rangle = \langle R_0^c\pi_0(E_L \log \phi_1), \pi_0 E_L\log \phi_2 \rangle = \text{const};\\
&-\langle R_+^r(E_R \log \phi_1), E_R \log \phi_2 \rangle = - \langle R_0^r\pi_0 (E_R \log \phi_1), E_R \log \phi_2 \rangle = \text{const}.
\end{split}
\]
This means that the difference between $\{\log \phi_1, \log \phi_2 \}$ and $\{\log \phi_1, \log \phi_2 \}_{\text{std}}$ (the standard bracket studied in \cite{double}) is constant. Since $f$- and $\varphi$-functions enjoy such properties, they are log-canonical between themselves and each other.
\end{proof}

Before we proceed to proving the log-canonicity for the remaining pairs, let us derive a preliminary formula, which is also needed in Section~\ref{s:compb}:
\begin{lemma}\label{l:phipsi}
Let $\phi$ be any $f$- or $\varphi$-function, and let $\psi$ be any $g$- or $h$-function. Then the following formula holds:
\begin{equation}\label{eq:phipsi}
\{\phi, \psi\} = -\langle \pi_0 E_L \phi, \nabla_Y \psi Y\rangle + \langle \pi_0 E_R \phi, Y\nabla_Y \psi \rangle + \langle R_0^c \pi_0 E_L \phi, E_L\psi\rangle - \langle R_0^r \pi_0 E_R\phi, E_R \psi\rangle.
\end{equation}
\end{lemma}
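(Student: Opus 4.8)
The plan is to compute the bracket $\{\phi,\psi\}$ directly from the explicit formula for the Poisson bracket on $\GL_n \times \GL_n$ recorded in Section~\ref{s:opers}, namely
\[
\{\phi,\psi\} = \langle R_+^c (E_L\phi), E_L\psi\rangle - \langle R_+^r(E_R\phi), E_R\psi\rangle + \langle X\nabla_X\phi, Y\nabla_Y\psi\rangle - \langle \nabla_X\phi\cdot X, \nabla_Y\psi\cdot Y\rangle,
\]
and then simplify each of the four terms using the invariance properties of $\phi$ (an $f$- or $\varphi$-function) established in Section on invariance properties. The key inputs are that $E_R\phi \in \mathfrak{b}_+$, that $\nabla_X\phi\cdot X,\ \nabla_Y\phi\cdot Y \in \mathfrak{b}_-$ (for $f$-functions; for $\varphi$-functions one has $E_R\tilde\varphi = 0$ and $E_L\tilde\varphi\in\mathfrak{b}_-$, which I will need to treat as a compatible special case), together with the constancy of the diagonal projections $\pi_0 E_L\log\phi$ and $\pi_0 E_R\log\phi$.

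First I would attack the $R_+^c$ term. Writing $R_+^c = \frac{1}{1-\gamma_c}\pi_> - \frac{\gamma_c^*}{1-\gamma_c^*}\pi_< + R_0^c\pi_0$ and using that $E_L\phi \in \mathfrak{b}_-$ kills the $\pi_>$ contribution, the strictly-lower part pairs against $E_L\psi$ in a way that should reorganize, via the adjointness of $R_+^c$ and $R_-^c$ and the relation $R_+^c - R_-^c = \id$, into a term involving $\langle \pi_0 E_L\phi, \cdot\rangle$ plus the genuinely Belavin--Drinfeld piece $\langle R_0^c\pi_0 E_L\phi, E_L\psi\rangle$. The goal is to show that the non-diagonal part of this term combines with the last two ``mixed gradient'' terms of the bracket. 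Symmetrically, the $R_+^r$ term, using $E_R\phi\in\mathfrak{b}_+$, produces $-\langle R_0^r\pi_0 E_R\phi, E_R\psi\rangle$ plus a piece pairing $\pi_0 E_R\phi$ against $Y\nabla_Y\psi$. The two explicit diagonal terms $\langle R_0^c\pi_0 E_L\phi, E_L\psi\rangle$ and $-\langle R_0^r\pi_0 E_R\phi, E_R\psi\rangle$ in~\eqref{eq:phipsi} are exactly these surviving contributions, so the computation amounts to checking that everything else collapses into $-\langle\pi_0 E_L\phi, \nabla_Y\psi\,Y\rangle + \langle\pi_0 E_R\phi, Y\nabla_Y\psi\rangle$.

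The mechanism for that collapse is the interplay between the triangularity of $E_L\phi$ (resp.\ $E_R\phi$) and the decomposition of $E_L\psi = \nabla_X\psi\cdot X + \nabla_Y\psi\cdot Y$, $E_R\psi = X\nabla_X\psi + Y\nabla_Y\psi$. I expect the $X$-gradient parts of $E_L\psi$ and $E_R\psi$ to cancel against the fourth and third terms of the bracket respectively, since $\langle \nabla_X\phi\cdot X, \nabla_Y\psi\cdot Y\rangle$ and $\langle X\nabla_X\phi, Y\nabla_Y\psi\rangle$ are precisely what is needed to convert a full pairing into a pairing with only the $Y$-gradient of $\psi$ surviving; the residual diagonal pieces, controlled by constancy of $\pi_0 E_{L/R}\log\phi$, are what become the $\pi_0 E_L\phi$ and $\pi_0 E_R\phi$ coefficients. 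Concretely, the trace-form pairing $\langle a, b\rangle$ of an upper (lower) triangular $a$ with $b$ picks out only the matching triangular part of $b$, and the off-diagonal upper/lower parts of $\nabla_X\psi\cdot X$ will be shown to match against the two mixed terms, leaving the diagonal projections isolated.

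The main obstacle will be bookkeeping the triangularity cancellations cleanly, in particular correctly handling the $R_0^c,R_0^r$ operators, which act only on $\mathfrak{h}$ and satisfy $R_0 + R_0^* = \id_{\mathfrak{h}}$: I must be careful to pair $R_0$ against the correct slot and to use adjointness so that only $\pi_0 E_L\phi$ (not $\pi_0 E_L\psi$) appears as the first argument, matching the asserted form of~\eqref{eq:phipsi}. A secondary subtlety is uniformly covering both $f$- and $\varphi$-functions: for $\varphi$ one has the stronger $E_R\tilde\varphi = 0$, so the $R_+^r$ and $\langle X\nabla_X\phi, Y\nabla_Y\psi\rangle$ terms simplify further, and I will need to verify that~\eqref{eq:phipsi} still reproduces the same expression (since $\varphi$ differs from $\tilde\varphi$ by a power of $\det X$, whose logarithmic gradient is scalar and contributes only through $\pi_0$, this should be consistent). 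Once these triangularity identities are checked term by term, the formula~\eqref{eq:phipsi} follows by collecting the four surviving contributions.
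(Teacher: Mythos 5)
Your general plan---expand the explicit bracket, use $E_L\phi\in\mathfrak{b}_-$ and $E_R\phi\in\mathfrak{b}_+$ to discard the $\pi_>$- and $\pi_<$-parts of $R_+^c$ and $R_+^r$, and rewrite the mixed-gradient terms by trace cyclicity---is the same as the paper's, but there is a genuine gap: every structural input you list concerns $\phi$ alone, and you never invoke the invariance properties of the $g$- or $h$-function $\psi$, namely $\xi_L\psi\in\mathfrak{b}_-$ and $\xi_R\psi\in\mathfrak{b}_+$ (see \eqref{eq:infinvpsi}), equivalently the twisted decompositions $E_L\psi = \xi_L\psi + (1-\gamma_c)(\nabla_X\psi\cdot X)$ and $E_R\psi = \xi_R\psi + (1-\gamma_r^*)(Y\nabla_Y\psi)$ from Section~\ref{s:opers}. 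These are indispensable. After the $\pi_>$-part of $R_+^c(E_L\phi)$ is killed, the surviving non-diagonal piece is $-\langle \tfrac{\gamma_c^*}{1-\gamma_c^*}\,\pi_< E_L\phi,\; E_L\psi\rangle$, and for a general function $\psi$ this term cannot be simplified at all: in particular $\gamma_c$ would survive into the final answer, whereas \eqref{eq:phipsi} contains no $\gamma$'s. The paper eliminates it in three steps, each using the invariance of $\psi$: the pairing of the strictly lower-triangular matrix $\tfrac{\gamma_c^*}{1-\gamma_c^*}\pi_< E_L\phi$ with $\xi_L\psi\in\mathfrak{b}_-$ vanishes, so only $(1-\gamma_c)(\nabla_X\psi\cdot X)$ contributes; adjointness collapses the operator, leaving $-\langle \pi_< E_L\phi, \gamma_c(\nabla_X\psi\cdot X)\rangle$; and then $\gamma_c(\nabla_X\psi\cdot X)=\xi_L\psi-\nabla_Y\psi\cdot Y$ together with $\langle\pi_< E_L\phi,\xi_L\psi\rangle=0$ turns this into $\langle\pi_< E_L\phi, \nabla_Y\psi\cdot Y\rangle$. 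The symmetric argument with $\xi_R\psi\in\mathfrak{b}_+$ handles the $R_+^r$ term.

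Relatedly, your proposed cancellation mechanism is incorrect as stated: the third and fourth terms of the bracket involve only the $Y$-gradient of $\psi$ (by cyclicity they equal $\langle E_R\phi, Y\nabla_Y\psi\rangle - \langle E_L\phi, \nabla_Y\psi\cdot Y\rangle$), so the ``$X$-gradient parts of $E_L\psi$ and $E_R\psi$'' have nothing to cancel against there. What actually happens is the reverse: the $\gamma_c$-twisted $X$-gradient of $\psi$ is converted into the $Y$-gradient by the invariance property of $\psi$ as above, and only afterwards do the mixed terms merge with these outputs---via $E_L\phi\in\mathfrak{b}_-$, $E_R\phi\in\mathfrak{b}_+$---to isolate $-\langle\pi_0 E_L\phi, \nabla_Y\psi\, Y\rangle + \langle\pi_0 E_R\phi, Y\nabla_Y\psi\rangle$. (A minor point: the constancy of $\pi_0 E_L\log\phi$ and $\pi_0 E_R\log\phi$ is not needed for this lemma; it enters only in the subsequent propositions.) Once you add \eqref{eq:infinvpsi} to your toolkit and replace the naive decomposition $E_L\psi = \nabla_X\psi\cdot X + \nabla_Y\psi\cdot Y$ by the twisted one, your computation goes through and reproduces the paper's proof.
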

\begin{proof}
If $\phi$ is either a $\varphi$- or $f$-function, then $\pi_0 E_R \phi \in \mathfrak{b}_+$ and $\pi_0 E_L \phi \in \mathfrak{b}_-$. Let's use the following form of the bracket:
\[
\{\phi, \psi\} = \langle R_+^c(E_L \phi), E_L \psi\rangle - \langle R_+^r(E_R \phi), E_R \psi \rangle + \langle E_R \phi, Y\nabla_Y \psi\rangle - \langle E_L \phi, \nabla_Y \psi Y \rangle.
\]
Recall that $E_L \psi = \xi_L \psi + (1-\gamma_c) (\nabla_X \psi X)$, where $\pi_0\xi_L \psi \in \mathfrak{b}_-$; with that in mind, rewrite the first term as
\[\begin{split}
\langle R_+^c(E_L \phi)&, E_L \psi\rangle = -\langle \frac{\gamma_c^*}{1-\gamma_c^*} \pi_{<} E_L\phi, E_L\psi\rangle + \langle R_0^c \pi_0 E_L\phi, E_L\psi \rangle = \\ &= -\langle \pi_{<} E_L\phi, \gamma_c(\nabla_X\psi X) \rangle +\langle R_0^c \pi_0 E_L\phi, E_L\psi \rangle = \langle \pi_{<} E_L\phi, \nabla_Y \psi Y\rangle + \langle R_0^c \pi_0 E_L\phi, E_L\psi \rangle.
\end{split}
\]
Similarly, applying $E_R \psi = \xi_R \psi + (1-\gamma_r^*) (Y\nabla_Y \psi)$, we can rewrite the second term of the bracket as
\[
-\langle R_+^r(E_R\phi), E_R \psi \rangle = -\langle \pi_{>} E_R\phi, Y\nabla_Y \psi\rangle - \langle R_0^r \pi_0 E_R\phi, E_R \psi \rangle.
\]
Combining all together, the result follows.
\end{proof}

\begin{proposition}
All $f$- and $\varphi$-functions are log-canonical with all $g$- and $h$-functions.
\end{proposition}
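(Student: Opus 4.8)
The plan is to pass to logarithms and feed the two families of invariance identities directly into the formula of Lemma~\ref{l:phipsi}. The pair $(\phi,\psi)$ is log-canonical precisely when $\{\log\phi,\log\psi\}$ is a constant, and since $E_L,E_R,\nabla_X,\nabla_Y$ are first-order operators (so that $E_L\log\psi=\tfrac1\psi E_L\psi$, and similarly for the others), dividing \eqref{eq:phipsi} by $\phi\psi$ yields
\[
\{\log\phi,\log\psi\}=-\langle a,\pi_0(\nabla_Y\log\psi\cdot Y)\rangle+\langle b,\pi_0(Y\nabla_Y\log\psi)\rangle+\langle R_0^c a,\pi_0 E_L\log\psi\rangle-\langle R_0^r b,\pi_0 E_R\log\psi\rangle,
\]
where $a:=\pi_0 E_L\log\phi$ and $b:=\pi_0 E_R\log\phi$. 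Here I use that $a$, $b$, $R_0^c a$ and $R_0^r b$ all lie in $\mathfrak h$, so that each trace pairing only detects the diagonal part $\pi_0(\cdot)$ of the accompanying $\psi$-gradient. Because $\phi$ is an $f$- or $\varphi$-function, the invariance properties established above ($\pi_0 E_L\log\phi=\mathrm{const}$, $\pi_0 E_R\log\phi=\mathrm{const}$) tell us that $a$ and $b$ are \emph{constant} diagonal matrices.

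Everything therefore reduces to showing that the four diagonal quantities $\pi_0(\nabla_Y\log\psi\cdot Y)$, $\pi_0(Y\nabla_Y\log\psi)$, $\pi_0 E_L\log\psi$ and $\pi_0 E_R\log\psi$ are independent of $(X,Y)$ for every $g$- and $h$-function $\psi$; granting this, the right-hand side above is a fixed linear combination of constants, hence constant. The key observation is that these diagonal projections are exactly \emph{multidegrees}: the $jj$-entry of $\pi_0(\nabla_Y\log\psi\cdot Y)$ is $\sum_k y_{kj}\,\partial_{y_{kj}}\log\psi$, the degree of $\psi$ in the $j$-th column of $Y$; likewise $\pi_0(Y\nabla_Y\log\psi)$ records the $Y$-row degrees, while $\pi_0 E_L\log\psi$ and $\pi_0 E_R\log\psi$ record respectively the sums of column degrees and of row degrees of $\psi$ in $X$ and $Y$ together. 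By Euler's relation each such degree is a constant as soon as $\psi$ is homogeneous in the relevant block of variables. Thus the whole proposition follows once one knows that every $g$- and $h$-function is multihomogeneous with respect to each individual row and each individual column of $X$ and of $Y$.

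For the honest minors $g_{ii}=\det X^{[i,n]}_{[i,n]}$ and $h_{ii}=\det Y^{[i,n]}_{[i,n]}$ this is immediate, as each is multilinear in the rows and columns it involves and constant in the rest. The remaining $g_{ij}$, $h_{ji}$ with $i>j$ are trailing principal minors $\det\mathcal{L}^{[s,N(\mathcal L)]}_{[s,N(\mathcal L)]}$ of $\mathcal L$-matrices, and this is where the main obstacle lies: in assembling an $\mathcal L$-matrix a single row (or column) of $X$ or $Y$ can contribute entries to more than one row (or column), so \emph{a priori} a transversal of the minor might pick up a variable from a fixed row/column with varying multiplicity, spoiling homogeneity. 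I would dispose of this by exploiting the staircase structure of the construction in Section~\ref{s:lmatr}: each original row and column of $X$ and $Y$ enters a given trailing minor with a fixed multiplicity, the only repetitions occurring at boundary rows/columns that are excluded once one restricts to the trailing block beginning at the diagonal cell $x_{ij}$ or $y_{ji}$ — these boundary coincidences being precisely the ones codified by the identifications $g_{n+1,i+1}$, $g_{i,0}$, $h_{j+1,n+1}$, $h_{0,j}$ of the Conventions in Section~\ref{s:iniclust}. Carrying out this bookkeeping block by block establishes the multihomogeneity, forces the four diagonal projections to be constant, and hence makes $\{\log\phi,\log\psi\}$ constant, i.e.\ $\phi$ and $\psi$ log-canonical.
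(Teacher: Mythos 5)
Your opening reduction is sound: since $a=\pi_0E_L\log\phi$ and $b=\pi_0E_R\log\phi$ are constant diagonal matrices for $f$- and $\varphi$-functions, and pairing against an element of $\mathfrak h$ only detects diagonal parts, Lemma~\ref{l:phipsi} does reduce everything to the four quantities $\pi_0(\nabla_Y\log\psi\cdot Y)$, $\pi_0(Y\nabla_Y\log\psi)$, $\pi_0E_L\log\psi$, $\pi_0E_R\log\psi$. The gap is that for a nontrivial BD pair these four quantities are \emph{not} constant, and the multihomogeneity you invoke is false --- this is not bookkeeping that can be carried out, because the statement you reduced to is wrong. Concretely, take the Cremmer--Gervais structure of Example~\ref{ex:cg4} ($n=4$): the third row of $\mathcal{L}_1$ is $(y_{22},y_{23},y_{24},x_{11},x_{12},x_{13})$, so the $h$-function $h_{24}=\det(\mathcal{L}_1)^{[3,6]}_{[3,6]}$ expands as $y_{24}M_1-y_{34}M_2+y_{44}M_3$ with $M_i$ depending only on $X$; its monomials have degree $1$ or $0$ in the second row of $Y$, so $\bigl(Y\nabla_Y\log h_{24}\bigr)_{22}=y_{24}M_1/h_{24}$ is genuinely non-constant. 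This mixing of a row of $Y$ with a row of $X$ inside a single row of $\mathcal{L}$ (rows glued by $\gamma_r$, columns by $\gamma_c$) is the defining feature of the $\mathcal{L}$-matrix construction, not a boundary artifact removed by passing to the trailing block; accordingly, the scaling invariances \eqref{eq:diaginv} hold only for the twisted actions $(TX,\tilde{\gamma}_r(T)Y)$, $(X\tilde{\gamma}_c^*(T),YT)$, etc., never for scaling a row or column of $X$ or $Y$ alone.

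What is actually constant are the $\gamma$-twisted Euler operators $\pi_0\xi_L\log\psi$, $\pi_0\xi_R\log\psi$, $\pi_0\eta_L\log\psi$, $\pi_0\eta_R\log\psi$ (e.g.\ $(\pi_0\xi_R\log\psi)_{ii}$ adds the degree in row $i$ of $X$ to the degree in row $\gamma_r(i)$ of $Y$ --- exactly the coupled pair in the example above), together with the hat-projected pieces $\pi_0\pi_{\hat{\Gamma}_2^c}(\nabla_Y\log\psi\cdot Y)$, $\pi_0\pi_{\hat{\Gamma}_2^r}(Y\nabla_Y\log\psi)$, $\pi_0\pi_{\hat{\Gamma}_1^c}(\nabla_X\log\psi\cdot X)$, $\pi_0\pi_{\hat{\Gamma}_1^r}(X\nabla_X\log\psi)$, where the projections kill precisely the runs on which mixing occurs. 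This is why the proof cannot stop at \eqref{eq:phipsi}: one must substitute $E_L\psi=\eta_L\psi+(1-\gamma_c^*)(\nabla_Y\psi\,Y)$ and $E_R\psi=\xi_R\psi+(1-\gamma_r^*)(Y\nabla_Y\psi)$ into the $R_0$-terms and use the adjoint identities $(R_0^c)^*(1-\gamma_c^*)=\pi_{\Gamma_2^c}+(R_0^c)^*\pi_{\hat{\Gamma}_2^c}$ and $(R_0^r)^*(1-\gamma_r^*)=\pi_{\Gamma_2^r}+(R_0^r)^*\pi_{\hat{\Gamma}_2^r}$, so that the non-constant parts of the third and fourth terms of \eqref{eq:phipsi} cancel against the non-constant parts of the first two, leaving only twisted or hat-projected (hence constant) quantities. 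Your argument as written is complete only in the standard case $\Gamma_1^r=\Gamma_1^c=\emptyset$, where no mixing occurs and the $g$- and $h$-functions are honest minors of $X$ and $Y$ separately; but the general aperiodic oriented case is exactly the content of the proposition.
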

\begin{proof}
Let $\phi$ be any $f$- or $\varphi$-function, and let $\psi$ be any $g$- or $h$-function. Only for this proof, call two rational functions log-equivalent ($\loge$) if their difference is a multiple of $\phi \psi$. Therefore, we aim at proving that $\{\phi,\psi\} \loge 0$. Let us pick a pair of solutions $(R_0^r,R_0^c)$ of the system~\eqref{eq:r0}-\eqref{eq:ralg} with the properties~\eqref{eq:roid} (as a reminder, in Section~\ref{s:depr} we show that log-canonicity doesn't depend on the choice of $R_0$).

Recall from \cite{plethora} or from Section~\ref{s:iniclust} that all the following quantities
\[
\pi_0\xi_R \psi, \ \ \pi_0 \eta_R \psi, \ \ \pi_0 \xi_L \psi, \ \ \pi_0 \eta_L \psi,\ \ \pi_0 \pi_{\hat{\Gamma}_1^r}(X \nabla_X \psi), \ \ \pi_0 \pi_{\hat{\Gamma}_2^r} (Y\nabla_Y \psi),  \ \ \pi_0 \pi_{\hat{\Gamma}_1^c}(\nabla_X \psi X),\ \ \pi_0 \pi_{\hat{\Gamma}_2^c} (\nabla_Y\psi Y), \ \ 
\]
are multiples of $\psi$. Also, recall from \cite{double} or from Section~\ref{s:iniclust} that $\pi_0 E_R \phi$ and $\pi_0 E_L \phi$ are multiples of $\phi$. Therefore, rewriting $E_L \psi$ as $E_L \psi = \eta_L \psi + (1-\gamma_c^*)(\nabla_Y \psi Y)$ and using $(R_0^c)^*(1-\gamma_c^*) = \pi_{\Gamma_2^c} + (R_0^c)^* \pi_{\hat{\Gamma}_2^c}$, we see that
\[\begin{split}
\langle R_0^c \pi_0 E_L \phi, E_L\psi \rangle &= \overbrace{\langle R_0^c \pi_0 E_L \phi, \eta_L \psi \rangle}^{\loge 0} + \langle \pi_0 E_L\phi, \pi_0(R_0^c)^*(1-\gamma_c^*)(\nabla_Y\psi Y)\rangle \loge\\&\loge \langle \pi_0 E_L\phi, \pi_{\Gamma_2^c} \nabla_Y\psi Y \rangle + \overbrace{\langle R_0^c \pi_0 E_L \phi, \pi_{\hat{\Gamma}_2^c} \nabla_Y \psi Y \rangle}^{\loge 0} \loge \langle \pi_0 E_L \phi, \pi_{\Gamma_2^c} \nabla_Y \psi Y \rangle.
\end{split}
\]
Similarly, rewriting $E_R\psi = \xi_R + (1-\gamma_r^*) Y\nabla_Y \psi$ and using $(R_0^r)^*(1-\gamma_r^*) = \pi_{\Gamma_2^r} + (R_0^r)^* \pi_{\hat{\Gamma}_2^r}$, we arrive at
\[
-\langle R_0^r \pi_0 E_R\phi, E_R \psi \rangle \loge - \langle \pi_0 E_R \phi, \pi_{\Gamma_2^r} Y\nabla_Y \psi \rangle.
\]
Now, combining these together with formula \eqref{eq:phipsi}, we see that 
\[\begin{split}
\{\phi, \psi\} &\loge -\langle \pi_0 E_L \phi, \nabla_Y \psi Y\rangle + \langle \pi_0 E_R \phi, Y\nabla_Y \psi \rangle + \langle \pi_0 E_L \phi, \pi_{\Gamma_2^c} \nabla_Y \psi Y \rangle - \langle \pi_0 E_R \phi, \pi_{\Gamma_2^r} Y\nabla_Y \psi \rangle \loge \\ & \loge -\langle \pi_0 E_L\phi, \pi_{\hat{\Gamma}_2^c} \nabla_Y \psi Y\rangle - \langle \pi_0 E_R\phi, \pi_{\hat{\Gamma}_2^r} Y\nabla_Y \psi \rangle \loge 0.
\end{split}
\]
Thus the result follows.
\end{proof}
\section{Compatibility}\label{s:compb}
The objective of this section is to prove Condition \ref{pr:c2} of Proposition \ref{p:compb}. The matrix $\Delta$ from the proposition is the identity matrix; therefore, we show that $\{\log y_{i}, \log x_j\} = \delta_{ij}$, where $y_i$ is the $y$-coordinate of a cluster variable $x_i$. Together with the results from Section \ref{s:logcan}, we will conclude, in particular, that any extended cluster in $\gc(\bg^r,\bg^c)$ is log-canonical with respect to the Poisson bracket. If $\psi_1$ is any cluster $g$- or $h$-function that is not equal to $g_{ii}$ or $h_{ii}$, $1 \leq i \leq n$, and $\psi_2$ is any $g$- or $h$-function, then it was shown in \cite{plethora} that
\[
\{\log y(\psi_1), \log \psi_2\} = \begin{cases}
1, \ &\psi_1 = \psi_2\\
0, \ &\text{otherwise.}
\end{cases}
\]
In this section, we treat all the other pairs of functions from the initial cluster.

\subsection{Diagonal derivatives}
\label{s:diagders}
In this subsection, we state technical formulas that compare the diagonal derivatives of the variables that are adjacent in the quiver. We remind the reader that when the indices are seemingly out of range, the conventions~\eqref{eq:fconv}-\eqref{eq:hconv} are in place.

\paragraph{Case of $f$- and $\varphi$-functions.} Set $\Delta(i,j) := \sum_{k=i}^{j} e_{kk}.$ The following formulas are drawn from the text of~\cite[p.~25]{double}: for $k,l\geq 0$, $1 \leq k+l\leq n$,
\begin{equation}\label{eq:elfexp}
\pi_0 E_L \log f_{kl} = \Delta(n-k+1,n) + \Delta(n-l+1,n), \ \  \pi_0E_R \log f_{kl} = \Delta(n-k-l+1,n);
\end{equation}
for $k,l \geq 1, \ k+l \leq n$,
\begin{equation}\begin{split}
\pi_0 E_L \log \varphi_{kl} &= (n-k-l)(I+\Delta(n,n))+ \Delta(n-k+1,n) + \Delta(n-l+1,n), \\ \pi_0 E_R \log\varphi_{kl} &= (n-k-l+1)I.
\end{split}
\end{equation}
Let $y(f_{kl})$ and $y(\varphi_{kl})$ be $y$-coordinates. Examining the neighborhoods of $\varphi$- and $f$-functions and applying the above formulas yield
\begin{equation}\label{eq:deryfphi}\begin{split}
&\pi_0 E_L y(f_{kl}) = \pi_0 E_R y(f_{kl}) = 0, \ \ k,l\geq 1, \ \ k+l \leq n-1;\\
&\pi_0 E_L y(\varphi_{kl}) = \pi_0 E_R y(\varphi_{kl}) = 0, \ \ k,l \geq 1, \ \ k+l \leq n.
\end{split}
\end{equation}

\paragraph{Case of $g$- and $h$-functions.} For $1 \leq i \leq j \leq n$, let us denote
\begin{equation}\begin{split}
g&:= \log g_{ij} - \log g_{i+1,j+1},\\
h&:= \log h_{ji} - \log h_{j+1,i+1}.
\end{split}
\end{equation}
Then $g$ satisfies the following list of formulas:
\begin{equation}
\begin{aligned}
\pi_0 \xi_L g &= \gamma_c(e_{jj}), & \pi_0 \xi_R g &= e_{ii}, \\
\pi_0 \eta_L g &= e_{jj}, & \pi_0 \eta_R g &= \gamma_r(e_{ii}); 
\end{aligned}
\end{equation}
\begin{equation}
\begin{aligned}
\pi_0\pi_{\hat{\Gamma}_2^c} (\nabla_Y g \cdot Y) &= 0, & \pi_0 \pi_{\hat{\Gamma}_1^r} (X\nabla_X g) &= \pi_{\hat{\Gamma}_1^r} e_{ii},\\
\pi_0 \pi_{\hat{\Gamma}_1^c}(\nabla_X g \cdot X) &= \pi_{\hat{\Gamma}_1^c} (e_{jj}), & \pi_0 \pi_{\hat{\Gamma}_2^r}(Y\nabla_Y g) & = 0;
\end{aligned}
\end{equation}
and for any runs $\Delta^r$, $\Delta^c$, $\bar{\Delta}^r$ and $\bar{\Delta}^c$,
\begin{equation}\label{eq:ylg}
\begin{aligned}
\tr(\nabla_X g X)_{\Delta^c}^{\Delta^c} &= 1_{\Delta^c}(j), & \tr(X\nabla_X g)_{\Delta^r}^{\Delta^r} &= 1_{\Delta^r}(i),\\
\tr (\nabla_Y g Y)_{\bar{\Delta}^c}^{\bar{\Delta}^c} &= 0, & \tr (Y\nabla_Y g)_{\bar{\Delta}^r}^{\bar{\Delta}^r}  & = 0
\end{aligned}
\end{equation}
where $1_{\Delta^c}$ and $1_{\Delta^r}$ are indicators. Similarly, $h$ satisfies the following list:
\begin{equation}\label{eq:diagxiet}
\begin{aligned}
\pi_0 \xi_L h &= e_{ii}, & \pi_0 \xi_R h &= \gamma_r^*(e_{jj}), \\
\pi_0 \eta_L h &= \gamma_c^*(e_{ii}), & \pi_0 \eta_R h &= e_{jj}; 
\end{aligned}
\end{equation}
\begin{equation}
\begin{aligned}
\pi_0\pi_{\hat{\Gamma}_2^c} (\nabla_Y h \cdot Y) &= \pi_{\hat{\Gamma}_2^c}e_{ii}, & \pi_0 \pi_{\hat{\Gamma}_1^r} (X\nabla_X h) &= 0,\\
\pi_0 \pi_{\hat{\Gamma}_1^c}(\nabla_X h \cdot X) &=0, & \pi_0 \pi_{\hat{\Gamma}_2^r}(Y\nabla_Y h) & = \pi_{\hat{\Gamma}_2^r} e_{jj};
\end{aligned}
\end{equation}
\begin{equation}\label{eq:xrh}
\begin{aligned}
\tr(\nabla_X h X)_{\Delta^c}^{\Delta^c} &= 0, & \tr(X\nabla_X h)_{\Delta^r}^{\Delta^r} &= 0,\\
\tr (\nabla_Y h Y)_{\bar{\Delta}^c}^{\bar{\Delta}^c} &= 1_{\bar{\Delta}^c}(i), & \tr (Y\nabla_Y h)_{\bar{\Delta}^r}^{\bar{\Delta}^r}  & = 1_{\bar{\Delta}^r}(j).
\end{aligned}
\end{equation}
The above formulas easily follow from a close inspection of the invariance properties from equation \eqref{eq:diaginv}; the formulas for traces follow from the proof of Lemma 4.4 in~\cite{plethora}. As a corollary, if $D \in \{\xi_L,\xi_R, \eta_L, \eta_R\}$, then
\begin{equation}\label{eq:ygh}
\pi_0 D y(g_{ij}) = \pi_0 D y(h_{ji}) = 0, \ \ 1 \leq j < i \leq n.
\end{equation}
For $i=j$, formula \eqref{eq:ygh} is true only for $D = \eta_L$ and $D=\xi_L$. It follows from equation \eqref{eq:ylg} that for any $1 \leq j \leq i \leq n$,
\begin{equation}\label{eq:trylg}
\tr( \nabla_X  y(g_{ij}) X)_{{\Delta}^c}^{{\Delta}^c} = \tr( X\nabla_X  y(g_{ij})_{{\Delta}^r}^{{\Delta}^r} =
\tr( \nabla_Y  y(g_{ij}) Y)_{\bar{\Delta}^c}^{\bar{\Delta}^c} =\tr( Y\nabla_Y  y(g_{ij}))_{\bar{\Delta}^r}^{\bar{\Delta}^r} = 0;
\end{equation}
and for any $1 \leq j < i \leq n$, it's a consequence of equation \eqref{eq:xrh} that
\begin{equation}\label{eq:trxrh}
\tr( \nabla_X  y(h_{ji}) X)_{{\Delta}^c}^{{\Delta}^c} = \tr( X\nabla_X  y(h_{ji}))_{{\Delta}^r}^{{\Delta}^r} =
\tr( \nabla_Y  y(h_{ji}) Y)_{\bar{\Delta}^c}^{\bar{\Delta}^c} =\tr( Y\nabla_Y  y(h_{ji}))_{\bar{\Delta}^r}^{\bar{\Delta}^r} = 0.
\end{equation}
\subsection{Dependence on the choice of $R_0$}\label{s:depr}
In this subsection we show that the compatibility of the Poisson bracket with the generalized cluster structure $\gc(\bg^r,\bg^c)$ does not depend on the choice of the solutions of the system~\eqref{eq:r0}-\eqref{eq:ralg}. Specifically, let $(R_0^r,R_0^c)$ and $(\tilde{R}_0^r,\tilde{R}_0^c)$ be solutions that correspond to $(\bg^r,\bg^c)$, and let us consider two Poisson brackets on $D(\GL_n)$ that depend on these choices: $\{\cdot,\cdot\}_{(R_0^r,R_0^c)}$ and $\{\cdot,\cdot\}_{(\tilde{R}_0^r,\tilde{R}_0^c)}$.
\begin{proposition}\label{p:r0deplog}
If the initial extended cluster of $\gc(\bg^r,\bg^c)$ is log-canonical with respect to $\{\cdot, \cdot\}_{(R_0^r,R_0^c)}$, then it's also log-canonical with respect to $\{\cdot,\cdot \}_{(\tilde{R}_0^r,\tilde{R}_0^c)}$.
\end{proposition}
\begin{proof}
Indeed, let $\psi_1$ and $\psi_2$ be any two variables from the initial extended cluster and let $\mathfrak{h}$ be the Cartan subalgebra of $\gl_n(\mathbb{C})$. Then the difference of the brackets can be written as
\[
\{\psi_1,\psi_2\}_{(R_0^r,R_0^c)} - \{\psi_1,\psi_2\}_{(\tilde{R}_0^r,\tilde{R}_0^c)} = \langle s_0^c \pi_0 E_L\psi_1, \pi_0 E_L\psi_2 \rangle - \langle s_0^r \pi_0 E_R \psi_1,\pi_0 E_R \psi_2\rangle
\]
where $s_0^\ell : \mathfrak{h}\rightarrow \mathfrak{h}$ is a skew-symmetric linear transformation such that $s_0^\ell(\alpha- \gamma_\ell(\alpha)) = 0$ for $\alpha \in \Gamma_1^\ell$, $\ell \in \{r,c\}$. Now it suffices to prove that\footnote{Let $A_i:= \pi_0 E_L\log\psi_i$. If we show that $s_0^c A_1$ and $s_0^cA_2$ are constant, then we can write $s_0^cA_1 = s_0^c\tilde{A}_1$ for some constant $\tilde{A}_1$; hence, $\langle s_0^cA_1, A_2\rangle = -\langle \tilde{A}_1,s_0^cA_2\rangle = \text{const}$.} $s_0^c\pi_0 E_L\log \psi = \text{const}$ and $s_0^r \pi_0E_R \log\psi = \text{const}$, where $\psi$ is any function from the initial extended cluster. Let us only deal with the case of $s_0^c$, the other case is similar. If $\psi$ is a $\varphi$- or $f$-function, then it follows from equation~\eqref{eq:invarfphi} that $\pi_0 E_L \log \psi = \text{const}$; if $\psi$ is a $g$- or $h$-function, then we write $\pi_0 E_L \psi = \pi_0\xi_L \psi + \pi_0(1-\gamma^c) X \nabla_X \psi$. Recall from equation~\eqref{eq:xiconst} that $\pi_0 \xi_L \log \psi = \text{const}$, hence it's left to study $s_0^c\pi_0 (1-\gamma^c)(X\nabla_X \psi)$. Let us enumerate all nontrivial column $X$-runs as $\Delta^c_1,\ldots,\Delta^c_k$, and let us decompose the space of all diagonal matrices $\mathfrak{h}$ as
\begin{equation}\label{eq:hdecomp}
\mathfrak{h} = \left(\bigoplus_{i=1}^k \mathfrak{h}_i\right) \oplus \left( \bigoplus_{i=1}^k \langle I_i \rangle \right) \oplus (\h_{\Gamma_1^c})^\perp
\end{equation}
where $\mathfrak{h}_i$ is a subspace generated by the roots $\Delta_i^c \cap \Gamma_1^c$, $I_i := \sum_{j \in \Delta_i^c} e_{jj}$, $\langle I_i \rangle$ is the span of $I_i$ and $\mathfrak{h}_{\Gamma_1^c}$ is the span of $\{e_{jj} \ | \ \exists i \in [1,k], \  j \in \Delta_i^c\}$. Now, $\pi_{\hat{\Gamma}_1^c}\pi_0 X\nabla_X\log \psi$ is constant by \eqref{eq:xiconst}, and the application of $s_0^c(1-\gamma_c)$ to $X\nabla_X\log \psi$ is zero on the first component of equation~\eqref{eq:hdecomp}. The projection of $X\nabla_X\log \psi$ onto the second component is equal to 
\begin{equation}\label{eq:diagdecomp}
\sum_{i=1}^k \frac{1}{|\Delta_i^c|} \tr(X\nabla_X\log \psi)_{\Delta_i^c}^{\Delta_i^c},
\end{equation}
which is constant by \eqref{eq:deltatraces} (or by Lemma 4.4 from \cite{plethora}). Thus the statement holds.\qedhere
\end{proof}

\begin{proposition}
If $\gc(\bg^r,\bg^c)$ is compatible with the Poisson bracket $\{\cdot,\cdot\}_{(R_0^r,R_0^c)}$, then it's also compatible with $\{\cdot,\cdot\}_{(\tilde{R}_0^r,\tilde{R}_0^c)}$.
\end{proposition}
\begin{proof}
Let $\psi_1$ and $\psi_2$ by any two variables from the initial extended cluster with $\psi_1$ being non-frozen. As the proof of Proposition~\ref{p:r0deplog} shows, we need to prove that
\[
\langle s_0^c E_L y(\psi_1), \psi_2\rangle = 0 \ \ \text{and} \ \ \langle s_0^r E_R y(\psi_1),\psi_2\rangle = 0.
\]
If $\psi_1$ is any $\varphi$- or $f$-function, then the above identities follow from formulas~\eqref{eq:deryfphi}. Assume that $\psi_1 = g_{ij}$ for $1 \leq j \leq i \leq n$ or $\psi_1 = h_{ji}$ for $1 \leq j < i \leq n$ (and $\psi_1$ is not frozen). Then we can write $E_L = \xi_L + (1-\gamma_c)(\nabla_X X)$ and recall that $\pi_0\xi_L y(\psi_1) = 0$ by equation~\eqref{eq:ygh}, 
$\tr(\nabla_X y(\psi_1))_{\Delta_i^c}^{\Delta_i^c} = 0$ by equation~\eqref{eq:trylg} and equation~\eqref{eq:trxrh}; and finally, $\pi_0 \pi_{\hat{\Gamma}_1^c}(\nabla_X y(\psi_1) X) = 0$ by~\eqref{eq:ygh}; therefore, $\langle s_0^c E_L y(\psi_1), \psi_2\rangle = 0$. In a similar way one can prove $\langle s_0^r E_R y(\psi_1),\psi_2\rangle = 0$. The only exception is $\psi_1 = h_{ii}$ for $2 \leq i \leq n$. In this case, we set $h = \log h_{i-1,i} - \log h_{i,i+1}$ and $f = \log f_{1,n-i} - \log f_{1,n-i+1}$ so that $\log y(h_{ii}) = h + f$, and let $\Delta_1^r,\ldots,\Delta_{m}^r$ be the list of all nontrivial row $X$-runs; then, by equations \eqref{eq:elfexp}, \eqref{eq:diagxiet} and~\eqref{eq:xrh},
\[\begin{split}
\langle s_0^r E_R& \log y(h_{ii}),\psi_2\rangle = \langle s_0^r \eta_R(h), E_R \psi \rangle + \langle s_0^r(1-\gamma_r)X\nabla_X h, E_R \psi\rangle + \langle s_0^r E_R f, E_R \psi \rangle = \\ &= \langle s_0^r e_{i-1,i-1}, E_R\psi\rangle + \sum_{k=1}^m \frac{1}{|\Delta_k^r|} \tr(X \nabla_X h)_{\Delta^r}^{\Delta^r}\langle s_0^r I_k, E_R \psi\rangle + \langle s_0(-e_{i-1,i-1}),E_R\psi\rangle = 0.\qedhere
\end{split}
\]
\end{proof}
\subsection{Computation of $\{y(\phi),\psi\}$ and $\{y(\psi),\phi\}$}
Let $\phi$ be any $f$- or $\varphi$-function, and let $\psi$ be any $g$- or $h$-function. The objective of this subsection is to show that $\{y(\phi),\psi\} = \{y(\psi), \phi\} = 0$ (for $y(\psi)$, we assume that $\psi$ is a cluster variable).

\begin{proposition}
$\{y(\phi),\psi\} = 0$.
\end{proposition}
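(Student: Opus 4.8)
The plan is to compute the bracket $\{y(\phi),\psi\}$ directly and exploit the vanishing results for diagonal derivatives established in Section~\ref{s:diagders}. Recall that the $y$-coordinate is $y(\phi) = \prod_j x_j^{b_{\phi j}}$, so $\log y(\phi) = \sum_j b_{\phi j} \log x_j$ is an integer combination of logarithms of adjacent cluster variables, and by bilinearity it suffices to understand $\{\log y(\phi), \log\psi\}$. Since $\phi$ is an $f$- or $\varphi$-function, its neighbors in the quiver are again $f$- and $\varphi$-functions together with (at most) some $g$- and $h$-functions, so $y(\phi)$ is a product of such variables with integer exponents. I would first write $\{\log y(\phi),\log\psi\}$ as the corresponding integer combination of brackets $\{\log\chi,\log\psi\}$ over the neighbors $\chi$ of $\phi$, and then feed each such bracket into Formula~\eqref{eq:phipsi} from Lemma~\ref{l:phipsi}.

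The key observation is that Lemma~\ref{l:phipsi} expresses $\{\phi,\psi\}$ entirely in terms of $\pi_0 E_L\phi$, $\pi_0 E_R\phi$ paired against $\nabla_Y\psi\, Y$, $Y\nabla_Y\psi$, $E_L\psi$ and $E_R\psi$. The right slot depends only on $\psi$ and is the same for every neighbor $\chi$ of $\phi$; only the left-hand factors $\pi_0 E_L$ and $\pi_0 E_R$ vary. Therefore $\{\log y(\phi),\log\psi\}$ is obtained by replacing $\pi_0 E_L\log\phi$ and $\pi_0 E_R\log\phi$ with $\pi_0 E_L\log y(\phi)$ and $\pi_0 E_R\log y(\phi)$ in the log-version of~\eqref{eq:phipsi}. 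But the computations recorded in Section~\ref{s:diagders} give precisely
\[
\pi_0 E_L\, y(f_{kl}) = \pi_0 E_R\, y(f_{kl}) = 0, \qquad \pi_0 E_L\, y(\varphi_{kl}) = \pi_0 E_R\, y(\varphi_{kl}) = 0,
\]
i.e. the diagonal $E_L$- and $E_R$-derivatives of the $y$-coordinate of any $f$- or $\varphi$-function vanish identically. Since every term of the logarithmic form of~\eqref{eq:phipsi} carries a factor $\pi_0 E_L\log\phi$ or $\pi_0 E_R\log\phi$ in its left slot, substituting $\log y(\phi)$ for $\log\phi$ makes every term vanish, and hence $\{\log y(\phi),\log\psi\} = 0$, which is equivalent to $\{y(\phi),\psi\}=0$.

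The one point that needs care is the passage from $\{\phi,\psi\}$ for a single function to $\{\log y(\phi),\log\psi\}$: I must confirm that Lemma~\ref{l:phipsi} holds verbatim with $\phi$ replaced by an arbitrary integer combination $\log y(\phi) = \sum_j b_{\phi j}\log\chi_j$ of the logarithms of $f$- and $\varphi$-neighbors. This holds because the derivation of~\eqref{eq:phipsi} uses only the invariance properties $E_R\phi\in\mathfrak b_+$, $E_L\phi\in\mathfrak b_-$, which are closed under $\mathbb{Q}$-linear combinations in the logarithmic setting, so the formula extends by linearity to $\log y(\phi)$; this is where I would be most careful, since it also requires that any $g$- or $h$-neighbors of $\phi$ appearing in $y(\phi)$ do not spoil the membership $E_R\log y(\phi)\in\mathfrak b_+$, $E_L\log y(\phi)\in\mathfrak b_-$. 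The anticipated obstacle is thus verifying that the specific neighbors contributing to $y(\phi)$ are all of the right type (or that their $E_L$/$E_R$ contributions still land in $\mathfrak b_\pm$ and have vanishing diagonal part), which is exactly what the tabulated formulas of Section~\ref{s:diagders} guarantee; once linearity of~\eqref{eq:phipsi} is secured, the proof is immediate.
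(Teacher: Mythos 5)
Your proof is correct and follows essentially the same route as the paper: the paper's own proof simply applies Formula~\eqref{eq:phipsi} with $y(\phi)$ in the first slot and concludes from the vanishing $\pi_0 E_L y(\phi) = \pi_0 E_R y(\phi) = 0$ recorded in Section~\ref{s:diagders}. The linearity point you rightly flag is left implicit in the paper; it holds because the only $g$- or $h$-neighbors of an $f$- or $\varphi$-vertex are the diagonal functions $g_{ii}$, $h_{ii}$, which under the index conventions of Section~\ref{s:iniclust} are themselves $f$-functions and so satisfy $E_L \in \mathfrak{b}_-$, $E_R \in \mathfrak{b}_+$ (this comes from the invariance properties~\eqref{eq:invfphi}, not from the diagonal-derivative tables), whence the derivation of~\eqref{eq:phipsi} extends verbatim to the Laurent monomial $y(\phi)$.
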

\begin{proof}
Let us apply formula \eqref{eq:phipsi}:
\[
\{y(\phi), \psi\} = -\langle \pi_0 E_L y(\phi), \nabla_Y \psi Y\rangle + \langle \pi_0 E_R y(\phi), Y\nabla_Y \psi \rangle + \langle R_0^c \pi_0 E_L y(\phi), E_L\psi\rangle - \langle R_0^r \pi_0 E_Ry(\phi), E_R \psi\rangle,
\]
and now recall from equation \eqref{eq:deryfphi} that $\pi_0E_Ly(\phi) = \pi_0E_Ry(\phi) = 0$. Thus $\{y(\phi),\psi\} = 0$.
\end{proof}

\begin{proposition}
$\{y(\psi),\phi\} = 0$.
\end{proposition}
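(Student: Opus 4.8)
The plan is to mirror the proof of the preceding proposition, the only difference being that the $f$- or $\varphi$-function now occupies the second slot of the bracket. By skew-symmetry I write $\{y(\psi),\phi\} = -\{\phi, y(\psi)\}$, and since every cluster variable is log-canonical with $\phi$ (Section~\ref{s:logcan}), the quantity $\{\log\phi,\log y(\psi)\} = \sum_k b_{\psi k}\{\log\phi,\log\psi_k\}$ is a constant; it suffices to show this constant is $0$. For the $g$- and $h$-factors $\psi_k$ of the monomial $y(\psi)$ I would apply Formula~\eqref{eq:phipsi}, which is legitimate because these factors satisfy $\xi_L\psi_k\in\mathfrak{b}_-$ and $\xi_R\psi_k\in\mathfrak{b}_+$ by~\eqref{eq:infinvpsi}; the contribution of the $f$- and $\varphi$-factors (which do appear, e.g.\ $\varphi_{1,n-1}=f_{1,n-1}$ in $y(h_{22})$) must be carried along separately through their mutual log-canonicity, and I expect them to cancel against the pieces below. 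In every term of~\eqref{eq:phipsi} the first-slot partner is one of the constant diagonal matrices $\pi_0 E_L\phi$, $\pi_0 E_R\phi$ computed in Section~\ref{s:diagders}, so only the diagonal projections of the second-slot gradients survive.

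The heart of the argument is then a cancellation driven by~\eqref{eq:ygh}. Writing $a:=\pi_0\nabla_X y(\psi)\,X$ and $b:=\pi_0\nabla_Y y(\psi)\,Y$, the identities $\xi_L=\gamma_c(\nabla_X X)+\nabla_Y Y$ and $\eta_L=\nabla_X X+\gamma_c^*(\nabla_Y Y)$ combined with $\pi_0\xi_L y(\psi)=\pi_0\eta_L y(\psi)=0$ force $b=-\gamma_c a$ and $\pi_{\hat{\Gamma}_1^c}a=0$, so that $\pi_0 E_L y(\psi)=(1-\gamma_c)a$. Feeding this into the third term of~\eqref{eq:phipsi} and using the adjoint relation $(R_0^c)^*(1-\gamma_c)=-\gamma_c+(R_0^c)^*\pi_{\hat{\Gamma}_1^c}$ from Section~\ref{s:opers} turns it into $\langle\pi_0 E_L\phi,\,b\rangle$, which is exactly the negative of the first term. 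The same computation on the right, with $a':=\pi_0 X\nabla_X y(\psi)$, $b':=\pi_0 Y\nabla_Y y(\psi)$, the identities for $\xi_R,\eta_R$, the vanishing $\pi_0\xi_R y(\psi)=\pi_0\eta_R y(\psi)=0$, and $(R_0^r)^*(1-\gamma_r)=-\gamma_r+(R_0^r)^*\pi_{\hat{\Gamma}_1^r}$, makes the fourth term cancel the second. Hence $\{\phi,y(\psi)\}=0$ for every off-diagonal $\psi=g_{ij},h_{ji}$ with $i>j$.

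The main obstacle is the diagonal case $\psi=g_{ii}$ or $h_{ii}$ ($2\le i\le n$), which the statement also covers but for which~\eqref{eq:ygh} only supplies the \emph{left} vanishing $\pi_0\xi_L y(\psi)=\pi_0\eta_L y(\psi)=0$. The left cancellation (first and third terms) therefore survives unchanged, but $\pi_0\xi_R y(g_{ii})$ and $\pi_0\eta_R y(g_{ii})$ are in general nonzero, so the cancellation of the second and fourth terms is no longer automatic. Here I would compute these two diagonal derivatives explicitly from the mutation relation for $g_{ii}$ (resp.\ $h_{ii}$) together with the diagonal-derivative tables of Section~\ref{s:diagders}, and verify by hand that the residual right-hand combination still annihilates the explicit diagonals $\pi_0 E_R\phi$ (namely $\Delta(n-k-l+1,n)$ for $f_{kl}$ and $(n-k-l+1)I$ for $\varphi_{kl}$). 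A cleaner alternative I would try first is to exploit that $g_{ii}$ and $h_{ii}$ are principal minors independent of the Belavin--Drinfeld data: comparing $\{\phi,y(g_{ii})\}$ with the standard bracket---where the identity is already known from~\cite{double}---should reduce the claim to the vanishing of the BD-dependent correction, which once more rests on the diagonal derivatives furnished by~\eqref{eq:ygh}.
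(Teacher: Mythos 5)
Your off-diagonal case ($\psi=g_{ij}$ or $h_{ji}$, $i>j$) is correct and is essentially the paper's own Case 1: the paper keeps $y(\psi)$ in the first slot of its analogue \eqref{eq:ypsiphi} and uses $R_0^c(1-\gamma_c)=\pi_{\Gamma_1^c}+R_0^c\pi_{\hat{\Gamma}_1^c}$ (and its $r$-counterpart), while you pass to the second slot by skew-symmetry and use the adjoint identities $R_0^*(1-\gamma)=-\gamma+R_0^*\pi_{\hat{\Gamma}_1}$; the two pairwise cancellations driven by \eqref{eq:ygh} are the same. Note that in this case the issue you flag about $f$-/$\varphi$-factors does not arise at all: for off-diagonal $\psi$ the $y$-coordinate is a monomial in $g$- and $h$-functions only, which is exactly why both your computation and the paper's go through.

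The genuine gap is the diagonal case $\psi=h_{ii},\,g_{ii}$, and it is larger than your sketch acknowledges. First, Formula \eqref{eq:phipsi} cannot be applied to $y(h_{ii})$ or $y(g_{ii})$ at all --- not even for the ``left'' terms: its derivation requires every factor in the second slot to satisfy the twisted invariances \eqref{eq:infinvpsi} ($\xi_L\in\mathfrak{b}_-$, $\xi_R\in\mathfrak{b}_+$), and the $f$-factors $f_{1,n-i},f_{1,n-i+1}$ (resp.\ $f_{n-i,1},f_{n-i+1,1}$) only enjoy the untwisted invariances \eqref{eq:invfphi}. So your claim that ``the left cancellation survives unchanged'' is unjustified. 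Second, your ``cleaner alternative'' is circular as stated: you say the vanishing of the BD-dependent correction ``rests on the diagonal derivatives furnished by \eqref{eq:ygh}'', but \eqref{eq:ygh} is precisely what fails for $\xi_R,\eta_R$ when $i=j$, as you note yourself; moreover $y(g_{ii})$ is \emph{not} BD-independent (its factors $g_{i+1,i},g_{i,i-1}$ are minors of $\mathcal{L}$-matrices), so comparing with the standard structure requires an argument, not just the BD-independence of $g_{ii}$ itself. What the paper actually does is split $\log y(h_{ii})=\hat h+\hat f$ with $\hat h=\log h_{i-1,i}-\log h_{i,i+1}$ and $\hat f=\log f_{1,n-i}-\log f_{1,n-i+1}$, compute $\{\hat h,\log\phi\}$ by the first-slot formula \eqref{eq:ypsiphi} and the tables of Section~\ref{s:diagders} (the individual diagonal derivatives are nonzero here, e.g.\ $\pi_0\xi_L\hat h=e_{ii}$), obtaining the pure $R_0$-expression $\langle R_0^c e_{ii},E_L\log\phi\rangle-\langle R_0^r e_{i-1,i-1},E_R\log\phi\rangle$, and compute $\{\hat f,\log\phi\}$ from the invariance properties of $f$-functions, obtaining exactly the opposite $R_0$-terms plus $\langle X\nabla_X\hat f,Y\nabla_Y\log\phi\rangle-\langle\nabla_X\hat f\,X,\nabla_Y\log\phi\,Y\rangle$. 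The BD-dependent terms thus cancel \emph{across} the two pieces, and the remainder is an expression in $f$-/$\varphi$-functions alone, identical for every aperiodic oriented BD pair, hence zero by the standard-case result of \cite{double} (for $g_{ii}$ the same scheme runs with three pieces $\hat g,\hat g',\hat f$). Without this splitting and the explicit cross-cancellation, your proposal does not prove the statement for $\psi=g_{ii},h_{ii}$.
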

\begin{proof}
Let us pick a pair $(R_0^r,R_0^c)$ of solutions of the system~\eqref{eq:r0}-\eqref{eq:ralg} such that both $R_0^r$ and $R_0^c$ satisfy the identities~\eqref{eq:roid}.\\
\noindent \emph{Case 1, $i\neq j$.} Assume first that $\psi$ is any cluster $g_{ij}$ or $h_{ji}$ for $1 \leq j < i \leq n$. Similarly to equation \eqref{eq:phipsi}, we can write
\begin{equation}\label{eq:ypsiphi}
\{y(\psi),\phi\} = \langle \pi_0 X \nabla_X y(\psi), E_R\phi\rangle - \langle \pi_0 \nabla_X y(\psi)\cdot X, E_L\phi\rangle + \langle R_0^c \pi_0 E_L y(\psi), E_L\phi\rangle -\langle R_0^r \pi_0 E_R y(\psi), E_R\phi\rangle.
\end{equation}
Using equation \eqref{eq:ygh} and the formula $E_L = \xi_L + (1-\gamma_c)(\nabla_X X)$, we can write $E_L y(\psi) = (1-\gamma_c) \nabla_X y(\psi) X$ and $\pi_0 \pi_{\hat{\Gamma}_1^c} \nabla_X y(\psi) X = 0$; hence, the second and the third terms combine into
\[\begin{split}
- \langle \pi_0 \nabla_X y(\psi)\cdot X&, E_L\phi\rangle + \langle R_0^c \pi_0 E_L y(\psi), E_L\phi\rangle =\\&= - \langle \pi_0\pi_{\Gamma_1^c} \nabla_X y(\psi)\cdot X, E_L\phi\rangle + \langle R_0^c \pi_0 (1-\gamma_c)\pi_{\Gamma_1^c} (\nabla_X y(\psi)\cdot X), E_L\phi\rangle= 0.
\end{split}
\]
Similarly, the first term cancels out with the fourth one if we write $E_R y(\psi) = (1-\gamma_r)(X\nabla_X y(\psi))$ and apply $R_0^r(1-\gamma_r)\pi_0 = \pi_0 \pi_{\Gamma_1^r} + R_0^r\pi_0 \pi_{\hat{\Gamma}_1^r}$.

\noindent \emph{Case 2. $\psi = h_{ii}$, $2 \leq i \leq n$.} Let us denote $\hat{h}:= \log h_{i-1,i} - \log h_{i,i+1}$, $\hat{f} := \log f_{1,n-i} - \log f_{1,n-i+1}$, $\hat{\phi} = \log \phi$. Then $\log y(h_{ii}) = \hat{h} + \hat{f}$. The bracket $\{\hat{h}, \hat{\phi}\}$ can be expressed as in equation \eqref{eq:ypsiphi}:
\[
\{\hat{h},\hat{\phi}\} = \langle \pi_0 X \nabla_X \hat{h}, E_R\hat{\phi}\rangle - \langle \pi_0 \nabla_X \hat{h}\cdot X, E_L\hat{\phi}\rangle + \langle R_0^c \pi_0 E_L \hat{h}, E_L\hat{\phi}\rangle -\langle R_0^r \pi_0 E_R \hat{h}, E_R\hat{\phi}\rangle.
\]
Using the diagonal derivatives formulas from Section \ref{s:diagders} and $E_L = \xi_L + (1-\gamma_c)(\nabla_X X)$, we can expand the second and the third terms as
\[\begin{split}
- \langle \pi_0 \nabla_X \hat{h}\cdot X&, E_L\hat{\phi}\rangle + \langle R_0^c \pi_0 E_L \hat{h}, E_L\hat{\phi}\rangle = - \langle \pi_0 \pi_{\Gamma_1^c}\nabla_X \hat{h}\cdot X, E_L\hat{\phi}\rangle + \langle R_0^c e_{ii}, E_L\hat{\phi}\rangle + \\ &+ \langle R_0^c(1-\gamma_c)(\nabla_X \hat{h} \cdot X), E_L\hat{\phi}\rangle = \langle R_0^c e_{ii}, E_L\hat{\phi}\rangle;
\end{split}
\]
similarly, using $E_R = \eta_R + (1-\gamma_r)(X\nabla_X)$, we write
\[
\langle \pi_0 X \nabla_X \hat{h}, E_R\hat{\phi}\rangle-\langle R_0^r \pi_0 E_R \hat{h}, E_R\hat{\phi}\rangle = -\langle R_0^r e_{i-1,i-1}, E_R \hat{\phi} \rangle,
\]
hence $\{\hat{h}, \hat{\phi}\} = \langle R_0^c e_{ii}, E_L\hat{\phi}\rangle - \langle R_0^r e_{i-1,i-1}, E_R \hat{\phi} \rangle$. Using the invariance properties of $f$-functions together with the diagonal derivatives formulas for $\hat{f}$, we can write $\{\hat{f},\hat{\phi}\}$ as
\[
\{\hat{f}, \hat{\phi}\} = -\langle R_0^c e_{ii}, E_L\hat{\phi}\rangle + \langle R_0^r e_{i-1,i-1}, E_R \hat{\phi}\rangle + \langle X \nabla_X \hat{f}, Y\nabla_Y \hat{\phi}\rangle - \langle \nabla_X \hat{f} X, \nabla_Y \hat{\phi} Y\rangle.
\]
Altogether, we see that
\[
\{\log y(\psi), \log \phi\} = \langle X \nabla_X \hat{f}, Y\nabla_Y \hat{\phi}\rangle - \langle \nabla_X \hat{f} X, \nabla_Y \hat{\phi} Y\rangle.
\]
The latter expression depends only on $f$- and $\varphi$-functions, which stay the same for all oriented aperiodic BD pairs. Since it was proved in \cite{double} that for the standard pair we have $\{y(\psi), \phi\} = 0$, we see that the same is true for any other BD pair.

\noindent \emph{Case 2. $\psi = g_{ii}$, $2 \leq i \leq n$.} Let us denote $\hat{g}:=\log g_{i,i-1} - \log g_{i+1,i}$, $\hat{g}^\prime := \log g_{i+1,i+1} - \log g_{i-1,i-1}$, $\hat{f} := \log f_{n-i+1,1} - \log f_{n-i,1}$. Then $\log y(g_{ii}) = \hat{g} + \hat{g}^\prime + \hat{f}$. The bracket between these three pieces and $\phi$ can be computed as in the previous case:
\[
\{\hat{g}, \hat{\phi}\} = \langle R_0^c e_{i-1,i-1}, E_L \hat{\phi} \rangle - \langle R_0^r e_{ii}, E_R \hat{\phi} \rangle + \langle e_{ii}, E_R \hat{\phi}\rangle - \langle e_{i-1,i-1}, E_L\hat{\phi} \rangle;
\]
\[
\{\hat{f},\hat{\phi}\} = \langle R_0^c e_{ii}, E_L\hat{\phi}\rangle - \langle R_0^r e_{i-1,i-1}, E_R \hat{\phi} \rangle + \langle X \nabla_X \hat{f}, Y\nabla_Y \hat{\phi}\rangle - \langle \nabla_X \hat{f} X, \nabla_Y \hat{\phi} Y \rangle;
\]
\[\begin{split}
\{ \hat{g}^\prime, \hat{\phi} \} &= -\langle R_0^c(e_{i-1,i-1} + e_{ii}), E_L \hat{\phi}\rangle + \langle R_0^r(e_{i-1,i-1} + e_{ii}), E_R \hat{\phi}\rangle - \\ &-\langle e_{i-1,i-1} + e_{ii}, E_R \hat{\phi} \rangle + \langle e_{ii} + e_{i-1,i-1}, E_L\hat{\phi} \rangle.
\end{split}
\]
Now, summing up the above three equations, we see that all terms with $R_0^c$ or $R_0^r$ cancel out. The remaining terms do not depend on the choice of BD pair, hence they coincide with the expression in the standard case, which is zero.
\end{proof}
\subsection{Bracket for $g$- and $h$-functions}\label{s:bragh}
The main objective of this subsection is to derive a formula for the Poisson bracket between $g$- and $h$-functions that's subsequently used below. 


\paragraph{Shorthand notation.} Whenever we fix two functions $\psi_1$ and $\psi_2$, let us denote the gradients of their logarithms (and operators associated with them) via augmenting the operators with upper indices $1$ or $2$. For instance, $\nabla^1_X \cdot X := \nabla_X \log \psi_1 \cdot X$ or $\eta_R^2 := \eta_R \log \psi_2$. For conciseness, any other data associated with either of the two functions (e.g., blocks or $\mathcal{L}$-matrices) is also augmented with upper indices $1$ or $2$.

\begin{lemma}
Let $\psi_1$ and $\psi_2$ be any $g$- or $h$-functions. Then the bracket between them can be expressed as
\begin{equation}\label{eq:brack}
\begin{split}
\{\log \psi_1,\log \psi_2\} = &= -\langle \pi_{<} \eta_L^1, \pi_{>} \eta_L^2 \rangle - \langle \pi_{>} \eta_R^1, \pi_{<} \eta_R^2 \rangle + \\ &+ \langle \gamma_r \xi_R^1, \gamma_r X \nabla_X^2 \rangle + \langle \gamma_c^* \xi_L^1, \gamma_c^* \nabla_Y^2 Y \rangle + D\\
\end{split}
\end{equation}
where $D$ is given by
\begin{equation}\label{eq:diagpart}\begin{split}
D &= -\langle \pi_0 \gamma_c^* \xi_L^1, \gamma_c^*(\nabla^2_Y Y)\rangle - \langle \pi_0 \gamma_r \xi_R^1, \gamma_r(X\nabla^2_X)\rangle + \langle R_0^c \pi_0 E_L^1, E_L^2 \rangle - \langle R_0^r \pi_0 E_R^1, E_R^2 \rangle - \\ &-\langle \pi_0 \nabla^1_X X, E_L^2\rangle + \langle \pi_0 X\nabla^1_X, E_R^2 \rangle.
\end{split}
\end{equation}
\end{lemma}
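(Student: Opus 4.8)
The plan is to substitute the Belavin--Drinfeld shape of the two $R$-matrices into the Poisson bracket and to peel off the contributions term by term. Writing $\{\log\psi_1,\log\psi_2\}$ in the $\GL_n$ form of the bracket (the displayed specialization of \eqref{eq:brackgen}), I expand both $R_+^c$ and $R_+^r$ according to \eqref{eq:rplus}, so that each of the two $R$-terms splits as a $\pi_>$-piece, a $\pi_<$-piece, and a $\pi_0$-piece. The two diagonal pieces $\langle R_0^c\pi_0 E_L^1, E_L^2\rangle$ and $-\langle R_0^r\pi_0 E_R^1, E_R^2\rangle$ are set aside at once as the two $R_0$-summands of the diagonal part $D$ in \eqref{eq:diagpart}; the remaining four strictly-triangular pairings, together with the two mixed terms $\langle X\nabla_X^1, Y\nabla_Y^2\rangle$ and $-\langle\nabla_X^1 X,\nabla_Y^2 Y\rangle$, must be reorganized into the four terms of \eqref{eq:brack} plus the rest of $D$.

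The two structural facts driving the reduction are: first, each of $\gamma_c,\gamma_c^*,\gamma_r,\gamma_r^*$ respects the grading $\mathfrak n_+\oplus\mathfrak h\oplus\mathfrak n_-$, hence commutes with $\pi_>,\pi_<,\pi_0$ and with $(1-\gamma_\ell)^{-1}$; and second, the infinitesimal invariance \eqref{eq:infinvpsi}, which gives $\pi_>\xi_L^i=0$ and $\pi_<\xi_R^i=0$. Combining these with $E_L=\xi_L+(1-\gamma_c)(\nabla_X X)$ yields the clean reduction $\frac{1}{1-\gamma_c}\pi_>E_L^1=\pi_>(\nabla_X^1 X)$, and dually, with $E_R=\xi_R+(1-\gamma_r^*)(Y\nabla_Y)$, the reduction $\frac{\gamma_r^*}{1-\gamma_r^*}\pi_<E_R^1=\gamma_r^*\pi_<(Y\nabla_Y^1)$. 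For the two complementary pieces (the $\pi_<$-part of the $E_L$-term and the $\pi_>$-part of the $E_R$-term), invariance does not kill a projection directly; here I move the $R$-matrix operator across the trace form (using $\pi_>^*=\pi_<$, $\pi_0^*=\pi_0$, and that the adjoint of $\gamma_\ell$ is $\gamma_\ell^*$ involutively) and re-express the result through the $\eta$-forms $E_L=\eta_L+(1-\gamma_c^*)(\nabla_Y Y)$, $E_R=\eta_R+(1-\gamma_r)(X\nabla_X)$ and the identities $\eta_L=\gamma_c^*(\xi_L)+\pi_{\hat{\Gamma}_1^c}(\nabla_X X)$, $\eta_R=\gamma_r(\xi_R)+\pi_{\hat{\Gamma}_2^r}(Y\nabla_Y)$. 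This is precisely what manufactures the two $\eta$-pairings $-\langle\pi_<\eta_L^1,\pi_>\eta_L^2\rangle$ and $-\langle\pi_>\eta_R^1,\pi_<\eta_R^2\rangle$, while the $\pi_{\hat\Gamma}$-corrections supply the two $\xi$-terms $\langle\gamma_c^*\xi_L^1,\gamma_c^*\nabla_Y^2 Y\rangle$ and $\langle\gamma_r\xi_R^1,\gamma_r X\nabla_X^2\rangle$.

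Finally I feed the mixed terms $\langle X\nabla_X^1,Y\nabla_Y^2\rangle$ and $-\langle\nabla_X^1 X,\nabla_Y^2 Y\rangle$ into the reorganization: they pair against the $\pi_>(\nabla_X^1 X)$ and $\gamma_r^*\pi_<(Y\nabla_Y^1)$ residues produced by the two clean reductions, cancelling the leftover off-diagonal contributions and leaving only a $\pi_0$-valued remainder. Collecting every piece whose inner operator is a diagonal projection --- the two $R_0^\ell\pi_0$ terms already isolated, the $\langle\pi_0\gamma_c^*\xi_L^1,\gamma_c^*(\nabla_Y^2 Y)\rangle$ and $\langle\pi_0\gamma_r\xi_R^1,\gamma_r(X\nabla_X^2)\rangle$ corrections, and the $-\langle\pi_0\nabla_X^1 X,E_L^2\rangle$, $\langle\pi_0 X\nabla_X^1,E_R^2\rangle$ residues coming from the mixed terms --- reassembles exactly into $D$ as written in \eqref{eq:diagpart}.

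I expect the main obstacle to be the bookkeeping in the middle step: collapsing six strictly-triangular pairings down to four forces one to track carefully where each of $\pi_>,\pi_<,\pi_0$ lands after every transfer across $\langle\cdot,\cdot\rangle$, and to verify that all $\pi_{\hat\Gamma}$-corrections other than their diagonal parts contribute nothing off-diagonal. The most error-prone point is sign management in those transfers --- the overall minus on the $R_+^r$ term and the internal minus on the $-\tfrac{\gamma_\ell^*}{1-\gamma_\ell^*}\pi_<$ summand --- since the underlying trace identities themselves are routine once the grouping of terms is fixed.
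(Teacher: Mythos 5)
Your proposal is correct and follows essentially the same route as the paper's own proof: expand $R_+^c$ and $R_+^r$ via the Belavin--Drinfeld form into $\pi_>$, $\pi_<$, and $R_0\pi_0$ pieces, set the $R_0$ terms aside into $D$, use the infinitesimal invariance $\xi_L^i\in\mathfrak{b}_-$, $\xi_R^i\in\mathfrak{b}_+$ together with $E_L=\xi_L+(1-\gamma_c)(\nabla_X X)$ and $E_R=\xi_R+(1-\gamma_r^*)(Y\nabla_Y)$ to reduce the triangular pairings, transfer operators across the trace form, and reorganize with the $\xi$--$\eta$ identities so that the mixed terms cancel the off-diagonal residues and the diagonal leftovers assemble into $D$. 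The specific reductions you isolate (e.g.\ $\tfrac{1}{1-\gamma_c}\pi_>E_L^1=\pi_>(\nabla_X^1X)$ and $\tfrac{\gamma_r^*}{1-\gamma_r^*}\pi_<E_R^1=\gamma_r^*\pi_<(Y\nabla_Y^1)$) are exactly the ones appearing in the paper's computation, so the plan is sound as stated.
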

We refer to $D$ as \emph{the diagonal part} of the bracket. 
\begin{proof}
Recall that the bracket is defined as
\begin{equation}\label{eq:prbrack}
\{ \log \psi_1, \log \psi_2\} = \langle R_+^c(E_L^1), E_L^2 \rangle - \langle R_+^r(E_R^1), E_R^2\rangle + \langle X \nabla^1_X, Y \nabla^1_Y \rangle - \langle \nabla^1_X X, \nabla^2_Y Y \rangle.
\end{equation}
Recall that $E_L^i = \xi_L^i + (1-\gamma_c) (\nabla_X^i X)$ with $\xi_L^i \in \mathfrak{b}_-$, $i \in \{1,2\}$; with that, the first term becomes
\begin{equation}\label{eq:prfrstt}
\begin{split}
\langle R_+^c(E_L^1), E_L^2 \rangle &= \langle \frac{1}{1-\gamma_c} \pi_{>} E_L^1, E_L^2 \rangle - \langle \frac{\gamma_c^*}{1-\gamma_c^*} \pi_{<} E_L^1, E_L^2 \rangle + \langle R_0^c \pi_0 E_L^1, E_L^2 \rangle = \\ &= \langle \pi_{>} \nabla_X^1 X, E_L^2 \rangle - \langle \pi_{<} E_L^1, \gamma_c(\nabla_X^2 X)\rangle + \langle R_0^c \pi_0 E_L^1, E_L^2 \rangle.
\end{split}
\end{equation}
Similarly, recall that $E_R^i = \xi_R^i + (1-\gamma_r^*) (Y \nabla_Y^i)$ and $\xi_R^i \in \mathfrak{b}_+$; using these formulas, we rewrite the second term of the bracket as
\begin{equation}\label{eq:prscndt}
\begin{split}
-\langle R_+^r (E_R^1), E_R^1\rangle &= -\langle \frac{1}{1-\gamma_r}\pi_{>}E_R^1, E_R^2 \rangle + \langle \frac{\gamma_r^*}{1-\gamma_r^*} \pi_{<}E_R^1, E_R^2 \rangle - \langle R_0^r \pi_0 E_R^1, E_R^2 \rangle = \\ &= -\langle \pi_{>} E_R^1, Y\nabla^2_Y\rangle + \langle \pi_{<}\gamma_r^* (Y\nabla^2_Y), E_R^2 \rangle - \langle R_0^r \pi_0 E_R^1, E_R^2 \rangle.
\end{split}
\end{equation}
With equations \eqref{eq:prfrstt} and \eqref{eq:prscndt}, we can rewrite equation \eqref{eq:prbrack} as
\begin{equation}\label{eq:prrewr1}
\begin{split}
\{\log \psi_1,\log \psi_2\} &= \langle \pi_{>} \nabla^1_X X, E_L^2\rangle - \langle \pi_{<} E_L^1, \gamma_c (\nabla^2_X X)\rangle - \langle \nabla^1_X X, \nabla^2_Y Y\rangle - \\ & -\langle \pi_{>} E_R^1, Y\nabla^2_Y \rangle + \langle \pi_{<} \gamma_r^*(Y\nabla^1_Y), E_R^2\rangle + \langle X\nabla^1_X, Y\nabla^2_Y \rangle + \\ &+ \langle R_0^c \pi_0 E_L^1, E_L^2\rangle - \langle R_0^r \pi_0 E_R^1, E_R^2 \rangle.
\end{split}
\end{equation}
Let's deal with the first three terms of equation \eqref{eq:prrewr1}. Rewrite $E_L^1 = \xi_L^1 + (1-\gamma_c)(\nabla_X^1 X)$ and $\pi_{>}\gamma_c(\nabla^2_X X)=-\pi_{>}\nabla^2_Y Y$, and combine the first and the third terms:
\begin{equation}\label{eq:prl}
\begin{split}
&\langle \pi_{>} \nabla^1_X X, E_L^2\rangle - \langle \pi_{<} E_L^1, \gamma_c (\nabla^2_X X)\rangle - \langle \nabla^1_X X, \nabla^2_Y Y\rangle = \\ = &-\langle \pi_{\leq} \nabla^1_X X, \nabla^2_Y Y\rangle + \langle \pi_{>} \nabla^1_X X, \nabla^2_X X\rangle + \langle \pi_{<} \xi_L^1, \nabla_Y^2 Y\rangle + \langle \pi_{<} (1-\gamma_c)(\nabla_X^1 X), \nabla_Y^2 Y\rangle;
\end{split}
\end{equation}
the first and the fourth terms in the latter expression combine into
\[
\begin{split}
-\langle \pi_{\leq} \nabla^1_X X&, \nabla^2_Y Y\rangle + \langle \pi_{<} (1-\gamma_c)(\nabla_X^1 X), \nabla_Y^2 Y\rangle = - \langle \pi_0 \nabla^1_X X, \nabla^2_Y Y\rangle - \langle \pi_{<} \gamma_c \nabla^1_X X, \nabla^2_Y Y\rangle =\\ &= - \langle \pi_0 \nabla^1_X X, \nabla^2_Y Y\rangle - \langle \pi_{<} \nabla^1_X X, \eta_L^2 \rangle + \langle \pi_{<} \nabla^1_X X, \nabla^2_X X\rangle = \\ &= - \langle \pi_0 \nabla^1_X X, \nabla^2_Y Y\rangle - \langle \pi_{<} \eta_L^1, \eta_L^2 \rangle + \langle \pi_{<} \gamma_c^*(\nabla^1_Y Y), \eta_L^2 \rangle + \langle \pi_{<} \nabla^1_X X, \nabla^2_X X\rangle.
\end{split}
\]
Since $\xi_L^2 = \gamma_c(\eta_L^2) + \pi_{\hat{\Gamma}_2^c} (\nabla_Y^2 Y) \in \mathfrak{b}_-$, we see that $\pi_>(\gamma_c(\eta_L^2)) = -\pi_{>}\pi_{\hat{\Gamma}_2^c}(\nabla_Y^2 Y)$, hence the term $\langle \pi_{<} \gamma_c^* (\nabla^1_Y Y), \eta_L^2 \rangle$ can be combined with $\langle \pi_{<} \xi_L^1, \nabla^2_Y Y\rangle$ from equation~\eqref{eq:prl} as
\[\begin{split}
\langle \pi_{<} \xi_L^1, \nabla^2_Y Y\rangle + \langle \pi_{<} \gamma_c^* (\nabla^1_Y Y), \eta_L^2 \rangle &= \langle \pi_{<} \xi_L^1, \nabla^2_Y Y\rangle - \langle \pi_{<} \pi_{\hat{\Gamma}_2^c}(\nabla^1_Y Y), \nabla^2_Y Y \rangle = \langle \pi_{<} \pi_{\Gamma_2^c} \xi_L^1, \nabla_Y^2 Y\rangle = \\ &= \langle \pi_{<} \gamma_c^*(\xi_L^1), \gamma_c^*(\nabla_Y^2 Y)\rangle,
\end{split}
\]
for $\pi_{\hat{\Gamma}_2^c}(\xi_L^1) = \pi_{\hat{\Gamma}_2^c}(\nabla_Y^1 Y)$. Overall, equation \eqref{eq:prl} (which is the first three terms of equation \eqref{eq:prrewr1}) updates to
\[
-\langle \pi_{<} \eta_L^1, \eta_L^2\rangle + \langle \pi_{<}\gamma_c^*(\xi_L^1), \gamma_c^*(\nabla^2_Y Y)\rangle + \langle \nabla^1_X X, \nabla^2_X X\rangle - \langle\pi_0\nabla^1_X X, \nabla^2_X X\rangle - \langle \pi_0 \nabla^1_X X, \nabla^2_Y Y\rangle.
\]
Next, let's study the contribution of the fourth, fifth and the sixth terms in equation \eqref{eq:prrewr1} together with $\langle \nabla^1_X X, \nabla^2_X X\rangle = \langle X \nabla^1_X, X \nabla^2_X \rangle$. First, rewrite $-\langle \pi_{>} E_R^1, Y\nabla^2_Y\rangle$ as
\begin{equation}\label{eq:eryny}
\begin{split}
&-\langle \pi_{>} E_R^1, Y\nabla^2_Y\rangle = -\langle \pi_{>} \eta_R^1, Y\nabla^2_Y \rangle - \langle \pi_{>} (1-\gamma_r) (X\nabla^1_X), Y\nabla^2_Y\rangle = \\ &= -\langle \pi_{>} \eta_R^1, \eta_R^2\rangle + \langle \pi_{>} \eta_R^1, \gamma_r (X\nabla^2_X) \rangle - \langle \pi_{>} X\nabla^1_X, Y \nabla^2_Y\rangle + \langle \pi_{>} \gamma_r (X \nabla^1_X), Y\nabla^2_Y\rangle;
\end{split}
\end{equation}
Since $\gamma_r^*(\eta_R^1) = \pi_{\Gamma_1^r} \xi_R^1$, we see that $\langle \pi_{>} \eta_R^1, \gamma_r(X \nabla^2_X)\rangle = \langle \pi_{>} \pi_{\Gamma_1^r} \xi_R^1, X\nabla^2_X\rangle$. The last two terms in equation \eqref{eq:eryny} together with $\langle \nabla^1_X X, \nabla^2_X X\rangle$ and the fifth and the sixth terms in equation \eqref{eq:prrewr1} contribute
\[\begin{split}
&- \langle \pi_{>} X\nabla^1_X, Y \nabla^2_Y\rangle + \langle \pi_{>} \gamma_r (X \nabla^1_X), Y\nabla^2_Y\rangle + \langle X \nabla^1_X, X \nabla^2_X \rangle + \langle \pi_{<} \gamma_r^*(Y\nabla^1_Y), E_R^2\rangle + \langle X\nabla^1_X, Y\nabla^2_Y \rangle = \\ &= \langle \pi_{\leq} X\nabla^1_X, Y\nabla^2_Y\rangle - \langle \pi_{>} X\nabla^1_X, X\nabla^2_X \rangle + \langle X\nabla^1_X,X\nabla^2_X \rangle - \langle \pi_{<}X\nabla^1_X, E_R^2\rangle = \\ &= \langle \pi_0 X\nabla^1_X,Y\nabla^2_Y\rangle +\langle \pi_0 X\nabla^1_X,X\nabla^2_X \rangle.
\end{split}
\]
Combining everything together, we obtain the formula.
\end{proof}

\begin{lemma}
If $(R_0^r,R_0^c)$ are chosen so that the identities~\eqref{eq:roid} hold, then the diagonal part $D$ from equation \eqref{eq:diagpart} can be further expanded as
\begin{equation}\label{eq:diagpart2}\begin{split}
D &= -\langle \pi_0 \gamma_c^* \xi_L^1, \gamma_c^*(\nabla^2_Y Y)\rangle - \langle \pi_0 \gamma_r \xi_R^1, \gamma_r(X\nabla^2_X)\rangle + \langle R_0^c \pi_0 \xi_L^1, E_L^2 \rangle - \langle \pi_{\hat{\Gamma}_1^c} \pi_0 \nabla^1_X X, E_L^2 \rangle + \\ &+ \langle R_0^c \pi_0 \pi_{\hat{\Gamma}_1^c} \nabla^1_X X, E_L^2 \rangle - \langle R_0^r \pi_0 \eta_R^1, E_R^2 \rangle +  \langle \pi_0 \pi_{\hat{\Gamma}_1^r} X \nabla^1_X, E_R^2 \rangle - \langle R_0^r \pi_0 \pi_{\hat{\Gamma}_1^r} X \nabla^1_X, E_R^2 \rangle.
\end{split}
\end{equation}
\end{lemma}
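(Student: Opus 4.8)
The plan is to exploit the fact that the first two terms of \eqref{eq:diagpart} and \eqref{eq:diagpart2} are literally identical, so that the whole content of the lemma reduces to re-expanding two mirror-image blocks: the ``column'' block $\langle R_0^c \pi_0 E_L^1, E_L^2\rangle - \langle \pi_0 \nabla^1_X X, E_L^2\rangle$ and the ``row'' block $-\langle R_0^r \pi_0 E_R^1, E_R^2\rangle + \langle \pi_0 X\nabla^1_X, E_R^2\rangle$. I would handle these two blocks separately, since they are interchanged by the symmetry $X \leftrightarrow Y$, $L \leftrightarrow R$, $\gamma_c \leftrightarrow \gamma_r$ built into the operators of Section~\ref{s:opers}.

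For the column block, I would substitute $E_L^1 = \xi_L^1 + (1-\gamma_c)(\nabla^1_X X)$, splitting $\langle R_0^c \pi_0 E_L^1, E_L^2\rangle$ into $\langle R_0^c \pi_0 \xi_L^1, E_L^2\rangle$ plus $\langle R_0^c \pi_0 (1-\gamma_c)(\nabla^1_X X), E_L^2\rangle$. Because $\gamma_c$ stabilizes the space of diagonal matrices, $\pi_0$ commutes with $1-\gamma_c$, so the second summand becomes $\langle R_0^c(1-\gamma_c)(\pi_0\nabla^1_X X), E_L^2\rangle$. Now I would apply the identity $R_0^c(1-\gamma_c) = \pi_{\Gamma_1^c} + R_0^c\pi_{\hat{\Gamma}_1^c}$ (restricted to $\mathfrak{h}$, from the ``bracket and $R_0$'' paragraph) to rewrite this as $\langle \pi_{\Gamma_1^c}\pi_0\nabla^1_X X, E_L^2\rangle + \langle R_0^c\pi_{\hat{\Gamma}_1^c}\pi_0\nabla^1_X X, E_L^2\rangle$. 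Finally I would decompose the leftover $-\langle \pi_0\nabla^1_X X, E_L^2\rangle$ via $\id = \pi_{\Gamma_1^c} + \pi_{\hat{\Gamma}_1^c}$; the two $\pi_{\Gamma_1^c}$-contributions cancel, and since $\pi_0$ and $\pi_{\hat{\Gamma}_1^c}$ commute (both are coordinate projections in the matrix-unit basis) the surviving terms are exactly the three column terms of \eqref{eq:diagpart2}.

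The row block is treated in the same manner, starting instead from $E_R^1 = \eta_R^1 + (1-\gamma_r)(X\nabla^1_X)$ and using $R_0^r(1-\gamma_r) = \pi_{\Gamma_1^r} + R_0^r\pi_{\hat{\Gamma}_1^r}$; again the $\pi_{\Gamma_1^r}$-contributions cancel against the split of $\langle \pi_0 X\nabla^1_X, E_R^2\rangle$, leaving precisely the row terms of \eqref{eq:diagpart2}. There is no real obstacle here—the computation is pure bookkeeping—and the only points requiring care are (i) justifying that $\pi_0$ commutes with $\gamma_c$, $\gamma_r$ and with the complementary projections $\pi_{\hat{\Gamma}_1^c}$, $\pi_{\hat{\Gamma}_1^r}$, all of which follows because these operators respect the matrix-unit decomposition and stabilize $\mathfrak{h}$, and (ii) remembering that the $R_0$-identities are statements about operators on $\mathfrak{h}$, so they may be invoked only after the inner argument has been projected by $\pi_0$. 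Reassembling the rewritten column and row blocks with the two unchanged terms then yields \eqref{eq:diagpart2}.
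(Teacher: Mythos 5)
Your proof is correct and follows essentially the same route as the paper's: substitute $E_L^1 = \xi_L^1 + (1-\gamma_c)(\nabla^1_X X)$ and $E_R^1 = \eta_R^1 + (1-\gamma_r)(X\nabla^1_X)$, apply the identities $R_0^c(1-\gamma_c) = \pi_{\Gamma_1^c} + R_0^c\pi_{\hat{\Gamma}_1^c}$ and $R_0^r(1-\gamma_r) = \pi_{\Gamma_1^r} + R_0^r\pi_{\hat{\Gamma}_1^r}$ after projecting by $\pi_0$, and cancel the $\pi_{\Gamma_1}$-contributions against the split of the remaining terms. The commutation of $\pi_0$ with $\gamma_c$, $\gamma_r$ and the complementary projections, which you spell out explicitly, is used implicitly in the paper and your justification of it is correct.
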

\begin{proof}
Observe that 
\[
R_0^c \pi_0 E_L^1 = R_0^c \pi_0 \xi_L^1 + R_0^c \pi_0 (1-\gamma_c) (\nabla^1_X X) = R_0^c \pi_0 \xi_L^1 + \pi_0 \pi_{\Gamma_1^c} \nabla^1_X X + R_0^c \pi_0 \pi_{\hat{\Gamma}_1^c} \nabla^1_X X.
\]
Therefore, the corresponding terms together with $-\langle \pi_0 \nabla^1_X X, E_L^2\rangle$ contribute
\[
\langle R_0^c \pi_0 E_L^1, E_L^2 \rangle - \langle \pi_0 \nabla^1_X X, E_L^2 \rangle = \langle R_0^c \pi_0 \xi_L^1, E_L^2 \rangle - \langle \pi_{\hat{\Gamma}_1^c} \pi_0 \nabla^1_X X, E_L^2 \rangle + \langle R_0^c \pi_0 \pi_{\hat{\Gamma}_1^c} \nabla^1_X X, E_L^2 \rangle.
\]
Similarly,
\[
R_0^r \pi_0 E_R^1 = R_0^r \pi_0 \eta_R^1 + \pi_0 \pi_{\Gamma_1^r} X \nabla_X^1 + R_0^r \pi_0 \pi_{\hat{\Gamma}_1^r} X\nabla_X^1,
\]
hence
\[
-\langle R_0^r \pi_0 E_R^1, E_R^2\rangle + \langle \pi_0 X\nabla^1_X, E_R^2 \rangle = -\langle R_0^r \pi_0 \eta_R^1, E_R^2 \rangle + \langle \pi_0 \pi_{\hat{\Gamma}_1^r} X \nabla^1_X, E_R^2 \rangle - \langle R_0^r \pi_0 \pi_{\hat{\Gamma}_1^r} X \nabla^1_X, E_R^2 \rangle.
\]
Now the result is obtained via combining the two formulas.
\end{proof}

\subsection{Block formulas}\label{s:blockf}
In this subsection, we state a further expansion from \cite{plethora} of the first four terms in \mbox{formula \eqref{eq:brack}.}

\paragraph{Block intervals.} Let $\mathcal{L}$ be an $\mathcal{L}$-matrix. We enumerate the blocks of $\mathcal{L}$ in such a way that blocks $X_t$ and $Y_t$ are aligned along their rows (i.e., using $\gamma_r$) and $Y_t$ and $X_{t+1}$ are aligned along their columns, $t \geq 1$. Let us denote by $K_t$ and $L_t$, respectively, the row and column indices in $\mathcal{L}$ that are occupied by $X_t$; similarly, $\bar{K}_t$ and $\bar{L}_t$ are row and column indices occupied by $Y_t$ in $\mathcal{L}$. Furthermore, we set $\Phi_t := K_t \cap \bar{K}_t$ and $\Psi_t := L_t \cap \bar{L}_{t-1}$. Figure \ref{f:lstruct} depicts an $\mathcal{L}$-matrix with the intervals.

\begin{figure}
\begin{center}
\includegraphics[scale=0.4]{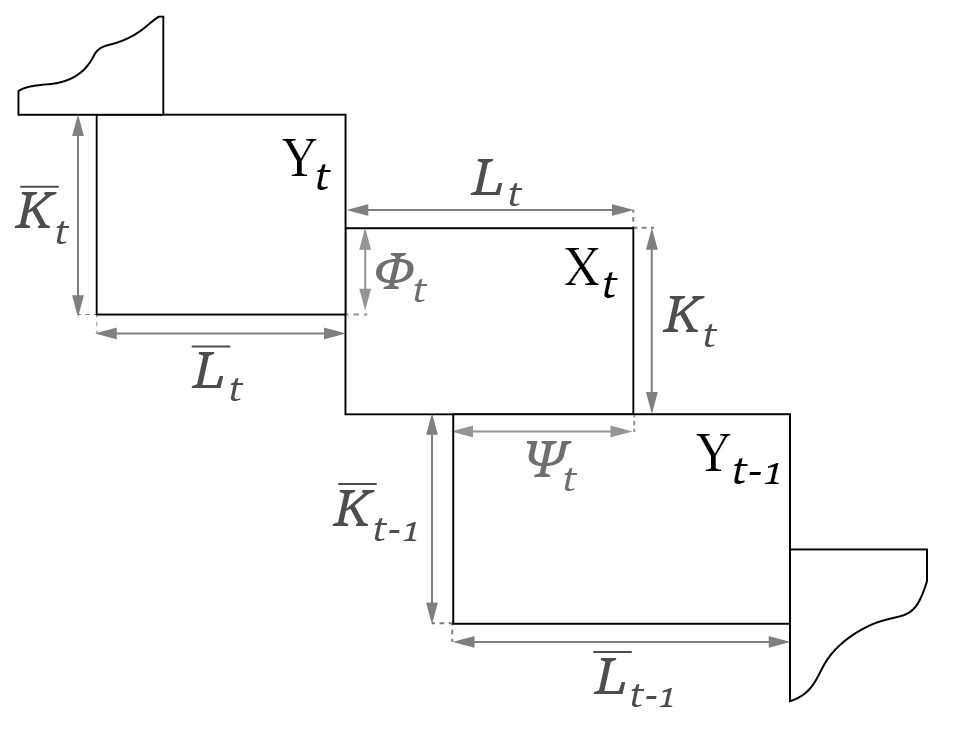}
\end{center}
\caption{An illustration of the intervals $\bar{K}_t$, $\bar{L}_t$, $L_t$, $K_t$, $\Phi_t$, $\Psi_t$.}
\label{f:lstruct}
\end{figure}
\begin{remark}
The authors of \cite{plethora} additionally define empty blocks in the beginning and in the end of the sequence so that it always starts with an $X$-block and ends with a $Y$-block; furthermore, they attach row or column intervals to the empty blocks depending on a set of conditions. The convention with empty blocks is rather complicated and only helps to avoid one term in two formulas. Ergo, we write all formulas without assuming that there are extra empty blocks.
\end{remark}

\paragraph{$\mathcal{L}$-gradients.} Let $\psi$ be any $g$- or $h$-function, and let $\mathcal{L}$ be an $\mathcal{L}$-matrix such that $\psi = \det \mathcal{L}^{[s,N(\mathcal{L})]}_{[s,N(\mathcal{L})]}$ (in the case of $\psi = g_{ii}$ or $\psi = h_{ii}$, set $\mathcal{L}(X,Y):=X$ or $\mathcal{L}(X,Y): = Y$ accordingly). Notice that $(j,i)$ entry of $\nabla_X\psi$ is the sum of the cofactors computed at all occurrences of $x_{ij}$ in the matrix $\mathcal{L}^{[s,N(\mathcal{L})]}_{[s,N(\mathcal{L})]}$ (and similarly for $(j,i)$ entry of $\nabla_Y\psi$ and $y_{ij}$). We define an $N(\mathcal{L}) \times N(\mathcal{L})$ matrix $\nabla_{\mathcal{L}}\psi$ that has as its $(j,i)$ entry the $(i-s+1,j-s+1)$-cofactor of $\mathcal{L}^{[s,N(\mathcal{L})]}_{[s,N(\mathcal{L})]}$, where $i, j \geq s$, and zero everywhere else. Consequently, if $m$ is the last index in the block sequence in $\mathcal{L}$, we have
\[
\nabla_{X} \psi =  \sum_{t=1}^{m} (\nabla_{\mathcal{L}} \psi)_{L_t \rightarrow J_t}^{K_t \rightarrow I_t}, \ \ \ \nabla_Y \psi = \sum_{t=1}^{m} (\nabla_{\mathcal{L}} \psi)_{\bar{L}_t \rightarrow \bar{J}_t}^{\bar{K}_t \rightarrow \bar{I}_t},
\]
where $Y_t = Y^{\bar{J}_t}_{\bar{I}_t}$ and $X_t = X^{J_t}_{I_t}$. Evidently, by $\nabla_{\mathcal{L}}\log \psi$ we mean $(1/\psi) \nabla_{\mathcal{L}}\psi$. Let us mention the following simple formulas:
\[
\mathcal{L}\nabla_{\mathcal{L}}\log\psi = \begin{bmatrix}
0 & * \\
0 & I
\end{bmatrix}, \ \ \ \nabla_{\mathcal{L}}\log\psi \cdot \mathcal{L} = \begin{bmatrix}
0 & 0 \\
* & I
\end{bmatrix},
\]
where $I$ is the identity matrix that occupies $[s,N(\mathcal{L})]\times [s,N(\mathcal{L})]$ and $*$ indicates terms whose particular expressions are of no importance in the proofs.

\paragraph{Numbers $p$ and $q$.} Let $\psi(X,Y):= \det \mathcal{L}^{[s,N(\mathcal{L})]}_{[s,N(\mathcal{L})]}(X,Y)$ for some $s$ and $\mathcal{L}$. Let us call the \emph{leading block} of $\psi$ the $X$- or $Y$-block of $\mathcal{L}(X,Y)$ that contains the entry $(s,s)$. The number $q$ is defined as the index of the leading block. Furthermore, if the block is of type $X$, we set $p:=q$; if it's of type $Y$, we set $p:=q+1$.

\paragraph{Embeddings $\rho$ and $\sigma$.} Let us pick a pair of $g$- or $h$-functions $\psi_1$ and $\psi_2$ and let us mark all the data associated with either of the functions with an upper index $1$ or $2$. Pick the number $p$ associated with $\psi_1$ as in the previous paragraph and let $X_t^2 = X_{I_t^2}^{J_t^2}$ be an $X$-block of $\mathcal{L}^2$. If $I_t^2 \subseteq I_p^1$, define $\rho(K_t^2)$ to be a subset of $K_p^1$ that corresponds to $I_t^2$ viewed as a subset of $I_p^1$; similarly, if $J_t^2 \subseteq J_p^1$, we define $\rho(L_t^2)$ to be a subset of $L_p^1$ that occupies the column indices $I_t^2$ in $X_p^1$. Likewise, fix $Y_u^1 = Y_{\bar{I}_u^1}^{\bar{J}_u^1}$ in $\mathcal{L}^1$ and define an embedding $\sigma_u$ for $Y$-blocks as follows. If $\bar{J}_t^2 \subseteq \bar{J}_u^1$, then $\sigma_u(\bar{L}_t^2)$ is a subset of $\bar{L}_u^1$ that corresponds to $\bar{J}_t^2$; similarly, if $\bar{I}_t^2 \subseteq \bar{I}_u^1$, then $\sigma_u(\bar{K}_t^2)$ is a subset of $\bar{K}_u^1$ that occupies the indices $\bar{I}_t^2$ in $Y_u^1$. Note: The map $\rho$ always embeds into rows or columns of $X_p^1$ that is viewed as a submatrix of $\mathcal{L}^1$, whereas the targeting block for $\sigma$ might vary depending on its subscript $u$.

\paragraph{More on subblocks.} Recall that $X$- and $Y$-blocks have the form $X^{[1,\beta]}_{[\alpha,n]}$ and $Y^{[\bar{\beta},n]}_{[1,\bar{\alpha}]}$, where $\alpha,\beta,\bar{\alpha},\bar{\beta}$ are defined in Section \ref{s:lmatr}. For two matrices $A_1$ and $A_2$, let us write $A_1 \subseteq A_2$ if $A_1$ is a submatrix of $A_2$. Let us recall Proposition 4.3 from \cite{plethora}:
\begin{proposition}
Let $X_1$, $X_2$, $Y_1$ and $Y_2$ be arbitrary $X$- and $Y$-blocks, with $\alpha$'s and $\beta$'s indexed accordingly. Then the following holds:
\begin{enumerate}[(i)]
\item If $\beta_2 < \beta_1$ or $\alpha_2 > \alpha_1$, then $X_2 \subseteq X_1$;
\item If $\bar{\beta}_2 > \bar{\beta}_1$ or $\bar{\alpha}_2 < \bar{\alpha}_1$, then $Y_2 \subseteq Y_1$.
\end{enumerate}
\end{proposition}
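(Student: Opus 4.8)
The plan is to reduce both containments to a single pair of opposing monotonicities in the integer that indexes a block. Recall from Section~\ref{s:lmatr} that an $X$-block attached to a horizontal edge $i\to i'$ in the upper part has the shape $X^{[1,\beta]}_{[\alpha,n]}$ with $\beta=i_+(\Gamma_1^c)$ and $\alpha=(i'+1)_-(\Gamma_1^r)+1$, and that a horizontal edge always joins $i$ with $n-i$, so $i'=n-i$. Hence both endpoints are functions of the single source vertex $i$, namely $\beta(i)=i_+(\Gamma_1^c)$ and $\alpha(i)=(n-i+1)_-(\Gamma_1^r)+1$. First I would record the elementary monotonicity of the run endpoints: since $i_+=\min\{j\in[1,n]\setminus\Gamma_1:j\ge i\}$ and $i_-=\max\{j\in[0,n]\setminus\Gamma_1:j<i\}$, both $i\mapsto i_+$ and $i\mapsto i_-$ are non-decreasing, because enlarging $i$ shrinks the set cut out by $j\ge i$ and enlarges the one cut out by $j<i$. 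Consequently $\beta(i)$ is non-decreasing in $i$, while $\alpha(i)$ is non-increasing in $i$, the latter because $i\mapsto n-i+1$ is decreasing.

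Next I would translate the submatrix relation into inequalities. As the $X$-blocks are submatrices of the common matrix $X$, we have $X_2\subseteq X_1$ exactly when their index sets nest, i.e. $[1,\beta_2]\subseteq[1,\beta_1]$ and $[\alpha_2,n]\subseteq[\alpha_1,n]$, equivalently $\beta_2\le\beta_1$ and $\alpha_2\ge\alpha_1$. The content of the proposition is that under the monotonicity above a single strict inequality already forces its companion. Let $X_1$ and $X_2$ have sources $a_1$ and $a_2$. If $\beta_2<\beta_1$, then $\beta(a_2)<\beta(a_1)$ together with the non-decrease of $\beta$ forces $a_2<a_1$, whence the non-increase of $\alpha$ gives $\alpha_2=\alpha(a_2)\ge\alpha(a_1)=\alpha_1$; combined with $\beta_2\le\beta_1$ this yields $X_2\subseteq X_1$. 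Symmetrically, if $\alpha_2>\alpha_1$ then $a_2<a_1$, now from the non-increase of $\alpha$, hence $\beta_2\le\beta_1$, and again $X_2\subseteq X_1$. This proves part~(i).

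For part~(ii) I would run the dual computation for $Y$-blocks, which have the shape $Y^{[\bar\beta,n]}_{[1,\bar\alpha]}$ arising from an edge $j'\to j$ with $\bar\beta=(j+1)_-(\Gamma_2^c)+1$, $\bar\alpha=j'_+(\Gamma_2^r)$ and $j'=n-j$. Here $\bar\beta(j)$ is non-decreasing and $\bar\alpha(j)=(n-j)_+(\Gamma_2^r)$ is non-increasing in the source index $j$, and the containment $Y_2\subseteq Y_1$ amounts to $\bar\beta_2\ge\bar\beta_1$ together with $\bar\alpha_2\le\bar\alpha_1$. The identical two-case argument then shows that either hypothesis $\bar\beta_2>\bar\beta_1$ or $\bar\alpha_2<\bar\alpha_1$ forces both inequalities, so $Y_2\subseteq Y_1$.

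The argument is essentially bookkeeping of monotonicities, so I expect no serious obstacle; the one place demanding care is the interaction of the $+1$ shifts with the beginning/end-of-path conventions flagged in the footnote of Section~\ref{s:lmatr}, which one must verify do not disturb the monotonicity when a source or target vertex lies in $\Gamma_1^r$, $\Gamma_1^c$, or $\Gamma_2^c$. Since those shifts only record the collapse $\Delta^r(i')=\Delta^r(i'+1)$ (and its $Y$-analogue) on roots of a run and preserve the weak monotonicity of the endpoints, they leave the two-case deduction intact, so the recalled statement follows exactly as in the cited work.
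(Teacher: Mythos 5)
Your proof is correct. Note that the paper itself does not prove this proposition --- it is recalled verbatim as Proposition 4.3 of the cited earlier work (\emph{plethora}), so there is no in-paper argument to compare against; your derivation therefore serves as a self-contained verification. The route you take is the natural one: parametrize each block by the source vertex of its horizontal edge (using that horizontal edges join $i$ and $n-i$, so $\beta$ and $\alpha$ are both functions of that single vertex), observe that $i\mapsto i_+$ and $i\mapsto i_-$ are non-decreasing so that $\beta$ is non-decreasing and $\alpha$ is non-increasing in the source, translate $X_2\subseteq X_1$ into the index inequalities $\beta_2\le\beta_1$, $\alpha_2\ge\alpha_1$ (legitimate here because the entries are distinct indeterminates, so submatrix containment is index nesting), and then run the two-case deduction in which one strict inequality pins down the order of the sources and hence forces the companion inequality; the $Y$-block case is the mirror image. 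Each of these steps checks out, including the observation that the $+1$ shifts in the defining formulas ride along with the monotonicity of the run endpoints rather than disturbing it.
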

\noindent Notice that the proposition in particular states that if $\beta_2 < \beta_1$, then necessarily $\alpha_2 \geq \alpha_1$, and likewise in all other instances. We implicitly refer to this fact in the proofs that follow.

\paragraph{The formulas.} Let us pick any $g$- or $h$-functions $\psi_1$ and $\psi_2$, and let $p$ and $q$ be the numbers defined above for $\psi_1$, and let $q^\prime$ be the index of the leading block of $\psi_2$. Following \cite{plethora}, define \emph{$B$-terms:}
\begin{align*}
B_t^{\ronum{1}} &:= -\langle \lgrl{1}_{\rho(\Phi_t^2)}^{\rho(\Phi_t^2)} \lm{2}_{\Phi_t^2}^{\bar{L}_t^2} \grl{2}_{\bar{L}_t^2}^{\Phi_t^2}\rangle, & B_t^{\ronum{2}} &:= \langle \grll{1}_{\rho(\Psi_t^2)}^{\rho(\Psi_t^2)}\grl{2}_{\Psi_t^2}^{\bar{K}_{t-1}^2}\lm{2}_{\bar{K}_{t-1}^2}^{\Psi_t^2}\rangle,\\
B_t^{\ronum{3}} &:= \langle \grll{1}_{\Psi_p^1}^{L_p^1\setminus \Psi_p^1} \grl{2}_{L_t^2\setminus \Psi_t^2}^{K_t^2} \lm{2}_{K_t^2}^{\Psi_t^2}\rangle, & B_t^{\ronum{4}} &:= \langle \lgrl{1}_{\rho(\Phi_t^2)}^{\rho(\Phi_t^2)} \lm{2}_{\Phi_t^2}^{L_t^2}\grl{2}_{L_t^2}^{\Phi_t^2}\rangle\\
\bar{B}_t^{\ronum{1}}(u) &:= -\langle \grll{1}_{\sigma_u(\Psi_{t+1}^2)}^{\sigma_u(\Psi_{t+1})} \grl{2}_{\Psi_{t+1}^2}^{K_{t+1}^2} \lm{2}_{K_{t+1}^2}^{\Psi_{t+1}^2}\rangle, & \bar{B}_t^{\ronum{2}}(u) &:= \langle \lgrl{1}_{\sigma_u(\Phi_t^2)}^{\sigma_u(\Phi_t^2)} \lm{2}_{\Phi_t^2}^{L_t^2} \grl{2}_{L_t^2}^{\Phi_t^2}\rangle\\
\bar{B}_t^{\ronum{3}} &:= \langle \lgrl{1}_{\bar{K}_q^1\setminus \Phi_q^1}^{\Phi_q^1} \lm{2}_{\Phi_t^2}^{\bar{L}_t^2} \grl{2}_{\bar{L}_t^2}^{\bar{K}_t^2\setminus \Phi_t^2}\rangle, & \bar{B}_t^{\ronum{4}}(u) &:= \langle \grll{1}_{\sigma_u(\Psi_{t+1}^2)}^{\sigma_u(\Psi_{t+1}^2)} \grl{2}_{\Psi_{t+1}^2}^{\bar{K}_t^2} \grl{2}_{\bar{K}_t^2}^{\Psi_{t+1}^2}\rangle
\end{align*}
Now, the formulas for the first four terms of equation \eqref{eq:brack} are:
\begin{equation}\label{eq:fetal}\begin{split}
\langle \pi_{<}\eta_L^1&,\pi_{>}\eta_L^2\rangle = \sum_{\beta_t^2 < \beta_p^1} (B_t^{\ronum{1}}+B_t^{\ronum{2}}) + \sum_{\beta_t^2=\beta_p^1}B_t^{\ronum{3}} +\\ &+\sum_{\beta_t^2 < \beta_p^1} \left( \left\langle \lgrl{1}_{\rho(K_t^2)}^{\rho(K_t^2)}\lgrl{2}_{K_t^2}^{K_t^2}\right\rangle-\left\langle \grll{1}_{\rho(L_t^2)}^{\rho(L_t^2)}\grll{2}_{L_t^2}^{L_t^2}\right\rangle\right);
\end{split}
\end{equation}
\begin{equation}\label{eq:fetar}\begin{split}
\langle \pi_{>}\eta_R^1&,\pi_{<}\eta_R^2 \rangle = \sum_{\bar{\alpha}_t^2 < \bar{\alpha}_q^1} (\bar{B}_t^{\ronum{1}}(q) + \bar{B}_t^{\ronum{2}}(q)) + \sum_{\bar{\alpha}_t^2 = \bar{\alpha}^1_q} \bar{B}_t^{\ronum{3}} + \\ &+  \sum_{\bar{\alpha}_t^2 < \bar{\alpha}^1_q} \left( \left\langle \grll{1}_{\sigma_q(\bar{L}_t^2}^{\sigma_q(\bar{L}_t^2)}\grll{2}_{\bar{L}_t^2}^{\bar{L}_t^2}\right\rangle - \left\langle \lgrl{1}_{\sigma_q(\bar{K}_t^2)}^{\sigma_q(\bar{K}_t^2)} \lgrl{2}_{\bar{K}_t^2}^{\bar{K}_t^2}\right\rangle \right);
\end{split}
\end{equation}
\begin{equation}\label{eq:fxil}
\begin{split}
\langle \gamma_c^* (\xi_L^1)&, \gamma_c^*(\nabla_Y^2Y)\rangle = \sum_{\beta_t^2 \leq \beta_p^1} B_t^{\ronum{2}} + \sum_{\bar{\beta}_t^2 > \bar{\beta}_{p-1}^1} \bar{B}_t^{\ronum{4}}(p-1) + (\Psi_p^1 = \emptyset) \sum_{\bar{\beta}_{p-1} = \bar{\beta}_t} \bar{B}_t^{\ronum{4}}(p-1) + \\
& +\sum_{u=1}^{p} \sum_{t=1}^{q^\prime} \langle (\nabla^1_{\mathcal{L}} \mathcal{L}^1)^{L_u^1 \rightarrow J_u^1}_{L_u^1 \rightarrow J_u^1}, \gamma_c^* (\nabla^2_{\mathcal{L}}\mathcal{L}^2)^{\bar{L}^2_t\setminus \Psi_{t+1} \rightarrow \bar{J}^2_t \setminus \bar{\Delta}(\bar{\beta}_t)}_{\bar{L}^2_t\setminus \Psi_{t+1} \rightarrow \bar{J}^2_t \setminus \bar{\Delta}(\bar{\beta}_t)} \rangle + \\
& + \sum_{u=1}^{p-1} \sum_{t=1}^{q^\prime} \langle (\nabla^1_{\mathcal{L}} \mathcal{L}^1)^{\bar{L}^1_u\setminus \Psi_{u+1} \rightarrow \bar{J}^1_u \setminus \bar{\Delta}(\bar{\beta}_u)}_{\bar{L}^1_u\setminus \Psi_{u+1} \rightarrow \bar{J}^1_u \setminus \bar{\Delta}(\bar{\beta}_u)}, \pi_{\Gamma_2^c} (\nabla^2_{\mathcal{L}}\mathcal{L}^2)^{\bar{L}^2_t\setminus \Psi_{t+1} \rightarrow \bar{J}^2_t \setminus \bar{\Delta}(\bar{\beta}_t)}_{\bar{L}^2_t\setminus \Psi_{t+1} \rightarrow \bar{J}^2_t \setminus \bar{\Delta}(\bar{\beta}_t)}\rangle + \\ 
& + \sum_{t=1}^{q^\prime} (| \{ u < p \ | \ \beta_u^1 \geq \beta_{t+1}^2 \}| + |\{u < p-1 \ | \ \bar{\beta}_u < \bar{\beta}_{t} \}|) \langle (\nabla^2_{\mathcal{L}})^{\bar{K}_t^2}_{\Psi_{t+1}^2} (\mathcal{L}^2)_{\bar{K}_t^2}^{\Psi_{t+1}^2}\rangle;
\end{split}
\end{equation}
\begin{equation}\label{eq:fxir}
\begin{split}
\langle \gamma_r (\xi_R^1)&, \gamma_r(X\nabla_X^2)\rangle = \sum_{\bar{\alpha}_t^2 \leq \bar{\alpha}^1_{p-1}} \bar{B}_t^{\ronum{2}}(p-1) + \sum_{\bar{\alpha}_t^2 \leq \bar{\alpha}_p^1} \bar{B}_t^{\ronum{2}}(p) + \sum_{\alpha_t^2 > \alpha_p^1} B_t^{\ronum{4}}+\\ &+ (\Phi_p^1 = \emptyset) \sum_{\alpha_t^2 = \alpha_p^1} B_t^{\ronum{4}} + \sum_{u=1}^p \sum_{t=1}^{q^\prime} \langle (\mathcal L \nabla^1_{\mathcal{L}})^{\bar{K}_u^1 \rightarrow \bar{I}_u^1}_{\bar{K}_u^1 \rightarrow \bar{I}_u^1}, \gamma_r(\mathcal{L}^2 \nabla^2_{\mathcal{L}})^{K_t^2 \setminus \Phi_t^2 \rightarrow I_t^2 \setminus \Delta(\alpha_t^2)}_{K_t^2 \setminus \Phi_t^2 \rightarrow I_t^2 \setminus \Delta(\alpha_t^2)} \rangle +\\
&+ \sum_{u=1}^p \sum_{t=1}^{q^\prime} \langle (\mathcal{L}^1 \nabla^1_{\mathcal{L}})^{K_u^1 \setminus \Phi_u^1 \rightarrow I_u^1 \setminus \Delta(\alpha_u^1)}_{K_u^1 \setminus \Phi_u^1 \rightarrow I_u^1 \setminus \Delta(\alpha_u^1)}, \pi_{\Gamma_1^r} (\mathcal{L}^2 \nabla^2_{\mathcal{L}})_{K_t^2\setminus \Phi_t^2 \rightarrow I_t^2 \setminus \Delta(\alpha_t^2)}^{K_t^2\setminus \Phi_t^2 \rightarrow I_t^2 \setminus \Delta(\alpha_t^2)} \rangle+ \\
 &+ \sum_{t=1}^{q^\prime} (|\{u < p-1 \ | \ \bar{\alpha}_u^1 \geq \bar{\alpha}_t^2\}| + |\{u < p \ | \ \alpha_u^1 < \alpha_t^2\}|)\langle (\mathcal{L}^2)_{\Phi_t^2}^{L_t^2} (\nabla^2_{\mathcal{L}})_{L_t^2}^{\Phi_t^2}\rangle.
\end{split}
\end{equation}
By $(\Psi_p^1 = \emptyset)$ and $(\Phi_p^1 = \emptyset)$ we mean an indicator that's equal to $1$ if the condition is satisfied and $0$ otherwise. It follows from the construction that $(\Psi_p^1 = \emptyset) = 1$ if and only if $Y_p^1$ is the last block in the alternating path that defines $\mathcal{L}^1$; similarly, $(\Phi_p^1 = \emptyset) = 1$ if and only if $X_p^1$ is the last block in the path (hence it sits in the upper left corner of $\mathcal{L}^1$, for blocks along the path are glued in $\mathcal{L}$ from bottom up). The terms with indicators are not present in the empty block convention from \cite{plethora}, for their contribution is accounted for in other terms. 

Lastly, let us mention the total contribution of $B$-terms to equation \eqref{eq:brack}. If $\psi_1$ is an $h$-function, then the total contribution is
\begin{equation}\label{eq:cbh}\begin{split}
&\sum_{\substack{\bar{\alpha}^2_{t-1} < \bar{\alpha}_{p-1}^1 \\ \bar{\beta}_{t-1}^2 > \bar{\beta}_{p-1}^1}}\left\langle \grll{1}_{\sigma_{p-1}(\Psi_t^2)}^{\sigma_{p-1}(\Psi_t^2)} \grll{2}_{\Psi_t^2}^{\Psi_t^2}\right\rangle + \sum_{\substack{\bar{\alpha}_{t-1}^2 \neq \bar{\alpha}_{p-1}^1\\\bar{\beta}_{t-1}^2 = \bar{\beta}_{p-1}^1}} \left\langle \lgrl{1}_{\Psi_p^1}^{\Psi_p^1} \grll{2}_{\Psi_t^2}^{\Psi_t^2}\right\rangle + \\ +&\sum_{\substack{\bar{\alpha}_{t-1}^2 = \bar{\alpha}_{p-1}^1\\ \bar{\beta}_{t-1}^2 < \bar{\beta}_{p-1}^1}} \left\langle \lm{2}_{\bar{K}_{t-1}^2}^{L^2_{t-1}} \grl{2}_{L_{t-1}^2}^{\bar{K}_{t-1}^2}\right\rangle + \sum_{\substack{\bar{\alpha}_{t-1}^2 = \bar{\alpha}_{p-1}^1\\ \bar{\beta}_{t-1}^2 \geq \bar{\beta}_{p-1}^1 }} \left\langle \lgrl{1}_{\bar{K}_{p-1}^1}^{\bar{K}_{p-1}^1} \lgrl{2}_{\bar{K}_{t-1}^2}^{\bar{K}_{t-1}^2}\right\rangle - \\ - & \sum_{\substack{\bar{\alpha}_{t-1}^2 = \bar{\alpha}_{p-1}^1\\ \bar{\beta}_{t-1}^2 \geq \bar{\beta}_{p-1}^1 }} \left\langle \grll{1}_{\sigma_{p-1}(\bar{L}_{t-1}^2 \setminus \Psi_t^2)}^{\sigma_{p-1}(\bar{L}_{t-1}^2 \setminus \Psi_t^2)} \grll{2}_{\bar{L}_{t-1}^2 \setminus \Psi_t^2}^{\bar{L}_{t-1}^2 \setminus \Psi_t^2} \right\rangle + \sideset{}{^l}\sum_{\substack{\bar{\alpha}_{t-1}^2 = \bar{\alpha}_{p-1}^1\\\bar{\beta}_{t-1}^2 = \bar{\beta}_{p-1}^1 }} \left\langle \lm{2}_{\Phi^2_{t-1}}^{L_{t-1}^2}\grl{2}_{L_{t-1}^2}^{\Phi_{t-1}^2}\right\rangle + \\ + & \sideset{}{^l}\sum_{\substack{\bar{\alpha}_{t-1}^2 = \bar{\alpha}_{p-1}^1\\ \bar{\beta}_{t-1}^2 = \bar{\beta}_{p-1}^1 }}\left\langle \grll{1}_{\bar{L}_{p-1}^1}^{\bar{L}_{p-1}^1} \grll{2}_{\bar{L}_{t-1}^2}^{\bar{L}_{t-1}^2} \right\rangle - \sideset{}{^l}\sum_{\substack{\bar{\alpha}_{t-1}^2 = \bar{\alpha}_{p-1}^1\\ \bar{\beta}_{t-1}^2 = \bar{\beta}_{p-1}^1}} \left\langle \lgrl{1}_{\bar{K}_{p-1}^1}^{\bar{K}_{p-1}^1} \lgrl{2}_{\bar{K}_{t-1}^2}^{\bar{K}_{t-1}^2} \right\rangle,
\end{split}
\end{equation}
where $\sum^l$ means a summation over blocks the $Y_{t-1}^2$ that have their exit point strictly to the left of the exit point of $Y_{p-1^1}$ (for the definition of exit points, see Section \ref{s:lmatr}). If $\psi_1$ is a $g$-function, then the contribution is
\begin{equation}\label{eq:cbg}
\begin{split}
&\sum_{\substack{\beta_t^2<\beta_p^1\\\alpha_t^2>\alpha_p^1}}\left\langle \lgrl{1}_{\rho(\Phi_t^2)}^{\rho(\Phi_t^2)} \lgrl{2}_{\Phi_t^2}^{\Phi_t^2} \right\rangle + \sum_{\substack{\beta_t^2 \neq \beta_p^1\\ \alpha_t^2 = \alpha_p^1}}\left\langle \lgrl{1}_{\Phi_p^1}^{\Phi_p^1} \lgrl{2}_{\Phi_t^2}^{\Phi_t^2}\right\rangle + \\ + &\sum_{\substack{\beta_t^2 = \beta_p^1\\ \alpha_t^2 < \alpha_p^1}} \left\langle \lm{2}_{\bar{K}_{t-1}^2}^{L_t^2} \grl{2}_{L_t^2}^{\bar{K}_{t-1}^2}\right\rangle + \sum_{\substack{\beta_t^2 = \beta_p^1 \\ \alpha_t^2 \geq \alpha_p^1}} \left\langle \grll{1}_{L_p^1}^{L_p^1} \grll{2}_{L_t^2}^{L_t^2} \right\rangle - \\ -&\sum_{\substack{\beta_t^2=\beta_p^1 \\ \alpha_t^2 \geq \alpha_p^1}}\left\langle \lgrl{1}_{\rho(K_t^2\setminus \Phi_t^2)}^{\rho(K_t^2\setminus \Phi_t^2)} \lgrl{2}_{K_t^2 \setminus \Phi_t^2}^{K_t^2 \setminus \Phi_t^2} \right\rangle +\sideset{}{^a}\sum_{\substack{\beta_t^2 = \beta_p^1\\\alpha_t^2=\alpha_p^1}}\mathop{} \left\langle \lm{2}_{\bar{K}_{t-1}^2}^{\Psi_t^2} \grl{2}_{\Psi_t^2}^{\bar{K}_{t-1}^2}\right\rangle + \\ &+ \sideset{}{^a}\sum_{\substack{\beta_t^2 = \beta_p^1\\ \alpha_t^2 = \alpha_p^1}} \left\langle \lgrl{1}_{K_p^1}^{K_p^1} \lgrl{2}_{K_t^2}^{K_t^2}\right\rangle - \sideset{}{^a} \sum_{\substack{\beta_t^2 = \beta_p^1\\ \alpha_t^2 = \alpha_p^1}}\left\langle \grll{1}_{L_p^1}^{L_p^1} \grll{2}_{L_t^2}^{L_t^2}\right\rangle + \\ &+ \sum_{\bar{\beta}_t^2 > \bar{\beta}_{p-1}^1} \left\langle \lm{2}_{\bar{K}_t^2}^{\Psi_{t+1}^2} \grl{2}_{\Psi_{t+1}^2}^{\bar{K}_t^2}\right\rangle + \sum_{\bar{\alpha}_{t}^2 \leq \bar{\alpha}_{p-1}^1} \left\langle \lm{2}_{\Phi_t^2}^{L_t^2} \grl{2}_{L_t^2}^{\Phi_t^2}\right\rangle,
\end{split}
\end{equation}
where $\sum^a$ means that the summation is taken over blocks the $X_t^2$ that have their exit point strictly above the exit point of $X_p^1$.
\subsection{Computation of $\{y(h_{ii}), \psi\}$}
Let $\psi$ be an arbitrary $g$- or $h$-function and let $h_{ii}$ be fixed, $2 \leq i \leq n$. For the shorthand notation from Section~\ref{s:bragh}, the first function in this section is $\log h_{i-1,i} - \log h_{i,i+1}$ and the second function is $\log \psi$, meaning that if an operator has an upper index $1$ or $2$, it's applied to the first or the second function, respectively. We also assume throughout the subsection that a pair $(R_0^r,R_0^c)$ is chosen so that the identities from~\eqref{eq:roid} hold.

\begin{proposition}
The bracket of $y(h_{ii})$ and $\psi$ can be expressed as
\begin{equation}\label{eq:yhpsi}\begin{split}
\{\log y(h_{ii}),\log \psi\} &= -\langle \pi_{<} \eta_L^1, \pi_{>} \eta_L^2 \rangle - \langle \pi_{>} \eta_R^1, \pi_{<} \eta_R^2 \rangle + \\ &+ \langle \gamma_r \xi_R^1, \gamma_r X \nabla_X^2 \rangle + \langle \gamma_c^* \xi_L^1, \gamma_c^* \nabla_Y^2 Y \rangle + \\ &+ \langle \pi_{\hat{\Gamma}_2^c} e_{ii}, \pi_{\hat{\Gamma}_2^c} \nabla_Y^2 Y\rangle - \langle e_{i-1,i-1}, \eta_R^2 \rangle.
\end{split}
\end{equation}
\end{proposition}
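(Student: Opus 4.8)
The plan is to split the $y$-coordinate additively and feed the two pieces into the bracket formulas already at our disposal. As recorded in Case~2 of the earlier computation of $\{y(\psi),\phi\}$, the neighborhood of $h_{ii}$ gives $\log y(h_{ii}) = \hat{h} + \hat{f}$, where $\hat{h} := \log h_{i-1,i} - \log h_{i,i+1}$ and $\hat{f} := \log f_{1,n-i} - \log f_{1,n-i+1}$. By bilinearity, $\{\log y(h_{ii}), \log\psi\} = \{\hat{h}, \log\psi\} + \{\hat{f}, \log\psi\}$, and I would treat the two summands by different formulas: \eqref{eq:brack} for the first, which is legitimate because $\hat{h}$ is a $\mathbb{Q}$-linear combination of logarithms of $h$-functions and all the structural properties used to derive \eqref{eq:brack} (notably $\xi_L\in\mathfrak{b}_-$ and $\xi_R\in\mathfrak{b}_+$) survive passing to such a combination; and Lemma~\ref{l:phipsi} for the second, since $\hat{f}$ comes from $f$-functions. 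In the shorthand of Section~\ref{s:bragh}, the four leading terms of \eqref{eq:brack} with upper index~$1$ read off $\hat{h}$ are literally the first four terms of the claimed \eqref{eq:yhpsi}, so they need no further work; everything else is diagonal.

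For $\{\hat{f},\log\psi\}$, the formulas $\pi_0 E_L\log f_{kl} = \Delta(n-k+1,n)+\Delta(n-l+1,n)$ and $\pi_0 E_R\log f_{kl}=\Delta(n-k-l+1,n)$ from Section~\ref{s:diagders} give, after telescoping, $\pi_0 E_L\hat{f} = -e_{ii}$ and $\pi_0 E_R\hat{f} = -e_{i-1,i-1}$; substituting these into \eqref{eq:phipsi} yields $\{\hat{f},\log\psi\} = \langle e_{ii}, \nabla_Y^2 Y\rangle - \langle e_{i-1,i-1}, Y\nabla_Y^2\rangle - \langle R_0^c e_{ii}, E_L^2\rangle + \langle R_0^r e_{i-1,i-1}, E_R^2\rangle$. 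It then remains to prove that the diagonal part $D$ of $\{\hat{h},\log\psi\}$ in \eqref{eq:diagpart}, added to this expression, collapses to the two residual terms $\langle \pi_{\hat{\Gamma}_2^c}e_{ii}, \pi_{\hat{\Gamma}_2^c}\nabla_Y^2 Y\rangle - \langle e_{i-1,i-1}, \eta_R^2\rangle$.

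To carry this out I would introduce the diagonal gradients $a:=\pi_0(\nabla_X^1 X)$, $b:=\pi_0(\nabla_Y^1 Y)$, $a':=\pi_0(X\nabla_X^1)$, $b':=\pi_0(Y\nabla_Y^1)$ of $\hat{h}$ and read off from the $\hat{h}$-derivative table of Section~\ref{s:diagders} the relations $\gamma_c(a)+b=e_{ii}$, $a+\gamma_c^*(b)=\gamma_c^*(e_{ii})$, $\gamma_r(a')+b'=e_{i-1,i-1}$, $a'+\gamma_r^*(b')=\gamma_r^*(e_{i-1,i-1})$; the second relation of each pair forces $a\in\mathfrak{g}_{\Gamma_1^c}$ and $a'\in\mathfrak{g}_{\Gamma_1^r}$ (equivalently $\pi_{\hat{\Gamma}_1^c}a=0$, $\pi_{\hat{\Gamma}_1^r}a'=0$). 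The collapse then happens in three independent groupings. First, the terms of $D+\{\hat{f},\log\psi\}$ pairing against $E_L^2$ assemble to $\langle R_0^c(a+b-e_{ii})-a, E_L^2\rangle$; since $a+b-e_{ii}=(1-\gamma_c)a$ and $R_0^c(1-\gamma_c)=\pi_{\Gamma_1^c}+R_0^c\pi_{\hat{\Gamma}_1^c}$ (Section~\ref{s:opers}), this argument is $\pi_{\Gamma_1^c}a - a = 0$, so the contribution vanishes. Symmetrically, the terms pairing against $E_R^2$ give $\langle a'-R_0^r(1-\gamma_r)a', E_R^2\rangle = 0$ via $R_0^r(1-\gamma_r)=\pi_{\Gamma_1^r}+R_0^r\pi_{\hat{\Gamma}_1^r}$. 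Second, $-\langle\gamma_c^*(e_{ii}),\gamma_c^*\nabla_Y^2 Y\rangle + \langle e_{ii},\nabla_Y^2 Y\rangle = \langle e_{ii},(1-\gamma_c\gamma_c^*)\nabla_Y^2 Y\rangle = \langle \pi_{\hat{\Gamma}_2^c}e_{ii}, \pi_{\hat{\Gamma}_2^c}\nabla_Y^2 Y\rangle$, because $\gamma_c\gamma_c^* = \pi_{\Gamma_2^c}$ and the projections are self-adjoint idempotents. Third, $-\langle \gamma_r\gamma_r^*(e_{i-1,i-1}),\gamma_r(X\nabla_X^2)\rangle - \langle e_{i-1,i-1}, Y\nabla_Y^2\rangle = -\langle e_{i-1,i-1}, \eta_R^2\rangle$, since $\gamma_r\gamma_r^*=\pi_{\Gamma_2^r}$, the image of $\gamma_r$ lies in $\mathfrak{g}_{\Gamma_2^r}$ so the projection can be dropped, and $\eta_R^2=\gamma_r(X\nabla_X^2)+Y\nabla_Y^2$.

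The main obstacle is organizational rather than conceptual: the diagonal part $D$ interleaves $\xi$-, $\eta$-, $E_L$-, $E_R$- and bare $\nabla_X X$/$X\nabla_X$-contributions of $\hat{h}$, and one must group them correctly against the four pieces coming from $\hat{f}$ so that the two $R_0$-identities and the isometry relations $\gamma_c\gamma_c^*=\pi_{\Gamma_2^c}$, $\gamma_r\gamma_r^*=\pi_{\Gamma_2^r}$ can be applied. Once the groupings above are set up each cancellation is immediate, and no property of $\psi$ beyond $E_L^2,E_R^2,\nabla_Y^2 Y, X\nabla_X^2$ being its genuine logarithmic gradients is used.
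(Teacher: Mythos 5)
Your proof is correct and follows essentially the same route as the paper: the same decomposition $\log y(h_{ii}) = \hat{h} + \hat{f}$, with \eqref{eq:brack} applied to $\hat{h}$ and Lemma~\ref{l:phipsi} plus the diagonal-derivative formulas applied to $\hat{f}$, followed by the collapse of $D + \{\hat{f},\log\psi\}$ via the $R_0$-identities and $\gamma\gamma^* = \pi_{\Gamma_2}$. Your three explicit groupings and the relations for $a,b,a',b'$ simply spell out the cancellations that the paper's proof states in condensed form; all intermediate identities check out.
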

\begin{proof}
Recall that the $y$-coordinate of $h_{ii}$ is given by 
\[
y(h_{ii}) = \frac{h_{i-1,i} f_{1,n-i}}{h_{i,i+1} f_{1,n-i+1}}.
\]
Set $f:= f_{1,n-i}/f_{1,n-i+1}$. Using the diagonal derivatives formulas for $f$ from Section~\ref{s:diagders} and formula~\eqref{eq:phipsi}, we can express the bracket $\{\log f, \log \psi\}$ as
\[
\{\log f, \log \psi\} = \langle e_{ii}, \nabla^2_Y Y\rangle - \langle e_{i-1,i-1}, Y\nabla^2_Y \rangle - \langle R_0^c e_{ii}, E_L^2\rangle + \langle R_0^r e_{i-1,i-1}, E_R^2 \rangle.
\]
Combining the latter formula with the expression for $D$ from equation~\eqref{eq:diagpart}, $D+ \{\log f, \log \psi\}$ becomes
\[\begin{split}
-&\langle \pi_{\Gamma_2^c} e_{ii}, \nabla^2_Y Y\rangle - \langle e_{i-1,i-1}, \gamma_r(X\nabla^2_X) \rangle + \langle R_0^c e_{ii}, E_L^2 \rangle - \langle R_0^r e_{i-1,i-1}, E_R^2 \rangle+\\ + &\langle e_{ii}, \nabla^2_Y Y\rangle - \langle e_{i-1,i-1}, Y\nabla^2_Y \rangle - \langle R_0^c e_{ii}, E_L^2\rangle + \langle R_0^r e_{i-1,i-1}, E_R^2 \rangle = \\ &= \langle \pi_{\hat{\Gamma}_2^c} e_{ii}, \nabla^2_Y Y\rangle - \langle e_{i-1,i-1}, \eta_R^2 \rangle.
\end{split}
\]
Now, applying equation~\eqref{eq:brack} to $\{\log h_{i-1,i} - \log h_{i,i+1}, \log \psi\}$ the formula follows.
\end{proof}
\begin{corp}\label{c:yhh}
As a consequence, $\{\log y(h_{ii}), \log h_{jj}\} = \delta_{ij}$ for any $j$.
\end{corp}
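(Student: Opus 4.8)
The plan is to evaluate the right-hand side of \eqref{eq:yhpsi} for the special choice $\psi = h_{jj} = \det Y^{[j,n]}_{[j,n]}$ and to show that only two of its six terms survive, their sum being $\delta_{ij}$. The essential simplification is that $h_{jj}$ depends on $Y$ alone, so $\nabla_X \log h_{jj} = 0$. Hence $E_L^2 = \nabla_Y\log h_{jj}\cdot Y$ and $E_R^2 = Y\nabla_Y\log h_{jj}$, and a direct cofactor (Jacobi) computation — equivalently, the trailing-minor formulas of Section~\ref{s:blockf} applied with the matrix $Y$ and $s=j$ — shows that $\nabla_Y\log h_{jj}\cdot Y$ is lower triangular, $Y\nabla_Y\log h_{jj}$ is upper triangular, and both have diagonal part $\Delta(j,n)$. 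In particular $\eta_R^2 = Y\nabla_Y\log h_{jj}\in\mathfrak{b}_+$ and $\eta_L^2 = \gamma_c^*(\nabla_Y\log h_{jj}\cdot Y)\in\mathfrak{b}_-$, the latter because $\gamma_c^*$ preserves lower-triangularity.

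First I would dispose of four of the six terms. The first vanishes since $\eta_L^2\in\mathfrak{b}_-$ forces $\pi_{>}\eta_L^2 = 0$; the second vanishes since $\eta_R^2\in\mathfrak{b}_+$ forces $\pi_{<}\eta_R^2 = 0$; the third vanishes because $\nabla_X\log h_{jj} = 0$. The fifth term vanishes for a structural reason: $e_{ii}$ is diagonal, and since the $Y$-runs of $\bg^c$ partition $[1,n]$ the entire Cartan $\mathfrak{h}$ lies in $\mathfrak{g}_{\Gamma_2^c}$, whence $\pi_{\hat{\Gamma}_2^c}(e_{ii}) = 0$.

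It remains to compute the fourth and sixth terms. For the sixth, $\langle e_{i-1,i-1},\eta_R^2\rangle$ is the $(i-1,i-1)$ diagonal entry of $Y\nabla_Y\log h_{jj}$, equal to $1$ precisely when $i-1\ge j$, so the sixth term is $-[\,i\ge j+1\,]$. For the fourth term I would use that $\gamma_c^*$ is the adjoint of $\gamma_c$ and that $\gamma_c\gamma_c^* = \pi_{\Gamma_2^c}$, giving
\[
\langle \gamma_c^*\xi_L^1,\ \gamma_c^*(\nabla_Y\log h_{jj}\cdot Y)\rangle = \langle \xi_L^1,\ \pi_{\Gamma_2^c}(\nabla_Y\log h_{jj}\cdot Y)\rangle .
\]
Both factors lie in $\mathfrak{b}_-$ (block-diagonal truncation preserves lower-triangularity), so the trace form selects only the diagonal and the term reduces to $\langle \pi_0\xi_L^1,\ \pi_0\pi_{\Gamma_2^c}(\nabla_Y\log h_{jj}\cdot Y)\rangle = \langle \pi_0\xi_L^1,\Delta(j,n)\rangle$, using that block-diagonal truncation leaves the diagonal unchanged. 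By the diagonal-derivative list of Section~\ref{s:diagders} applied to $\xi_L^1 = \xi_L(\log h_{i-1,i} - \log h_{i,i+1})$ one has $\pi_0\xi_L^1 = e_{ii}$, so the fourth term equals $\langle e_{ii},\Delta(j,n)\rangle = [\,i\ge j\,]$.

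Adding the two surviving contributions yields $[\,i\ge j\,] - [\,i\ge j+1\,]$, which is $1$ when $i=j$ and $0$ otherwise, i.e.\ $\delta_{ij}$; this also covers the frozen boundary case $j=1$, where $i\ge 2$ makes both indicators agree, as well as the index-$n$ boundary cases handled by the conventions of Section~\ref{s:iniclust}. The only delicate step is the reduction of the fourth term: one must justify the adjoint identity $\gamma_c\gamma_c^* = \pi_{\Gamma_2^c}$, the preservation of $\mathfrak{b}_-$ under both $\gamma_c^*$ and $\pi_{\Gamma_2^c}$, and above all the correct index matching producing $\pi_0\xi_L^1 = e_{ii}$ rather than $e_{i-1,i-1}$ — the latter would collapse the bracket to zero identically, so this is precisely where the Kronecker delta is generated.
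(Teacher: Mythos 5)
Your overall skeleton matches the paper's proof: the first three terms of \eqref{eq:yhpsi} are disposed of in exactly the same way, the index computation $\pi_0\xi_L^1=e_{ii}$ that you flag as delicate is indeed correct (it is the paper's diagonal-derivative formula from Section~\ref{s:diagders}), and the closing arithmetic $\langle e_{ii}-e_{i-1,i-1},\Delta(j,n)\rangle=\delta_{ij}$ agrees. The genuine problem is your treatment of the fourth and fifth terms, which rests on a false structural claim. The subalgebra $\mathfrak{g}_{\Gamma_2^c}$ is the direct sum of the blocks $\gl(\bar{\Delta}_i^c)$ over the \emph{nontrivial} $Y$-runs only; this is the only reading consistent with the identities of Section~\ref{s:opers}, such as $\xi_L=\gamma_c(\eta_L)+\pi_{\hat{\Gamma}_2^c}(\nabla_Y Y)$, which force $\gamma_c\gamma_c^*=\pi_{\Gamma_2^c}$, and $\gamma_c\gamma_c^*$ annihilates every trivial-run block. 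Consequently the Cartan is \emph{not} contained in $\mathfrak{g}_{\Gamma_2^c}$: if $i$ lies in a trivial $Y$-run of $\bg^c$, then $\pi_{\hat{\Gamma}_2^c}(e_{ii})=e_{ii}\neq 0$, so your fifth term does not vanish; correspondingly, $\pi_0\pi_{\Gamma_2^c}(\nabla_Y\log h_{jj}\cdot Y)=\pi_{\Gamma_2^c}\Delta(j,n)$ rather than $\Delta(j,n)$ (the projection $\pi_{\Gamma_2^c}$ does kill diagonal entries, namely those at trivial-run indices), so your fourth term equals $\langle\pi_{\Gamma_2^c}e_{ii},\Delta(j,n)\rangle$, not $\langle e_{ii},\Delta(j,n)\rangle$. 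The standard pair $\Gamma_1^c=\emptyset$ makes the failure maximal: there $\gamma_c^*=0$ and $\pi_{\hat{\Gamma}_2^c}=\id$, so the fourth term is $0$ and the fifth term carries the whole of $\langle e_{ii},\Delta(j,n)\rangle$ --- exactly the opposite of your assignments.

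Your final answer survives only because the two mistakes cancel: for every $i$ the fourth and fifth terms are complementary, and their sum is $\langle e_{ii},\Delta(j,n)\rangle$. This is precisely how the paper argues; it never evaluates the two terms separately, but combines them, using that $\gamma_c^*$ is a trace-form isometry on $\mathfrak{g}_{\Gamma_2^c}$ and zero on its orthogonal complement, so that $\langle\gamma_c^*a,\gamma_c^*b\rangle+\langle\pi_{\hat{\Gamma}_2^c}a,\pi_{\hat{\Gamma}_2^c}b\rangle=\langle\pi_{\Gamma_2^c}a,\pi_{\Gamma_2^c}b\rangle+\langle\pi_{\hat{\Gamma}_2^c}a,\pi_{\hat{\Gamma}_2^c}b\rangle=\langle a,b\rangle$. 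Replace your two separate evaluations by this single combination step and your argument becomes correct for every aperiodic oriented pair, not only for those $i$ that happen to lie in a nontrivial column $Y$-run.
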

\begin{proof}
The first two terms of equation~\eqref{eq:yhpsi} vanish, for $Y\nabla_Y \log h_{jj} \in \mathfrak{b}_+$ and $\nabla_Y \log h_{jj} \cdot Y \in \mathfrak{b}_-$; since $h_{jj}$ doesn't depend on $X$, the third term vanishes as well. Now, recall from Section~\ref{s:diagders} that \[\pi_0( \nabla_Y \log h_{jj} \cdot Y) = \pi_0 (Y \nabla_Y \log h_{jj}) = \Delta(j,n)\] and $\xi_L (\log h_{i-1,i} -\log h_{i,i+1}) = e_{ii}$, where $\Delta(j,n) = \sum_{k=j}^{n} e_{kk}$. Therefore
\[\begin{split}
\{\log y(h_{ii}),\log \psi\} &= \langle \gamma_c^* e_{ii}, \gamma_c^*\Delta(j,n) \rangle + \langle \pi_{\hat{\Gamma}^c_2}e_{ii}, \pi_{\hat{\Gamma}^c_2} \Delta(j,n) \rangle - \langle e_{i-1,i-1}, \Delta(j,n) \rangle =\\ &= \langle e_{ii} - e_{i-1,i-1}, \Delta(j,n) \rangle = \delta_{ij}. \qedhere
\end{split} 
\]
\end{proof}

\begin{lemma}\label{eq:h2last}
The following formulas for the last two terms of equation \eqref{eq:yhpsi} hold:
\[
\langle \pi_{\hat{\Gamma}_2^c} e_{ii}, \pi_{\hat{\Gamma}_2^c} \nabla_Y^2 Y\rangle = \sum_{t=1}^{q^\prime} \langle \pi_{\hat{\Gamma}_2^c} e_{ii}, \begin{bmatrix} 0 & 0\\ 0 & \grll{2}_{\bar{L}_t^2 \setminus \Psi_{t+1}^2}^{\bar{L}_t^2 \setminus \Psi_{t+1}^2} \end{bmatrix} \rangle;
\]
\[
-\langle e_{i-1,i-1}, \eta_R^2 \rangle = - \sum_{t=1}^{q^\prime} \langle e_{i-1,i-1}, \begin{bmatrix} \lgrl{2}_{\bar{K}_t^2}^{\bar{K}_t^2} & 0 \\ 0 & 0 \end{bmatrix} \rangle - \sum_{t=1}^{q^\prime} \langle e_{i-1,i-1}, \gamma_r \begin{bmatrix} 0 & 0 \\ 0 & \lgrl{2}_{K_t^2 \setminus \Phi_t^2}^{K_t^2 \setminus \Phi_t^2} \end{bmatrix} \rangle.
\]
\end{lemma}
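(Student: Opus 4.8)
The plan is to expand the two terms by substituting the $\mathcal{L}$-gradient block decompositions of Section~\ref{s:blockf}; this is the same bookkeeping that produces \eqref{eq:fetal}--\eqref{eq:fxir}, now specialized to the situation in which the first slot of the trace form carries the diagonal matrix $e_{ii}$ (resp.\ $e_{i-1,i-1}$) from the $y$-coordinate of $h_{ii}$ rather than a general $\mathcal{L}$-gradient. I would keep in mind that $\nabla_Y^2 Y = \nabla_Y\log\psi_2\cdot Y$, that $\eta_R^2 = \gamma_r(X\nabla_X\log\psi_2) + Y\nabla_Y\log\psi_2$, and the decompositions $\nabla_Y\log\psi_2 = \sum_t (\nabla_{\mathcal{L}^2}\log\psi_2)^{\bar{L}_t^2\to\bar{J}_t^2}_{\bar{K}_t^2\to\bar{I}_t^2}$ and $\nabla_X\log\psi_2 = \sum_t (\nabla_{\mathcal{L}^2}\log\psi_2)^{L_t^2\to J_t^2}_{K_t^2\to I_t^2}$, where $Y_t^2 = Y^{\bar{J}_t^2}_{\bar{I}_t^2}$ and $X_t^2 = X^{J_t^2}_{I_t^2}$. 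The two normal-form identities $\mathcal{L}^2\cdot\nabla_{\mathcal{L}^2}\log\psi_2 = \left[\begin{smallmatrix}0&*\\0&I\end{smallmatrix}\right]$ and $\nabla_{\mathcal{L}^2}\log\psi_2\cdot\mathcal{L}^2 = \left[\begin{smallmatrix}0&0\\ *&I\end{smallmatrix}\right]$, with the identity block occupying $[s,N(\mathcal{L}^2)]\times[s,N(\mathcal{L}^2)]$, are the engine of the whole computation.

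For the first identity I would start from $\nabla_Y^2 Y = \nabla_Y\log\psi_2\cdot Y$ and note that the entries of $Y$ occurring in the block $Y_t^2 = Y^{\bar{J}_t^2}_{\bar{I}_t^2}$ are exactly the entries of $\mathcal{L}^2$ in rows $\bar{K}_t^2$ and columns $\bar{L}_t^2$. Right-multiplying the $t$-th summand of $\nabla_Y\log\psi_2$ by $Y$ thus reproduces the slice of $\nabla_{\mathcal{L}^2}\log\psi_2\cdot\mathcal{L}^2$ indexed by the columns $\bar{L}_t^2$; the normal form for $\grll{2}$ leaves only the trailing portion, and since the $Y$-block columns $\bar{J}_t^2 = [\bar{\beta}_t^2,n]$ are sent to the lower-right of $\gl_n$, each contribution has the shape $\left[\begin{smallmatrix}0&0\\0&\grll{2}_{\bar{L}_t^2}^{\bar{L}_t^2}\end{smallmatrix}\right]$. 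The columns in $\Psi_{t+1}^2 = L_{t+1}^2\cap\bar{L}_t^2$ are those glued to the next $X$-block through $\gamma_c^*$, hence along a $\bg^c$ $Y$-run, so their contribution lies in $\mathfrak{g}_{\Gamma_2^c}$; because $\pi_{\hat{\Gamma}_2^c}$ is the self-adjoint idempotent onto the orthogonal complement of $\mathfrak{g}_{\Gamma_2^c}$, pairing against $\pi_{\hat{\Gamma}_2^c}e_{ii}$ annihilates precisely those columns and replaces $\bar{L}_t^2$ by $\bar{L}_t^2\setminus\Psi_{t+1}^2$, yielding the first formula.

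For the second identity I would split $\eta_R^2 = Y\nabla_Y\log\psi_2 + \gamma_r(X\nabla_X\log\psi_2)$ and treat the pieces separately. For $Y\nabla_Y\log\psi_2$ the companion normal form for $\lgrl{2}$ applies: left-multiplication by $Y$ turns the $t$-th block into the slice of $\mathcal{L}^2\cdot\nabla_{\mathcal{L}^2}\log\psi_2$ indexed by the rows $\bar{K}_t^2$, and since the $Y$-block rows $\bar{I}_t^2 = [1,\bar{\alpha}_t^2]$ go to the upper-left of $\gl_n$, this produces $\left[\begin{smallmatrix}\lgrl{2}_{\bar{K}_t^2}^{\bar{K}_t^2}&0\\0&0\end{smallmatrix}\right]$, the first sum on the right. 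For $\gamma_r(X\nabla_X\log\psi_2)$ the same manipulation applies to the $X$-blocks, except that the rows $\Phi_t^2 = K_t^2\cap\bar{K}_t^2$ already shared with $Y_t^2$ are counted in the $Y$-piece above, so only $K_t^2\setminus\Phi_t^2$ remains; the operator $\gamma_r$ then relabels these rows along the corresponding $\bg^r$ run, producing $\gamma_r\left[\begin{smallmatrix}0&0\\0&\lgrl{2}_{K_t^2\setminus\Phi_t^2}^{K_t^2\setminus\Phi_t^2}\end{smallmatrix}\right]$, the second sum.

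Finally, pairing against the diagonal matrices $e_{ii}$ and $e_{i-1,i-1}$ collapses both identities to equalities of single diagonal entries, since a trace-form pairing with a diagonal matrix reads off only the diagonal of its partner. The hard part will be the interval bookkeeping: using the sub-block containment proposition recalled in Section~\ref{s:blockf} (Proposition~4.3 of~\cite{plethora}), I must verify that the overlaps $\Psi_{t+1}^2$ and $\Phi_t^2$ are excised exactly once, that left- versus right-multiplication by $Y$ routes each block into the correct corner of $\gl_n$, and that the boundary blocks — the first and last blocks of the alternating path, where $\Phi_p$ or $\Psi_p$ may be empty — produce no stray terms. This is exactly where the indicator conventions of \eqref{eq:fxil}--\eqref{eq:fxir} originate, and confirming that none of them survive the pairing with $e_{ii}$ or $e_{i-1,i-1}$ is the delicate step that ultimately feeds into \eqref{eq:yhpsi}.
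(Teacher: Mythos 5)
Your overall route is the one the paper itself takes. For the first identity the paper performs exactly the block expansion you describe: write $\nabla_Y^2 Y$ as a sum over $Y$-blocks of partial products of slices of $\nabla^2_{\mathcal{L}}$ and $\mathcal{L}^2$, project onto the diagonal, and observe that the block sitting at the shared columns $\Psi_{t+1}^2$ occupies a nontrivial column $Y$-run position, hence lies in $\mathfrak{g}_{\Gamma_2^c}$ and is annihilated against $\pi_{\hat{\Gamma}_2^c}e_{ii}$. For the second identity the paper simply cites the block formula for $\eta_R^2$ from~\cite{plethora} (``easily derivable in a similar fashion''), which is precisely what you propose to re-derive.

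However, your sketch of that re-derivation contains two compensating misstatements, and repairing them requires the one idea your outline omits. Left multiplication by $Y$ of the $t$-th summand of $\nabla_Y\log\psi_2$ does \emph{not} reproduce the slice $\lgrl{2}_{\bar{K}_t^2}^{\bar{K}_t^2}$ of the full product: it gives only the partial product $\lm{2}_{\bar{K}_t^2}^{\bar{L}_t^2}\grl{2}_{\bar{L}_t^2}^{\bar{K}_t^2}$, and at the shared rows $\Phi_t^2$ this differs from $\lgrl{2}_{\bar{K}_t^2}^{\bar{K}_t^2}$ by the cross term $\lm{2}_{\Phi_t^2}^{L_t^2}\grl{2}_{L_t^2}^{\Phi_t^2}$, since a row in $\Phi_t^2$ carries entries of $\mathcal{L}^2$ in the columns $L_t^2$ of $X_t^2$ as well as in $\bar{L}_t^2$. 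Correspondingly, the $X$-side block at $\Phi_t^2$ is not ``already counted in the $Y$-piece'' and cannot simply be dropped: $\Phi_t^2$ occupies the row run $\Delta^r(\alpha_t^2)$, and $\gamma_r$ carries that run onto $\bar{\Delta}^r$, i.e.\ onto the very diagonal positions where the $Y$-side contribution sits; its $\gamma_r$-image is exactly the missing cross term, and adding it is what completes the partial product to $\lgrl{2}_{\bar{K}_t^2}^{\bar{K}_t^2}$ in the first sum, leaving only $K_t^2\setminus\Phi_t^2$ in the second. Note the asymmetry with your first identity: there the same partial-versus-full discrepancy occurs at the shared columns $\Psi_{t+1}^2$, but it is harmless because those columns are killed by $\pi_{\hat{\Gamma}_2^c}$ anyway; in the second identity nothing annihilates the shared rows, so this recombination is essential. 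Your final formulas are correct, but as written the two intermediate claims are individually false and merely cancel each other.
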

\begin{proof}
The gradient $\nabla^2_Y Y$ can be expressed as
\[\begin{split}
\nabla^2_Y Y &= \sum_{t=1}^{q^\prime} \grl{2}_{\bar{L}_t^2 \rightarrow \bar{J}_t^2}^{\bar{K}_t^2 \rightarrow \bar{I}_t^2} \cdot Y = \sum_{t=1}^{q^\prime} \begin{bmatrix} 0 & 0 \\ \grl{2}_{\bar{L}_t^2}^{\bar{K}_t^2} Y_{\hat{\bar{J}}_t^2}^{\bar{I}_t^2} & \grl{2}_{\bar{L}_t^2}^{\bar{K}_t^2} \lm{2}_{\bar{K}_t^2}^{\bar{L}_t^2} \end{bmatrix} = \\ &= \sum_{t=1}^{q^\prime} \begin{bmatrix} 0 & 0 & 0 \\ \grl{2}_{\Psi_{t+1}^2}^{\bar{K}_t^2} Y_{\hat{\bar{J}}_t^2}^{\bar{I}_t^2} & \grl{2}_{\Psi_{t+1}^2}^{\bar{K}_t^2} \lm{2}_{\bar{K}_t^2}^{\Psi_{t+1}^2} & \grl{2}_{\Psi_{t+1}^2}^{\bar{K}_t^2} \lm{2}_{\bar{K}_t^2}^{\bar{K}_t^2 \setminus \Psi_{t+1}^2} \\ \grl{2}_{\bar{L}_t^2 \setminus \Psi_{t+1}^2}^{\bar{K}_t^2} Y_{\hat{\bar{J}}_t^2}^{\bar{I}_t^2} & \grl{2}_{\bar{K}_t^2 \setminus \Psi_{t+1}^2}^{\bar{K}_t^2} \lm{2}_{\bar{K}_t^2}^{\Psi_{t+1}^2} & \grl{2}_{\bar{L}_t^2 \setminus \Psi_{t+1}^2}^{\bar{K}_t^2} \lm{2}_{\bar{K}_t^2}^{\bar{L}_t^2 \setminus \Psi_{t+1}^2} \end{bmatrix},
\end{split}
\]
where $\hat{\bar{J}}_t^2 = [1,n] \setminus \bar{J}_t^2$. Now, if one projects $\nabla^2_Y Y$ onto the diagonal, only the central blocks survive; a further application of $\pi_{\hat{\Gamma}_2^c}$ nullifies the middle block, for it occupies the location $\bar{\Delta}(\bar{\alpha}_t) \times \bar{\Delta}(\bar{\alpha}_t)$, and thus the first formula follows. A block formula for $\eta_R^2$, though easily derivable in a similar fashion, was deduced in \cite{plethora}; hence the second formula follows.
\end{proof}

\begin{proposition}
The following formula holds:
\[
\{\log y(h_{ii}), \log\psi \} = \begin{cases} 1,\   &\psi = h_{ii} \\ 0,\  &\text{otherwise} \end{cases}
\]
\end{proposition}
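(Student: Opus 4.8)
The plan is to compute $\{\log y(h_{ii}),\log\psi\}$ starting from Formula~\eqref{eq:yhpsi}, which already isolates the contribution of the diagonal piece of $y(h_{ii})$ into the two explicit terms $\langle \pi_{\hat{\Gamma}_2^c} e_{ii}, \pi_{\hat{\Gamma}_2^c} \nabla_Y^2 Y\rangle$ and $-\langle e_{i-1,i-1},\eta_R^2\rangle$. The first four terms of~\eqref{eq:yhpsi} are exactly the four terms whose block expansions are recorded in~\eqref{eq:fetal}--\eqref{eq:fxir}, and whose total $B$-term contribution is~\eqref{eq:cbh} or~\eqref{eq:cbg}. The strategy is therefore to argue that the entire block machinery collapses: the $B$-terms and the non-diagonal block sums from~\eqref{eq:fetal}--\eqref{eq:fxir} should cancel among themselves or against the two explicit diagonal terms, leaving a residue that is nonzero only when $\psi=h_{ii}$, in which case it equals $1$ by the mechanism already exhibited in Corollary~\ref{c:yhh}.

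First I would substitute the block formulas for the last two terms from Lemma~\ref{eq:h2last}, so that both $\langle \pi_{\hat{\Gamma}_2^c} e_{ii}, \pi_{\hat{\Gamma}_2^c} \nabla_Y^2 Y\rangle$ and $-\langle e_{i-1,i-1},\eta_R^2\rangle$ are written as sums over the blocks $t=1,\ldots,q'$ of $\mathcal{L}^2$. The key structural observation is that these diagonal contributions are supported only on the index $i$ (through $e_{ii}$) and $i-1$ (through $e_{i-1,i-1}$); since $h_{ii}$'s first function $\log h_{i-1,i}-\log h_{i,i+1}$ has $\xi_L^1=e_{ii}$ and $\eta_R^1$ supported on $i-1$, the operators $\eta_L^1$, $\xi_R^1$, etc.\ appearing in the first four terms are likewise diagonal, i.e.\ supported on a single index. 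This means the $B$-terms, which generically pair off-diagonal block data of $\psi_1$ against block data of $\psi_2$, reduce drastically: the embeddings $\rho$ and $\sigma$ only see the single leading block of $h_{ii}$, and most of the sums in~\eqref{eq:fetal}--\eqref{eq:fxir} either vanish (off-diagonal parts of a diagonal gradient) or telescope against the explicit diagonal terms. I would track carefully which blocks of $\mathcal{L}^2$ can contain the row/column index $i$ or $i-1$ on their diagonal, using the ``exit point'' and block-interval bookkeeping from Section~\ref{s:blockf} together with the sub-block containment Proposition~4.3.

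The main obstacle I expect is precisely this bookkeeping: showing that every non-diagonal block term cancels requires matching the $B^{\ronum{k}}_t$ sums against the residual diagonal block sums index by index, and the indicator terms $(\Psi_p^1=\emptyset)$, $(\Phi_p^1=\emptyset)$ as well as the left/above summations $\sum^l$, $\sum^a$ in~\eqref{eq:cbh}--\eqref{eq:cbg} must be handled depending on whether the leading block of $h_{ii}$ is of type $X$ or $Y$ and whether it is the terminal block of its path. After the cancellation, the surviving residue reduces to pairing $e_{ii}-e_{i-1,i-1}$ (coming from $\xi_L^1$ and $\eta_R^1$) against the appropriate diagonal part of $\psi$'s gradient, exactly as in Corollary~\ref{c:yhh}; this detects whether index $i$ appears as a leading diagonal entry of $\psi$. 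Finally I would verify that this residue equals $1$ when $\psi=h_{ii}$ and $0$ otherwise, checking separately the boundary cases $\psi=g_{jj}$, $\psi=h_{jj}$, and the off-diagonal $g_{ij},h_{ji}$ with $i\neq j$, invoking that for the latter the claim $\{y(h_{ii}),\psi\}=0$ can also be cross-checked against the already-known standard-case result from~\cite{double} since the diagonal-derivative data is BD-pair independent.
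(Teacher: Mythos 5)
Your overall skeleton --- start from \eqref{eq:yhpsi}, expand its last two terms via Lemma~\ref{eq:h2last}, expand the first four via the block formulas of Section~\ref{s:blockf}, and let the case $\psi=h_{jj}$ be settled by the mechanism of Corollary~\ref{c:yhh} --- is indeed the paper's skeleton. But the engine you propose for collapsing the block machinery is false. You assert that since the diagonal derivative data of the first function $\log h_{i-1,i}-\log h_{i,i+1}$ is $e_{ii}$ and $e_{i-1,i-1}$, the operators $\eta_L^1$, $\xi_R^1$, $\eta_R^1$, $\xi_L^1$ are themselves diagonal (``supported on a single index''). What Section~\ref{s:diagders} gives is only that their $\pi_0$-projections are elementary matrices; the operators themselves have nonzero strictly triangular parts, because $h_{i-1,i}$ and $h_{i,i+1}$ are trailing minors of a genuine multi-block $\mathcal{L}$-matrix, and the difference of their $\mathcal{L}$-gradients is an elementary diagonal matrix \emph{plus} a nontrivial strictly triangular matrix, not the elementary matrix alone. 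If your claim were true, then $\pi_{<}\eta_L^1=0$ and $\pi_{>}\eta_R^1=0$, the first two terms of \eqref{eq:yhpsi} would vanish identically, and the proposition would follow in a few lines; in reality the expansions \eqref{eq:fetal}, \eqref{eq:fetar} and the $B$-term total \eqref{eq:cbh} contribute many nonzero sums, such as $\sum_{\bar{\alpha}_t^2 < \bar{\alpha}_{p-1}^1}\langle \grll{1}_{\sigma_{p-1}(\bar{L}_t^2)}^{\sigma_{p-1}(\bar{L}_t^2)} \grll{2}_{\bar{L}_t^2}^{\bar{L}_t^2}\rangle$, which must be cancelled against one another term by term. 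That cancellation rests on two inputs your proposal never identifies: (i) the standing assumption $\psi\neq h_{jj}$, which forces $\bar{\beta}_{p-1}^1\leq\bar{\beta}_{t-1}^2$ and $\bar{\alpha}_{p-1}^1\geq\bar{\alpha}_{t-1}^2$ for all $t$, and it is exactly these inequalities that make the conditions under different sums merge so that whole families of terms annihilate --- which is also why the $h_{jj}$ case must be split off \emph{before} the block computation rather than checked as a boundary case afterwards; and (ii) the vanishing of specific blocks of the first function's $\mathcal{L}$-gradients, e.g. $\lgrl{1}_{K_p^1}^{K_p^1}=0$ because the leading block of $\log h_{i-1,i}-\log h_{i,i+1}$ is the $Y$-block $Y_{p-1}^1$, together with the special term that equals $e_{ii}$ or $0$ according to whether $i\in\bar{\Delta}(\bar{\beta}_p^1)$.

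A second flaw: for off-diagonal $\psi=g_{ij}$, $h_{ji}$ with $i\neq j$ you propose to cross-check $\{y(h_{ii}),\psi\}=0$ against the standard-case result of \cite{double}. That reduction is not available here. Unlike the $f$- and $\varphi$-functions, the $g$- and $h$-functions change with the BD pair (they are minors of $\mathcal{L}$-matrices), and so does $y(h_{ii})$ itself; the comparison-to-standard trick is used in the paper only when, after cancelling all $R_0^r$- and $R_0^c$-terms, the residue involves solely $f$-, $\varphi$- and diagonal data, which is BD-independent. For a pair of $g$/$h$-functions neither side of the bracket coincides with its standard-case counterpart, so the general case genuinely requires the full block cancellation driven by the two inputs above.
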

\begin{proof}
First of all, let us assume that $\psi$ is not equal to some $h_{jj}$, for this case is covered by Corollary~\ref{c:yhh}. Recall that $Y$ blocks have the form $Y_t = Y_{[1, \bar{\alpha}_t]}^{[\bar{\beta}_t,n]}$. The assumption $\psi \neq h_{jj}$ for all $j$ implies that $\bar{\beta}_{p-1}^1 \leq \bar{\beta}_{t-1}^2$ and $\bar{\alpha}_{p-1}^1 \geq \bar{\alpha}_{t-1}^2$ for all $t$. Indeed, if on the contrary $\bar{\beta}_{p-1}^1 > \bar{\beta}_{t-1}^2$, then $\bar{\beta}_{p-1}^1 = 2$ and $\bar{\beta}_{t-1}^2 = 1$; this means that $Y_{t-1}^2 = Y$ and hence $\psi$ must be some $h_{jj}$. A similar reasoning applies to $\bar{\alpha}$, for $\bar{\alpha}^1_{p-1} \in \{n,n-1\}$.

Next, we need to collect block formulas of all terms in equation \eqref{eq:yhpsi}, which are given in Section~\ref{s:blockf} and in Lemma~\ref{eq:h2last}. Under the stated assumption, some of the terms of the block formulas readily vanish. Indeed, observe the following:
\begin{itemize}
\item All terms that do not contain the first function vanish (for instance, $\sum \langle \lm{2}_{\bar{K}_{t-1}^2}^{L_{t-1}^2} \grl{2}_{L_{t-1}^2}^{\bar{K}_{t-1}^2}\rangle = 0$);
\item The sums $\sum^l$ vanish. Indeed, these are sums over blocks $Y_{t}^2$ which have their exit point to the left of $Y_{p-1}^1$, hence the exit point of $Y_{t}^2$ must be $(1,1)$. That's precisely the case $\psi = h_{jj}$, which was considered in Corollary~\ref{c:yhh};
\item $\lgrl{1}_{K_p^1}^{K_p^1} = 0$, for the leading block of the first function is $Y_{p-1}^1$ and $K_p^1$ spans the rows of $X_p^1$;
\item $\grll{1}_{\rho(L_t^2)}^{\rho(L_t^2)} = 0$ under the assumption $\beta_t^2 < \beta_p^1$, for then $\rho(L_t^2) \subseteq L_p^1 \setminus \Psi_p^1$;
\item $\grll{1}_{L_u^1}^{L_u^1} = 0$ for all $u < p$;
\item For $u = p$, even though $\grll{1}_{L_p^1}^{L_p^1}$ might be nonzero, the only term it belongs to vanishes:
\[\begin{split}
\langle \grll{1}^{L_p^1 \rightarrow J_p^1}_{L_p^1 \rightarrow J_p^1}, \gamma_c^* &\grll{2}_{\bar{L}_t^2 \setminus \Psi_{t+1}^2 \rightarrow \bar{J}_t^2 \setminus \bar{\Delta}(\bar{\beta}_t^2)}^{\bar{L}_t^2 \setminus \Psi_{t+1}^2 \rightarrow \bar{J}_t^2 \setminus \bar{\Delta}(\bar{\beta}_t^2)} \rangle =\\= &\langle \grll{1}^{\Psi_p^1 \rightarrow \bar{\Delta}(\bar{\beta}_{p-1}^1)}_{\Psi_p^1 \rightarrow \bar{\Delta}(\bar{\beta}_{p-1}^1)}, \grll{2}_{\bar{L}_t^2 \setminus \Psi_{t+1}^2 \rightarrow \bar{J}_t^2 \setminus \bar{\Delta}(\bar{\beta}_t^2)}^{\bar{L}_t^2 \setminus \Psi_{t+1}^2 \rightarrow \bar{J}_t^2 \setminus \bar{\Delta}(\bar{\beta}_t^2)} \rangle = 0,
\end{split}\]
because if $\bar{\Delta}(\bar{\beta}_{p-1}^1)$ is nontrivial, it's $[1,1_+]$ (which is the leftmost $Y$-run, hence if we cut out a $Y$-run in the second slot, the nonzero blocks in the second matrix never overlap with such in the first one).
\item $\lgrl{1}_{K_{u}^1 \setminus \Phi_u^1}^{K_{u}^1 \setminus \Phi_u^1} = 0$ for all $u$;
\item All terms that involve the blocks of the first function with indices $u < p-1$ vanish.
\end{itemize}
Additionally, one term requires a special care. Observe that
\[
\grll{1}_{\bar{L}_{p-1}^1 \setminus \Psi_p^1 \rightarrow \bar{J}_{p-1}^1 \setminus \bar{\Delta}(\bar{\beta}_{p}^1)}^{\bar{L}_{p-1}^1 \setminus \Psi_p^1 \rightarrow \bar{J}_{p-1}^1 \setminus \bar{\Delta}(\bar{\beta}_{p}^1)} = \begin{cases} e_{ii}, \ &i \notin \bar{\Delta}(\bar{\beta}_{p}^1) \\
0, \  &\text{otherwise}
\end{cases}
\]
Since we can assume $\bar{\beta}_{p-1}^1 \leq \bar{\beta}_{t-1}^2$ for any $t$, we find that
\[
\sum_{t=1}^{q^\prime} \langle \grll{1}_{\bar{L}_{p-1}^1 \setminus \Psi_p^1 \rightarrow \bar{J}_{p-1}^1 \setminus \bar{\Delta}(\bar{\beta}_{p}^1)}^{\bar{L}_{p-1}^1 \setminus \Psi_p^1 \rightarrow \bar{J}_{p-1}^1 \setminus \bar{\Delta}(\bar{\beta}_{p}^1)}, \pi_{\Gamma_2^c} \grll{2}_{\bar{L}_{t}^2 \setminus \Psi_{t+1}^2 \rightarrow \bar{J}_t^2 \setminus \bar{\Delta}(\bar{\beta}_t^2)}^{\bar{L}_{t}^2 \setminus \Psi_{t+1}^2 \rightarrow \bar{J}_t^2 \setminus \bar{\Delta}(\bar{\beta}_t^2)} \rangle = \sum_{t=1}^{q^\prime}\langle \pi_{\Gamma_2^c} e_{ii}, \begin{bmatrix}0 & 0 \\ 0 & \grll{2}_{\bar{L}_t^2 \setminus \Psi_{t+1}^2}^{\bar{L}_t^2 \setminus \Psi_{t+1}^2}\end{bmatrix} \rangle.
\]
Indeed, this formula follows right away if $i \notin \bar{\Delta}(\bar{\beta}_p^1)$. But if $i \in \bar{\Delta}(\bar{\beta}_p^1)$, the LHS is zero; so is the expression on the right, for $\bar{\Delta}(\bar{\beta}_p^1)$ is the leftmost $Y$-run and we cut out a $Y$-run in the second slot.

With all above in mind, formula \eqref{eq:yhpsi} expands in terms of blocks as
\[\begin{split}
\{\log y(h_{ii}), \log \psi\} &= -\sum_{\bar{\alpha}_t^2 < \bar{\alpha}_{p-1}^1}  \langle \grll{1}_{\sigma_{p-1}(\bar{L}_t^2)}^{\sigma_{p-1}(\bar{L}_t^2)} \grll{2}_{\bar{L}_t^2}^{\bar{L}_t^2} \rangle +\sum_{\bar{\alpha}_t^2 < \bar{\alpha}_{p-1}^1} \langle \lgrl{1}_{\sigma_{p-1}(\bar{K}_t^2)}^{\sigma_{p-1}(\bar{K}_t^2)} \lgrl{2}_{\bar{K}_t^2}^{\bar{K}_t^2} \rangle  + \\ &+ \sum_{t=1}^{q^\prime} \langle \pi_{\Gamma_2^c} e_{ii}, \begin{bmatrix}0 & 0 \\ 0 & \grll{2}_{\bar{L}_t^2 \setminus \Psi_{t+1}^2}^{\bar{L}_t^2 \setminus \Psi_{t+1}^2}\end{bmatrix} \rangle + \sum_{\bar{\alpha}_{t-1}^2 < \bar{\alpha}_{p-1}^1 } \langle \grll{1}_{\sigma_{p-1}(\Psi_t^2)}^{\sigma_{p-1}(\Psi_t^2)} \grll{2}_{\Psi_t^2}^{\Psi_t^2} \rangle+\\+ &\sum_{t=1}^{q^\prime} \langle \lgrl{1}_{\bar{K}_{p-1}^1 \rightarrow \bar{I}_{p-1}^1}^{\bar{K}_{p-1}^1 \rightarrow \bar{I}_{p-1}^1}, \gamma_r \lgrl{2}_{K_t^2 \setminus \Phi_t^2 \rightarrow I_t^2 \setminus \Delta(\alpha_t^2)}^{K_t^2 \setminus \Phi_t^2 \rightarrow I_t^2 \setminus \Delta(\alpha_t^2)} \rangle - \\ & - \sum_{\substack{\bar{\alpha}_{t-1}^2 = \bar{\alpha}_{p-1}^1 \\ \bar{\beta}_{t-1}^2 \geq \bar{\beta}_{p-1}^1}} \langle \grll{1}_{\sigma_{p-1}(\bar{L}_{t-1}^2 \setminus \Psi_t^2)}^{\sigma_{p-1}(\bar{L}_{t-1}^2 \setminus \Psi_t^2)} \grll{2}_{\bar{L}_{t-1}^2 \setminus \Psi_t^2}^{\bar{L}_{t-1}^2 \setminus \Psi_t^2} \rangle + \\ &+ \sum_{\substack{\bar{\alpha}_{t-1} = \bar{\alpha}_{p-1} \\ \bar{\beta}^2_{t-1} \geq \bar{\beta}_{p-1}^1}} \langle \lgrl{1}_{\bar{K}_{p-1}}^{\bar{K}_{p-1}} \lgrl{2}_{\bar{K}_{t-1}^2}^{\bar{K}_{t-1}^2} \rangle +  \sum_{t=1}^{q^\prime} \langle \pi_{\hat{\Gamma}_2^c}e_{ii}, \begin{bmatrix} 0 & 0 \\ 0 & \grll{2}_{\bar{L}_t \setminus \Psi_{t+1}}^{\bar{L}_t \setminus \Psi_{t+1}} \end{bmatrix} \rangle - \\ &- \sum_{t=1}^{q^\prime} \langle e_{i-1,i-1}, \begin{bmatrix} \lgrl{2}_{\bar{K}_t^2}^{\bar{K}_t^2} & 0 \\ 0 & 0 \end{bmatrix} \rangle - \sum_{t=1}^{q^\prime} \langle e_{i-1,i-1}, \gamma_r \begin{bmatrix} 0 & 0 \\ 0 & \lgrl{2}_{K_t^2 \setminus \Phi_t^2}^{K_t^2 \setminus \Phi_t^2} \end{bmatrix} \rangle.
\end{split}
\]
Now we combine the remaining terms to obtain zero. The conditions under the second and the seventh sums combine into a condition satisfied for all blocks, and the resulting sum cancels out with the ninth one. The fifth sum cancels out with the last one. The third sum combines with the eighth one, and the resulting sum is 
\begin{equation}\label{eq:theg}
\begin{split}
\sum_{t=1}^{q^\prime} \langle \pi_{\Gamma_2^c} e_{ii}, \begin{bmatrix} 0 & 0 \\ 0 & \grll{2}_{\bar{L}_t^2\setminus \Psi_{t+1}^2}^{\bar{L}_t^2\setminus \Psi_{t+1}^2}\end{bmatrix} \rangle + \sum_{t=1}^{q^\prime} \langle \pi_{\hat{\Gamma}_2^c} e_{ii},& \begin{bmatrix} 0 & 0 \\ 0 & \grll{2}_{\bar{L}_t^2\setminus \Psi_{t+1}^2}^{\bar{L}_t^2\setminus \Psi_{t+1}^2}\end{bmatrix} \rangle = \\ &= \sum_{t=1}^{q^\prime} \langle e_{ii}, \begin{bmatrix} 0 & 0 \\ 0 & \grll{2}_{\bar{L}_t^2\setminus \Psi_{t+1}^2}^{\bar{L}_t^2\setminus \Psi_{t+1}^2}\end{bmatrix} \rangle.
\end{split}
\end{equation}
All the remaining terms (the first, the fourth and the sixth) combine into
\begin{equation}\label{eq:ffs}
\begin{split}
-\sum_{\bar{\alpha}_t^2 < \bar{\alpha}_{p-1}^1} \langle \grll{1}_{\sigma_{p-1}(\bar{L}_t^2)}^{\sigma_{p-1}(\bar{L}_t^2)} \grll{2}_{\bar{L}_t}^{\bar{L}_t}\rangle &+ \sum_{\bar{\alpha}_{t-1}^2 < \bar{\alpha}_{p-1}^1 } \langle \grll{1}_{\sigma_{p-1}(\Psi_t^2)}^{\sigma_{p-1}(\Psi_t^2)} \grll{2}_{\Psi_t^2}^{\Psi_t^2} \rangle -\\& - \sum_{\substack{\bar{\alpha}^2_{t-1} = \bar{\alpha}^1_{p-1} \\ \bar{\beta}^2_{t-1} \geq \bar{\beta}^1_{p-1} }} \langle \grll{1}_{\sigma_{p-1}(\bar{L}_{t-1}^2 \setminus \Psi_t^2)}^{\sigma_{p-1}(\bar{L}_{t-1}^2 \setminus \Psi_t^2)} \grll{2}_{\bar{L}_{t-1}^2 \setminus \Psi_t^2}^{\bar{L}_{t-1}^2 \setminus \Psi_t^2}\rangle = \\ =&- \sum_{\bar{\alpha}_{t-1}^2 \leq \bar{\alpha}_{p-1}^1} \langle \grll{1}_{\sigma_{p-1}(\bar{L}_{t-1}^2 \setminus \Psi_{t}^2)}^{\sigma_{p-1}(\bar{L}_{t-1}^2 \setminus \Psi_{t}^2)} \grll{2}_{\bar{L}_{t-1} \setminus \Psi_{t}^2}^{\bar{L}_{t-1} \setminus \Psi_{t}^2}\rangle.
\end{split}
\end{equation}
Now, the contributions of equations \eqref{eq:theg} and \eqref{eq:ffs} cancel each other out (note that the condition $\bar{\alpha}_{t-1}^2 \leq \bar{\alpha}_{p-1}^1$ is satisfied for all blocks, for this is a consequence of the assumption $\psi \neq h_{jj}$). Thus the result follows.
\end{proof}
\subsection{Computation of $\{y(g_{ii}), \psi\}$}

Let $\psi$ be an arbitrary $g$- or $h$-function and let $g_{ii}$ be fixed, $2 \leq i \leq n$. We employ the shorthand notation from the previous sections, choosing the first function to be $\log g_{i,i-1} - \log g_{i+1,i}$ and the second function to be $\log \psi$. Let us assume throughout the subsection that a pair $(R_0^r,R_0^c)$ is chosen so that the identities from~\eqref{eq:roid} hold.

\begin{proposition}
The bracket of $y(g_{ii})$ and $\psi$ can be expressed as
\begin{equation}\label{eq:ygpsi}\begin{split}
\{\log y(g_{ii}),\log \psi\} &= -\langle \pi_{<} \eta_L^1, \pi_{>} \eta_L^2 \rangle - \langle \pi_{>} \eta_R^1, \pi_{<} \eta_R^2 \rangle + \\ &+ \langle \gamma_r \xi_R^1, \gamma_r X \nabla_X^2 \rangle + \langle \gamma_c^* \xi_L^1, \gamma_c^* \nabla_Y^2 Y \rangle + \\ &+ \langle \pi_{\hat{\Gamma}_1^r} e_{ii}, \pi_{\hat{\Gamma}_1^r} X\nabla_X^2\rangle - \langle e_{i-1,i-1}, \eta_L^2 \rangle.
\end{split}
\end{equation}
\end{proposition}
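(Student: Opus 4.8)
The plan is to mirror the preceding computation of $\{\log y(h_{ii}),\log\psi\}$, exploiting the duality that swaps the roles of $g$- and $h$-functions (equivalently, of $X,\gamma_r$ and $Y,\gamma_c$). First I would record the $y$-coordinate read off from the mutation at $g_{ii}$,
\[
y(g_{ii}) = \frac{f_{n-i+1,1}\,g_{i+1,i+1}\,g_{i,i-1}}{f_{n-i,1}\,g_{i-1,i-1}\,g_{i+1,i}},
\]
and write $\log y(g_{ii}) = \hat g + \tilde f$, where $\hat g := \log g_{i,i-1} - \log g_{i+1,i}$ is taken as the first function in the shorthand of Section~\ref{s:bragh} and $\tilde f := \log f_{n-i+1,1} + \log g_{i+1,i+1} - \log f_{n-i,1} - \log g_{i-1,i-1}$. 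The crucial point is that, by the conventions of Section~\ref{s:iniclust}, the diagonal minors satisfy $g_{jj} = f_{n-j+1,0}$, so $\tilde f$ is a combination of (boundary) $f$-functions; in particular $E_L\tilde f \in \mathfrak{b}_-$ and $E_R\tilde f \in \mathfrak{b}_+$, so Lemma~\ref{l:phipsi} applies to the pair $(\tilde f,\psi)$.

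The bracket $\{\tilde f,\log\psi\}$ is then governed by Formula~\eqref{eq:phipsi}, which depends on $\tilde f$ only through $\pi_0 E_L\tilde f$ and $\pi_0 E_R\tilde f$. Using the $f$-function diagonal-derivative formulas of Section~\ref{s:diagders} (with the extended index range) I would compute $\pi_0 E_L\tilde f = -e_{i-1,i-1}$ and $\pi_0 E_R\tilde f = -e_{ii}$; these are the $h_{ii}$ values with $e_{ii}$ and $e_{i-1,i-1}$ interchanged, as the duality predicts. In parallel I would apply Formula~\eqref{eq:brack} to $\{\hat g,\log\psi\}$: its first four terms are exactly the first four terms of~\eqref{eq:ygpsi}, while the remainder is the diagonal part $D$ of~\eqref{eq:diagpart}. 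To expand $D$ I would feed in the diagonal derivatives of $\hat g$ from Section~\ref{s:diagders}, namely $\pi_0\xi_R^1 = e_{ii}$, $\pi_0\eta_L^1 = e_{i-1,i-1}$, $\pi_0\xi_L^1 = \gamma_c(e_{i-1,i-1})$ and $\pi_0\eta_R^1 = \gamma_r(e_{ii})$, together with $E_L^1 = \xi_L^1 + (1-\gamma_c)(\nabla_X^1 X)$ and $E_R^1 = \eta_R^1 + (1-\gamma_r)(X\nabla_X^1)$.

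The decisive and most tedious step is the simplification of $D + \{\tilde f,\log\psi\}$. Here I would use the $R_0$-identities of the paragraph ``The bracket and $R_0$'', principally $R_0^c(1-\gamma_c) = \pi_{\Gamma_1^c} + R_0^c\pi_{\hat{\Gamma}_1^c}$ and $R_0^r(1-\gamma_r)\pi_0 = \pi_0\pi_{\Gamma_1^r} + R_0^r\pi_0\pi_{\hat{\Gamma}_1^r}$, to pair the $R_0^c$- and $R_0^r$-weighted contributions of $D$ against those coming from~\eqref{eq:phipsi}, so that every term carrying $R_0^c$, $R_0^r$ or a projection $\pi_{\Gamma_1^c}$, $\pi_{\Gamma_1^r}$ cancels; what should remain is precisely $\langle \pi_{\hat{\Gamma}_1^r}e_{ii}, \pi_{\hat{\Gamma}_1^r}X\nabla_X^2\rangle - \langle e_{i-1,i-1}, \eta_L^2\rangle$, the two residual terms of~\eqref{eq:ygpsi}, and adjoining these to the four surviving terms of~\eqref{eq:brack} yields the claim. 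The main obstacle is thus purely bookkeeping in this final cancellation; conceptually it is the exact mirror of the $D + \{\log f,\log\psi\}$ simplification in the $h_{ii}$ proposition under $X\leftrightarrow Y$, $\gamma_r\leftrightarrow\gamma_c$ and $L\leftrightarrow R$. An alternative, computation-light route would be to derive~\eqref{eq:ygpsi} directly from the already-established $h_{ii}$ formula via the transpose involution $(X,Y)\mapsto(Y^t,X^t)$, which relates $\gc(\bg)$ to $\gc(\bg^{\mathrm{op}})$ and carries $g$-functions to $h$-functions, provided one checks that the bracket transforms compatibly under this involution.
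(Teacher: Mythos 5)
Your proposal follows the paper's own proof essentially verbatim: the same splitting of $\log y(g_{ii})$ into the first function $\hat g = \log g_{i,i-1}-\log g_{i+1,i}$ handled by Formula~\eqref{eq:brack} plus an $f$-type remainder handled by Lemma~\ref{l:phipsi}, the same diagonal-derivative values (your $\pi_0E_L\tilde f=-e_{i-1,i-1}$, $\pi_0E_R\tilde f=-e_{ii}$ agree with the paper's separate computations for $\log f_{n-i+1,1}-\log f_{n-i,1}$ and $\log g_{i+1,i+1}-\log g_{i-1,i-1}$ summed together), and the same final cancellation of the $R_0^c$-, $R_0^r$-terms in $D+\{\tilde f,\log\psi\}$ via the $R_0$-identities. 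The only cosmetic difference is that you merge the paper's two auxiliary functions into the single $\tilde f$, which changes nothing in the argument.
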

\begin{proof}
Recall that we employ the conventions from Section~\ref{s:iniclust} when indices are out of range. The $y$-coordinate of $g_{ii}$ is given by
\[
y(g_{ii}) = \frac{g_{i+1,i+1} f_{n-i+1,1} g_{i,i-1}}{g_{i-1,i-1} f_{n-i,1} g_{i+1,i}}.
\]

With formula \eqref{eq:phipsi} and the diagonal derivatives formulas from Section~\ref{s:diagders}, we can write the bracket $\{\log g, \log \psi\}$ as
\[\begin{split}
\{\log g, \log \psi\} &= \langle e_{i-1,i-1} + e_{ii}, \nabla_Y^2 Y\rangle - \langle e_{i-1,i-1} + e_{ii}, Y \nabla_Y^2 \rangle -\\ &-\langle R_0^c (e_{i-1,i-1} + e_{ii}), E_L^2 \rangle + \langle R_0^r (e_{i-1,i-1} + e_{ii}), E_R^2 \rangle
\end{split}
\]
and the bracket $\{\log f, \log \psi\}$ as
\[
\{\log f, \log \psi\} = -\langle e_{ii}, \nabla_Y^2 Y\rangle + \langle e_{i-1,i-1}, Y\nabla_Y^2 \rangle + \langle R_0^c e_{ii}, E_L^2 \rangle - \langle R_0^r e_{i-1,i-1}, E_R^2 \rangle.
\]
Now, the sum $\{\log f, \log\psi\} + \{\log g,\log \psi\}$ becomes
\[
\{\log f + \log g, \log \psi\} = \langle e_{i-1,i-1}, \nabla^2_Y Y\rangle - \langle e_{ii}, Y\nabla^2_Y \rangle - \langle R_0^c e_{i-1,i-1}, E_L^2 \rangle + \langle R_0^r e_{ii}, E_R^2 \rangle.
\]
To derive the formula for $\{\log y(g_{ii}), \log \psi\}$, we use formula \eqref{eq:brack}. Notice that the first four terms are already in the appropriate form, so we only need to deal with the diagonal part $D$. Consequently, we need to show that
\[
D + \{\log f + \log g, \log \psi\} = \langle \pi_{\hat{\Gamma}_1^r} e_{ii}, \pi_{\hat{\Gamma}_1^r} X\nabla_X^2\rangle - \langle e_{i-1,i-1}, \eta_L^2 \rangle.
\]
Notice that $R_0 \gamma = -\pi_{\Gamma_1} + R_0 \pi_{\Gamma_1}$. With this, we rewrite
\[\begin{split}
\langle R_0^c \pi_0 \xi_L^1, E_L^2 \rangle &= \langle R_0^c \gamma_c(e_{i-1,i-1}), E_L^2 \rangle = -\langle \pi_{\Gamma_1^c} e_{i-1,i-1}, E_L^2 \rangle + \langle R_0^c \pi_{\Gamma_1^c} e_{i-1,i-1}, E_L^2 \rangle;\\
-\langle  R_0^r\pi_0 \eta_R^1, E_R^2 \rangle &= -\langle R_0^r \gamma_r(e_{ii}), E_R^2 \rangle =  \langle \pi_{\Gamma_1^r} e_{ii}, E_R^2 \rangle - \langle R_0^r \pi_{\Gamma_1^r} e_{ii}, E_R^2 \rangle.
\end{split}
\]
Now, expressing $D$ as in equation \eqref{eq:diagpart2}, the full expression for $D + \{\log (fg), \log \psi\}$ expands as
\[
\begin{split}
&-\langle \pi_{\Gamma_1^c} e_{i-1,i-1}, \gamma_c^*(\nabla^2_Y Y) \rangle - \langle \pi_{\Gamma_1^r}e_{ii}, X \nabla^2_X \rangle - \langle \pi_{\Gamma_1^c} e_{i-1,i-1}, E_L^2 \rangle + \langle R_0^c \pi_{\Gamma_1^c} e_{i-1,i-1}, E_L^2\rangle - \\
 &- \langle \pi_{\hat{\Gamma}_1^c} e_{i-1,i-1}, E_L^2 \rangle + \langle R_0^c \pi_{\hat{\Gamma}_1^c}e_{i-1,i-1}, E_L^2\rangle + \langle \pi_{\Gamma_1^r} e_{ii}, E_R^2 \rangle - \langle R_0^r \pi_{\Gamma_1^r} e_{ii}, E_R^2 \rangle + \\
 &+ \langle \pi_{\hat{\Gamma}_1^r} e_{ii}, E_R^2\rangle - \langle R_0^r \pi_{\hat{\Gamma}_1^r} e_{ii}, E_R^2 \rangle +\langle e_{i-1,i-1}, \nabla^2_Y Y\rangle - \langle e_{ii}, Y\nabla^2_Y \rangle -\\ &- \langle R_0^c e_{i-1,i-1}, E_L^2 \rangle + \langle R_0^r e_{ii}, E_R^2 \rangle\rangle.
\end{split}
\]
It's easy to see that all terms containing $R_0^r$, as well as all terms containing $R_0^c$, result in zero. The rest can be combined as follows:
\[
-\langle \pi_{\Gamma_1^c} e_{i-1,i-1}, \gamma_c^*(\nabla^2_Y Y)\rangle - \langle \pi_{\Gamma_1^c} e_{i-1,i-1}, E_L^2 \rangle - \langle \pi_{\hat{\Gamma}_1^c}e_{i-1,i-1}, E_L^2 \rangle + \langle e_{i-1,i-1}, \nabla^2_Y Y\rangle = -\langle e_{i-1,i-1},\eta_L^2 \rangle;
\]
\[
-\langle \pi_{\Gamma_1^r} e_{ii}, X\nabla^2_X \rangle + \langle \pi_{\Gamma_1^r} e_{ii}, E_R^2 \rangle + \langle \pi_{\hat{\Gamma}_1^r} e_{ii}, E_R^2 \rangle - \langle e_{ii}, Y\nabla^2_Y \rangle = \langle \pi_{\hat{\Gamma}_1^r} e_{ii}, X\nabla^2_X \rangle.
\]
Thus the formula holds.
\end{proof}

\begin{corp}\label{c:ygg}
As a consequence, $\{\log y(g_{ii}), \log g_{jj} \} = \delta_{ij}$ for any $j$.
\end{corp}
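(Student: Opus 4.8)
The plan is to specialize the already-established Formula \eqref{eq:ygpsi} to $\psi = g_{jj}$ and then run the same bookkeeping that proved Corollary \ref{c:yhh}, but with the roles of $X$ and $Y$ (equivalently, left and right) interchanged. First I would record that $g_{jj} = \det X^{[j,n]}_{[j,n]}$ depends only on $X$, so $\nabla_Y g_{jj} = 0$; this immediately annihilates the fourth term $\langle \gamma_c^*\xi_L^1, \gamma_c^*\nabla_Y^2 Y\rangle$ of \eqref{eq:ygpsi}.

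Next I would dispose of the first two terms. The bottom-right minor $g_{jj}$ is invariant under $X\mapsto N_+X$ and $X\mapsto XN_-$ for unipotent upper and lower triangular $N_\pm$, since these act as unitriangular row and column operations on the block $X^{[j,n]}_{[j,n]}$; infinitesimally this gives $X\nabla_X g_{jj}\in\mathfrak{b}_+$ and $\nabla_X g_{jj}\cdot X\in\mathfrak{b}_-$. Hence $\eta_L^2 = \nabla_X g_{jj}\cdot X\in\mathfrak{b}_-$ forces $\pi_{>}\eta_L^2 = 0$, killing the first term, and $\eta_R^2 = \gamma_r(X\nabla_X g_{jj})\in\mathfrak{b}_+$ (as $\gamma_r$ preserves $\mathfrak{b}_+$) forces $\pi_{<}\eta_R^2 = 0$, killing the second.

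Then, for the three surviving terms — the third, fifth, and sixth of \eqref{eq:ygpsi} — I would reduce each to a pairing of diagonal matrices. The third term pairs $\gamma_r\xi_R^1$ and $\gamma_r X\nabla_X^2$, both of which lie in $\mathfrak{b}_+$, and the trace pairing of two upper triangular matrices equals the pairing of their diagonals; so it collapses to $\langle \gamma_r\pi_0\xi_R^1, \gamma_r\pi_0(X\nabla_X g_{jj})\rangle = \langle \gamma_r e_{ii}, \gamma_r\Delta(j,n)\rangle$, using $\pi_0\xi_R^1 = e_{ii}$ (from the diagonal-derivative list of Section~\ref{s:diagders} for the first function $\log g_{i,i-1}-\log g_{i+1,i}$) and the weight identity $\pi_0(X\nabla_X g_{jj}) = \pi_0(\nabla_X g_{jj}\cdot X) = \Delta(j,n)$. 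The fifth and sixth terms are already paired against the diagonal matrices $e_{ii}$ and $e_{i-1,i-1}$, and become $\langle \pi_{\hat{\Gamma}_1^r}e_{ii}, \pi_{\hat{\Gamma}_1^r}\Delta(j,n)\rangle$ and $-\langle e_{i-1,i-1}, \Delta(j,n)\rangle$ respectively.

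Finally I would combine. Since $\gamma_r$ is an isometry on $\mathfrak{g}_{\Gamma_1^r}$ and vanishes on its orthogonal complement, $\langle\gamma_r e_{ii},\gamma_r\Delta(j,n)\rangle = \langle\pi_{\Gamma_1^r}e_{ii},\pi_{\Gamma_1^r}\Delta(j,n)\rangle$; adding the fifth term and using that $\pi_{\Gamma_1^r}$ and $\pi_{\hat{\Gamma}_1^r}$ are complementary orthogonal projections yields $\langle e_{ii},\Delta(j,n)\rangle$. Subtracting the sixth term then gives $\langle e_{ii}-e_{i-1,i-1},\Delta(j,n)\rangle = \delta_{ij}$, as $\Delta(j,n) = \sum_{k=j}^{n}e_{kk}$. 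The argument is essentially the $X$-mirror of Corollary~\ref{c:yhh}, so I expect no conceptual difficulty; the only points requiring genuine care are verifying the triangularity and weight statements for $g_{jj}$ (the $X$-analogues of the $Y$-facts used for $h_{jj}$) and keeping the isometry and orthogonality bookkeeping straight in the final combination.
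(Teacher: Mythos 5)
Your proof is correct and follows essentially the same route as the paper: specialize Formula \eqref{eq:ygpsi} to $\psi = g_{jj}$, kill the first, second, and fourth terms via $\nabla_Y g_{jj}=0$ and the triangularity $X\nabla_X g_{jj}\in\mathfrak{b}_+$, $\nabla_X g_{jj}\cdot X\in\mathfrak{b}_-$, then reduce the surviving three terms to diagonal pairings and combine the complementary projections $\pi_{\Gamma_1^r}$, $\pi_{\hat{\Gamma}_1^r}$ to get $\langle e_{ii}-e_{i-1,i-1},\Delta(j,n)\rangle=\delta_{ij}$. The only difference is that you spell out the reduction of the third term (upper-triangular trace pairing plus the isometry property of $\gamma_r$ on $\mathfrak{g}_{\Gamma_1^r}$), which the paper leaves implicit.
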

\begin{proof}
Recall that $X \nabla_X g_{jj} \in \mathfrak{b}_+$ and $\nabla_X g_{jj} \cdot X \in \mathfrak{b}_-$, so the first two terms together with the fourth one vanish; since $\pi_0(\xi_R^1) = e_{ii}$ and $\pi_0 (X \nabla^2_X) = \pi_0 (\nabla^2_XX) = \Delta(j,n)$, where $\Delta(j,n) = \sum_{k=j}^n e_{kk}$, we see that
\[\begin{split}
\{\log y(g_{ii}), \log g_{jj} \} &= \langle \pi_{\Gamma_1^r} e_{ii}, \pi_{\Gamma_1^r} \Delta(j,n) \rangle + \langle \pi_{\hat{\Gamma}_1^r} e_{ii}, \pi_{\hat{\Gamma}_1^r} \Delta(j,n)\rangle - \langle e_{i-1,i-1}, \Delta(j,n)\rangle = \\ &= \langle e_{ii} - e_{i-1,i-1}, \Delta(j,n) \rangle = \delta_{ij}.\qedhere
\end{split}
\]
\end{proof}

\begin{lemma}\label{l:blockg}
The following formulas hold for the last two terms in equation \eqref{eq:ygpsi}:
\[
\langle \pi_{\hat{\Gamma}_1^r} e_{ii}, X \nabla^2_X \rangle = \langle \pi_{\hat{\Gamma}_1^r} e_{ii}, \begin{bmatrix} 0 & 0 \\ 0 & \lgrl{2}_{K_t^2 \setminus \Phi_t^2}^{K_t^2 \setminus \Phi_t^2}\end{bmatrix}\rangle;
\]
\[
-\langle e_{i-1,i-1}, \eta_L^2 \rangle = -\sum_{u=1}^{s^2} \langle e_{i-1,i-1}, \begin{bmatrix} \grll{2}_{L_t^2}^{L_t^2} & 0 \\ 0 & 0\end{bmatrix} \rangle - \sum_{t=1}^{s^2} \langle e_{i-1,i-1}, \gamma_c^* \begin{bmatrix} 0 & 0 \\ 0 & \grll{1}_{\bar{L}_t^2 \setminus \Psi_{t+1}^2}^{\bar{L}_t^2 \setminus \Psi_{t+1}^2} \end{bmatrix}\rangle.
\]
\end{lemma}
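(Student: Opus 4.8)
The plan is to prove Lemma~\ref{l:blockg} by mirroring, term for term, the argument of Lemma~\ref{eq:h2last}, but transported from the ``row/right'' gradients $\nabla_Y Y$ and $\eta_R$ to the ``column/left'' gradients $X\nabla_X$ and $\eta_L$ that govern the $y$-coordinate of $g_{ii}$. Conceptually the two lemmas are exchanged by the involution $(X,Y)\mapsto(Y^t,X^t)$ recalled in Section~\ref{s:birat}, which sends $\bg$ to $\bg^{\mathrm{op}}$, interchanges $g$- and $h$-functions, and carries $\pi_{\hat{\Gamma}_1^r}$ to $\pi_{\hat{\Gamma}_2^c}$ and $\eta_L$ to $\eta_R$; this symmetry guarantees that every manipulation used for $h_{ii}$ has an exact counterpart here, so the whole of Lemma~\ref{l:blockg} can in principle be read off from Lemma~\ref{eq:h2last} applied to $\gc(\bg^{\mathrm{op}})$. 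I will nonetheless present the direct computation, since establishing that the bracket and all the auxiliary operators transform consistently under the involution is itself a bookkeeping task.

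For the first identity I would start from the $\mathcal{L}$-gradient expansion of Section~\ref{s:blockf}, writing $X\nabla^2_X$ as a sum over the $X$-blocks $X_t^2=X^{J_t^2}_{I_t^2}$ of $\mathcal{L}^2$ of the embedded pieces of $\mathcal{L}^2\nabla^2_{\mathcal{L}}$, using the identity $\mathcal{L}\nabla_{\mathcal{L}}\log\psi=\bigl[\begin{smallmatrix}0&*\\0&I\end{smallmatrix}\bigr]$. Decomposing each such term into a $3\times 3$ block array along the splitting of $K_t^2$ induced by $\Phi_t^2$ and by the $X$-run $\Delta(\alpha_t^2)$, the pairing against the diagonal matrix $\pi_{\hat{\Gamma}_1^r}e_{ii}$ annihilates all off-diagonal blocks, leaving only the two central diagonal blocks. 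The crucial point is that the block supported on $\Delta(\alpha_t^2)\times\Delta(\alpha_t^2)$ lies in $\mathfrak{g}_{\Gamma_1^r}$, since $\Delta(\alpha_t^2)$ is precisely a row $X$-run; hence $\pi_{\hat{\Gamma}_1^r}$ kills it, and what survives is exactly $\lgrl{2}_{K_t^2\setminus\Phi_t^2}^{K_t^2\setminus\Phi_t^2}$ paired with $\pi_{\hat{\Gamma}_1^r}e_{ii}$, as claimed.

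For the second identity I would expand $\eta_L^2=\nabla_X^2 X+\gamma_c^*(\nabla_Y^2 Y)$ directly from the definition of $\eta_L$ in Section~\ref{s:opers}, and then pair against the diagonal matrix $e_{i-1,i-1}$, so that again only diagonal sub-blocks contribute. The $\nabla_X^2 X$ summand produces, block by block, the upper-left terms $\grll{2}_{L_t^2}^{L_t^2}$ (the straightforward column analogue of the $\nabla_Y^2 Y$ expansion used in Lemma~\ref{eq:h2last}), while the $\gamma_c^*(\nabla_Y^2 Y)$ summand produces the $\gamma_c^*$-twisted lower-right terms supported on $\bar{L}_t^2\setminus\Psi_{t+1}^2$, the deletion of $\Psi_{t+1}^2$ being exactly the $\bar{L}$-run bookkeeping that appears in the companion lemma. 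As in the $h$-case, the underlying block formula for $\eta_L^2$ itself is already available in~\cite{plethora} (or is derivable verbatim by the computation given there for $\eta_R^2$), so the two sums assemble immediately once the diagonal projection is carried out.

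The main obstacle I anticipate is purely combinatorial: correctly tracking which sub-block of each $X$- or $Y$-block survives the diagonal projection and which is subsequently killed by $\pi_{\hat{\Gamma}_1^r}$ (respectively, isolated by $\gamma_c^*$). In particular one must verify carefully that the surviving central block genuinely coincides with the $X$-run $\Delta(\alpha_t^2)$, so that its membership in $\mathfrak{g}_{\Gamma_1^r}$ is valid, and one must reconcile the summation-range conventions (the indices $q'$, $s^2$, and the range of $t$) with those fixed in Section~\ref{s:blockf}. These are the places where an index- or off-by-one error would creep in, but no genuinely new idea beyond the $\{y(h_{ii}),\psi\}$ computation is required.
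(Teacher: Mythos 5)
Your proposal is correct and takes essentially the same route as the paper: the first identity is established by exactly the same block expansion of $X\nabla^2_X$ into embedded $\mathcal{L}$-gradient pieces, with the diagonal pairing killing the off-diagonal blocks and $\pi_{\hat{\Gamma}_1^r}$ annihilating the middle block supported on the row run $\Delta(\alpha_t^2)\times\Delta(\alpha_t^2)$, while the second identity rests on the block formula for $\eta_L^2$, which the paper simply imports from \cite{plethora} rather than re-deriving. The one bookkeeping subtlety (which falls under the combinatorial care you already flag) is that the blocks $\grll{2}_{L_t^2}^{L_t^2}$ are not produced by $\nabla^2_X X$ alone: on the $\Psi_t^2$-columns the product $\nabla^2_{\mathcal{L}}\mathcal{L}^2$ also picks up a contribution from the $Y_{t-1}^2$-block, and this missing piece is exactly supplied by the run-supported (middle) part of $\gamma_c^*(\nabla^2_Y Y)$, so the two summands of $\eta_L^2$ must be merged before the grouping stated in the lemma appears.
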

\begin{proof}
The gradient $X \nabla^2_X$ can be expressed as
\[\begin{split}
X\nabla^2_X &= \sum_{t=1}^{s^2} X (\nabla_{\mathcal{L}}^2)_{L_t^2 \rightarrow J_t^2}^{K_t^2 \rightarrow I_t^2} = \sum_{t=1}^{s^2} \begin{bmatrix} 0 & X_{\hat{I}_t^2}^{J_t^2} \grl{2}_{L_t^2}^{K_t^2} \\ 0 & \lm{2}_{K_t^2}^{L_t^2} \grl{2}_{L_t^2}^{K_t^2} \end{bmatrix} = \\ &= \sum_{t=1}^{s^2} \begin{bmatrix} 0 & X_{\hat{I}_t^2}^{J_t^2} \grl{2}_{L_t^2}^{\Phi_t^2} & X_{\hat{I}_t^2}^{J_t^2} \grl{2}_{L_t^2}^{K_t^2 \setminus \Phi_t^2} \\ 0 & \lm{2}_{\Phi_t^2}^{L_t^2} \grl{2}_{L_t^2}^{\Phi_t^2} & \lm{2}_{\Phi_t^2}^{L_t^2} \grl{2}_{L_t^2}^{K_t^2 \setminus \Phi_t^2} \\ 0 & \lm{2}_{K_t^2 \setminus \Phi_t^2}^{L_t^2} \grl{2}_{L_t^2}^{\Phi_t^2} & \lm{2}_{K_t^2 \setminus \Phi_t^2}^{L_t^2} \grl{2}_{L_t^2}^{K_t^2 \setminus \Phi_t^2} \end{bmatrix}
\end{split},
\]
where $\hat{I}_t^2 = [1,n] \setminus I_t^2$. Now, the projection onto diagonal matrices eliminates all off-diagonal blocks; a further projection onto $\mathfrak{g}_{\hat{\Gamma}_1^r}$ nullifies the middle block, for it occupies the location $\Delta(\alpha_t^2) \times \Delta(\alpha_t^2)$. Thus the first formula hold. For the second formula, one can use a block formula for $\eta_L^2$ derived in \cite{plethora} (which is also easily derivable in a similar manner).
\end{proof}

\begin{proposition}
The following formula holds:
\[
\{\log y(g_{ii}), \log \psi \} = \begin{cases} 1, \ &\psi = g_{ii}\\ 0, \ &\text{otherwise} \end{cases}
\]
\end{proposition}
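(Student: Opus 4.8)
The plan is to reproduce, in transposed form, the argument used for $\{y(h_{ii}),\psi\}$ in the previous proposition: Formula~\eqref{eq:ygpsi} is the mirror image of~\eqref{eq:yhpsi} under the interchange of $X$ with $Y$, of the indices $c$ with $r$, and of $\eta_L$ with $\eta_R$, while Lemma~\ref{l:blockg} plays the role of Lemma~\ref{eq:h2last}. First I would dispose of the diagonal case $\psi=g_{jj}$, which is precisely Corollary~\ref{c:ygg} and yields $\delta_{ij}$; it then remains to prove $\{y(g_{ii}),\psi\}=0$ whenever $\psi$ is not of the form $g_{jj}$.

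For $\psi\neq g_{jj}$ I would first record the structural constraint that drives every subsequent cancellation. Since the first function $\log g_{i,i-1}-\log g_{i+1,i}$ is assembled from $g$-functions, its leading block is an $X$-block, so $p=q$; and the hypothesis $\psi\neq g_{jj}$ forces $\beta_p^1\geq\beta_t^2$ and $\alpha_p^1\leq\alpha_t^2$ for every $X$-block $X_t^2$ occurring in the $\mathcal{L}$-matrix of $\psi$. Indeed, a violation $\beta_t^2>\beta_p^1$ would give $\beta_t^2=n$, i.e.\ $X_t^2=X$ spanning all columns, and $\alpha_t^2<\alpha_p^1$ would make $X_t^2$ span all rows; either way $\psi$ would be some $g_{jj}$. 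This is the exact transpose of the inequalities $\bar\beta_{p-1}^1\leq\bar\beta_{t-1}^2$ and $\bar\alpha_{p-1}^1\geq\bar\alpha_{t-1}^2$ used in the $h_{ii}$ computation.

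With this constraint in hand I would substitute the block expansions~\eqref{eq:fetal}--\eqref{eq:fxir}, the $g$-contribution~\eqref{eq:cbg}, and the two identities of Lemma~\ref{l:blockg} into~\eqref{eq:ygpsi}, and then discard the vanishing terms exactly as in the bullet list of the $h_{ii}$ proof, read through the transpose. Concretely: every term not containing a gradient of the first function drops; the sums $\sum^a$ over $X$-blocks whose exit point lies strictly above that of $X_p^1$ vanish, because such an exit point must be $(1,1)$ and would again force $\psi=g_{jj}$; the factors $\grll{1}_{\rho(L_t^2)}^{\rho(L_t^2)}$ (for $\beta_t^2<\beta_p^1$) and $\lgrl{1}_{K_u^1\setminus\Phi_u^1}^{K_u^1\setminus\Phi_u^1}$ vanish by the geometry of the leading $X$-block; and the one delicate term, built from $\lgrl{1}$ with the run $\Delta(\alpha_p^1)$ deleted, survives only as $e_{ii}$ away from the leftmost $X$-run, so that the projections $\pi_{\Gamma_1^r}e_{ii}$ and $\pi_{\hat{\Gamma}_1^r}e_{ii}$ recombine into a single $\langle e_{ii},\cdot\rangle$. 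Finally, mimicking the passage from~\eqref{eq:theg} and~\eqref{eq:ffs} to zero in the $h_{ii}$ case, these merged pieces form one $\langle e_{ii},\cdot\rangle$ contribution while the remaining gradient sums telescope into a single bracket of the form $\langle\lgrl{1}\cdots\,\lgrl{2}\cdots\rangle$, and the two cancel, leaving $0$.

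The main obstacle is entirely bookkeeping: one must re-derive each vanishing statement and each pairwise cancellation in terms of the $X$-block parameters $(\alpha_t,\beta_t)$ in place of the $Y$-block parameters $(\bar\alpha_t,\bar\beta_t)$, while respecting that here $p=q$ rather than $p=q+1$ and tracking the indicator $(\Phi_p^1=\emptyset)$. No new Poisson-geometric ingredient is required; every component is already contained in Sections~\ref{s:bragh} and~\ref{s:blockf}, in Lemma~\ref{l:blockg}, and in the preceding proposition, and the combinatorics is the faithful mirror of that computation.
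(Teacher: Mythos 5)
Your plan is, in outline, exactly the paper's proof: the paper likewise disposes of $\psi=g_{jj}$ via Corollary~\ref{c:ygg}, derives the constraint $\alpha_p^1\leq\alpha_t^2$, $\beta_p^1\geq\beta_t^2$ from $\psi\neq g_{jj}$ (with the same "otherwise $\psi$ would be some $g_{jj}$" argument), expands~\eqref{eq:ygpsi} through the block formulas of Section~\ref{s:blockf} and Lemma~\ref{l:blockg}, discards vanishing terms, isolates the delicate term built from $\lgrl{1}$ with the run $\Delta(\alpha_p^1)$ removed, merges $\pi_{\Gamma_1^r}e_{ii}$ with $\pi_{\hat{\Gamma}_1^r}e_{ii}$ into $\langle e_{ii},\cdot\rangle$, and cancels the result against the telescoped gradient sum. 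So the strategy and all key ingredients coincide with the paper's.

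One concrete correction, though: your explicit list of vanishing factors is the untransposed $h$-case list, and in the $g$-case two of its entries are false. Since the leading block of the first function is now the $X$-block $X_p^1$, the gradient of $\log g_{i,i-1}-\log g_{i+1,i}$ is supported precisely on the rows and columns of that block. Consequently $\grll{1}_{\rho(L_t^2)}^{\rho(L_t^2)}$ (for $\beta_t^2<\beta_p^1$) does \emph{not} vanish: in the paper's expansion this is a surviving sum, which must be combined with the $\beta_t^2=\beta_p^1$ sum and used to cancel the term coming from $-\langle e_{i-1,i-1},\eta_L^2\rangle$; discarding it would leave that contribution uncancelled. Likewise $\lgrl{1}_{K_p^1\setminus\Phi_p^1}^{K_p^1\setminus\Phi_p^1}$ is precisely your delicate $e_{ii}$-term (it vanishes only for $u\neq p$), so it cannot appear on the vanishing list. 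The correctly transposed vanishing statements are $\grll{1}_{\bar{L}_p^1}^{\bar{L}_p^1}=0$, $\lgrl{1}_{\sigma_p(\bar{K}_t^2)}^{\sigma_p(\bar{K}_t^2)}=0$ for $\bar{\alpha}_t^2<\bar{\alpha}_p^1$, $\lgrl{1}_{\bar{K}_u^1}^{\bar{K}_u^1}=0$ for $u<p$, and the vanishing, for all $u$, of $\grll{1}$ restricted to $\bar{L}_u^1\setminus\Psi_{u+1}^1$ with the run $\bar{\Delta}(\bar{\beta}_u^1)$ removed. Your meta-rule of re-deriving each vanishing statement after transposition would catch this, but as written the list would break the final cancellation, so it must be fixed when the argument is written out in full.
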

\begin{proof}
First of all, let us assume that $\psi \neq g_{jj}$ for all $j$, for this case was studied in Corollary~\ref{c:ygg}. As a consequence, we can assume throughout the proof that for any $X_t^2 = X_{[\alpha_t^2,n]}^{[1,\beta^2_t]}$, we have $\alpha_p^1 \leq \alpha_t^2$ and $\beta_p^1 \geq \beta_{t}^2$. Indeed, if\footnote{Note that if $\alpha_p^1=\alpha_t^2 = 1$, it doesn't follow that $\psi$ is a trailing minor of $X$, for in this case $\psi$ can also be a trailing minor of $\mathcal{L}^1$.}  $\alpha_p^1 > \alpha_t^2$, then $\alpha_p^1 = 2$ and $\alpha_t^2 = 1$, which implies that $\psi = g_{jj}$ for some $j$. A similar reasoning applies to $\beta_p^1$, for $\beta_p^1 \in \{n-1,n\}$.

Now, we need to expand every term in equation \eqref{eq:ygpsi} using block formulas from Section~\ref{s:blockf} and Lemma~\ref{l:blockg}. We can say right away that some of the terms in the block formulas vanish via the following observations:
\begin{itemize}
\item All terms that do not contain the first function are zero. For instance, $\sum \langle \lm{2}_{\bar{K}_{t-1}^2}^{L_t^2} \grl{2}_{L_t^2}^{\bar{K}_{t-1}^2} \rangle = 0$;
\item The sums $\sum^{a}$ vanish, for they are taken over the blocks $X_t^2$ that have their exit point above the exit point of $X_p^1$. Since the exit point of the latter is $(2,1)$, the exit point of the former must be $(1,1)$, which is precisely the case $\psi = g_{jj}$, which in turn is excluded from the consideration;
\item $\grll{1}^{\bar{L}_p^1}_{\bar{L}_p^1} = 0$, for the leading block of the first function is $X_p^1$ and $\bar{L}_p^1$ spans the rows of $Y_p^1$;
\item $\lgrl{1}_{\sigma_{p}(\bar{K}_t^2)}^{\sigma_{p}(\bar{K}_t^2)} = 0$ if $\bar{\alpha}_t^2 < \bar{\alpha}_p^1$. This relation holds due to $\sigma_p(\bar{K}_t^2) \subseteq \bar{K}_p^1 \setminus \Phi_p^1$, and the gradient is zero along the latter rows;
\item $\lgrl{1}_{\bar{K}_u^1}^{\bar{K}_u^1} = 0$ for $u < p$;
\item For $u = p$, even though $\lgrl{1}_{\bar{K}_p^1}^{\bar{K}_p^1}$ might be nonzero, the only term it belongs to vanishes:
\[
\langle \lgrl{1}_{\bar{K}_p^1 \rightarrow \bar{I}_p^1}^{\bar{K}_p^1 \rightarrow \bar{I}_p^1}, \gamma_r \lgrl{2}_{K_t^2 \setminus \Phi_t^2 \rightarrow I_t^2 \setminus \Delta(\alpha_t^2)}^{K_t^2 \setminus \Phi_t^2 \rightarrow I_t^2 \setminus \Delta(\alpha_t^2)} \rangle = \langle \lgrl{1}_{\Phi_p \rightarrow \Delta(\alpha_p^1)}^{\Phi_p \rightarrow \Delta(\alpha_p^1)}, \lgrl{2}_{K_t^2 \setminus \Phi_t^2 \rightarrow I_t^2 \setminus \Delta(\alpha_t^2)}^{K_t^2 \setminus \Phi_t^2 \rightarrow I_t^2 \setminus \Delta(\alpha_t^2)} \rangle = 0,
\]
for if $\Delta(\alpha_p^1)$ is nontrivial, it is the leftmost $X$-run, and since we cut out an $X$-run in the second matrix, the result is zero.
\item $\grll{1}_{\bar{L}_u^1 \setminus \Psi_{u+1}^1 \rightarrow \bar{J}_u^1 \setminus \bar{\Delta}(\bar{\beta}_u^1)}^{\bar{L}_u^1 \setminus \Psi_{u+1}^1 \rightarrow \bar{J}_u^1 \setminus \bar{\Delta}(\bar{\beta}_u^1)} = 0$ for all $u$;
\item All terms that involve the blocks of the first function with indices $u < p-1$ vanish; 
\end{itemize}
Additionally, one term requires a special care. Observe that
\[
\lgrl{1}_{K_p^1 \setminus \Phi_p^1 \rightarrow I_p^1 \setminus \Delta(\alpha_p^1)}^{K_p^1 \setminus \Phi_p^1 \rightarrow I_p^1 \setminus \Delta(\alpha_p^1)} = \begin{cases}
e_{ii}, \ \ i \notin \Delta(\alpha_p^1)\\
0, \ \ \text{otherwise}
\end{cases}
\]
We argue that 
\[
\langle \lgrl{1}_{K_p^1 \setminus \Phi_p^1 \rightarrow I_p^1 \setminus \Delta(\alpha_p^1)}^{K_p^1 \setminus \Phi_p^1 \rightarrow I_p^1 \setminus \Delta(\alpha_p^1)}, \pi_{\Gamma_1^r} \lgrl{2}_{K_t^2 \setminus \Phi_t^2 \rightarrow I_t^2 \setminus \Delta(\alpha_t^2)}^{K_t^2 \setminus \Phi_t^2 \rightarrow I_t^2 \setminus \Delta(\alpha_t^2)} \rangle =  \langle \pi_{\Gamma_1^r} e_{ii}, \begin{bmatrix} 0 & 0 \\ 0 & \lgrl{2}_{K_t^2 \setminus \Phi_t^2}^{K_t^2 \setminus \Phi_t^2}\end{bmatrix} \rangle.
\]
Indeed, the formula follows right away if $i \notin \Delta(\alpha_p^1)$. On the contrary, if $i \in \Delta(\alpha_p^1)$, then the LHS is zero; such is the RHS as well, for we remove an $X$-run from the second matrix.

With all the above observations, formula \eqref{eq:ygpsi} expands as
\[\begin{split}
\{\log y(g_{ii}), \log \psi \} &= - \sum_{\beta_t^2 < \beta_p^1} \langle \lgrl{1}_{\rho(K_t^2)}^{\rho(K_t^2)} \lgrl{2}_{K_t^2}^{K_t^2} + \sum_{\beta_t^2 < \beta_p^1} \langle \grll{1}_{\rho(L_t^2)}^{\rho(L_t^2)} \grll{2}_{L_t^2}^{L_t^2} \rangle + \\ & + \sum_{t=1}^{s^2}\langle \pi_{\Gamma_1^r} e_{ii}, \begin{bmatrix} 0 & 0 \\ 0 & \lgrl{2}_{K_t^2 \setminus \Phi_t^2}^{K_t^2 \setminus \Phi_t^2}\end{bmatrix} \rangle + \sum_{\beta_t^2 < \beta_p^1} \langle \lgrl{1}_{\rho(\Phi_t^2)}^{\rho(\Phi_t^2)} \lgrl{2}_{\Phi_t^2}^{\Phi_t^2} \rangle
+ \\+ &\sum_{t=1}^{s^2} \langle \grll{1}_{L_p^1 \rightarrow J_p^1}^{L_p^1 \rightarrow J_p^1}, \gamma_c^* \grll{2}_{\bar{L}_t^2 \setminus \Psi_{t+1}^2 \rightarrow \bar{J}_t^2 \setminus \bar{\Delta}(\bar{\beta}_t^2)}^{\bar{L}_t^2 \setminus \Psi_{t+1}^2 \rightarrow \bar{J}_t^2 \setminus \bar{\Delta}(\bar{\beta}_t^2)} - \\ &  - \sum_{\beta_t^2 = \beta_p^1} \langle \lgrl{1}_{\rho(K_t^2 \setminus \Phi_t^2)}^{\rho(K_t^2 \setminus \Phi_t^2)} \lgrl{2}_{K_t^2 \setminus \Phi_t^2}^{K_t^2 \setminus \Phi_t^2} \rangle + \\ &+ \sum_{\beta_t^2 = \beta_p^1} \langle \grll{1}_{L_p^1}^{L_p^1} \grll{2}_{L_t^2}^{L_t^2} \rangle + \sum_{t=1}^{s^2} \langle \pi_{\hat{\Gamma}_1^r}, \begin{bmatrix} 0 & 0 \\ 0 & \lgrl{2}_{K_t^2 \setminus \Phi_t^2}^{K_t^2 \setminus \Phi_t^2} \end{bmatrix} \rangle - \\ &- \sum_{t=1}^{s^2} \langle e_{i-1,i-1}, \begin{bmatrix} \lgrl{1}_{L_t^2}^{L_t^2} & 0 \\ 0 & 0 \end{bmatrix} \rangle - \sum_{t=1}^{s^2} \langle e_{i-1,i-1}, \gamma_c^* \begin{bmatrix} 0 & 0 \\ 0 & \grll{1}_{\bar{L}_t^2 \setminus \Psi_{t+1}^2}^{\bar{L}_t^2 \setminus \Psi_{t+1}^2} \end{bmatrix} \rangle.
\end{split}
\]
Now let's combine these terms together. The conditions under the second and the seventh sums combine into $\beta_t^2 \leq \beta_p^1$, which is satisfied for all blocks due to our assumption; hence, these terms cancel out with the ninth sum. The fifth term cancels out with the last one. The third and the eighth term combine into
\begin{equation}\label{eq:ygpenlast}\begin{split}
\sum_{t=1}^{s^2} \langle \pi_{\Gamma_1^r} e_{ii}, \begin{bmatrix} 0 & 0 \\ 0 & \lgrl{2}_{K_t^2 \setminus \Phi_t^2}^{K_t^2 \setminus \Phi_t^2} \end{bmatrix} \rangle + \sum_{t=1}^{s^2} \langle \pi_{\Gamma_1^r} e_{ii}&, \begin{bmatrix} 0 & 0 \\ 0 & \lgrl{2}_{K_t^2 \setminus \Phi_t^2}^{K_t^2 \setminus \Phi_t^2} \end{bmatrix} \rangle = \\ &= \sum_{t=1}^{s^2} \langle e_{ii}, \begin{bmatrix} 0 & 0 \\ 0 & \lgrl{2}_{K_t^2 \setminus \Phi_t^2}^{K_t^2 \setminus \Phi_t^2} \end{bmatrix} \rangle.
\end{split}
\end{equation}
The remaining terms (the first, the fourth and the sixth) add up to
\begin{equation}\label{eq:yglast}\begin{split}
- \sum_{\beta_t^2 < \beta_p^1} \langle \lgrl{1}_{\rho(K_t^2)}^{\rho(K_t^2)} \lgrl{2}_{K_t^2}^{K_t^2} \rangle &+ \sum_{\beta_t^2 < \beta_p^1} \langle \lgrl{1}_{\rho(\Phi_t^2)}^{\rho(\Phi_t^2)} \lgrl{2}_{\Phi_t^2}^{\Phi_t^2} \rangle - \\ &- \sum_{\beta_t^2 = \beta_p^1} \langle \lgrl{1}_{\rho(K_t^2 \setminus \Phi_t^2)}^{\rho(K_t^2\setminus \Phi_t^2)} \lgrl{2}_{K_t^2 \setminus \Phi_t^2}^{K_t^2 \setminus \Phi_t^2}\rangle = \\ = &-\sum_{t=1}^{s^2} \langle \lgrl{1}_{\rho(K_t^2 \setminus \Phi_t^2)}^{\rho(K_t^2\setminus \Phi_t^2)} \lgrl{2}_{K_t^2 \setminus \Phi_t^2}^{K_t^2 \setminus \Phi_t^2}\rangle,
\end{split}
\end{equation}
where we used the fact that $\beta_t^2 \leq \beta_p^1$ is satisfied for all blocks under the stated assumption. Now notice that the total contribution of equations \eqref{eq:ygpenlast} and \eqref{eq:yglast} is zero. Thus the result follows.
\end{proof}
\section{Case of $D(\SL_n)$}\label{s:adj_sln}
In this section, we show how to derive Theorem~\ref{t:mainsln} from Theorem~\ref{t:main}. Let us restate Theorem~\ref{t:mainsln} here for convenience:

\begin{theorem*}
Let $\bg = (\bg^r, \bg^c)$ be a pair of aperiodic oriented Belavin-Drinfeld triples. There exists a generalized cluster structure $\gc(\bg)$ on $D(\SL_n) = \SL_n \times \SL_n$ such that
\begin{enumerate}[(i)]
\item The number of stable variables is $k_{\bg^r}+k_{\bg^c} + (n-1)$, and the exchange matrix has full rank;
\item The generalized cluster structure $\gc(\bg)$ is regular, and the ring of regular functions $\mathcal{O}(D(\SL_n))$ is naturally isomorphic to the upper cluster algebra $\bar{\mathcal{A}}_{\mathbb{C}}(\gc(\bg))$;\label{tm:iso2}
\item The global toric action of $(\mathbb{C}^*)^{k_{\bg^r}+k_{\bg^c}}$ on $\gc(\bg)$ is induced by the left action of $\mathcal{H}_{\bg^r}$ and the right action of $\mathcal{H}_{\bg^c}$ on $D(\SL_n)$;\label{tm:tor2}
\item Any Poisson bracket defined by the pair $\bg$ on $D(\SL_n)$ is compatible with $\gc(\bg)$.\label{tm:compb2}
\end{enumerate}
\end{theorem*}

Let $\bg:=(\bg^r,\bg^c)$ be an oriented aperiodic Belavin-Drinfeld pair. Fix a choice of $(R_0^r,R_0^c)$ for the Poisson bracket $\{\cdot,\cdot\}_{D(\SL_n)}$ on $D(\SL_n)$ and extend both $R_0^r$ and $R_0^c$ to the Cartan subalgebra of $\gl_n$ via $R_0^r(I) := R_0^c(I) := (1/2)I$, where $I$ is the identity matrix. Let $\{\cdot,\cdot\}_{D(\GL_n)}$ be the resulting Poisson bracket on $D(\GL_n)$.

\begin{lemma}\label{l:prj}
Under the above choice of $(R_0^r,R_0^c)$, the restriction map $\mathcal{O}(D(\GL_n)) \rightarrow \mathcal{O}(D(\SL_n))$ is Poisson; in other words, for any $f_1,f_2 \in \mathcal{O}(D(\GL_n))$,
\begin{equation}\label{eq:restrpoiss}
\{f_1,f_2\}_{D(\GL_n)}(X,Y) = \{f_1|_{D(\SL_n)},f_2|_{D(\SL_n)}\}_{D(\SL_n)}(X,Y), \ \ (X,Y) \in D(\SL_n).
\end{equation}
\end{lemma}
\begin{proof}
Let $\pi_*$ the projection of $\gl_n$ onto $\sll_n$
\[
\pi_* : \gl_n \rightarrow \sll_n, \ \ \pi_*(A) = A-\frac{1}{n}\tr(A)I,
\]
and let $f_1$ and $f_2$ be regular functions on $D(\GL_n)$. Recall that
\[\begin{split}
\{f_1,f_2\}_{D(\GL_n)}(X,Y) = \langle R_+^c&(E_Lf_1),E_Lf_2\rangle - \langle R_+^r (E_R f_1), E_R f_2\rangle+\\ + &\langle X\nabla_X f_1, Y\nabla_Y f_2\rangle - \langle \nabla_X f_1 X, \nabla_Y f_2 Y\rangle;
\end{split}
\]
\[\begin{split}
\{f_1|_{D(\SL_n)},f_2|_{D(\SL_n)}\}_{D(\SL_n)}(X,Y) = \langle R_+^c&(\pi_*(E_Lf_1)),\pi_*(E_Lf_2)\rangle - \langle R_+^r (\pi_*(E_R f_1)), \pi_*(E_R f_2)\rangle + \\ + &\langle \pi_*(X\nabla_X f_1), \pi_*(Y\nabla_Y f_2)\rangle - \langle \pi_*(\nabla_X f_1 X), \pi_*(\nabla_Y f_2 Y)\rangle.
\end{split}
\]
A simple computation shows that
\[
\langle R_+^c(E_Lf_1),E_Lf_2\rangle - \langle R_+^c(\pi_*(E_Lf_1)),\pi_*(E_Lf_2)\rangle = \frac{1}{2n} \tr(E_L f_1) \tr(E_L f_2);
\]
\[
\langle R_+^r(E_Rf_1),E_Rf_2\rangle - \langle R_+^r(\pi_*(E_Rf_1)),\pi_*(E_Rf_2)\rangle = \frac{1}{2n} \tr(E_R f_1) \tr(E_R f_2);
\]
\[
\langle X\nabla_X f_1, Y\nabla_Y f_2\rangle  - \langle \pi_*(X\nabla_X f_1), \pi_*(Y\nabla_Y f_2)\rangle = \frac{1}{n}\tr(X\nabla_X f_1) \tr(Y\nabla_Y f_2);
\]
\[
\langle \nabla_X f_1 X, \nabla_Y f_2 Y\rangle - \langle \pi_*(\nabla_X f_1 X), \pi_*(\nabla_Y f_2 Y)\rangle = \frac{1}{n}\tr(\nabla_X f_1 X)\tr(\nabla_Y f_2 Y).
\]
Now, since $\tr(AB) = \tr(BA)$, we see that $\tr(E_Lf_1) = \tr(E_R f_1)$, $\tr(X\nabla_X f_1) = \tr(\nabla_X f_1 X)$, and so on. Combining the above identities yields equation~\eqref{eq:restrpoiss}.
\end{proof}

Now let us turn to the proof of Theorem~\ref{t:mainsln}:
\begin{proof}
The initial extended seed on $D(\SL_n)$ is obtained from the initial extended seed on $D(\GL_n)$ via setting $\det X = \det Y = 1$ and removing the frozen variables $g_{11}$ and $h_{11}$. Therefore, the total number of frozen variables on $D(\SL_n)$ is $k_{\bg^r} + k_{\bg^c}$. Part~\ref{tm:iso2} is trivial: Given any regular function on $D(\SL_n)$, one can extend it to a regular function on $D(\GL_n)$, then express the function as a Laurent polynomial in terms of any extended cluster, and finally restrict it back to $D(\SL_n)$. 

Any extended cluster on $D(\SL_n)$ is log-canonical due to Lemma~\ref{l:prj} and the fact that the statement is true for $D(\GL_n)$. Let $\tilde{B}_{D(\GL_n}$ and $\tilde{B}_{D(\SL_n)}$ be the extended exchange matrices for the initial extended seeds, and let $\Omega_{D(\GL_n)}$ and $\Omega_{D(\SL_n)}$ be the coefficient matrices of the brackets. Due to the choice of $(R^r_0,R^c_0)$, $\det X$ and $\det Y$ are Casimirs on $D(\GL_n)$, and therefore the corresponding rows and columns of $\Omega_{D(\GL_n)}$ are zero; furthermore, the log-coefficient of any pair of variables on $D(\SL_n)$ coincides with the log-coefficient on $D(\GL_n)$ due to Lemma~\ref{l:prj}. Since in addition $\tilde{B}_{D(\GL_n)} \Omega_{D(\GL_n)} = \begin{bmatrix} I & 0 \end{bmatrix}$ (compatibility with the Poisson bracket), it follows that $\tilde{B}_{D(\SL_n)} \Omega_{D(\SL_n)} = \begin{bmatrix} I & 0 \end{bmatrix}$, and thus, by Proposition~\ref{p:compb}, the Poisson bracket $\{\cdot,\cdot\}_{D(\SL_n)}$ is compatible with the generalized cluster structure on $D(\SL_n)$. In particular, $\tilde{B}_{D(\SL_n)}$ has full rank.

For part~\ref{tm:tor2}, we use the groups $\mathcal{H}_{\bg^r}$ and $\mathcal{H}_{\bg^c}$ that were defined in Section~\ref{s:tor} (the difference is that we no longer have the two-dimensional action by scalar matrices). Note that $\mathcal{H}_{\bg^r},\mathcal{H}_{\bg^c}\subseteq \SL_n$, hence these groups induce zero weights on the frozen variables $g_{11} = \det X$ and $h_{11} = \det Y$. As a result, if $W_{D(\SL_n)}$ and $W_{D(\GL_n)}$ are the weight matrices for the actions of $\mathcal{H}_{\bg^r}\times \mathcal{H}_{\bg^c}$ upon $D(\GL_n)$ and $D(\SL_n)$, the identity $\tilde{B}_{D(\SL_n)}W_{D(\SL_n)} = 0$ follows from the identity $\tilde{B}_{D(\GL_n)}W_{D(\GL_n)} = 0$, for the weights of $g_{11}$ and $h_{11}$ are zero. We conclude from Proposition~\ref{p:toric} that the local toric action induced by $\mathcal{H}_{\bg^r}\times \mathcal{H}_{\bg^c}$ on $D(\SL_n)$ is $\gc$-extendable.
\end{proof}
\section{Selected examples}\label{s:exs}
In this section, we provide three examples of generalized cluster structures on $\GL_n \times \GL_n$ for $n\in \{3,4,5\}$. Note that some of the arrows in the quivers are dashed only for convenience; their weight is equal to $1$, as the weight of all the other arrows. For more examples, the reader may visit the author's github repository: 

\begin{center}\href{https://github.com/Grabovskii/GSV_Examples}{github.com/Grabovskii/GSV\_Examples}\end{center}

\subsection{Cremmer-Gervais $i\mapsto i-1$, $n = 3$}
The initial quiver is illustrated in Figure~\ref{f:n3nst}. There are two $\mathcal{L}$-matrices:
\[
\mathcal{L}_1(X,Y) = \begin{bmatrix}
x_{21} & x_{22} & x_{23} & \\
x_{31} & x_{32} & x_{33} & \\
 & y_{11} & y_{12} & y_{13}\\
 & y_{21} & y_{22} & y_{23}
\end{bmatrix}, \ \ \mathcal{L}_2(X,Y) = \begin{bmatrix} y_{13} & x_{21}\\ y_{23} & x_{31}\end{bmatrix}.
\]

\begin{figure}[htb]
\begin{center}
\includegraphics[scale=0.45]{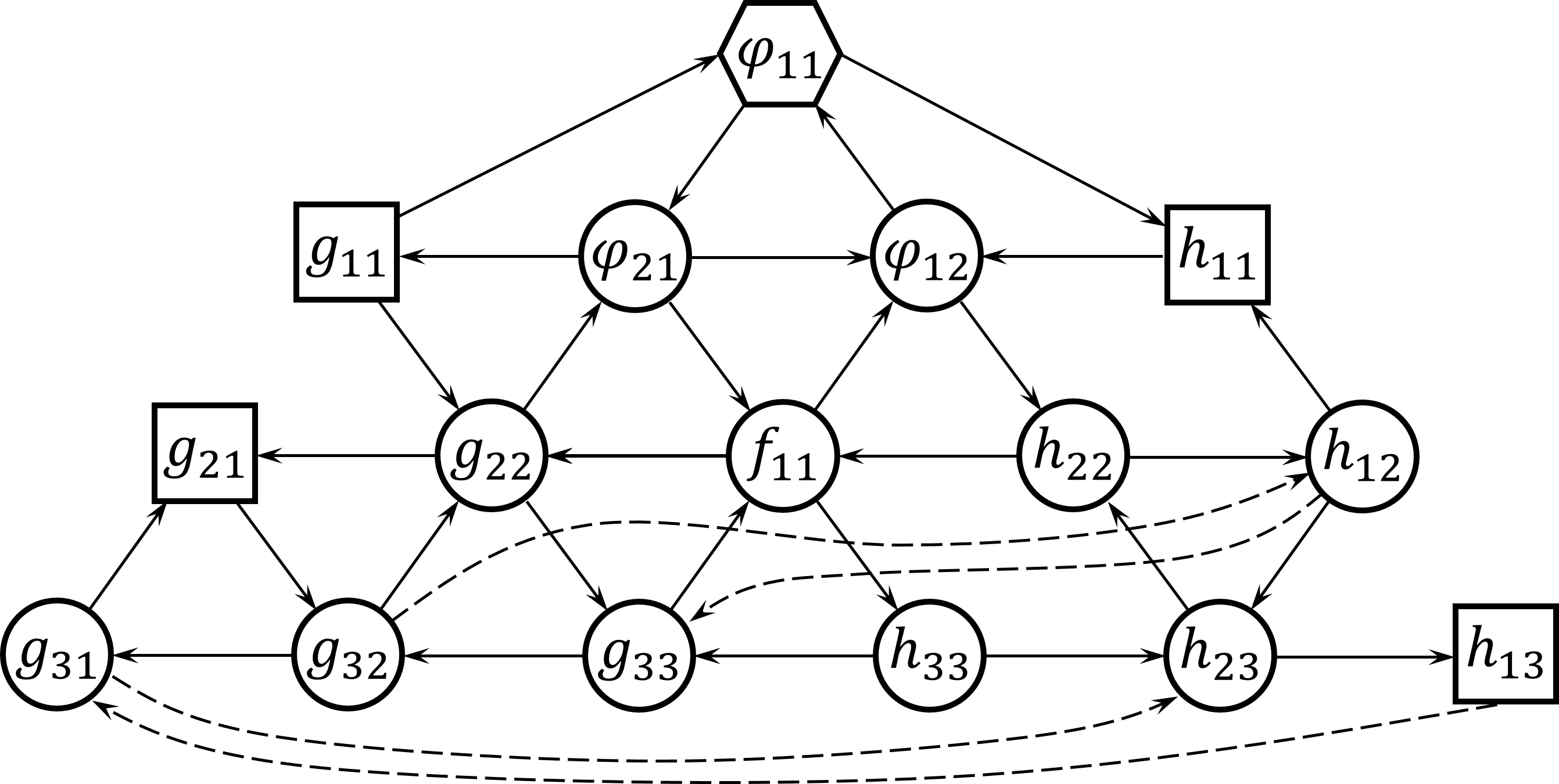}
\end{center}
\caption{The initial quiver for the Cremmer-Gervais structure in $n=3$, $\Gamma_1^r=\Gamma_1^c = \{2\}$,\hspace{0.3in} \mbox{$\Gamma_2^r=\Gamma_2^c = \{1\}$.}}
\label{f:n3nst}
\end{figure}

\subsection{Cremmer-Gervais $i \mapsto i+1$, $n=4$}
The initial quiver is illustrated in Figure~\ref{f:n4nst}. As we showed in Example~\ref{ex:cg4}, there are two $\mathcal{L}$-matrices:
\[
\mathcal{L}_1(X,Y) = \begin{bmatrix}x_{41} & x_{42} & x_{43} & 0 & 0 & 0\\ y_{12} & y_{13} & y_{14} & 0 & 0 & 0\\ y_{22} & y_{23} & y_{24} & x_{11} & x_{12} & x_{13} \\ y_{32} & y_{33} & y_{34} & x_{21} & x_{22} & x_{23}\\ y_{42} & y_{43} & y_{44} & x_{31} & x_{32} & x_{33}\\  0 & 0 & 0 & x_{41} & x_{42} & x_{43}  \end{bmatrix}, \ \ \ 
\mathcal{L}_2(X,Y) = \begin{bmatrix}
y_{12} & y_{13} & y_{14} & 0 & 0 & 0\\
y_{22} & y_{23} & y_{24} & x_{11} & x_{12} & x_{13}\\
y_{32} & y_{33} & y_{34} & x_{21} & x_{22} & x_{23}\\
y_{42} & y_{43} & y_{44} & x_{31} & x_{32} & x_{33}\\
0 & 0 & 0 & x_{41} & x_{42} & x_{43}\\
0 & 0 & 0 & y_{12} & y_{13} & y_{14}
\end{bmatrix}.
\]
\begin{figure}[htb]
\begin{center}
\includegraphics[scale=0.5]{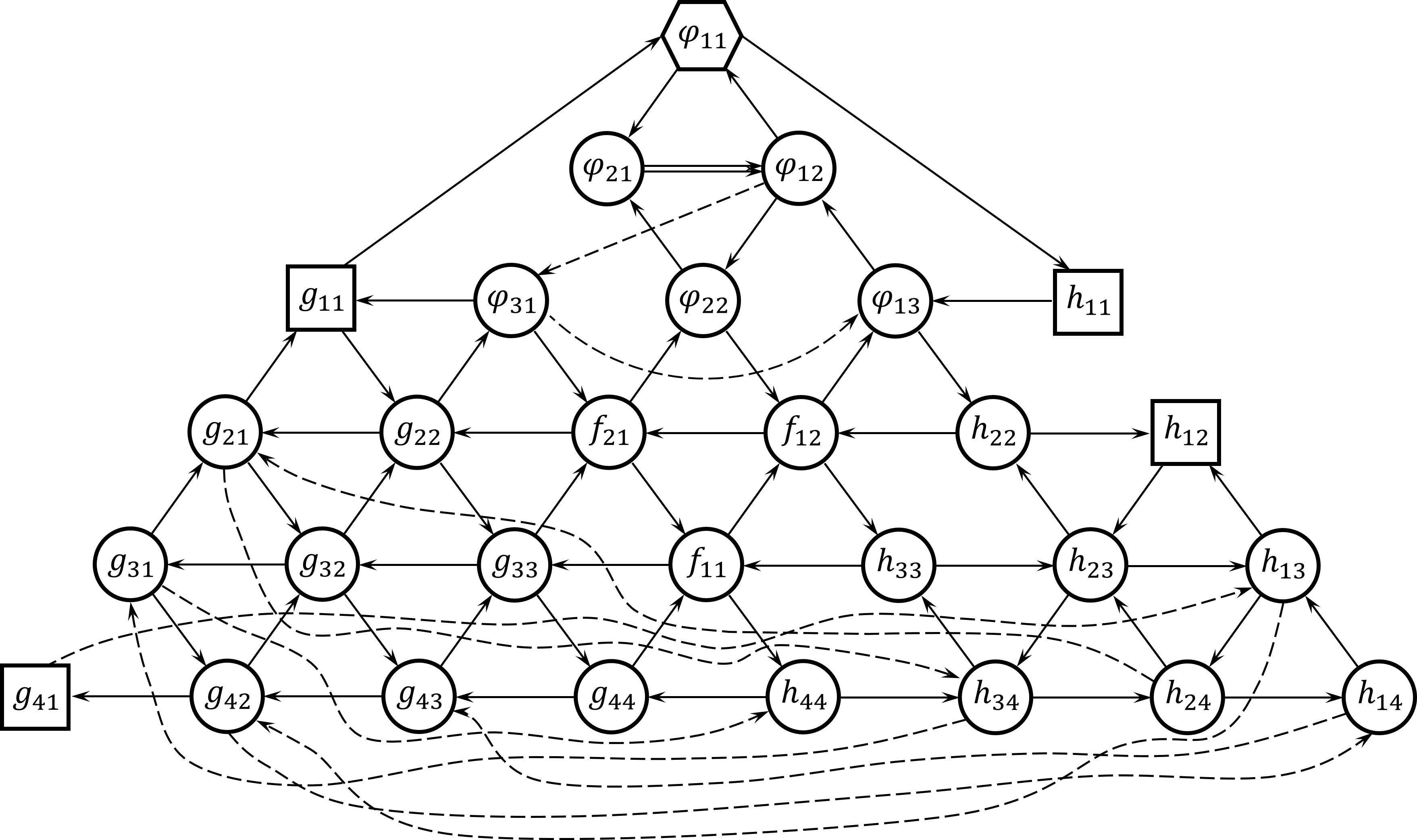}
\end{center}
\caption{The initial quiver for Cremmer-Gervais structure $i \mapsto i+1$, $\bg^r=\bg^c$, $n=4$.}
\label{f:n4nst}
\end{figure}

\subsection{An example with different $\bg^r$ and $\bg^c$, $n=5$}

The initial quiver of the resulting $\gc(\bg^r,\bg^c)$ is illustrated in Figure~\ref{f:n5nst}.

\begin{figure}[htb]
\begin{center}
\includegraphics[scale=0.43]{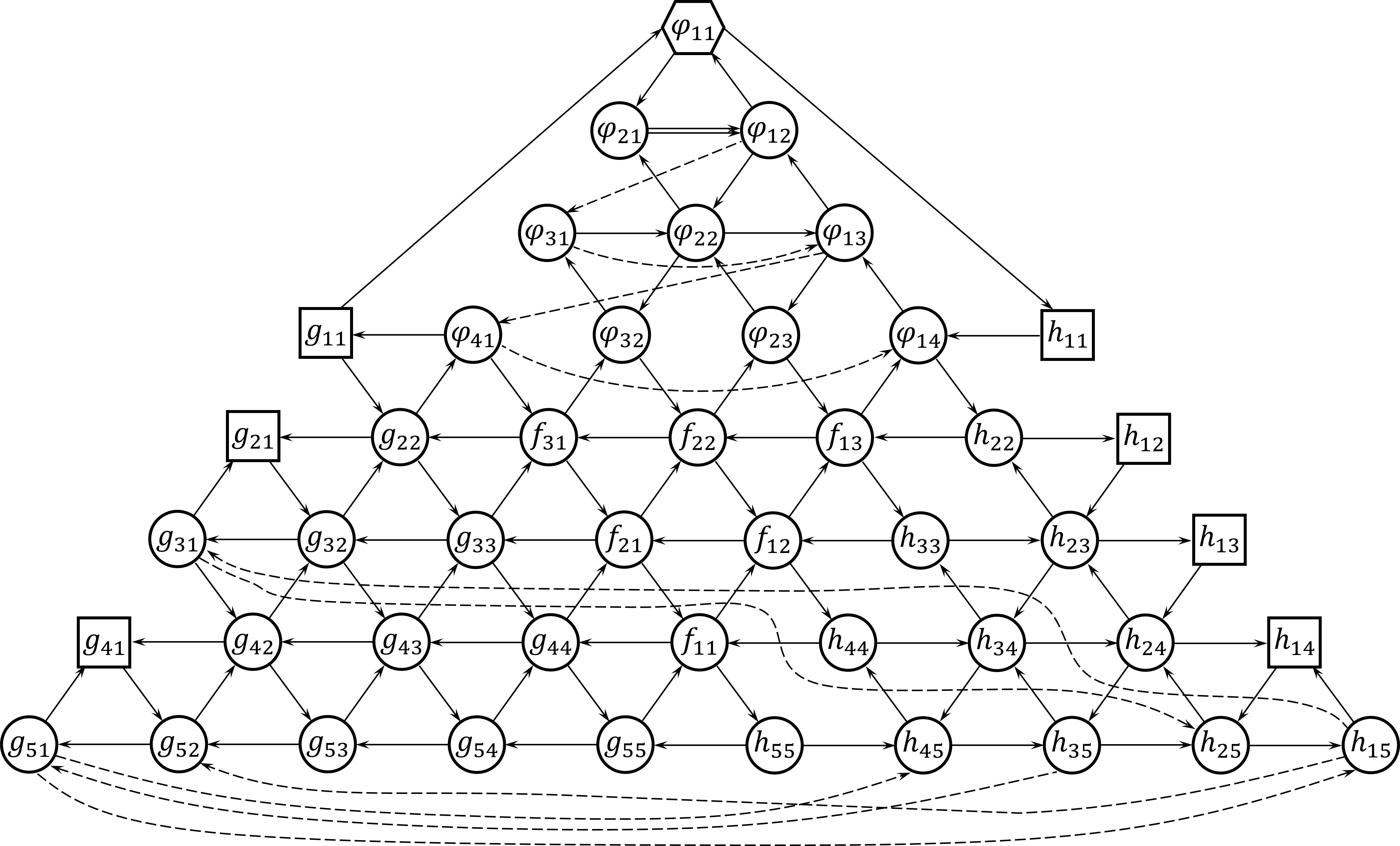}
\end{center}
\caption{The initial quiver for the generalized cluster structure on $\GL_5 \times \GL_5$ induced by the BD pair $\bg=(\bg^r,\bg^c)$ with $\Gamma_1^r = \{2,4\}$, $\Gamma_2^r = \{1,3\}$, $\gamma_r(2) = 1$, $\gamma_r(4) = 3$, $\Gamma_1^c = \{1\}$, $\Gamma_2^c = \{4\}$, $\gamma_c(1) = 4$.}
\label{f:n5nst}
\end{figure}

Set $n:=5$, $\Gamma_1^r := \{2,4\}$, $\Gamma_2^r := \{1,3\}$, $\gamma_r(2): = 1$, $\gamma_r(4): = 3$; $\Gamma_1^c := \{1\}$, $\Gamma_2^c: = \{4\}$, $\gamma_c(1) := 4$. There are six $\mathcal{L}$-matrices, five of which are trivial and one nontrivial:
\[
\mathcal{L}_1(X,Y):= \begin{bmatrix}
y_{13} & y_{14} & y_{15} &  &  &  &  & \\
y_{23} & y_{24} & y_{25} &  &  &  &  & \\
y_{33} & y_{34} & y_{35} & x_{41} & x_{42} &  &  & \\
y_{43} & y_{44} & y_{45} & x_{51} & x_{52} &  &  & \\
 &  &  & y_{14} & y_{15} & x_{21} & x_{22} & x_{23} \\
 &  &  & y_{24} & y_{25} & x_{31} & x_{32} & x_{33}\\
 &  &  &  &  & x_{41} & x_{42} & x_{43}\\
 &  &  &  &  & x_{51} & x_{52} & x_{53}
\end{bmatrix};
\]
\[
\mathcal{L}_2(X,Y) := X^{[1,2]}_{[4,5]}, \ \ \mathcal{L}_3(X,Y):=X^{[1,4]}_{[2,5]}, \ \ \mathcal{L}_4(X,Y):=Y^{[4,5]}_{[1,2]}, \ \ \mathcal{L}_5(X,Y):=Y^{[2,5]}_{[1,4]}.
\]

\clearpage

\end{document}